\newcommand{\dist}{\mathrm{dist}}
\newcommand{\diver}{\mathrm{div \,}}
\newcommand{\dd}{\mathrm{d}}
\renewcommand{\Tilde}{\widetilde}
\renewcommand{\Hat}{\widehat}
\newcommand{\RR}{\mathbb{R}}
\newcommand{\CC}{\mathbb{C}}
\newcommand{\NN}{\mathbb{N}}
\newcommand{\U}{\mathcal{U}}
\newcommand{\dom}{\mathrm{dom}}
\def\nn{\mathbf n}
\def\Scal{\mathrm{Scal}}
\def\Ric{\mathrm{Ric}}
\def\R{\mathrm{R}}
\def\Spin{\mathrm{Spin}}
\def\ddt{\frac{\partial}{\partial t}}
\def\Span{\mathrm{Span}}
\def\SS{\mathbf S}
\def\ii{i \,}
\def\CCl{\CC \mathrm{l}}
\newtheorem{theorem}{Theorem}[section]
\newtheorem{prop}[theorem]{Proposition}
\newtheorem{cor}[theorem]{Corollary}
\newtheorem{lemma}[theorem]{Lemma}
\newtheorem{definition}[theorem]{Definition}
\theoremstyle{definition}
\theoremstyle{remark}
\newtheorem{rem}[theorem]{\bf Remark}
\numberwithin{equation}{section} 
\newenvironment{nouppercase}{\renewcommand{\uppercasenonmath}[1]{}}{}
\begin{document}

\title[MIT Bag operator on spin manifolds]{A generalized MIT Bag operator on spin manifolds in the non-relativistic limit}
\author[B. Flamencourt]{Brice Flamencourt}
\address[B. Flamencourt]{Université Paris-Saclay, CNRS, Laboratoire de mathématiques d'Orsay, 91405, Orsay, France.}
\email{brice.flamencourt@universite-paris-saclay.fr}

\subjclass[2010]{15A66, 34L40, 53B20}
\keywords{Dirac operator; spin manifolds; MIT Bag model; eigenvalue asymptotics; effective operator.}

\begin{nouppercase}
\maketitle
\end{nouppercase}

\begin{abstract}
We consider Dirac-like operators with piecewise constant mass terms on spin manifolds, and we study the behaviour of their spectra when the mass parameters become large. In several asymptotic regimes, effective operators appear: the extrinsic Dirac operator and a generalized MIT Bag Dirac operator. This extends some results previously known for the Euclidean spaces to the case of general spin geometry. 
\end{abstract}

\setcounter{tocdepth}{1}
\tableofcontents

\section{Introduction}

The MIT Bag model was developed by the physicists to describe the behaviour of quarks fields inside hadrons. Mathematically, the hadron is seen as a compact region $\mathcal K$ with smooth boundary of the overall space, where the quarks are supposed to be confined. This could be quantified by saying that the quantum flux through the border of $\mathcal K$ is null, a condition which is satisfied if we add the so-called MIT Bag condition on the boundary of $\mathcal K$ (see \cite{Joh} for the details). Moreover, the quarks fields inside the hadron are Dirac fields, which means they are governed by the Dirac equation.

A Dirac field in the case of the space of dimension 3 is a $\CC^4$-valued function $\psi$ also depending on time, and the Dirac equation takes the form
\begin{equation} \label{Diracequation}
H_m \psi := \left( - \ii \sum\limits_{k = 1}^3 \alpha_k \partial_k + m \beta \right) \psi = \ii \frac{\partial}{\partial t} \psi
\end{equation}
where $\alpha_1, \alpha_2, \alpha_3, \beta \in M_4 (\CC)$ are four Hermitian matrices satisfying the conditions $\alpha_k \alpha_l + \alpha_l \alpha_k = 2 \delta_i^j \mathrm{I}_4$, $\beta^2 = \mathrm{I}_4$ and $\alpha_k$ anti-commutes with $\beta$ for all $k,l \in \lbrace 1,2,3 \rbrace$. In view of this equation, the Dirac operator $H_m$ can be interpreted as an Hamiltonian, and the description of its spectrum is a natural question. Thus, in the context of the MIT Bag model, we are interested in the operator resulting from the combination of $H_m$ restricted to the region $\mathcal K$ together with the MIT Bag boundary condition, namely
\begin{equation} \label{MITop3}
H^\mathcal K_m \psi := H_m \psi, \quad \dom (H^\mathcal K_m) = \lbrace \psi \in H^1(\mathcal K, \CC^4), \, - \ii \beta (\alpha \cdot \nn) \, \psi_{\vert \partial \mathcal K} = \psi_{\vert \partial \mathcal K} \rbrace,
\end{equation}
where $\nn$ is the normal outer vector field over $\partial \mathcal K$. The spectrum of this operator has been investigated in \cite{ALTR16}, where the non-relativistic limit was considered, i.e. the asymptotic regime where the mass goes to infinity. From a physical point of view, this last fact means that the speed of light becomes large, since this constant is hidden in the mass term in \eqref{Diracequation}. It was shown that if we denote by $(\mu_j)_{j \ge 1}$ the non-decreasing  sequence of positive eigenvalues of $H^\mathcal K_m$, one has the asymptotic
\begin{equation}
\mu_j \underset{m \rightarrow - \infty}{=} \Tilde \mu_j^\frac 1 2 + \mathcal O (m^{- \frac 12})
\end{equation}
where $(\Tilde \mu_j)$ is the non-decreasing sequence of eigenvalues of an effective operator acting on the boundary of $\mathcal K$.

In the same framework, the MIT Bag Dirac operator was interpreted as the limit of a Dirac-type operator with a potential corresponding to two masses $m$ and $M$ in the regions $\mathcal K$ and $\mathcal K^c$ respectively \cite{ALTR18}. More precisely, if we define the operator
\begin{equation}
H_{m,M} := H_m + (M - m) \mathbf 1_{\mathcal K^c}, \quad \dom (H{m,M}) := H^1(\RR^3, \CC^4),
\end{equation}
then the eigenvalues of $H_{m,M}$ converge to the corresponding ones of $H^\mathcal K_m$ when $M \rightarrow + \infty$.

In the recent article \cite{MOP}, the case of Euclidean spaces was studied in order to enlarge the precedent results. The expression of the operator in dimension 3 given by \eqref{MITop3} was generalized in dimension $n$ by considering $n+1$ Hermitian matrices $\alpha_1, \ldots, \alpha_{n+1} \in M_N(\CC)$ ($N := 2^{\lfloor \frac{n+1}{2} \rfloor}$) satisfying the Clifford conditions $\alpha_k \alpha_l + \alpha_l \alpha_k = 2 \delta_k^l \mathrm{I}_{n+1}$ and by setting
\begin{equation} \label{operatorDm}
D_m \psi := \left( - \ii \sum\limits_{k=1}^{n+1} \alpha_k \partial_k + m \alpha_{n+1} \right) \psi, \dom(D_m) = H^1(\RR^n, \CC^N).
\end{equation}
This last operator is not the intrinsic Dirac operator in $\RR^n$ but it can be interpreted like in \eqref{Diracequation} as the Hamiltonian appearing in the Dirac equation of a Lorentzian space of dimension $n+1$. From these considerations, the MIT Bag Dirac operator $A_m$ can be defined by
\begin{equation} \label{MITBagEuclidean}
A_m := D_m, \dom(A_m) := \lbrace \psi \in H^1(\mathcal K, \CC^ 4), \, - \ii \alpha_{n+1} \sum\limits_{k=1}^n \nn_k \alpha_k \, \psi_{\vert \partial \mathcal K} = \psi_{\vert \partial \mathcal K} \rbrace.
\end{equation}
With this definition, the result on the convergence of the eigenvalues of $A_m$ still holds, and the effective operator on the boundary can be explicited. Namely, the eigenvalues of $A_m^2$ converge to the eigenvalues of the square of the intrinsic Dirac operator on $\partial \mathcal K$. Moreover, if $n \notin 4 \mathbb Z$, the spectra of the operators are symmetric with respect to the origin, and we recover the result stated in dimension 3.

As for the Minkowski space, the operator $A_m$ can be viewed as the limit of an operator with two masses \cite[Theorem 1.2]{MOP}. This operator is defined in the same way as before:
\begin{equation} \label{BEuclidean}
B_{m,M} := D_m + (M - m) \mathbf 1_{\mathcal K} \alpha_{n+1}, \quad \dom(B_{m,M}) := H^1(\RR^n, \CC^N),
\end{equation}
and the eigenvalues of $B^2_{m,M}$ converge to the eigenvalues of $A^2_m$ when $M \rightarrow + \infty$. In addition, a combination of the two previous asymptotic behaviours is also true \cite[Theorem 1.3]{MOP}: in the asymptotic regime $m \rightarrow - \infty$ and $M \rightarrow + \infty$ with $\frac m M \rightarrow 0$, one has that the eigenvalues of $B^2_{m,M}$ converge to the corresponding ones of the intrinsic Dirac operator on the boundary $\partial \mathcal K$.

In the precedent discussion, the spaces considered where always flat, but the Dirac operator can be defined in a more general setting, for example over a manifold admitting a $\Spin$-structure. Consequently, our aim in the present text is to extend the results of \cite{MOP} to this more general framework. In order to do so, the first step is to understand the geometrical meaning of the operator considered in the MIT Bag model, because we recall that the Dirac operator considered in \cite{MOP} is not the intrinsic Dirac operator of the Euclidean space. Indeed, the operator $D_m$ is the so-called Dirac-Witten operator on $\RR^n$ seen as an hypersurface of $\RR^{n+1}$, plus a mass term which is actually the Clifford multiplication by the vector $\ii m x_{n+1}$ in $\RR^{n+1}$.

Nevertheless, even if the expression \eqref{MITBagEuclidean} is a direct generalization of equation \eqref{MITop3}, the Dirac-Witten operator is not the operator we obtain from the physical model \cite{Joh}. Indeed, in \eqref{Diracequation} we used the alpha matrices, but the Dirac equation is more likely written using the gamma matrices defined by
\[
\gamma^0 := \beta, \quad \gamma^k := - \ii \gamma^0 \alpha_k \: k = 1,2,3.
\]
If one rewrites \eqref{Diracequation} with the $\gamma$ matrices, one obtains
\begin{equation}
H_m \psi = \left(\sum\limits_{k = 1}^3 \gamma^0 \gamma^k \partial_k + m \gamma^0 \right) \psi,
\end{equation}
and this last operator is, up to a change of sign, the extrinsic Dirac operator on the hypersurface $\RR^3$ plus the mass term. Moreover, the boundary condition defined in \cite{ALTR16} by $- \ii \beta (\alpha \cdot \nn) \psi = \psi$ reads $\ii (\gamma \cdot \nn) \psi = \psi$ and this last boundary condition is the MIT Bag boundary condition as introduced in \cite{Joh}.

All together, we have two natural ways of setting the problem in the case of a complete spin manifold $\mathcal N$. In both cases, we have to see $\mathcal N$ as an hypersurface of the Riemannian product $\mathcal C := \mathcal N \times \RR$, and we denote by $\nu$ the outer normal vector field over $\mathcal N$. In addition, the region $\mathcal K$ is now a compact submanifold of $\mathcal N$ with boundary. The theory of $\Spin$-structures restricted to hypersurfaces gives that $\mathcal C$ and $\partial \mathcal K$ are also spin manifolds. Consequently, we can define $\Sigma \mathcal C$ the spinor bundle over $\mathcal C$, and the extrinsic Dirac operator $\mathcal D^\mathcal N$, which acts on spinors of $\mathcal C$ restricted to $\mathcal N$.

From the previous discussion, the obvious generalization of the MIT Bag Dirac operator in the Euclidean spaces \eqref{MITBagEuclidean} is defined as the Dirac-Witten operator on $\mathcal N$ plus a mass term and we add the boundary condition $\ii \nu \cdot \nn \cdot \Psi = \Psi$ on $\partial \mathcal K$. This last condition is not the MIT Bag boundary condition, but the condition associated to a chirality operator, and it is consistent with the condition imposed in \eqref{MITBagEuclidean}. Namely, we have
\begin{equation}
A_m := \nu \cdot \mathcal D^\mathcal N + \ii m \, \nu \cdot, \, \dom (A_m) = \left\lbrace \Psi \in H^1 (\Sigma \mathcal C_{\vert \mathcal K}), \ii \nu \cdot \nn \cdot \Psi = \Psi \textrm{ on $\partial \mathcal K$}\right\rbrace.
\end{equation}

Furthermore, the cylinder $\mathcal C$ can be endowed with a Lorentzian metric such that $\nu$ is a time-like vector, and in this case, solving the Dirac equation in $\mathcal C$ in the same way as for dimension 3 let us with the study of the extrinsic Dirac operator on $\mathcal N$ plus the mass term. The boundary condition imposed in this case is the original MIT Bag boundary condition $\ii \nn \cdot \Psi = \Psi$.

Actually, the two operators we defined this way are unitary equivalent since the manifold $\mathcal N$ is totally geodesic in $\mathcal C$. This last result explains how the operator studied in \cite{MOP} is obtained from the physical model, and the two definitions we gave above are equivalent.

In the same way as before, the two-masses operator is obtained by adding a potential corresponding to two masses in $\mathcal K$ and $\mathcal K^c$ in the expression of the operator $A_m$. Since in our framework the manifold $\mathcal N$ is only complete, $B_{m,M}$ is defined as the closure of the operator
\begin{equation}
\Tilde B_{m,M} := \nu \cdot \mathcal D^\mathcal N + \ii (m \mathbf 1_{\mathcal K} + M \mathbf 1_{\mathcal K^c}) \nu \cdot,
\end{equation}
whose domain is the set of smooth sections with compact support in $\Sigma \mathcal C_{\vert \mathcal N}$. This definition is consistent with \eqref{BEuclidean} because it was shown in \cite{MOP} that the two-masses operator is essentially self-adjoint on the smooth functions with compact support.

The operators $A_m$ and $B_{m,M}$ are self-adjoint and we are interested in the behaviour of the spectrum of $A_m$ when $m \rightarrow - \infty$ and the spectrum of $B_{m,M}$ in the asymptotic regime $M \rightarrow + \infty$ and $\min(-m,M) \rightarrow + \infty$. These limits are the ones studied in \cite{MOP}, and the three main theorems we state below are the counterparts of \cite[Theorems 1.1, 1.2, 1.3]{MOP}.

From now on, we use for $j \in \NN$ and a lower semibounded operator $T$ the notation $E_j(T)$, which stands for the $j$-th eigenvalue of $T$ when counting with multiplicity in the non-decreasing order.

First of all, one has the convergence of the eigenvalues of $A_m^2$ to the eigenvalues of the square of the Dirac operator on $\partial \mathcal K$:

\begin{theorem} \label{main1}
For any $j \in \NN$, one has $E_j(A_m^2) \underset{m \rightarrow - \infty} \longrightarrow E_j \left( (\slashed D^{\partial \mathcal K})^2 \right)$.
\end{theorem}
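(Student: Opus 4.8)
The plan is to analyze the quadratic form of $A_m^2$ and show it converges, in an appropriate variational sense, to the quadratic form of $(\slashed D^{\partial\mathcal K})^2$ localized near the boundary. The starting point is the identity $A_m^2 = (\nu\cdot\mathcal D^{\mathcal N})^2 + m^2 + (\text{first-order terms in } m)$ on $\dom(A_m)$; more precisely, since $\nu\cdot$ and $\ii\,\nu\cdot$ are (up to sign) parallel Clifford multiplications and $\mathcal N$ is totally geodesic in $\mathcal C$, one computes $A_m^2\Psi = (\mathcal D^{\mathcal N})^2\Psi + m^2\Psi$ in the interior, with the cross-terms cancelling because $\mathcal D^{\mathcal N}$ anticommutes suitably with the Clifford action of $\nu$. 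The point is that the $m^2$ contribution is enormous, so any eigenspinor with eigenvalue staying bounded must concentrate in the region where the boundary condition allows cancellation, i.e., in a shrinking collar neighborhood of $\partial\mathcal K$ of width $\sim |m|^{-1}$. So the first step is a \textbf{localization / Agmon-type estimate}: show that for $\Psi$ in the spectral subspace of $A_m^2$ below a fixed level $\Lambda$, one has $\|\Psi\|_{L^2(\mathcal K\setminus \mathcal U_\delta)} = O(e^{-c|m|\delta})$ where $\mathcal U_\delta$ is the $\delta$-collar of $\partial\mathcal K$, together with control of $\|\nabla\Psi\|$ there. This is the standard mechanism in all the MIT-bag asymptotics papers (\cite{ALTR16}, \cite{MOP}) and is what forces the problem onto the boundary.

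The second step is to introduce \textbf{tubular coordinates} $(s,y)\in[0,\delta)\times\partial\mathcal K$ near $\partial\mathcal K$, trivialize $\Sigma\mathcal C$ along the normal geodesics using parallel transport, and write $A_m$ in these coordinates as $\nu\cdot\big(\partial_s \nu\cdot{+}\text{Clifford}(\tfrac{\partial}{\partial y}) + \text{curvature terms}\big) + \ii m\,\nu\cdot$. After the rescaling $s = |m|^{-1}t$, the operator splits into a one-dimensional transverse piece — a harmonic-oscillator/Dirac-type model operator on the half-line $\RR_+$ with the boundary condition $\ii\nu\cdot\nn\cdot\Psi = \Psi$ at $t=0$ — plus a tangential Dirac operator on $\partial\mathcal K$ of lower order in $|m|^{-1}$. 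The key algebraic computation is to identify the transverse model operator: its square, restricted to the eigenspace of the boundary chirality projector, should have a simple spectrum with a one-dimensional ground state (the exponential profile $e^{-|m|s}$, after rescaling $e^{-t}$) of \emph{zero} energy, and the tangential operator acting on that ground-state fiber is exactly $\slashed D^{\partial\mathcal K}$ (up to the unitary identification of $\Sigma\mathcal C|_{\partial\mathcal K}$ with $\Sigma\partial\mathcal K$, possibly a direct sum of two copies when the dimensions force it). This gives the effective operator $(\slashed D^{\partial\mathcal K})^2$.

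The third step is to turn this into the eigenvalue convergence via a two-sided \textbf{min-max argument}. For the upper bound $\limsup_m E_j(A_m^2)\le E_j((\slashed D^{\partial\mathcal K})^2)$, I would take the first $j$ eigenspinors $\phi_1,\dots,\phi_j$ of $\slashed D^{\partial\mathcal K}$, tensor each with the rescaled exponential profile cut off at scale $\delta$, check these quasimodes satisfy the boundary condition exactly and lie in $\dom(A_m)$, and estimate $\|A_m(\phi_k\otimes\chi e^{-|m|s})\|^2 \le E_k((\slashed D^{\partial\mathcal K})^2)\|\cdot\|^2 + o(1)$ using the coordinate expansion and the vanishing of the cut-off error (exponentially small). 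For the lower bound $\liminf_m E_j(A_m^2)\ge E_j((\slashed D^{\partial\mathcal K})^2)$, I would use the localization estimate from Step 1 to project the true low-lying eigenspinors onto the transverse ground state, bound the remainder by the spectral gap of the transverse model operator (which is $\Omega(m^2)$ on the orthogonal complement), and recognize the leading term as a Rayleigh quotient for $(\slashed D^{\partial\mathcal K})^2$ up to $o(1)$ errors coming from curvature terms in the tubular expansion.

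The \textbf{main obstacle} I expect is Step 2: getting the Clifford-algebra bookkeeping exactly right so that the transverse model operator genuinely has a zero-energy ground state with the correct boundary condition, and so that the tangential operator on that ground-state bundle is precisely $\slashed D^{\partial\mathcal K}$ and not some twisted or doubled variant — the Euclidean computation in \cite{MOP} relied on explicit $\alpha$-matrices, whereas here one must work invariantly with the restriction of $\Sigma\mathcal C$ to the totally geodesic submanifold $\partial\mathcal K$ and handle the parity-dependent identification of spinor bundles (the paper has already flagged the role of $n\in 4\mathbb Z$). A secondary technical point is making the Agmon estimate of Step 1 uniform in $m$ in the presence of the manifold's geometry (using only completeness of $\mathcal N$ and compactness of $\mathcal K$), but this follows standard IMS-localization arguments once the quadratic-form identity $q_{A_m}(\Psi) = \|\mathcal D^{\mathcal N}\Psi\|^2 + m^2\|\Psi\|^2 - (\text{boundary term of fixed sign})$ is in place.
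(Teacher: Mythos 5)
Your plan has the same skeleton as the paper's proof: localize near $\partial \mathcal K$ where the boundary term $m\int_{\partial\mathcal K}|\Psi|^2$ can offset the bulk $m^2$, separate variables with a one-dimensional transverse Robin problem whose ground state ($\approx e^{-|m|s}$, energy $\approx -m^2$) nearly cancels the mass, and squeeze $E_j(A_m^2)$ between two bounds converging to the eigenvalues of an effective boundary operator. The implementation differences are real but benign: the paper localizes by Dirichlet--Neumann form bracketing on a fixed collar $\Pi^-_\delta$ rather than by Agmon estimates and an $|m|^{-1}$ rescaling, and it runs the lower bound through the monotone convergence theorem for forms (after relaxing to a tensor-product structure) rather than through a projection onto the transverse ground state with a spectral-gap estimate; your variants could be made to work, at the price of controlling cross terms that the monotone-convergence route avoids.

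The genuine gap is the identification of the effective operator, which you assert ("the tangential operator on that ground-state fiber is exactly $\slashed D^{\partial \mathcal K}$") and elsewhere downgrade to "$o(1)$ errors coming from curvature terms in the tubular expansion". The curvature contributions are not $o(1)$: what the localized form actually produces at leading order is $\int_{\partial\mathcal K}\bigl(|\overline\nabla^{\mathcal N}\iota\Psi|^2+\tfrac14(\Scal^{\partial\mathcal K}-\mathrm{Tr}(W^2))|\Psi|^2\bigr)v_{\partial\mathcal K}$ on the chirality-constrained space $\{\ii\,\nu\cdot\nn\cdot\Psi=\Psi\}$, where the potential arises from an exact interplay between the Lichnerowicz term $\Scal^{\mathcal N}/4$, the mean-curvature boundary term $-H/2$, and the second normal derivative of the tubular Jacobian $\phi$ (the paper's estimate $\partial_t^2\phi/2\phi-(\partial_t\phi)^2/4\phi^2=\tfrac14(\Scal^{\partial\mathcal K}-\mathrm{Tr}(W^2)-\Scal^{\mathcal N})+\mathcal O(t)$). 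Identifying this constrained form with $(\slashed D^{\partial\mathcal K})^2$ is then a separate, parity-dependent unitary equivalence built from Clifford multiplication by $\nn$ and the hypersurface spinor identifications, in which the Weingarten shift $\tfrac12\,\nn\cdot We_k\cdot$ in the restricted connection exactly absorbs the $\tfrac14\mathrm{Tr}(W^2)$ and $\Scal^{\partial\mathcal K}$ terms (the paper's Lemma on the limit operator $L$). You correctly flag this as the main obstacle, but it is precisely where the non-flat geometry enters; without it the effective operator could equally well come out as a connection Laplacian plus a nonzero curvature potential, and the claimed limit $E_j((\slashed D^{\partial\mathcal K})^2)$ would not follow -- note that the same computation is already needed to build your upper-bound quasimodes, since the eigenspinors of $\slashed D^{\partial\mathcal K}$ must first be mapped into $\Sigma\mathcal C_{\vert\partial\mathcal K}$ so as to satisfy $\ii\,\nu\cdot\nn\cdot\Psi=\Psi$. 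A secondary omission: on a general manifold one cannot compare tangential covariant derivatives on different slices $\partial\mathcal K\times\{t\}$ for free; uniform-in-$m$ comparison estimates via parallel transport along the normal geodesics (with $O(\delta)$ errors controlled by curvature and $W$) are needed to replace the slice forms by the form on $\partial\mathcal K$, and your "trivialize along normal geodesics" only gestures at this.
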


The two operators $A_m^2$ and $B_{m,M}^2$ are surprisingly related in the asymptotic regime $M \rightarrow + \infty$:

\begin{theorem} \label{main2}
For any $j \in \NN$, there is $M_0 \in \RR$ such that for all $M \ge M_0$, $B^2_{m,M}$ has at least $j$ eigenvalues, and one has $E_j(B_{m,M}^2) \underset{M \rightarrow + \infty} \longrightarrow E_j( A_m^2)$.
\end{theorem}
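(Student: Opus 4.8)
The plan is to keep $m$ fixed and to prove separately the two bounds
\[
\limsup_{M\to+\infty} E_j(B_{m,M}^2)\le E_j(A_m^2)
\qquad\text{and}\qquad
\liminf_{M\to+\infty} E_j(B_{m,M}^2)\ge E_j(A_m^2),
\]
using the min-max principle for the non-negative quadratic forms $\Psi\mapsto\|A_m\Psi\|^2$ (with form domain $\dom(A_m)$, that is, $H^1(\Sigma\mathcal{C}_{\vert\mathcal K})$ together with the boundary condition $\ii\nu\cdot\nn\cdot\Psi=\Psi$ on $\partial\mathcal{K}$) and $\Psi\mapsto\|B_{m,M}\Psi\|^2$. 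A preliminary observation settles the first assertion of the statement. Since $\nu\cdot$ is parallel and anticommutes with $\mathcal{D}^\mathcal{N}$ ($\mathcal{N}$ being totally geodesic in $\mathcal{C}$), the cross term in $\|B_{m,M}\Psi\|^2$ vanishes identically on sections supported in $\mathcal{K}^c$, so there $\|B_{m,M}\Psi\|^2=\|\mathcal{D}^\mathcal{N}\Psi\|^2+M^2\|\Psi\|^2\ge M^2\|\Psi\|^2$; a Persson-type argument gives $\inf\sigma_{\mathrm{ess}}(B_{m,M}^2)\ge M^2$, so as soon as the $j$-th min-max value of $B_{m,M}^2$ is shown to stay bounded it lies below the essential spectrum for $M$ large, hence is a genuine eigenvalue and $B_{m,M}^2$ has at least $j$ of them.

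\emph{Upper bound.} I would fix an $L^2$-orthonormal family $\Psi_1,\dots,\Psi_j$ of eigenspinors of $A_m^2$ on $\mathcal{K}$ and extend each of them to $\mathcal{N}$. Writing $\mathcal{D}^\mathcal{N}$ in Fermi coordinates $(t,y)$ on a collar $(0,\delta)\times\partial\mathcal{K}$ of $\partial\mathcal{K}$ inside $\mathcal{K}^c$, the condition $\ii\nu\cdot\nn\cdot\Psi_i=\Psi_i$ is precisely the one for which the continuation of $\Psi_i|_{\partial\mathcal{K}}$ as an approximate solution of $(\nu\cdot\mathcal{D}^\mathcal{N}+\ii M\nu\cdot)\Phi=0$ is the exponentially \emph{decaying} mode $\sim e^{-Mt}$ rather than a growing one — this is the algebraic content of the unitary equivalence discussed in the introduction. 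Accordingly I would set $\Hat\Psi_i^M:=\Psi_i$ on $\mathcal{K}$, $\Hat\Psi_i^M:=\chi(t)e^{-Mt}\Phi_i(t,y)$ on the collar (with $\Phi_i$ the normal parallel transport of $\Psi_i|_{\partial\mathcal{K}}$ and $\chi$ a fixed cutoff), and $0$ elsewhere. A direct computation in these coordinates gives $\|B_{m,M}\Hat\Psi_i^M\|^2=\|A_m\Psi_i\|^2+O(M^{-1})$ and $\langle\Hat\Psi_i^M,\Hat\Psi_k^M\rangle=\delta_{ik}+O(M^{-1})$ uniformly, and feeding $\Span(\Hat\Psi_1^M,\dots,\Hat\Psi_j^M)$ into the min-max quotient for $B_{m,M}^2$ yields the $\limsup$ bound.

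\emph{Lower bound.} Let $\Psi^M$ be a normalized eigenspinor of $B_{m,M}^2$ at the $j$-th eigenvalue $\lambda_M$, so $\lambda_M\le E_j(A_m^2)+o(1)\le C$ by the previous step. The crucial step is a concentration estimate, $\|\Psi^M\|_{L^2(\mathcal{K}^c)}\to 0$. Expanding $\|B_{m,M}\Psi^M\|^2$ over $\mathcal{K}^c$ one finds $\|\mathcal{D}^\mathcal{N}\Psi^M\|^2_{\mathcal{K}^c}+M^2\|\Psi^M\|^2_{\mathcal{K}^c}$ plus a cross term which, after one integration by parts, reduces exactly to a boundary integral $\pm M\langle(\ii\nu\cdot\nn\cdot)\Psi^M,\Psi^M\rangle_{\partial\mathcal{K}}$; thus only the component $\Psi^M_-$ of $\Psi^M|_{\partial\mathcal{K}}$ in the $(-1)$-eigenspace of the chirality operator enters with an unfavourable sign. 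Writing $\mathcal{D}^\mathcal{N}$ in the collar and completing the square with the sign matched to this decomposition — which is compatible precisely with the decaying mode $e^{-Mt}$ — one closes the estimate and obtains $\|B_{m,M}\Psi^M\|^2_{\mathcal{K}^c}\ge\tfrac12 M^2\|\Psi^M\|^2_{L^2(\mathcal{K}^c)}-C\|\Psi^M\|^2$, whence $\|\Psi^M\|_{L^2(\mathcal{K}^c)}=O(M^{-1})$ and, by a Caccioppoli iteration, exponential smallness of $\Psi^M$ away from $\partial\mathcal{K}$. Since $\mathcal{N}$ has no boundary, interior elliptic regularity for $\nu\cdot\mathcal{D}^\mathcal{N}$ on a fixed neighbourhood of $\mathcal{K}$, combined with $\|\Psi^M\|_{L^2(\mathcal{K}^c)}\to 0$, gives a uniform bound on $\|\Psi^M|_{\mathcal{K}}\|_{H^1(\mathcal{K})}$; by compactness of $H^1(\mathcal{K})\hookrightarrow L^2(\mathcal{K})$ and of the trace $H^1(\mathcal{K})\to L^2(\partial\mathcal{K})$, a subsequence of $\Psi^M|_{\mathcal{K}}$ converges weakly in $H^1(\mathcal{K})$ and strongly in $L^2(\mathcal{K})$ and $L^2(\partial\mathcal{K})$ to some $\Psi_\infty$. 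As $\|\Psi^M_-\|_{L^2(\partial\mathcal{K})}\to 0$, the limit satisfies $(\Psi_\infty)_-=0$, hence $\Psi_\infty\in\dom(A_m)$; and since $B_{m,M}$ coincides on $\mathcal{K}$ with the differential expression of $A_m$, weak lower semicontinuity of that quadratic form on $H^1(\mathcal{K})$ gives $\|A_m\Psi_\infty\|^2\le\liminf_M\|B_{m,M}\Psi^M\|^2_{L^2(\mathcal{K})}\le\liminf_M\lambda_M$. Applying this to eigenspinors for the first $j$ eigenvalues of $B_{m,M}^2$, whose restrictions to $\mathcal{K}$ form an asymptotically orthonormal family and whose $L^2$-limits therefore span a $j$-dimensional subspace of $\dom(A_m)$ on which $\|A_m\cdot\|^2/\|\cdot\|^2\le\liminf_M\lambda_M$, the min-max principle for $A_m^2$ yields $E_j(A_m^2)\le\liminf_M E_j(B_{m,M}^2)$.

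The main obstacle is the concentration estimate: the sharp description of $\mathcal{D}^\mathcal{N}$ in a collar of $\partial\mathcal{K}$ and the control, with the correct constants, of the boundary term produced by the cross term, which is exactly where the structure of the chirality boundary condition is used. Everything else — the essential-spectrum bound, the trial-state computation, the Rellich and interior-regularity arguments, and the passage to the limit — is a routine adaptation of the corresponding steps in \cite{MOP}.
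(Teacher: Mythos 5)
Your preliminary step (Persson bound on the essential spectrum plus boundedness of the min--max values) is an acceptable way to get the ``at least $j$ eigenvalues'' claim, and your upper bound is essentially the paper's: your trial states $\chi(t)e^{-Mt}\Phi_i$ are, up to normalization, the extension operator $F_M$ of Lemma~\ref{extension} applied to the traces of eigenspinors of $A_m^2$, and the computation is the same. Your lower bound, however, takes a different route from the paper (which decouples $\mathcal K$ and $\mathcal K^c$ by dropping the matching condition on $\partial\mathcal K$ and then applies monotone form convergence, Corollary~\ref{monotonecor}, to the interior forms); your scheme -- concentration of eigenspinors, compactness, and weak lower semicontinuity of $\Phi\mapsto\Vert(\nu\cdot\mathcal D^\mathcal N+\ii m\,\nu\cdot)\Phi\Vert^2_{L^2(\mathcal K)}$, which neatly bypasses the boundary bookkeeping since $\Vert B_{m,M}\Psi^M\Vert^2_{L^2(\mathcal K)}\le\lambda_M$ -- is workable in principle, but two of its steps are wrong as stated.

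First, the concentration estimate $\Vert B_{m,M}\Psi\Vert^2_{L^2(\mathcal K^c)}\ge\tfrac12 M^2\Vert\Psi\Vert^2_{L^2(\mathcal K^c)}-C\Vert\Psi\Vert^2$ is false. After the integration by parts the exterior contribution is exactly $\int_{\mathcal K^c}\bigl(\vert\nabla^\mathcal N\iota\Psi\vert^2+(\tfrac{\Scal^\mathcal N}{4}+M^2)\vert\Psi\vert^2\bigr)v_\mathcal N-\int_{\partial\mathcal K}(M-\tfrac H2)\vert\Psi\vert^2 v_{\partial\mathcal K}$, i.e. $r_M+M^2$ in the notation of Lemma~\ref{extension}; on spinors of the form $e^{-Mt}$ times a boundary trace this quantity is $\mathcal O(1)\cdot\Vert\Psi_{\vert\partial\mathcal K}\Vert^2_{L^2(\partial\mathcal K)}$, whereas $\tfrac12M^2\Vert\Psi\Vert^2_{L^2(\mathcal K^c)}\approx\tfrac M4\Vert\Psi_{\vert\partial\mathcal K}\Vert^2_{L^2(\partial\mathcal K)}$, so no such inequality with an $M^2$ in front can hold; the sharp statement is only $\Lambda_1(r_M+M^2)\ge-C_0$ (or $\ge\varepsilon M$ after moving $\varepsilon\int_{\partial\mathcal K}\vert\Psi\vert^2$ to the interior part, as in \eqref{quadBrep}). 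Consequently the correct decay of an eigenspinor is $\Vert\Psi^M\Vert^2_{L^2(\mathcal K^c)}=\mathcal O(M^{-1})$, not $\mathcal O(M^{-2})$. Second, this matters for your next step: the uniform $H^1(\mathcal K)$ bound via ``interior elliptic regularity on a fixed neighbourhood of $\mathcal K$'' needs $M\Vert\Psi^M\Vert_{L^2(\mathcal K^c)}=\mathcal O(1)$, i.e. precisely the rate you cannot have, and on $\mathcal K$ alone a bound on $\Vert\mathcal D^\mathcal N\Psi^M\Vert_{L^2(\mathcal K)}$ does not control $\Vert\Psi^M\Vert_{H^1(\mathcal K)}$ in the absence of boundary conditions (the boundary term in the Lichnerowicz--type identity involves $\mathcal D^{\partial\mathcal K}$ and has no sign). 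Both gaps are repairable: take the uniform bounds on $\Vert\nabla^\mathcal N\iota\Psi^M\Vert_{L^2(\mathcal K)}$, on the trace, and the smallness $\int_{\partial\mathcal K}\vert\mathcal P_-\Psi^M\vert^2=\mathcal O(M^{-1})$ directly from the form representation of Proposition~\ref{quadB} combined with Theorem~\ref{trace}, which then also yields $\Vert\Psi^M\Vert^2_{L^2(\mathcal K^c)}=\mathcal O(M^{-1})$; with these inputs your compactness and weak--lsc argument (including the span version needed for the $j$-th eigenvalue) goes through. As written, though, the concentration inequality and the $H^1(\mathcal K)$ bound are genuine gaps.
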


In addition, one has a combination of these two results:

\begin{theorem} \label{main3}
For any $j \in \NN$, there is $\tau_0 \in \RR$ such that for all $M \ge \tau_0$ and $m \le - \tau$, the operator $B^2_{m,M}$ has at least $j$ eigenvalues, and one has $E_j(B_{m,M}^2) \underset{\min (M,-m) \rightarrow + \infty} \longrightarrow E_j \left( (\slashed D^{\partial \mathcal K})^2 \right)$.
\end{theorem}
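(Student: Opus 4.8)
The plan is to prove Theorem~\ref{main3} through matching two‑sided bounds for $E_j(B_{m,M}^2)$ that hold uniformly on the two‑parameter region $\mathcal R_\tau:=\{(m,M):m\le-\tau,\ M\ge\tau\}$, recycling the constructions behind Theorems~\ref{main1} and~\ref{main2}; the genuinely new ingredient is uniformity with respect to the parameter that is kept fixed in those statements.

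\emph{Upper bound.} By the min‑max principle it suffices to exhibit, for $\tau$ large, a $j$‑dimensional trial subspace of $\dom(B_{m,M})$ on which the quadratic form of $B_{m,M}^2$ is bounded by $E_j\big((\slashed D^{\partial\mathcal K})^2\big)+o(1)$. Let $\phi_1,\dots,\phi_j$ be orthonormal eigensections of $(\slashed D^{\partial\mathcal K})^2$ with eigenvalues $E_1,\dots,E_j$. Fix a tubular neighbourhood $U_\delta\cong\partial\mathcal K\times(-\delta,\delta)$ of $\partial\mathcal K$ in $\mathcal N$ with normal coordinate $t$, $\{t<0\}\subset\mathcal K$, and set $\Psi_k(y,t)=\chi(t)\,f_{m,M}(t)\,\widetilde\phi_k(y,t)$, where $\widetilde\phi_k$ is the parallel transport of $\phi_k$ along the normal geodesics, $\chi$ is a fixed cut‑off, and $f_{m,M}$ is the optimal transverse profile (piecewise exponential with rates $|m|$ on $\{t<0\}$ and $M$ on $\{t>0\}$, glued at $t=0$ compatibly with the transmission/chirality algebra encoded in the domain of the closed operator $B_{m,M}$). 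A direct computation of $\langle B_{m,M}^2\Psi_k,\Psi_k\rangle/\|\Psi_k\|^2$ shows that the tangential contribution equals $E_k+o(1)$ while the transverse kinetic part, the mass term, and the curvature and second‑fundamental‑form corrections are $o(1)$ as $\min(M,-m)\to\infty$, uniformly on $\mathcal R_\tau$; after orthonormalisation this gives the desired trial subspace. Since $B_{m,M}$ coincides with $\nu\cdot\mathcal D^{\mathcal N}+\ii M\,\nu\cdot$ outside the compact set $\mathcal K$, a Lichnerowicz‑type estimate shows that the bottom of the essential spectrum of $B_{m,M}^2$ tends to $+\infty$; hence $B_{m,M}^2$ has at least $j$ eigenvalues for $\tau$ large, and $\limsup_{\min(M,-m)\to\infty}E_j(B_{m,M}^2)\le E_j\big((\slashed D^{\partial\mathcal K})^2\big)$.

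\emph{Lower bound.} Fix $(m,M)\in\mathcal R_\tau$ and let $\Psi_1^{m,M},\dots,\Psi_j^{m,M}$ be orthonormal eigensections of $B_{m,M}^2$ with eigenvalues at most $\Lambda:=E_j\big((\slashed D^{\partial\mathcal K})^2\big)+1$ (available from the upper bound). An IMS localisation, combined with the pointwise form bounds $B_{m,M}^2\ge\tfrac12 m^2$ on $\mathcal K\setminus U_\delta$ and $B_{m,M}^2\ge c\,M^2$ on $\mathcal K^c\setminus U_\delta$, yields $\|\Psi_k^{m,M}\|_{L^2(\mathcal N\setminus U_\delta)}^2=O\big(\Lambda/\min(m^2,M^2)\big)$, and an Agmon‑type estimate upgrades this to concentration at normal scale $1/\min(M,|m|)$ near $\partial\mathcal K$; in particular no mass escapes along the end of $\mathcal N$. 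Writing the restriction of $\nu\cdot\mathcal D^{\mathcal N}$ to $U_\delta$ as a transverse $\partial_t$‑term plus the tangential operator $\slashed D^{\partial\mathcal K}$ plus a remainder vanishing as the normal coordinate tends to $0$, one extracts from the normal averages of the $\Psi_k^{m,M}$ a weak limit consisting of $j$ orthonormal elements of the form domain of $(\slashed D^{\partial\mathcal K})^2$ whose energies do not exceed $\liminf$ of the corresponding eigenvalues; the min‑max principle on $\partial\mathcal K$ then gives $\liminf_{\min(M,-m)\to\infty}E_j(B_{m,M}^2)\ge E_j\big((\slashed D^{\partial\mathcal K})^2\big)$.

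\emph{Main obstacle.} The rates of convergence in Theorems~\ref{main1} and~\ref{main2} depend on the parameter held fixed there, so the essential difficulty is to verify that every estimate above is uniform on $\mathcal R_\tau$: the IMS and Agmon constants, the transverse/tangential splitting of $\nu\cdot\mathcal D^{\mathcal N}$ in $U_\delta$, and in particular the compactness of the family $\{\Psi_k^{m,M}\}_{(m,M)\in\mathcal R_\tau}$, which requires ruling out simultaneous loss of mass into the bulk of $\mathcal K$, out along the end of $\mathcal N$, and into the shrinking transverse direction. An equivalent route is to first show that the convergence in Theorem~\ref{main2} is uniform for $m\le-\tau$ and then combine it with Theorem~\ref{main1} by a two‑$\varepsilon$ argument; proving that uniform version, however, rests on exactly the same estimates.
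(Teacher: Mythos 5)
Your outline reproduces the general strategy (localization near $\partial\mathcal K$, product-type trial states for the upper bound, a limiting argument for the lower bound), but it has two genuine gaps. The first concerns the boundary term $2(M-m)\int_{\partial\mathcal K}\vert\mathcal P_-\Psi\vert^2\,v_{\partial\mathcal K}$ in the quadratic form of $B_{m,M}^2$ (see \eqref{quadBrep}). In the upper bound, any trial spinor whose trace on $\partial\mathcal K$ does not satisfy $\mathcal P_-\Psi=0$ produces a Rayleigh quotient of order $M-m$, so "gluing the transverse profile compatibly with the transmission algebra" is not enough: the spinorial part itself must be a lift of the eigensections of $(\slashed D^{\partial\mathcal K})^2$ into the $+1$ eigenspace of $\ii\nu\cdot\nn\cdot$, and for such constrained spinors the tangential energy is \emph{not} $E_k+o(1)$ by a naive computation --- the Weingarten terms are $O(1)$ and only cancel through the identification of Lemma~\ref{Uequi} (this is the content of the effective form $\ell$ in \eqref{quadL}). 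In the lower bound the same term is indispensable: your weak-limit argument never explains why the limiting boundary spinors satisfy the chirality constraint. Without that mechanism the limiting tangential operator is the \emph{unconstrained} one (the form $\ell''$ on all of $H^1(\Sigma\mathcal C_{\vert\partial\mathcal K})$), whose $j$-th eigenvalue is in general strictly below $E_j\bigl((\slashed D^{\partial\mathcal K})^2\bigr)$ (roughly, eigenvalues of $L$ with doubled multiplicity), so the bound you would obtain does not match the upper bound. In the paper the constraint is forced precisely by the coefficient $2(M-m)$ blowing up, implemented through the monotone convergence theorem.

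The second gap you flag yourself: the compactness of the eigenspinor family and the exclusion of simultaneous loss of mass (into the bulk, along the end of $\mathcal N$, and into the shrinking normal scale), together with uniformity of all constants on the region $\{m\le-\tau,\,M\ge\tau\}$. This is exactly the step your proposal leaves unproven, and it is where the paper takes a different, and more robust, route: Dirichlet--Neumann bracketing reduces everything to the generalized cylinder $\Pi_\delta$ (the exterior piece being bounded below by $\min(m^2,M^2)-C$ via Lemma~\ref{semi-boundr}); the bracketing of Proposition~\ref{quadbound} replaces the metric by the product metric up to $O(\delta)$ errors; separation of variables then expresses the lower-bound form as $\ell''_{-\delta}\otimes 1+1\otimes X$ plus the projection term, where $X$ is the one-dimensional two-mass operator with a $\delta$-interaction analysed in Lemma~\ref{lemma1}; finally Corollary~\ref{monotonecor} (monotone convergence) yields convergence of eigenvalues to those of $L_{-\delta}$ \emph{without any compactness argument for eigenfunctions}, and uniformity in $(m,M)$ is automatic because everything is expressed through the single parameter $\zeta=\min(-m,M)$. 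If you want to salvage your approach, you must (i) build the constrained lift via Lemma~\ref{Uequi} in the upper bound, (ii) extract from the $2(M-m)\int\vert\mathcal P_-\Psi\vert^2$ term the vanishing of $\mathcal P_-$ traces along your weakly convergent sequences, and (iii) prove strong $L^2(\partial\mathcal K)$ convergence of the renormalized normal averages (e.g.\ via uniform tangential $H^1$ bounds and Theorem~\ref{RellKond}); as written, the argument does not yet establish the lower bound.
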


Theorem~\ref{main3} is an improvement of \cite[Theorem 1.3]{MOP} since we drop the assumption $\frac m M \rightarrow 0$.

\begin{rem}
We can also look at the operator $A_m^2$ when $m \to + \infty$ and the operator $B_{m,M}^2$ when $m,M \to + \infty$ (or $m,M \to - \infty$). We can prove that in these two cases, the spectrum escapes to infinity (see Remarks~\ref{asym1} and~\ref{asym2} below).
\end{rem}

\subsection*{Organization of the paper} The proofs of the three theorems are really closed to the ones written in \cite{MOP} once we have stated the correct geometrical context. The global strategy is thus to compute sesquilinear forms for the operators $A_m^2$ and $B_{m,M}^2$ in order to find lower bound and upper bound for the limits of the eigenvalues by use of the Min-Max principle.

In section~\ref{Notations} we first recall some fundamental results in spectral theory on the correspondence between self-adjoint operator and sesquilinear forms on Hilbert space. The Min-Max principle, which is the key point of our proof, is stated, and we also give a quick review on the monotone convergence theorem in the case of sesquilinear forms. This last theorem is helpful to find the lower bounds for the limits of the eigenvalues, since it gives a description of the asymptotic domain of the operators. After these preliminaries on operators theory, we introduce the basic tools needed to understand the geometrical context. Indeed, the theory on restriction of spin structure to oriented hypersurfaces of spin manifold plays a significant role in the understanding of the generalized MIT Bag operator.

The section~\ref{SectionOperators} is devoted to the construction of the operators. We develop here the discussion about the two equivalent ways of defining $A_m$. We also define the operator $B_{m,M}$ and we show that it is self-adjoint as a direct consequence of the completeness of $\mathcal N$. The self-adjointness of $A_m$ is more difficult to prove, and we need to compute the sesquilinear form for $A_m^2$ in order to understand its graph norm and its domain. The computations for the forms of square operators is done in section~\ref{SectionForms} and the main tool used to this aim is the Schr\"odinger-Lichnerowicz formula, which gives the expression of the square of the Dirac operator on a spin manifold. Once we get the sesquilinear forms, the graph norm of $A_m$ is shown to be equivalent to the $H^1$ norm on its domain, and we can use the analysis done in \cite{GN} to conclude on self-adjointness.

An important idea to prove the main results is that we can restrict the analysis to a tubular neighbourhood of the boundary of $\mathcal K$. Thanks to this restriction of domain, we only have to understand the operators on a generalized cylinder $\partial \mathcal K \times (-\delta, \delta)$ with $\delta > 0$. However, there is an additional difficulty since we cannot compare the covariant derivatives on the different slices of the cylinder as it is done in \cite{MOP}. Thus, we prove some comparison lemmas in section~\ref{SectionTubular}, where we express the operators in tubular coordinates.

The aim of this restriction is to be able to separate the variables in the generalized cylinder previously introduced. Thus, some one-dimensional operators will appear later in the analysis, and we devote section~\ref{OneDim} to the spectral analysis of these operators, even if a large part of this work has already been done in \cite[Section 3]{MOP}.

In section~\ref{Aform} we prove Theorem~\ref{main1}. The geometrical context is well-defined, and it remains to follow the lines of \cite[Section 4]{MOP}. The proof is done by restricting the analysis to the tubular neighbourhood of $\partial \mathcal K$ intersected with the interior of $\mathcal K$ thanks to the Min-Max principle. Next, an upper bound can be find for the limit by choosing good test functions which are tensorial products between eigenspinors of a model operator on $\partial \mathcal K$ and the first eigenfunction of a one-dimensional operator. The proof of the lower bound relies on the monotone convergence theorem after operating a transformation on the operator in tubular coordinates.

The result stated in Theorem~\ref{main2} is proved in section~\ref{BforM}. We find an appropriate extension operator which sends eigenspinors of $A_m^2$ into $\dom(B_{m,M})$, and this gives the upper bound. The lower bound is once again a consequence of the monotone convergence theorem together with the Min-Max principle.

Finally, we investigate Theorem~\ref{main3} in section~\ref{BformM}. We use a combination of the two precedent proofs. After restricting the problem to the tubular neighbourhood of $\partial \mathcal K$, the upper bound is found in the same way as for Theorem~\ref{main1} by choosing good test functions in the Min-Max principle, and the lower bound is a consequence of the monotone convergence theorem.

\subsection*{Acknowledgements} The author thanks his advisors Andrei Moroianu and Konstantin Pankrashkin for the constant support during the work and their helpful remarks for the improvement of the paper.

\section{Notations and preliminaries.} \label{Notations}

\subsection{About spectral theory.} Let $\mathbf H$ be an infinite-dimensional Hilbert space endowed with the inner product $(\cdot, \cdot)_\mathbf H$. For a self-adjoint and lower semibounded operator $T$ on $\mathbf H$, we denote by $\dom \, T$ its domain, and for any $j \in \NN$,  $E_j(T)$ is the $j$th eigenvalue of $T$, counted with multiplicity in the non-decreasing order. We also note $\sigma(T)$, $\sigma_{ess} (T)$ and $\sigma_d (T)$ the spectrum, the essential spectrum and the discrete spectrum of $T$ respectively.

We denote the adjoint of an operator $T$ by $T^*$ and its closure by $\overline T$.

For a sesquilinear form $t$ in $\mathbf H$, we denote its domain by $\mathcal Q (t)$. There is a one-to-one correspondence between densely defined, closed, symmetric, lower semibounded forms and lower semibounded self-adjoint operators (see \cite[VI, Theorem 2.1]{Kat} for details). For a lower semi-bounded self-adjoint operator $T$, we will denote by $\mathcal Q(T)$ the domain of the associated form. If $T$ and $T'$ are two such operators, and $t,t'$ are the associated forms, we write $T \le T'$ if $\mathcal Q (T') \subset \mathcal Q (T)$ and $t(u, u) \le t'(u, u)$ for all $u \in \mathcal Q (T')$.

For $j \in \NN$, we define the $j$th Rayleigh quotient of the form $t$ by
\begin{equation}
\Lambda_j(t) := \inf_{\substack{V \subset \mathcal Q (t) \\ \dim V = j}} \sup_{\substack{u \in V \setminus \{ 0 \} }} \frac{t(u,u)}{\Vert u \Vert^2_\mathcal H}.
\end{equation}

We recall that if $t$ and $t'$ are two semibounded from below bilinear forms, we write $t \le t'$ if $\mathcal Q(t') \subset \mathcal Q(t)$ and $t(u,u) \le t'(u,u)$ for all $u \in \mathcal Q(t)$.

Let $t$ be a closed symmetric lower semibounded form, and $T$ its associated operator. The well-known Min-Max principle gives a link between the Rayleigh quotients of $t$ and the eigenvalues of $T$. More precisely, we have the following theorem:

\begin{theorem}[Min-Max principle]
Let $\Sigma := \inf \, \sigma_{ess} T$. We are in one of the following cases:
\begin{itemize}
\item[(a)] $\Lambda_j(t) < \Sigma$ for all $j$, $\underset{m \rightarrow + \infty}{\lim} \Lambda_m(t) = \Sigma$ and $E_j(T)=\Lambda_j(t)$ for all $j$.
\item[(b)] $\sigma_{ess} T < + \infty$ and there is $N < + \infty$ such that the interval $(- \infty , \Sigma)$ contains exactly $N$ eigenvalues of $T$ counted with multiplicity and for all $j \le N$, one has $\Lambda_j(t) = E_j(T)$ and $\Lambda_m(t)=\Sigma$ for all $m> N$.
\end{itemize}
\end{theorem}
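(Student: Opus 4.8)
The plan is to reduce everything to the spectral theorem for the self-adjoint operator $T$ attached to $t$. Since replacing $T$ by $T + c$ shifts $t$, every Rayleigh quotient $\Lambda_j(t)$, every eigenvalue $E_j(T)$ and $\Sigma = \inf \sigma_{ess} T$ by the same $c$, I would first assume $T \ge 1$; then the form--operator correspondence recalled above gives $\mathcal Q(t) = \dom(T^{1/2})$ and $t(u,u) = \Vert T^{1/2} u \Vert^2 \ge \Vert u \Vert^2$, so every Rayleigh quotient is finite and positive. Writing $T = \int s \, dE_s$ for the spectral resolution and $E(\omega)$ for the spectral projection of a Borel set $\omega$, the single structural fact I need is that $\sigma(T) \cap (-\infty, \Sigma)$ consists of isolated eigenvalues of finite multiplicity (this is exactly what $\Sigma = \inf \sigma_{ess} T$ means), so that $\lambda \mapsto \operatorname{rank} E((-\infty, \lambda))$ is finite and non-decreasing on $(-\infty,\Sigma)$, jumping precisely at the eigenvalues $\mu_1 \le \mu_2 \le \cdots$ of $T$ lying strictly below $\Sigma$, repeated with multiplicity. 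This list is infinite in case (a) and of finite length $N$ in case (b).

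For the upper bound, when $\mu_j$ exists I would take $V = \Span(\phi_1, \dots, \phi_j)$ for orthonormal eigenvectors $\phi_i$ of $\mu_i$; these lie in $\dom T \subset \mathcal Q(t)$, and for $u = \sum_i c_i \phi_i$ one gets $t(u,u) = \sum_i |c_i|^2 \mu_i \le \mu_j \Vert u \Vert^2$, hence $\Lambda_j(t) \le \mu_j$. If instead $T$ has only $N < j$ eigenvalues below $\Sigma$ (so necessarily $\Sigma < \infty$), then $\operatorname{Ran} E((-\infty, \Sigma + \eps))$ is infinite-dimensional for every $\eps > 0$ because $\Sigma \in \sigma_{ess} T$; I would pick a $j$-dimensional subspace $V$ inside it, on which $t(u,u) \le (\Sigma + \eps) \Vert u \Vert^2$, and let $\eps \downarrow 0$ to obtain $\Lambda_j(t) \le \Sigma$.

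For the matching lower bound, let $V \subset \mathcal Q(t)$ be any $j$-dimensional subspace and pick $\lambda < \Sigma$ with $\operatorname{rank} E((-\infty,\lambda)) < j$: one may take $\lambda = \mu_j$ when that eigenvalue exists (at most $j-1$ eigenvalues lie strictly below $\mu_j$), and any $\lambda < \Sigma$ otherwise. A dimension count forces $V \cap \operatorname{Ran} E([\lambda,\infty)) = V \cap \ker E((-\infty,\lambda)) \ne \{0\}$; for a nonzero $u$ there the measure $d(E_s u, u)$ is carried by $[\lambda,\infty)$, so $t(u,u) = \int_{[\lambda,\infty)} s \, d(E_s u, u) \ge \lambda \Vert u \Vert^2$ and hence $\sup_{u \in V \setminus \{0\}} t(u,u)/\Vert u \Vert^2 \ge \lambda$. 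Taking the infimum over $V$ gives $\Lambda_j(t) \ge \mu_j$ when $\mu_j$ exists and $\Lambda_j(t) \ge \Sigma$ otherwise. Combined with the upper bounds, this identifies $\Lambda_j(t)$ with $\mu_j = E_j(T)$ whenever $T$ has at least $j$ eigenvalues below $\Sigma$ — all $j$ in case (a), all $j \le N$ in case (b) — and with $\Sigma$ for the remaining indices; the limit $\Lambda_m(t) \to \Sigma$ in case (a) then follows since $(\mu_m)$ is non-decreasing, bounded above by $\Sigma$, and by discreteness of $\sigma(T) \cap (-\infty,\Sigma)$ cannot accumulate below $\Sigma$.

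The two variational estimates are routine; the main point requiring care is the bookkeeping of open versus closed spectral intervals at an eigenvalue — using $(-\infty,\mu_j)$ rather than $(-\infty,\mu_j]$ in the lower bound, and $\mu_j < \Sigma$ strictly — which is what makes the strict inequalities $\Lambda_j(t) < \Sigma$ in (a) and the equalities $\Lambda_m(t) = \Sigma$ for $m > N$ in (b) come out correctly, together with the standard identification $\mathcal Q(t) = \dom(T^{1/2})$ needed to evaluate $t(u,u)$ through the spectral measure.
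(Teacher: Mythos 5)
Your argument is correct, and it is the canonical proof of this statement. Note that the paper itself offers no proof: the Min-Max principle is quoted there as a well-known result (it is the standard statement found, e.g., in Kato or Reed--Simon), so there is no ``paper proof'' to compare against; your write-up simply supplies the standard spectral-calculus argument behind it. The key ingredients are all in place and used correctly: the second representation theorem giving $\mathcal Q(t)=\dom(T^{1/2})$ and $t(u,u)=\Vert T^{1/2}u\Vert^2$, the fact that $\sigma(T)\cap(-\infty,\Sigma)$ is purely discrete, the upper bound by testing on spans of eigenvectors (or on a $j$-dimensional subspace of $\operatorname{Ran}E\bigl((-\infty,\Sigma+\eps)\bigr)$, which is infinite-dimensional since $\Sigma\in\sigma_{ess}T$ when the essential spectrum is nonempty), and the lower bound via the dimension count $V\cap\operatorname{Ran}E([\lambda,\infty))\neq\{0\}$ with $\lambda=\mu_j$ or $\lambda\uparrow\Sigma$. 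Your handling of the dichotomy (infinitely many versus $N<\infty$ eigenvalues below $\Sigma$, the latter forcing $\Sigma<\infty$), of the strict inequalities in case (a), of $\Lambda_m(t)=\Sigma$ for $m>N$ in case (b), and of the limit $\Lambda_m(t)\to\Sigma$ via non-accumulation of eigenvalues below $\Sigma$ is all sound; no gaps.
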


The proofs of the spectral part of this text will use monotone convergence of operators. The result stated below is a reformulation of \cite[Theorem 4.2]{BHSW}.

\begin{theorem} \label{monotone}
Let $(T_n)_{n \in \NN}$ be a sequence of lower semibounded self-adjoint operators in closed subspaces $(\mathbf H_n)_{n \in \NN}$ of $\mathbf H$, and let $(t_n)_{n \in \NN}$ be the sequence of associated forms. Assume there exists $\gamma \in \RR$ such that $t_n \ge \gamma$ for all $n$ and suppose moreover that the sequence $(t_n)$ (or equivalently $(T_n)$) is non-decreasing.
Then, the form $t_\infty$ defined by
\begin{equation}
\mathcal Q (t_\infty) = \left\lbrace h \in \bigcap\limits_{n \in \NN} \mathcal Q (t_n), \, \lim_{n \rightarrow \infty} t_n(h,h) < \infty \right\rbrace
\end{equation}
and $t_\infty (h,h) = \lim_{n \rightarrow \infty} t_n(h,h)$ for all $h \in \mathcal Q (t_\infty)$ is closed, symmetric, and $t_\infty \ge \gamma$.

Moreover, if $\mathbf H_\infty := \overline{\mathcal Q(t_\infty)}$, one can define the self-adjoint operator $T_\infty$ on $\mathbf H_\infty$ associated to $t_\infty$, and the sequence $(T_n)$ strongly converges to $T_\infty$ in the generalized resolvent sense, i.e. for all $\lambda < \gamma$, one has
\begin{equation}
((T_n - \lambda)^{-1} \oplus 0_{\mathbf H_n^\perp}) h \underset{n \rightarrow \infty}{\longrightarrow} ((T_\infty - \lambda)^{-1} \oplus 0_{\mathbf H_\infty^\perp}) h, \; \forall h \in \mathbf H.
\end{equation} 
\end{theorem}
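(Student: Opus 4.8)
\emph{The plan.} I would prove the three assertions in turn, after a harmless normalization. Replacing each $t_n$ by $t_n - (\gamma - 1)(\cdot,\cdot)_{\mathbf{H}}$ on the same domain keeps the sequence non-decreasing, leaves the subspaces $\mathbf{H}_n$ unchanged, and merely shifts the operators and the resolvent parameter $\lambda$ by a constant; so I may assume $\gamma = 1$, i.e. every $t_n$ is coercive. Then each $\mathcal{Q}(t_n)$ is a Hilbert space for the inner product $t_n(\cdot,\cdot)$, the inclusion $\mathcal{Q}(t_n) \hookrightarrow \mathbf{H}$ is a contraction with image in $\mathbf{H}_n$, and monotonicity means $\mathcal{Q}(t_{n+1}) \subseteq \mathcal{Q}(t_n)$ with $t_n \le t_{n+1}$ there; in particular the $\mathbf{H}_n$ decrease and $\mathbf{H}_\infty \subseteq \bigcap_n \mathbf{H}_n$. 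That $\mathcal{Q}(t_\infty)$ is a vector space, that $t_\infty$ is well defined on it by polarization and is symmetric and $\ge 1$, is immediate from the fact that $t_n(\cdot,\cdot)^{1/2}$ is a seminorm (all four scalar limits in the polarization identity exist because the relevant $t_n(\cdot,\cdot)^{1/2}$-lengths are bounded in $n$). For closedness I would take an $\mathbf{H}$-convergent, $t_\infty$-Cauchy sequence $(h_j)$ in $\mathcal{Q}(t_\infty)$: for each fixed $n$ it is Cauchy in $\mathcal{Q}(t_n)$, so its limit there is the $\mathbf{H}$-limit $h$, giving $h \in \mathcal{Q}(t_n)$ and $t_n(h_j - h, h_j - h) \to 0$; letting first $k \to \infty$ and then $n \to \infty$ in $t_n(h_J - h_k, h_J - h_k) \le t_\infty(h_J - h_k, h_J - h_k)$ shows $h_J - h \in \mathcal{Q}(t_\infty)$ with small $t_\infty$-norm, hence $h \in \mathcal{Q}(t_\infty)$ and $t_\infty(h_j - h, h_j - h) \to 0$. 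The representation theorem for forms then produces the self-adjoint $T_\infty \ge 1$ on $\mathbf{H}_\infty = \overline{\mathcal{Q}(t_\infty)}$.

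\emph{Resolvent convergence.} Fix $\lambda < 1$ and set $R_n := (T_n - \lambda)^{-1} \oplus 0_{\mathbf{H}_n^\perp}$ on $\mathbf{H}$. The Dirichlet-type variational formula
\[
(f, R_n f)_{\mathbf{H}} = \max_{v \in \mathcal{Q}(t_n)} \bigl( 2\,\mathrm{Re}\,(f,v)_{\mathbf{H}} - t_n(v,v) + \lambda \Vert v \Vert_{\mathbf{H}}^2 \bigr),
\]
with maximizer $u_n := R_n f$, is legitimate since $t_n - \lambda \ge 1 - \lambda > 0$; combined with the nesting $\mathcal{Q}(t_{n+1}) \subseteq \mathcal{Q}(t_n)$ and $t_n \le t_{n+1}$ it yields $0 \le R_{n+1} \le R_n$ as bounded self-adjoint operators, so $R_n$ converges strongly to some bounded self-adjoint $R \ge 0$ by Vigier's monotone convergence theorem. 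To identify $R$: testing the equation for $u_n$ against $u_n$ bounds $\Vert u_n \Vert$ and $t_n(u_n,u_n)$ uniformly, hence for each fixed $m$ the tail $(u_n)_{n \ge m}$ is bounded in $\mathcal{Q}(t_m)$ (using $t_m(u_n,u_n) \le t_n(u_n,u_n)$), so the limit $u = Rf$ lies in every $\mathcal{Q}(t_m)$ with $\sup_m t_m(u,u) < \infty$ by weak lower semicontinuity, i.e. $u \in \mathcal{Q}(t_\infty)$ and $t_\infty(u,u) - \lambda\Vert u\Vert^2 \le (f,u)$; applying the same to $g \perp \mathbf{H}_\infty$ shows $Rg \in \mathbf{H}_\infty$, whence $(Rg,g)=0$ and $Rg = 0$, so $R = RP_\infty$. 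Finally, comparing the maximum above with the same functional restricted to $\mathcal{Q}(t_\infty)$ and with $t_n$ replaced by $t_\infty$ gives $(f,R_nf) \ge (f,(T_\infty-\lambda)^{-1}P_\infty f)$ for all $n$, while the Dirichlet principle for $t_\infty$ applied to the admissible vector $u = Rf$ gives the reverse inequality; polarization then identifies $R$ with $(T_\infty - \lambda)^{-1} \oplus 0_{\mathbf{H}_\infty^\perp}$, which is exactly the asserted convergence.

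\emph{Main difficulty.} The delicate point is precisely this identification of the strong limit $R$ with the resolvent of the form limit: one must transport the energy bound through the nested form domains to land $Rf$ in $\mathcal{Q}(t_\infty)$, and then squeeze $(f,Rf)$ between the two variational extrema; by contrast the normalization, the symmetry of $t_\infty$, and its closedness are routine bookkeeping with the form norms. Alternatively, and more economically, one may simply invoke \cite[Theorem 4.2]{BHSW}: its hypotheses — a non-decreasing, uniformly lower-semibounded sequence of closed symmetric forms — are literally ours, and the only reformulation is to record the ambient subspaces $\mathbf{H}_n$ via the orthogonal direct sums with $0$.
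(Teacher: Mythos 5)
Your argument is correct, but it follows a genuinely different route from the paper: the paper gives no proof of this theorem at all, presenting it only as a reformulation of \cite[Theorem 4.2]{BHSW} (with the ambient subspaces $\mathbf H_n$ recorded through the direct sums with $0$), which is exactly the "more economical" option you mention at the end. Your self-contained proof is the classical monotone-convergence argument for non-decreasing forms (in the spirit of Kato and Simon): the normalization $\gamma=1$, the completeness argument showing $t_\infty$ is closed, the Dirichlet variational formula $(f,R_nf)_{\mathbf H}=\max_{v\in\mathcal Q(t_n)}\bigl(2\,\mathrm{Re}\,(f,v)_{\mathbf H}-t_n(v,v)+\lambda\Vert v\Vert^2_{\mathbf H}\bigr)$ giving $0\le R_{n+1}\le R_n$, Vigier's theorem for the strong limit $R$, and the identification of $R$ with $(T_\infty-\lambda)^{-1}\oplus 0_{\mathbf H_\infty^\perp}$ via the uniform energy bound on $u_n=R_nf$, weak lower semicontinuity of each $t_m$ along the $\mathbf H$-convergent sequence, and the squeeze between the two variational extrema — all of these steps check out, including the correct handling of the projections $P_n$ hidden in $(f,v)_{\mathbf H}=(P_nf,v)_{\mathbf H}$ and the observation that $Rg=0$ for $g\perp\mathbf H_\infty$ since $R\ge0$ and $Rf\in\mathcal Q(t_\infty)$ for every $f$. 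What each approach buys: the citation keeps the paper short and defers the (nontrivial, subspace-valued) bookkeeping to \cite{BHSW}, whereas your direct proof makes the statement self-contained and makes transparent precisely where the decreasing subspaces $\mathbf H_n$ and the convention $R_n=(T_n-\lambda)^{-1}\oplus 0_{\mathbf H_n^\perp}$ enter the limit.
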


Since we are interested in the behaviour of the spectrum, we state that in the framework of Theorem~\ref{monotone}, one has actually the convergence of the eigenvalues of $T_n$ to the corresponding eigenvalues of $T_\infty$. To show this, we first recall \cite[Theorem 2.1]{Wei2}:

\begin{theorem} \label{Weidmann2.1}
Let $(T_n)$ be a sequence of self-adjoint operators which are bounded below with $T_n \le T_{n+1}$, strongly converging  to $T$ in the generalized resolvent sense. Assume that the essential spectrum of $T_n$  is contained in $[0, + \infty)$ for all $n \in \NN$. Suppose that $T$ has $j_0$ eigenvalues below zero ($j_0$ might be infinity). It follows that
\begin{align*}
E_j(T_n) \underset{n \rightarrow + \infty}{\longrightarrow} E_j(T) \textrm{ for all $j \le j_0$} \\
\lim_{n \rightarrow + \infty} E_j(T_n)  \ge \eta \textrm{ for all $j > j_0$}.
\end{align*}
Moreover,
\begin{equation*}
\Vert \mathbf 1_{(- \infty, \lambda)} (T_n) - \mathbf 1_{(- \infty, \lambda)} (T) \Vert  \underset{n \rightarrow + \infty}{\longrightarrow} 0 \textrm{ for all $\lambda < 0$}.
\end{equation*}
\end{theorem}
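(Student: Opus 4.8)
The statement is \cite[Theorem~2.1]{Wei2}, and its proof is essentially a combination of the Min-Max principle with the strong resolvent convergence; here is the shape of the argument. The plan is to translate everything into the language of quadratic forms and Rayleigh quotients, where the monotonicity is transparent, and to invoke the resolvent convergence only where it is genuinely needed.

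I would first pass to the associated closed forms $t_n$ of $T_n$ and $t$ of $T$. Since $T_n\le T_{n+1}$ we have $\mathcal Q(t_{n+1})\subseteq\mathcal Q(t_n)$ and $t_n\le t_{n+1}$ on the smaller domain, so Theorem~\ref{monotone} produces a limiting form $t_\infty$ with operator $T_\infty$, to which $(T_n)$ converges in the generalized resolvent sense; by uniqueness of that limit $T_\infty=T$ and $t_\infty=t$, and in particular $\mathcal Q(t)\subseteq\mathcal Q(t_n)$ with $t_n\le t_m\le t$ on the relevant domains whenever $n\le m$. It follows that $\Lambda_j(t_n)\le\Lambda_j(t_{n+1})\le\Lambda_j(t)$ for every $j$, so that $\ell_j:=\lim_n\Lambda_j(t_n)$ exists and $\ell_j\le\Lambda_j(t)$. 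One also checks that $\sigma_{ess}(T)\subseteq[0,\infty)$: otherwise $\Lambda_j(t)<0$, hence $\Lambda_j(t_n)<0$, for all large $j$, and the Min-Max principle would force infinitely many eigenvalues of $T_n$ below $0$, so that $\sigma_{ess}(T_n)$ would meet $(-\infty,0)$. Consequently the Min-Max principle identifies $\Lambda_j(t)=E_j(T)$ for $j\le j_0$, and $\Lambda_j(t_n)=E_j(T_n)$ whenever $\Lambda_j(t_n)<0$; this already gives the upper estimate $\limsup_n E_j(T_n)=\ell_j\le E_j(T)$ for $j\le j_0$.

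The remaining, and main, difficulty is the matching lower bound $\liminf_n E_j(T_n)\ge E_j(T)$ together with the operator-norm convergence of the spectral projections, and this is the step I expect to be the main obstacle: the easy estimates above use only the order relation, whereas here both hypotheses $T_n\le T_{n+1}$ and $\sigma_{ess}(T_n)\subseteq[0,\infty)$ are essential. Fix $\lambda<0$ with $\lambda\notin\sigma(T)$, which is possible since $\sigma(T)$ is discrete below $0$. Strong resolvent convergence yields the strong convergence $P_n:=\mathbf 1_{(-\infty,\lambda)}(T_n)\to\mathbf 1_{(-\infty,\lambda)}(T)=:P$; each $P_n$ has finite rank $N_n$ because $\lambda<0\le\inf\sigma_{ess}(T_n)$, and $T_n\le T_{n+1}$ forces $E_i(T_n)\le E_i(T_{n+1})$, so $(N_n)$ is non-increasing and eventually equal to some $N_\infty$. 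One then upgrades the strong convergence to $\|P_n-P\|\to0$ and to $\mathrm{rank}\,P=N_\infty$: the ranges $\mathrm{Ran}\,P_n$ have a fixed finite dimension for $n$ large, so convergent subsequences can be extracted from their orthonormal bases, and the monotonicity rules out the escape of low-lying spectral mass that would otherwise allow the rank to drop in a merely strong limit --- this is the content of the last displayed convergence in the statement. Letting $\lambda$ decrease toward $E_j(T)$, for each such $\lambda$ the operator $T$ has at most $j-1$ eigenvalues below $\lambda$, hence by the rank equality so does $T_n$ for $n$ large, i.e. $E_j(T_n)\ge\lambda$ for $n$ large; this gives $\liminf_n E_j(T_n)\ge\lambda$ and therefore $\ge E_j(T)$, which combined with the upper estimate yields $E_j(T_n)\to E_j(T)$ for $j\le j_0$. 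The analogous counting, now with $\lambda$ below the threshold $\eta$, shows that for $j>j_0$ at most $j_0$ eigenvalues of $T_n$ can remain below $\eta$ as $n\to\infty$, whence $\lim_n E_j(T_n)\ge\eta$.
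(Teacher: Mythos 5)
The paper does not prove this statement at all: it is recalled verbatim from the literature as \cite[Theorem 2.1]{Wei2} and then used as a black box to derive Corollary~\ref{monotonecor}, so the only justification expected at this point is the citation (note in passing that the undefined $\eta$ in the transcription is just $0$ in Weidmann's formulation). Judged on its own terms, the first half of your sketch is sound routine work: passing to the forms, identifying $T$ with the monotone form limit furnished by Theorem~\ref{monotone} through uniqueness of the strong generalized resolvent limit, and deducing $\Lambda_j(t_n)\le\Lambda_j(t_{n+1})\le\Lambda_j(t)$, hence $\limsup_n E_j(T_n)\le E_j(T)$ for $j\le j_0$. (Your argument that $\sigma_{ess}(T)\subset[0,\infty)$ should use that the Rayleigh quotients would be bounded away from $0$ by $\inf\sigma_{ess}(T)<0$, not merely negative: infinitely many eigenvalues of $T_n$ accumulating at $0$ would not contradict $\sigma_{ess}(T_n)\subset[0,\infty)$. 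This is easily repaired.)

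The genuine gap sits exactly where you yourself locate the main difficulty, and your sketch does not close it. From the strong convergence $P_n=\mathbf 1_{(-\infty,\lambda)}(T_n)\to P$ and the eventual constancy $\mathrm{rank}\,P_n=N_\infty$ one only gets $\mathrm{rank}\,P\le N_\infty$; what you need is $\mathrm{rank}\,P\ge N_\infty$, i.e.\ that no eigenvalue branch of $T_n$ below $\lambda$ escapes in the limit, together with the norm convergence $\Vert P_n-P\Vert\to0$. Your justification --- that convergent subsequences can be extracted from orthonormal bases of $\mathrm{Ran}\,P_n$ --- fails in an infinite-dimensional space: orthonormal families admit only weakly convergent subsequences, and the weak limits may vanish, which is precisely the escape (spectral pollution) phenomenon to be excluded. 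The sentence ``monotonicity rules out the escape of low-lying spectral mass'' is a restatement of the theorem, not an argument: the eigenvectors of $T_n$ need not belong to $\mathcal Q(t)$ (the inclusions go the wrong way, $\mathcal Q(t)\subset\mathcal Q(t_n)$), so they cannot be used as test functions for $t$, and strong resolvent convergence applied to a moving sequence of eigenvectors yields nothing. Closing this step is the actual content of Weidmann's monotone continuity of the spectral resolution; as written, your proposal reduces the theorem to its hardest assertion and then asserts it. Either reproduce that argument from \cite{Wei2}, or do as the paper does and simply cite it.
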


From Theorem~\ref{monotone} and Theorem~\ref{Weidmann2.1} we deduce the following corollary:

\begin{cor} \label{monotonecor}
Let $(T_n)_{n \in \NN}$ and $T_\infty$ be like in Theorem~\ref{monotone}. Assume moreover that $\sigma_{ess} (T_{n_0}) \subset [\eta, + \infty)$ for some $n_0 \in \NN$ and that $T_\infty$ has $j_0$ eigenvalues below $\eta$ ($j_0$ might be infinity). Thus, one has
\begin{equation}
E_j(T_n) \underset{n \rightarrow + \infty}{\longrightarrow} E_j(T) \textrm{ for all $j \le j_0$}
\end{equation}
and
\begin{equation}
\Vert \mathbf 1_{(-\infty, \lambda)} (T_n) - \mathbf 1_{(-\infty, \lambda)} (T_\infty) \Vert \underset{n \rightarrow + \infty}{\longrightarrow} 0, \quad \forall \lambda < \eta.
\end{equation}
\end{cor}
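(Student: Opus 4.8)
The plan is to deduce Corollary~\ref{monotonecor} from Theorem~\ref{Weidmann2.1} by a translation that moves the threshold $\eta$ to $0$. Since all the convergence statements concern only the regime $n \to \infty$, I would first discard the initial terms and assume without loss of generality that $n_0 = 0$, i.e. that $\sigma_{ess}(T_0) \subset [\eta, +\infty)$; this does not change the monotone limit, because the form domain $\bigcap_n \mathcal Q(t_n)$ and the limits $\lim_n t_n(h,h)$ are unaffected by finitely many terms, so Theorem~\ref{monotone} still produces the same $t_\infty$ and $\mathbf H_\infty$. Then I would set $\Tilde T_n := T_n - \eta$ on $\mathbf H_n$ for $n \in \NN \cup \{\infty\}$, with associated forms $\Tilde t_n := t_n - \eta \Vert \cdot \Vert^2$ on $\mathcal Q(\Tilde t_n) = \mathcal Q(t_n)$. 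The sequence $(\Tilde t_n)$ is again non-decreasing and bounded below by $\gamma - \eta$, so Theorem~\ref{monotone} applies to it; and since $\lim_n \Tilde t_n(h,h) < \infty$ if and only if $\lim_n t_n(h,h) < \infty$, the limiting form produced by Theorem~\ref{monotone} for $(\Tilde t_n)$ is precisely $\Tilde t_\infty = t_\infty - \eta \Vert \cdot \Vert^2$, so its associated operator is $\Tilde T_\infty = T_\infty - \eta$ on the same space $\mathbf H_\infty$, and $(\Tilde T_n)$ converges to $\Tilde T_\infty$ strongly in the generalized resolvent sense.

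The remaining hypothesis to check before invoking Theorem~\ref{Weidmann2.1} is that $\sigma_{ess}(\Tilde T_n) \subset [0, +\infty)$ for \emph{every} $n$, i.e. $\inf \sigma_{ess}(T_n) \ge \eta$ for every $n$. Only the case $n = 0$ is assumed, so I would use the monotonicity of the bottom of the essential spectrum under the ordering $T_n \le T_{n+1}$: from $\mathcal Q(t_{n+1}) \subset \mathcal Q(t_n)$ and $t_n(u,u) \le t_{n+1}(u,u)$ on $\mathcal Q(t_{n+1})$, every $j$-dimensional subspace of $\mathcal Q(t_{n+1})$ is admissible in the Rayleigh quotient for $t_n$ and carries there a value not exceeding the one for $t_{n+1}$, whence $\Lambda_j(t_n) \le \Lambda_j(t_{n+1})$ for all $j$. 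By the Min-Max principle $\inf \sigma_{ess}(T_n) = \lim_{j \to \infty} \Lambda_j(t_n)$, so this quantity is non-decreasing in $n$ and therefore $\inf \sigma_{ess}(T_n) \ge \inf \sigma_{ess}(T_0) \ge \eta$ for all $n$.

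With these points settled, Theorem~\ref{Weidmann2.1} applies to $(\Tilde T_n)$ and $\Tilde T_\infty$. The operator $T_\infty$ has $j_0$ eigenvalues below $\eta$ exactly when $\Tilde T_\infty$ has $j_0$ eigenvalues below $0$, so we obtain $E_j(\Tilde T_n) \to E_j(\Tilde T_\infty)$ for all $j \le j_0$ and $\Vert \mathbf 1_{(-\infty, \mu)}(\Tilde T_n) - \mathbf 1_{(-\infty, \mu)}(\Tilde T_\infty) \Vert \to 0$ for all $\mu < 0$. Translating back via $E_j(\Tilde T_n) = E_j(T_n) - \eta$, $E_j(\Tilde T_\infty) = E_j(T_\infty) - \eta$, and $\mathbf 1_{(-\infty, \lambda)}(S) = \mathbf 1_{(-\infty, \lambda - \eta)}(S - \eta)$ applied with $S = T_n$ and $S = T_\infty$ and $\mu = \lambda - \eta$, one recovers the two claimed statements for all $\lambda < \eta$; reinstating the finitely many discarded indices changes nothing.

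I do not expect a genuine obstacle here: the corollary is essentially a repackaging of Theorem~\ref{Weidmann2.1}. The only two points that need a little attention are the verification that $\sigma_{ess}(T_n) \subset [\eta, +\infty)$ holds for all $n$ and not merely for $n = n_0$, which rests on the monotonicity $n \mapsto \Lambda_j(t_n)$ coming from the Min-Max principle, and the check that shifting the forms by the constant $\eta$ commutes with the limiting construction of Theorem~\ref{monotone}, so that the limit of $(\Tilde T_n)$ is exactly $T_\infty - \eta$ rather than some other operator.
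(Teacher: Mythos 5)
Your argument is correct in substance, but it is not the route the paper takes, and the difference is worth noting. You shift by $\eta$, propagate the essential-spectrum hypothesis from $n_0$ to all $n$ via the monotonicity of the Rayleigh quotients $\Lambda_j(t_n)$, check that the shift commutes with the construction of Theorem~\ref{monotone}, and then apply Theorem~\ref{Weidmann2.1} \emph{directly} to the unbounded operators $T_n-\eta$ acting in the varying subspaces $\mathbf H_n$, invoking it with ``generalized resolvent'' convergence. The paper instead conjugates through the strictly increasing function $f(\lambda)=\tfrac{1}{\eta-\gamma}-\tfrac{1}{\lambda-\gamma}$, i.e. it forms the bounded operators $B_n=\tfrac{1}{\eta-\gamma}-((T_n-\gamma)^{-1}\oplus 0_{\mathbf H_n^\perp})$ on the fixed space $\mathbf H$: the ordering $B_n\le B_{n+1}\le B_\infty$ is then supplied by \cite[Proposition 2.2]{BHSW}, the strong convergence $B_n\to B_\infty$ is literally the conclusion of Theorem~\ref{monotone}, and the essential-spectrum condition becomes $\sigma_{ess}(B_n)\subset[0,\tfrac{1}{\eta-\gamma}]$, after which Weidmann's theorem is applied in its classical setting (a single Hilbert space, honest strong resolvent convergence) and the conclusions are pulled back through $f$ using $\mathbf 1_{(-\infty,f(\lambda))}(B_n)=\mathbf 1_{(-\infty,\lambda)}(T_n)$. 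What the paper's detour buys is precisely that one never needs Theorem~\ref{Weidmann2.1} for semibounded unbounded operators living in different subspaces with only generalized resolvent convergence: your proof leans on the paper's (re)statement of Weidmann's result in that generality, which is exactly the point the $B_n$-construction is designed to avoid, since \cite[Theorem 2.1]{Wei2} is formulated for a fixed space. What your route buys is transparency: the $\eta$-shift is trivial to track, and your explicit min-max argument that $\inf\sigma_{ess}(T_n)\ge\inf\sigma_{ess}(T_{n_0})\ge\eta$ for all $n\ge n_0$ makes precise a step the paper leaves implicit (it asserts $\sigma_{ess}(B_n)\subset[0,\tfrac{1}{\eta-\gamma}]$ for all $n\ge n_0$ while only $n_0$ is covered by hypothesis, the justification being the same monotonicity you spell out, transported to the $B_n$). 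So if one accepts Theorem~\ref{Weidmann2.1} exactly as stated in the paper, your proof is complete; if one only grants Weidmann's original fixed-space theorem, the paper's resolvent transform (or an equivalent reduction) is still needed.
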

\begin{proof}
We consider for $n \ge n_0$ large enough the bounded self-adjoint operators in $\mathbf H$
\begin{align*}
B_n &:= \frac{1}{\eta - \gamma} - ((T_n - \gamma)^{-1} \oplus 0_{\mathbf H_n^\perp}) \\
B_\infty &:= \frac{1}{\eta - \gamma} - ((T_\infty - \gamma)^{-1} \oplus 0_{\mathbf H_\infty^\perp}).
\end{align*}
From \cite[Proposition 2.2]{BHSW}, it comes that for all $n \ge n_0$, one has $B_n \le B_{n+1} \le B_\infty$. In addition, $\sigma_{ess} (B_n) \subset [0, \frac{1}{\eta - \gamma}]$, $\sigma_{ess} (B_\infty) \subset [0, \frac{1}{\eta - \gamma}]$, and $(B_n)$ converges strongly to $B_\infty$. Thus, Theorem~\ref{Weidmann2.1} gives that for all $j \in \NN$ such that $E_j(B_\infty) < 0$ one has
\begin{equation}
E_j(B_n) \underset{n \rightarrow + \infty}{\longrightarrow} E_j(B_\infty)
\end{equation}
and that for all $t < 0$, there holds
\begin{equation}
\Vert \mathbf 1_{(-\infty, t)} (B_n) - \mathbf 1_{(-\infty, t)} (B_\infty) \Vert \underset{n \rightarrow \infty}{\longrightarrow} 0.
\end{equation}
For $\lambda > \gamma$, we define the strictly increasing function $f(\lambda) := \frac{1}{\eta - \gamma} - \frac{1}{\lambda - \gamma}$. One has $B_n  = f(T_n)$ and $B_\infty = f(T_\infty)$ and we deduce that for all $j \le j_0$
\begin{equation*}
E_j(T_n) \underset{n \rightarrow + \infty}{\longrightarrow} E_j(T) \textrm{ for all $j \le j_0$}
\end{equation*}
and from
\[
\mathbf 1_{(-\infty, f(\lambda))} (B_n) = \mathbf 1_{(-\infty, \lambda)} (T_n), \quad \mathbf 1_{(-\infty, f(\lambda))} (B_\infty) = \mathbf 1_{(-\infty, \lambda)} (T_\infty),
\]
we deduce that for all $\lambda < \eta$
\begin{equation*}
\Vert \mathbf 1_{(-\infty, \lambda)} (T_n) - \mathbf 1_{(-\infty, \lambda)} (T_\infty) \Vert \underset{n \rightarrow \infty}{\longrightarrow} 0. \qedhere
\end{equation*}
\end{proof}

\subsection{Clifford algebra} \label{SectionClifford} We recall here the basic facts about Clifford algebra, and we refer to \cite{BH3M} for the details.

For any $d \in \NN$, the real Clifford algebra $\mathrm{Cl}_d$ is the quotient of the tensorial algebra over $\RR^d$ by the two-sided ideal generated by the elements $x \otimes x + \Vert x \Vert^2 1$. The complex Clifford algebra is defined by $\CCl_d := \mathrm{Cl}_d \otimes_\RR \CC$. The spin group is the subgroup of $\mathrm{Cl}_d$ given by
\[
\Spin_d := \lbrace x_1 \cdot \ldots \cdot x_{2k} \in \mathrm{Cl}_d, \, k \in \NN \textrm{ and } x_j \in \RR^d, \, \Vert x_j \Vert = 1 \textrm{ for all $1 \le j \le 2 k$} \rbrace.
\]

We define the complex volume form as the element of $\CCl_d$
\begin{equation}
\omega^\CC_d := i^{\lfloor \frac{d+1}{2} \rfloor} e_1 \cdot \ldots \cdot e_d
\end{equation}
where $(e_1, \ldots, e_d)$ is any positively-oriented orthonormal frame of $\RR^d$, canonically identified with a basis of $\CC^d$.

If $d$ is even, $\CCl_d$ admits an unique irreducible complex representation $(\rho_d, \Sigma_d)$ where  $\Sigma_d$ is a complex vector space of dimension $2^\frac{d}{2}$. When restricted to the $\Spin$ group, this Clifford module decomposes into $\Sigma_d = \Sigma_d^+ \oplus \Sigma_d^-$ and the representation splits in two irreducible inequivalent representations $(\rho_d^\pm, \Sigma^\pm_d)$. These submodules are characterized by the action of the complex volume form, namely $\omega_d^\CC$ acts as $\pm \textrm{Id}$ on $\Sigma_d^\pm$.

When $d$ is odd, $\CCl_d$ admits two irreducible inequivalent representations over complex vector spaces of dimension $2^\frac{d-1}{2}$. They are characterized by the action of the complex volume form which acts as $\pm \textrm{Id}$. We denote by $(\rho_d, \Sigma_d)$ the representation on which $\omega_d^\CC$ acts as the identity.

\subsection{Notations for manifolds and bundles} \label{notman} In all this text, the manifolds will be considered smooth and paracompact.

Let $\mathcal (\mathcal M, g)$ be a Riemannian manifold of dimension $d+1$, with boundary $\partial \mathcal M$ (possibly empty). If $\mathcal M$ is oriented, we denote by $v_\mathcal M$ the volume form on $\mathcal M$ compatible with the metric. In all this article, the integrations will be done with respect to the measure associated to the elementary Riemannian volume, which coincides with the integration with respect to the volume form $v_\mathcal M$ in the oriented case.

We denote by $\nabla^\mathcal M$ the Levi-Civita connection of $(\mathcal M, g)$ and by $\R^\mathcal M$, $\Ric^\mathcal M$, $\Scal^\mathcal M$ the Riemann curvature tensor, the Ricci tensor, and  the scalar curvature of $\mathcal M$ respectively.

If $E$ is a vector bundle over $\mathcal M$, we denote respectively by $\Gamma(E)$, $\Gamma_c(E)$ and $\Gamma_{cc}(E)$ the smooth sections of $E$, the smooth sections of $E$ with compact support in $\mathcal M$, and the smooth sections of $E$ with compact support in $\mathcal M \setminus \partial \mathcal M$. If moreover $E$ is a Hermitian bundle, we note $L^2(E)$ the space of square integrable sections of $E$. If it is necessary, we will write $L^2(E,v_\mathcal M)$ to specify the measure used for the integration.

We now assume that $\mathcal M$ is oriented. The manifold $\mathcal M$ admit a spin structure if there exists a map $\chi$ and a principal bundle $P_{\Spin_{d+1}} \mathcal M$ of fibre $\Spin$ over $\mathcal M$ such that we have the commutative diagram:
\begin{equation}
\begin{tikzcd}
\Spin_{d+1} \arrow[r, "s \mapsto u s"] \arrow[dd] & P_{\Spin_{d+1}} \mathcal M \arrow[dd, "\chi"] \arrow[rd] \\
& & \mathcal M \\
\mathrm{SO}_{d+1} \arrow[r, "g \mapsto \chi (u) g"] & P_{\mathrm{SO}_{d+1}} \mathcal M \arrow[ru]
\end{tikzcd}
\end{equation}

Then, in the case where $\mathcal M$ is spin, we define the associated complex spinor bundle by $\Sigma \mathcal M := P_{\Spin_{d+1}} \mathcal M \times_{\rho_{d+1}} \Sigma_{d+1}$ where we recall that $(\rho_{d+1}, \Sigma_{d+1})$ is an irreducible representation of the Clifford algebra $\CCl_{d+1}$ as defined in section~\ref{SectionClifford}. 

There is a natural action of the Clifford bundle $\CC \mathcal M := P_{\mathrm{SO}_{d+1}} \times_r \CCl_{d+1}$ (where $r$ is the action of $\mathrm{SO}_{d+1}$ on $\RR^d$ extended to a representation on $\CCl_d$) defined by:
\begin{equation}
[\chi (u), v] ([u, \psi]) := [u, \rho_{d+1} (v) \psi]
\end{equation}
for all $u \in P_{\Spin_{d+1}} \mathcal M$, $v \in \CC l_{d+1}$ and $\psi \in \Sigma_{d+1}$. This action is called the Clifford product and will be denoted by "$\cdot$".

One has a canonical Hermitian product $\langle \cdot,\cdot \rangle$ on $\Sigma \mathcal M$ for which the Clifford product by a unit vector is unitary. Moreover, one obtains a metric connection on $\Sigma \mathcal M$ by lifting the Levi-Civita connection on the orthonormal frame bundle of $\mathcal M$ through the map $\chi$. The covariant derivative obtained this way will still be denoted by $\nabla^\mathcal M$.

We define the intrinsic Dirac operator $\slashed D^\mathcal M$ on $\mathcal M$, by its pointwise expression
\begin{equation}
\slashed D^{\mathcal M} \Psi = \sum\limits_{k = 1}^{d+1} e_k \cdot \nabla^{\mathcal M}_{e_k} \Psi, \; \dom(\slashed D^{\mathcal M}) = \Gamma_c(\Sigma \mathcal M),
\end{equation}
where $(e_1, \ldots,e_{d+1})$ is an orthonormal frame. This definition does not depend on the choice of the frame.

Finally, we remind the Schrödinger-Lichnerowicz formula, which will be a fundamental tool to compute sesquilinear forms of operators. A proof can be found in \cite[Theorem 1.3.8]{Gin}.

\begin{theorem} [Schrödinger-Lichnerowicz formula] \label{S-Lformula}
The Dirac operator $\slashed D^\mathcal M$ satisfies the formula
\begin{equation}
(\slashed D^\mathcal M)^2 = \left(\nabla^\mathcal M \right)^* \nabla^\mathcal M + \frac{\Scal^\mathcal M}{4},
\end{equation}
where $(\nabla^\mathcal M)^*: \Gamma(T^* \mathcal M \otimes \Sigma \mathcal M) \rightarrow \Gamma(\Sigma \mathcal M)$ is the formal adjoint of $\nabla^\mathcal M$.
\end{theorem}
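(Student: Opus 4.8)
The plan is to carry out the classical local (Bochner/Weitzenböck) computation at an arbitrary point $p \in \mathcal M$, working in a synchronous orthonormal frame $(e_1, \dots, e_{d+1})$, i.e.\ one with $\nabla^\mathcal M_{e_i} e_j|_p = 0$. With such a frame all first-order terms coming from differentiating the frame vanish at $p$, so that there $(\slashed D^\mathcal M)^2 \Psi = \sum_{j,k} e_j \cdot e_k \cdot \nabla^\mathcal M_{e_j} \nabla^\mathcal M_{e_k} \Psi$. First I would split this double sum into its diagonal part $j = k$ and its off-diagonal part $j \ne k$. The diagonal part equals $\sum_j e_j \cdot e_j \cdot \nabla^\mathcal M_{e_j} \nabla^\mathcal M_{e_j} \Psi = - \sum_j \nabla^\mathcal M_{e_j} \nabla^\mathcal M_{e_j} \Psi$ by the Clifford relation $e_j \cdot e_j = -1$, and this is precisely $(\nabla^\mathcal M)^* \nabla^\mathcal M \Psi$ evaluated at $p$ in the synchronous frame (the connection Laplacian).

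For the off-diagonal part, since $e_j \cdot e_k = - e_k \cdot e_j$ for $j \ne k$, I would antisymmetrize the second covariant derivatives, which at $p$ produces the spinorial curvature:
\[
\sum_{j \ne k} e_j \cdot e_k \cdot \nabla^\mathcal M_{e_j} \nabla^\mathcal M_{e_k} \Psi = \tfrac12 \sum_{j \ne k} e_j \cdot e_k \cdot \R^{\Sigma \mathcal M}(e_j, e_k) \Psi,
\]
the bracket term $\nabla^\mathcal M_{[e_j,e_k]}$ vanishing at $p$. Then I would substitute the standard expression of the spinorial curvature in terms of the Riemann tensor, $\R^{\Sigma \mathcal M}(e_j, e_k)\Psi = \tfrac14 \sum_{l,m} R_{jklm}\, e_l \cdot e_m \cdot \Psi$ with $R_{jklm} := \langle \R^\mathcal M(e_j, e_k) e_l, e_m \rangle$, reducing the off-diagonal part to the quadruple Clifford sum $\tfrac18 \sum_{j,k,l,m} R_{jklm}\, e_j \cdot e_k \cdot e_l \cdot e_m \cdot \Psi$.

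The heart of the matter — and the step I expect to be the main obstacle — is the algebraic simplification of this curvature sum. I would organize it according to coincidences among the indices $j, k, l$ (note $j \ne k$ and $l \ne m$ are automatic from the antisymmetries of $\R^\mathcal M$). When $j, k, l$ are pairwise distinct the product $e_j \cdot e_k \cdot e_l$ is totally antisymmetric in those indices, while the first Bianchi identity $R_{jklm} + R_{kljm} + R_{ljkm} = 0$ forces the symmetrized coefficient to vanish, so this portion contributes nothing. The remaining terms are those with $l \in \{ j, k \}$; using the Clifford identities $e_j \cdot e_k \cdot e_j = e_k$ and $e_j \cdot e_k \cdot e_k = - e_j$ for $j \ne k$, together with the pair symmetries of $\R^\mathcal M$ and the symmetry of $\Ric^\mathcal M$, a careful accounting shows $\sum_{j,k,l,m} R_{jklm}\, e_j \cdot e_k \cdot e_l \cdot e_m = 2\, \Scal^\mathcal M$. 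Dividing by $8$, the off-diagonal part equals $\tfrac14 \Scal^\mathcal M \Psi$, and adding the diagonal part $(\nabla^\mathcal M)^* \nabla^\mathcal M \Psi$ yields the identity at $p$. Since $p$ is arbitrary and both sides are pointwise-defined differential operators, the formula holds on all of $\mathcal M$; this reproduces the proof in \cite[Theorem 1.3.8]{Gin}.
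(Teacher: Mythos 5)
Your computation is correct and is exactly the standard Weitzenb\"ock/Bochner argument: the paper gives no proof of its own but simply cites \cite[Theorem 1.3.8]{Gin}, and your synchronous-frame splitting, the curvature contraction via the first Bianchi identity and the Clifford identities $e_j \cdot e_k \cdot e_j = e_k$, $e_j \cdot e_j = -1$ (consistent with the paper's convention $x \cdot x = -\Vert x \Vert^2$), and the final reduction to $\frac{1}{4}\Scal^\mathcal M$ reproduce precisely that cited proof. No gaps beyond the routine bookkeeping you already indicate.
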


\subsection{Restriction of the spinor bundle to hypersurfaces} We take $(\mathcal M, g)$ as in the previous section.

Let $\mathcal H$ be a smooth oriented hypersurface of $\mathcal M$. Let $\nu$ be the outer unit normal vector field on $\mathcal H$, that is, the only vector field such that if $(e_1,\ldots, e_d)$ is an oriented frame of $\mathcal H$, then $(e_1,\ldots, e_d, \nu)$ is an oriented frame of $\mathcal M$. We define the Weingarten operator of $\mathcal H$ as the endomorphism of $T \mathcal H$ given by
\begin{equation}
W_\mathcal H (X) := -\nabla^{\mathcal M}_X \nu,
\end{equation}
and $H_\mathcal H: \mathcal M \rightarrow \RR$ will be the pointwise trace of this operator.

The hypersurface $\mathcal H$ inherits a spin structure from the one of $\mathcal M$, and we can define the spinor bundle $\Sigma \mathcal H$ (for the details, see \cite[Section 2.4]{BH3M}). This last bundle is endowed with the natural Hermitian product on spinors, still denoted by $\langle \cdot, \cdot \rangle$. The covariant derivative on $\Sigma \mathcal H$ induced by the Levi-Civita connection will be denoted by $\nabla^\mathcal H$. We will also write $\nabla^\mathcal H$ for the covariant derivative on $\Sigma \mathcal H \oplus \Sigma \mathcal H$ (where $\oplus$ stands for the Whitney product), and for all $X \in T \mathcal H$, the Clifford product by $X$ on $\Sigma \mathcal H \oplus \Sigma \mathcal H$ is given by
\begin{equation}
X \cdot (\Psi_1, \Psi_2) := (X\cdot\Psi_1, -X\cdot\Psi_2), \quad \forall (\Psi_1, \Psi_2) \in \Sigma \mathcal H \oplus \Sigma \mathcal H.
\end{equation}

There is a link between the restricted spinor bundle $\Sigma \mathcal M_{\vert \mathcal H}$ and $\Sigma \mathcal H$, given by the following proposition (see \cite[Proposition 1.4.1]{Gin}):

\begin{prop} \label{hypersurface}
Let $\mathcal M$ and $\mathcal H$ be as above. There exists an isomorphism $\zeta$ from $\Sigma \mathcal M_{\vert \mathcal H}$ into $\Sigma \mathcal H$ if $d$ is even and into $\Sigma \mathcal H \oplus \Sigma \mathcal H$ otherwise, and $\zeta$ can be chosen so that it satisfies the following properties:
\begin{enumerate}
\item Let $x \in \mathcal H$, then for all $X \in \Gamma(T_x \mathcal H)$ and all $\Psi \in (\Sigma \mathcal M)_{\vert\lbrace x \rbrace}$, the Clifford product on $\mathcal H$ satisfies
\begin{equation}
X \cdot \zeta(\Psi) = \zeta (X\cdot\nu(x)\cdot\Psi),
\end{equation}
\item The isomorphism $\zeta$ is unitary,
\item For all $\Psi \in \Gamma(\Sigma \mathcal M_{\vert \mathcal H})$ and $X \in T\mathcal H$, 
\begin{equation}
\zeta(\nabla^{\mathcal M}_X \Psi) = \nabla^\mathcal H_X \zeta(\Psi) + \frac{1}{2} W_\mathcal H X\cdot\zeta(\Psi).
\end{equation}
\item For $\Psi \in \Sigma \mathcal M_{\vert \mathcal H}$,
\begin{equation}
\zeta (\ii \nu \cdot \Psi) = \left\lbrace
\begin{array}{cc}
{\left(\begin{matrix} 0 & I_{d} \\ I_{d} & 0 \end{matrix}\right) \zeta(\Psi)} & \textrm{if $d$ is odd} \\
\omega^\CC_{d} \cdot \zeta (\Psi) & \textrm{if $d$ is even}
\end{array}
\right.,
\end{equation}
where the complex volume form $\omega^\CC_d$ was defined in section~\ref{SectionClifford}.
\end{enumerate}
\end{prop}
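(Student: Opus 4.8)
The plan is to reduce the statement to a linear-algebra fact about Clifford modules at a point and then to globalize it by functoriality of the spin structure. Fix $x\in\mathcal{H}$ and an oriented orthonormal frame $(e_1,\dots,e_d,\nu)$ of $T_x\mathcal{M}$ adapted to $\mathcal{H}$. The elements $\tilde e_j:=e_j\cdot\nu\in\CCl_{d+1}$, $1\le j\le d$, satisfy $\tilde e_j\tilde e_k+\tilde e_k\tilde e_j=-2\delta_{jk}$ (using $\nu\cdot\nu=-1$ and that $\nu$ anticommutes with each $e_j$), so $e_j\mapsto\tilde e_j$ extends to an algebra homomorphism $\CCl_d\to\CCl_{d+1}$ whose image is the even subalgebra $\CCl_{d+1}^{0}$ (injectivity and surjectivity onto $\CCl_{d+1}^0$ follow by comparison of dimensions). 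Restricting the $\CCl_{d+1}$-module $\Sigma_{d+1}$ along this homomorphism turns $\Sigma_{d+1}$ into a $\CCl_d$-module, and $\zeta$ will be a suitable identification of this module with $\Sigma\mathcal{H}$, resp.\ $\Sigma\mathcal{H}\oplus\Sigma\mathcal{H}$. Once $\zeta$ intertwines $\tilde e_j$ with Clifford multiplication by $e_j$ on the target, property~(1) is immediate, since for $X=e_j$ one has $\zeta(X\cdot\nu\cdot\Psi)=\zeta(\tilde e_j\cdot\Psi)=e_j\cdot\zeta(\Psi)=X\cdot\zeta(\Psi)$, and the general case follows by linearity.

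When $d$ is even, $\dim\Sigma_{d+1}=2^{d/2}=\dim\Sigma_d$ and the restricted module is irreducible, hence isomorphic to $\Sigma_d$; since both spaces carry the essentially unique invariant Hermitian product, one can take $\zeta$ unitary, giving~(2). For~(4), one computes $\tilde e_1\cdots\tilde e_d=e_1\cdots e_d$ in $\CCl_{d+1}$, so under $\zeta$ the action of $\omega_d^\CC$ is that of $i^{d/2}e_1\cdots e_d$, an involution, and from $\omega_{d+1}^\CC=i^{d/2}e_1\cdots e_d\cdot(\ii\nu)$ together with the normalization $\rho_{d+1}(\omega_{d+1}^\CC)=\mathrm{Id}$ one gets $\zeta(\ii\nu\cdot\Psi)=\omega_d^\CC\cdot\zeta(\Psi)$. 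When $d$ is odd, $\dim\Sigma_{d+1}=2\dim\Sigma_d$ and $\Sigma_{d+1}$ splits under $\CCl_{d+1}^0$ as $\Sigma_{d+1}^+\oplus\Sigma_{d+1}^-$, the $(\pm1)$-eigenspaces of $\omega_{d+1}^\CC$; a short computation gives $\tilde e_1\cdots\tilde e_d=e_1\cdots e_{d+1}$ and hence that $\omega_d^\CC$ (formed from the $\tilde e_j$) coincides with $\omega_{d+1}^\CC$, so $\omega_d^\CC$ acts as $\pm1$ on $\Sigma_{d+1}^\pm$. Thus $\Sigma_{d+1}^+\cong\Sigma_d$ and $\Sigma_{d+1}^-\cong\Sigma_d$ with the Clifford action twisted by a sign, which is exactly the action on $\Sigma\mathcal{H}\oplus\Sigma\mathcal{H}$ introduced in the text; choosing a unitary identification $\Sigma_{d+1}^+\cong\Sigma_d$ and defining $\zeta$ on $\Sigma_{d+1}^-$ by composing it with the unitary involution $\ii\nu$ (which exchanges the two summands, as $\nu$ anticommutes with $\omega_{d+1}^\CC$ and $(\ii\nu)^2=\mathrm{Id}$) yields a unitary $\zeta$ for which $\zeta(\ii\nu\cdot\Psi)=\left(\begin{smallmatrix}0&I_d\\I_d&0\end{smallmatrix}\right)\zeta(\Psi)$ by construction. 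This gives~(2) and~(4) in the odd case.

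To globalize, I would check that the algebraic isomorphism just constructed is equivariant under the inclusion $\Spin_d\hookrightarrow\Spin_{d+1}$ induced by $\nu^\perp\hookrightarrow\RR^{d+1}$: an element $e_i\cdot e_j\in\Spin_d$ equals $\tilde e_i\cdot\tilde e_j$ in $\CCl_{d+1}$, so the two module structures match under $\zeta$. Since the spin structure on $\mathcal{H}$ is by definition the reduction of $P_{\Spin_{d+1}}\mathcal{M}_{\vert\mathcal{H}}$ along this inclusion, $\zeta$ varies smoothly with $x$ and defines the required bundle isomorphism. It remains to prove~(3), the spinorial Gauss formula. Writing $\nabla^{\mathcal{M}}_X\Psi=X(\Psi)+\tfrac14\sum_{a,b}\langle\nabla^{\mathcal{M}}_Xf_a,f_b\rangle f_a\cdot f_b\cdot\Psi$ in the adapted frame $(f_a)=(e_1,\dots,e_d,\nu)$ and splitting the double sum into the part with $a,b\le d$ and the cross terms involving $\nu$, the identities $\langle\nabla^{\mathcal{M}}_Xe_k,e_l\rangle=\langle\nabla^{\mathcal{H}}_Xe_k,e_l\rangle$ and $\langle\nabla^{\mathcal{M}}_Xe_k,\nu\rangle=\langle W_{\mathcal{H}}X,e_k\rangle$ for $X\in T\mathcal{H}$ turn the first part into the local expression of $\nabla^{\mathcal{H}}_X\zeta(\Psi)$ — using equivariance of $\zeta$ for the $X(\,\cdot\,)$ term and $\zeta(e_k\cdot e_l\cdot\Psi)=e_k\cdot e_l\cdot\zeta(\Psi)$, which follows from~(1) applied twice since $e_k\cdot e_l=(e_k\cdot\nu)\cdot(e_l\cdot\nu)$ — and the cross terms into $\tfrac12\,\zeta\big((W_{\mathcal{H}}X)\cdot\nu\cdot\Psi\big)=\tfrac12\,(W_{\mathcal{H}}X)\cdot\zeta(\Psi)$ by~(1), which is precisely~(3).

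I expect the main obstacle to be the bookkeeping in the odd-dimensional case: one must choose the two unitary identifications $\Sigma_{d+1}^{\pm}\cong\Sigma_d$ so that, simultaneously, $\ii\nu$ becomes the off-diagonal involution and the restricted Clifford action is the prescribed twisted one. The other delicate point is the step in~(3) where $\zeta$ is seen to intertwine the ``pointwise differentiation'' parts of $\nabla^{\mathcal{M}}$ and $\nabla^{\mathcal{H}}$, which is exactly where the $\Spin_d$-equivariance of $\zeta$ and the compatibility of the local frames are used. All of this is classical; see \cite[Section~2.4]{BH3M} and \cite[Proposition~1.4.1]{Gin}.
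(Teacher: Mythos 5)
Your sketch is correct and follows the same standard route as the sources the paper itself relies on (the paper does not prove Proposition~\ref{hypersurface} but cites \cite[Proposition 1.4.1]{Gin} and \cite[Section 2.4]{BH3M}): restriction of $\Sigma_{d+1}$ along $e_j\mapsto e_j\cdot\nu$ identifying $\CCl_d$ with $\CCl_{d+1}^0$, the volume-form bookkeeping for (4), $\Spin_d$-equivariance for globalization, and the local spin-connection formula for the Gauss relation (3); I checked the sign and dimension-parity computations and they are consistent with the paper's conventions.
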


We can define a covariant derivative $\overline \nabla^\mathcal M$ on $\Sigma \mathcal M_{\vert \mathcal H}$ such that $\overline \nabla^\mathcal M \Psi$ is the restriction of $\nabla^\mathcal M \Psi$ to $\Gamma(T^* \mathcal H \otimes E)$. This notation will be useful as we will often consider the norm of the restricted covariant derivative on hypersurfaces.

The link between $\Sigma \mathcal M_{\vert \mathcal H}$ and $\Sigma \mathcal H$ gives rise to a natural operator called the extrinsic Dirac operator. This is actually the Dirac operator of $\mathcal H$ which acts on the spinor bundle $\Sigma \mathcal M_{\vert \mathcal H}$.
This extrinsic Dirac operator on $\mathcal H$ is the operator acting on $\Gamma_c(\Sigma \mathcal M)$ defined by
\begin{equation} \label{diracext}
\mathcal D^\mathcal H := \zeta^* \slashed D^\mathcal H \zeta \textrm{ if $d$ is odd}, \quad \mathcal D^\mathcal H := \zeta^* \mathbb (\slashed D^\mathcal H \oplus -\slashed D^\mathcal H) \zeta \textrm{ if $d$ is even}.
\end{equation}
where $\zeta$ is the isomorphism given by Proposition~\ref{hypersurface}.
It can be explicitly computed, and its expression at $x \in \mathcal H$ for $\Psi \in \Sigma \mathcal M$ is
\begin{equation} \label{extrinsicexpression}
\mathcal D^\mathcal H \Psi (x) =  \frac{H_\mathcal H (x)}{2} \Psi(x) - \nu(x) \cdot \sum_{k=1}^d e_k \cdot \nabla^{\mathcal M}_{e_k} \Psi(x)
\end{equation}
where $(e_1,\ldots,e_d)$ is an orthonormal frame of $T_x \mathcal H$ \cite[Proposition 1.4.1]{Gin}, \cite{HMZ}.

\subsection{Sobolev spaces on manifolds} Let $(\mathcal M,g)$ be a Riemannian manifold of dimension $d+1$ with boundary $\partial \mathcal M$. We denote by $\nu_\mathcal M$ the normal unit vector field over $\partial \mathcal M$.

Let $(E, \nabla^E, \langle \cdot, \cdot \rangle_E)$ be an Hermitian bundle of dimension $q$ over $\mathcal M$, and assume moreover that $\mathcal  M$ is compact. The construction of the Sobolev spaces on $E$ is done for example in \cite[Definition 3.5]{GN}, but we recall the idea to be self-contained.

In what follows, we will denote by $\exp^\mathcal M$ the Riemannian exponential map on $\mathcal M$ and by $B^\mathcal M_x(r)$ the ball of radius $r > 0$ and of center 0 in $T_x \mathcal M$ where $x \in \mathcal M$. These notations will be used for the boundary $\partial \mathcal M$ with an obvious modification. By the compactness of $\mathcal M$, there is $r_t > 0$ such that:
\begin{itemize}
\item the map
\begin{equation}
F : \partial \mathcal M \times [0, 2 r_t) \ni (x,t) \rightarrow \exp^\mathcal M_x (t \nu_\mathcal M (x))
\end{equation}
is a diffeomorphism on its image;
\item for all $x \in \mathcal M \setminus F(\partial \mathcal M \times [0, 2 r_t))$, $\exp^\mathcal M$ is injective on the open ball of radius $r_t$ of $T_x \mathcal M$;
\item for all $x \in \partial \mathcal M$, $\exp^{\partial \mathcal M}$ is injective on the open ball of radius $r_t$ of $T_x \partial \mathcal M$.
\end{itemize}
Let $(U_j)_{j \in J}$ be a finite covering of $\mathcal M$ such that $U_j = \exp^\mathcal M_x (B^\mathcal M_x (r_t))$ with $x \in \mathcal M \setminus F(\partial \mathcal M \times [0, 2 r_t))$ (Gaussian coordinates) or $U_j = F(B^{\partial \mathcal M}_x(r_t) \times [0,2 r_t))$ with $x \in \partial \mathcal M$ (normal coordinates). The maps given by these charts are denoted by $(f_j)_{j \in J}$. 
We trivialize $E$ over $U_j$ with Gaussian coordinates by identifying $E_x$ with $\CC^q$ and by making parallel transport along the radial geodesics. Over the set $U_j$ with normal coordinates, we trivialize $E$ by identifying $E_x$ with $\CC^q$ and by making parallel transport first along the radial geodesics in $\partial \mathcal M$ and then along the geodesics normal to $\partial \mathcal M$. The trivializations obtained are denoted by $\xi_j$.

Let $(h_j)_{j \in J}$ be a partition of unity adapted to the covering $(U_j)_{j \in J}$. For $s \in \RR$ we define the $H^s$ norm by
\begin{equation} \label{Hsnorm}
\Vert \Psi \Vert^2_{H^s(E)} := \sum\limits_{j \in J} \Vert (\xi_j)_* (h_j \Psi) \circ f_j^{-1} \Vert^2_{H^s(\mathbf R^{d+1}_j,\CC^q)},
\end{equation}
where $\mathbf R^{d+1}_j := \RR^{d+1}$ when $U_j \cap \partial \mathcal M = \emptyset$ and $\mathbf R^{d+1}_j := \RR^d \times \RR^+$ otherwise.

\begin{definition} \label{Hs}
Let $s \in \RR$. The Sobolev space $H^s(E)$ is the completion of the space $\Gamma_c(E)$ for the $H^s$ norm.
\end{definition}

\begin{rem}
The Sobolev spaces defined in this way are a generalization of the $H^s$ spaces in $\RR^{d+1}$, and for $k \in \NN$, the $H^s$ norm is equivalent to the norm defined by the square root of $\sum\limits_{j = 0}^k \Vert (\nabla^E)^j \cdot \Vert^2$ (see \cite[Theorem 5.7]{GS}, or \cite[Remark 3.6]{GN}).
\end{rem}

A direct consequence of Definition~\ref{Hs} is that the intrinsic Dirac operator on a compact manifold without boundary is essentially self-adjoint and the domain of its closure is the Sobolev space $H^1$.

\begin{prop} \label{closure}
If $(\mathcal M, g)$ is a compact Riemannian spin manifold without boundary, $\slashed D^\mathcal M$ is essentially self-adjoint, and the domain of its closure is $H^1(\Sigma \mathcal M)$.
\end{prop}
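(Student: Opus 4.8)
The plan is to run the classical argument that a formally self-adjoint first-order elliptic operator on a closed manifold is essentially self-adjoint, but phrased so that it also pins down the domain of the closure as the Sobolev space $H^1(\Sigma\mathcal M)$ of Definition~\ref{Hs}. Since $\mathcal M$ is compact without boundary, $\Gamma_c(\Sigma\mathcal M)=\Gamma(\Sigma\mathcal M)$, and for $\Psi,\Phi\in\Gamma(\Sigma\mathcal M)$ an integration by parts (with no boundary contribution) gives $(\slashed D^\mathcal M\Psi,\Phi)_{L^2}=(\Psi,\slashed D^\mathcal M\Phi)_{L^2}$, so $\slashed D^\mathcal M$ is symmetric and its closure $\overline{\slashed D^\mathcal M}$ exists.

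First I would show that on $\Gamma(\Sigma\mathcal M)$ the graph norm $\Psi\mapsto(\Vert\Psi\Vert^2_{L^2}+\Vert\slashed D^\mathcal M\Psi\Vert^2_{L^2})^{1/2}$ is equivalent to the $H^1$ norm. The inequality $\Vert\slashed D^\mathcal M\Psi\Vert_{L^2}\le C\Vert\Psi\Vert_{H^1(\Sigma\mathcal M)}$ is immediate since $\slashed D^\mathcal M$ is a first-order differential operator with smooth coefficients on a compact manifold. For the reverse, I would apply the Schrödinger--Lichnerowicz formula (Theorem~\ref{S-Lformula}): for $\Psi\in\Gamma(\Sigma\mathcal M)$,
\begin{equation}
\Vert\nabla^\mathcal M\Psi\Vert^2_{L^2}=\Vert\slashed D^\mathcal M\Psi\Vert^2_{L^2}-\frac14\int_{\mathcal M}\Scal^\mathcal M\,|\Psi|^2\le\Vert\slashed D^\mathcal M\Psi\Vert^2_{L^2}+C\Vert\Psi\Vert^2_{L^2},
\end{equation}
using that $\Scal^\mathcal M$ is bounded on the compact $\mathcal M$. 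Combined with the Remark after Definition~\ref{Hs}, according to which $\Vert\cdot\Vert_{H^1(\Sigma\mathcal M)}$ is equivalent to $(\Vert\cdot\Vert^2_{L^2}+\Vert\nabla^\mathcal M\cdot\Vert^2_{L^2})^{1/2}$, this yields the claimed equivalence. Consequently $\dom(\overline{\slashed D^\mathcal M})$ is the completion of $\Gamma_c(\Sigma\mathcal M)$ for the $H^1$ norm, i.e. exactly $H^1(\Sigma\mathcal M)$ by Definition~\ref{Hs}, and on it $\overline{\slashed D^\mathcal M}$ coincides with the distributional Dirac operator.

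It remains to upgrade ``closed symmetric'' to ``self-adjoint'', i.e. to prove $\dom\big((\slashed D^\mathcal M)^*\big)\subset H^1(\Sigma\mathcal M)$ (the inclusion $\overline{\slashed D^\mathcal M}\subset(\slashed D^\mathcal M)^*$ being automatic). If $\Psi\in\dom\big((\slashed D^\mathcal M)^*\big)$, then $\slashed D^\mathcal M\Psi=(\slashed D^\mathcal M)^*\Psi\in L^2$ in the distributional sense; since $\slashed D^\mathcal M$ is a first-order \emph{elliptic} operator on the closed manifold $\mathcal M$, elliptic regularity (as used in \cite{GN}, or classically) gives $\Psi\in H^1(\Sigma\mathcal M)=\dom(\overline{\slashed D^\mathcal M})$. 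Hence $(\slashed D^\mathcal M)^*=\overline{\slashed D^\mathcal M}$, which is the assertion. The one point requiring care, and the only genuine input beyond formal manipulations, is this elliptic-regularity step $\Psi,\slashed D^\mathcal M\Psi\in L^2\Rightarrow\Psi\in H^1$ on the manifold; if one wishes to avoid invoking it as a black box, the standard alternative is a Friedrichs-mollifier argument (Friedrichs' lemma), mollifying $\Psi\in\dom((\slashed D^\mathcal M)^*)$ in local charts to obtain $\Psi_\varepsilon\in\Gamma(\Sigma\mathcal M)$ with $\Psi_\varepsilon\to\Psi$ and $\slashed D^\mathcal M\Psi_\varepsilon\to(\slashed D^\mathcal M)^*\Psi$ in $L^2$, so that $\Psi\in\dom(\overline{\slashed D^\mathcal M})$ directly.
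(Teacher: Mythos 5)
Your argument is correct, and its first half is exactly the paper's: you use the Schrödinger--Lichnerowicz formula together with the boundedness of $\Scal^\mathcal M$ on the compact manifold to show the graph norm of $\slashed D^\mathcal M$ is equivalent to the $H^1$ norm on smooth sections, so that $\dom(\overline{\slashed D^\mathcal M})=H^1(\Sigma\mathcal M)$ by Definition~\ref{Hs}. Where you diverge is the self-adjointness step. The paper simply observes that $\mathcal M$, being compact, is complete, and invokes the general theorem (Ginoux, Proposition 1.3.5) that the Dirac operator of a complete spin manifold is essentially self-adjoint; this is a one-line appeal to a result the paper uses anyway for the complete, non-compact manifold $\mathcal N$. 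You instead prove $(\slashed D^\mathcal M)^*=\overline{\slashed D^\mathcal M}$ directly, by showing $\dom((\slashed D^\mathcal M)^*)\subset H^1(\Sigma\mathcal M)$ via interior elliptic regularity for the first-order elliptic operator $\slashed D^\mathcal M$ on the closed manifold (or, alternatively, via a Friedrichs-mollifier approximation). Both routes are sound: the paper's is shorter and uniform with its treatment of the complete case, while yours is more self-contained for the closed-manifold setting, exploits compactness (and ellipticity, which the completeness argument does not need in the same way) directly, and in passing identifies $\dom((\slashed D^\mathcal M)^*)$ with $H^1(\Sigma\mathcal M)$ explicitly rather than obtaining it as a corollary of essential self-adjointness. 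The only ingredient you must justify is precisely the elliptic-regularity (or Friedrichs' lemma) step you flag, which is classical; with it, your proof is complete.
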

\begin{proof}
The Dirac operator is symmetric, and then it is closable. By compactness, there exists $C > 0$ such that $\vert \Scal^\mathcal M \vert \leq C$. Moreover, by the Schrödinger-Lichnerowicz formula (Theorem~\ref{S-Lformula}), the graph norm of $\slashed D^\mathcal M$ is equivalent to 
\[(1 + C) \Vert \cdot \Vert^2_{L^2(\mathcal M)} + \Vert \slashed D^\mathcal M \cdot \Vert^2_{L^2(\mathcal M)} = \left(1+ C + \frac{\Scal^\mathcal M}{4} \right) \Vert \cdot \Vert^2_{L^2(\mathcal M)} + \Vert \nabla^\mathcal M \cdot \Vert^2_{L^2(\mathcal M)}
\]
and this last norm is equivalent to the $H^1(\Sigma \mathcal M)$-norm because of the boundedness of $\Scal^\mathcal M$. Then, the domain of the closure of $\slashed D^\mathcal M$ is the completion of $\Gamma_c(\Sigma \mathcal M)$ for the graph norm, which is exactly $H^1(\Sigma \mathcal M)$.

The manifold $(\mathcal M, g)$ is complete because it is compact, and then the Dirac operator is essentially self-adjoint in $L^2(\Sigma \mathcal M)$ \cite[Proposition 1.3.5]{Gin}, which concludes the proof.
\end{proof}

By the definition of the Sobolev spaces, one can observe that it is possible to extend the results valid for Euclidean spaces. We state a trace theorem which is a modification of \cite[Theorem 3.7]{GN}, where we add a bound for the $L^2$-norm of the trace.

\begin{theorem} \label{trace}
Let $(\mathcal M, g)$ be a compact Riemannian manifold with boundary $\partial \mathcal M$. Let $(E,\nabla^E, \langle \cdot, \cdot \rangle_E)$ be an Hermitian vector bundle with base $\mathcal M$.

Then, the pointwise restriction operator $\gamma_\mathcal M : \Gamma_c (E) \rightarrow \Gamma_c (E_{\vert \partial \mathcal M})$ extends to a bounded operator from $H^1(E)$ onto $H^{\frac{1}{2}}(E_{\vert \partial \mathcal M})$, and there is a bounded right inverse to $\gamma_\mathcal M : H^1(E) \rightarrow H^\frac{1}{2}(E_{\vert \partial \mathcal M})$ denoted by $\epsilon_\mathcal M$, which maps $\Gamma_c(E_{\vert \partial \mathcal M})$ into $\Gamma_ c(E)$.
Moreover, there exists $K > 0$ such that for any $\varepsilon \in (0,1)$,
\[
\Vert \gamma_\mathcal M \Psi \Vert^2_{L^2(\partial \mathcal M)} \leq K \left( \varepsilon^{\frac{1}{2}} \Vert \nabla^E \Psi \Vert^2_{L^2(\mathcal M)} + \varepsilon^{-\frac{1}{2}} \Vert \Psi \Vert^2_{L^2(\mathcal M)} \right), \,\Psi \in H^1(E).
\]
\end{theorem}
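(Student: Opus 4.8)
\textbf{Strategy.} The plan is to reduce the statement to the corresponding facts on $\RR^d \times \RR^+$ by using the finite covering $(U_j)_{j \in J}$ and the partition of unity $(h_j)_{j \in J}$ that enter the very definition \eqref{Hsnorm} of the Sobolev norm. On each chart of normal-coordinate type the restriction operator becomes, after transport by $\xi_j$ and $f_j$, the ordinary trace operator $u \mapsto u(\cdot,0)$ on $\RR^{d+1}_+$; on charts of Gaussian type the intersection with $\partial\mathcal M$ is empty and the contribution is zero. The boundedness of $\gamma_\mathcal M : H^1(E)\to H^{1/2}(E_{\vert\partial\mathcal M})$ and the existence of a bounded right inverse $\epsilon_\mathcal M$ are then inherited from the flat trace theorem, exactly as in \cite[Theorem 3.7]{GN}; the only genuinely new content is the $\varepsilon$-dependent $L^2$-bound, so I would spend most of the effort there.

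\textbf{Key steps.} First I would fix the data $r_t$, $(U_j)$, $(f_j)$, $(\xi_j)$, $(h_j)$ from the definition of $H^s(E)$ and recall that, by that definition, $\Vert\Psi\Vert^2_{H^1(E)}$ and $\Vert\gamma_\mathcal M\Psi\Vert^2_{H^{1/2}(E_{\vert\partial\mathcal M})}$ are, up to fixed constants, the sums over $j$ of the corresponding flat norms of the localized pushforwards $u_j := (\xi_j)_*(h_j\Psi)\circ f_j^{-1}$. Second, I would invoke the classical trace theorem on $\RR^d\times\RR^+$: $\Vert u_j(\cdot,0)\Vert_{H^{1/2}(\RR^d)} \le C\Vert u_j\Vert_{H^1(\RR^{d+1}_+)}$, together with a bounded flat extension operator, and reassemble the pieces via the partition of unity to get boundedness of $\gamma_\mathcal M$ and construction of $\epsilon_\mathcal M$ (mapping $\Gamma_c$ to $\Gamma_c$ because the flat extension and the cutoffs preserve smoothness and supports). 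Third, for the quantitative $L^2$-bound I would use the scaled one-dimensional trace inequality on the half-line: for $v\in H^1(\RR^+)$ and any $\varepsilon\in(0,1)$,
\begin{equation*}
|v(0)|^2 \le 2\varepsilon^{1/2}\Vert v'\Vert^2_{L^2(\RR^+)} + \varepsilon^{-1/2}\Vert v\Vert^2_{L^2(\RR^+)},
\end{equation*}
obtained from $|v(0)|^2 = -\int_0^\infty \frac{d}{dt}(\chi(t)|v(t)|^2)\,dt$ with a cutoff $\chi$ supported in $[0,\varepsilon^{1/2}]$ and Young's inequality. Integrating this in the tangential variables on each normal chart, summing over $j$, and translating the flat $L^2$ norms of $\partial_t u_j$ and $u_j$ back into $\Vert\nabla^E\Psi\Vert_{L^2(\mathcal M)}$ and $\Vert\Psi\Vert_{L^2(\mathcal M)}$ — using that on $[0,2r_t)$ the normal-coordinate metric and its derivatives, as well as the Christoffel symbols of $\nabla^E$, are bounded by constants depending only on $\mathcal M$ and $E$ — yields the claimed inequality with a uniform $K$, after absorbing the lower-order terms coming from differentiating the cutoffs $h_j$ (which contribute only to the $\varepsilon^{-1/2}\Vert\Psi\Vert^2$ part and are harmless since $\varepsilon<1$).

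\textbf{Main obstacle.} The routine-but-delicate point is bookkeeping the geometric distortion: passing between the intrinsic norms $\Vert\nabla^E\Psi\Vert_{L^2(\mathcal M)}$, $\Vert\Psi\Vert_{L^2(\mathcal M)}$ and the flat norms of the $u_j$ introduces the Jacobian of $f_j$ and the connection coefficients of $E$ in the trivialization $\xi_j$, and one must check these are uniformly bounded on the compact collar $\partial\mathcal M\times[0,2r_t)$ so that the constant $K$ does not secretly depend on $\varepsilon$. The key structural reason it works is that the tangential ($H^{1/2}$) regularity of the trace is never needed for the quantitative bound — only the half-line estimate in the normal direction is — so the scaling in $\varepsilon$ is dictated purely by the one-dimensional inequality above and survives the localization intact; making this separation clean is the part that requires care rather than ingenuity.
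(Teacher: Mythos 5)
Your proposal is correct and takes essentially the same route as the paper: the boundedness of $\gamma_\mathcal M$ and the right inverse are taken from \cite[Theorem 3.7]{GN}, and the $\varepsilon$-dependent $L^2$ bound is obtained by localizing with the covering, trivializations and partition of unity entering the definition \eqref{Hsnorm} of the Sobolev norms, applying a flat $\varepsilon$-trace inequality on $\mathbf R^{d+1}_j$, and reassembling into the intrinsic norms of $\Psi$ and $\nabla^E\Psi$. The only inessential difference is that the paper imports the flat inequality from \cite[Theorem 1.5.1.10]{Gri}, whereas you rederive it from the elementary one-dimensional half-line estimate; since the value of $K$ is arbitrary, your slightly different constants are harmless.
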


\begin{proof}
The proof of the first part of the theorem is done in \cite[Theorem 3.7]{GN}. We prove the last estimate.

With the notations of \eqref{Hsnorm}, we denote by $J_N$ the set of all $j \in J$ such that $U_j \cap \partial \mathcal M \neq \emptyset$, and there is a constant $C > 0$ and a constant $\Tilde K > 0$ given by \cite[Theorem 1.5.1.10]{Gri} such that for any $\varepsilon \in (0,1)$ and for all $\Psi \in H^1(E)$
\begin{align*}
\Vert \gamma_\mathcal M \Psi \Vert^2_{L^2(\partial \mathcal M)} \le& C \sum\limits_{j \in J_N} \Vert (\xi_j)_* (h_j \Psi) \circ f_j^{-1} \Vert^2_{L^2(\RR^d \times \{ 0 \},\CC^q)} \\
\le& C \Tilde K \sum\limits_{j \in J} \lbrack \varepsilon^\frac 12 \Vert (\xi_j)_* (h_j \Psi) \circ f_j^{-1} \Vert^2_{H^1(\mathbf R^{d+1}_j,\CC^q)} \\
&+ \varepsilon^{-\frac 12} \Vert (\xi_j)_* (h_j \Psi) \circ f_j^{-1} \Vert^2_{L^2(\mathbf R^{d+1}_j,\CC^q)} \rbrack \\
=& C \Tilde K \left( \varepsilon^{\frac{1}{2}} \Vert \nabla^E \Psi \Vert^2_{L^2(\mathcal M)} + \varepsilon^{-\frac{1}{2}} \Vert \Psi \Vert^2_{L^2(\mathcal M)} \right). \qedhere
\end{align*}
\end{proof}

The Rellich-Kondrachov theorem still holds for the Sobolev spaces on compact manifolds. Consequently, the operators with domain include in the first Sobolev space on a vector bundle with compact basis have compact resolvent. We refer to \cite[Proposition 3.13]{TS2} for the following theorem.

\begin{theorem}[Rellich-Kondrachov-type theorem] \label{RellKond}
Let $E$ be an Hermitian vector bundle over a compact manifold $\mathcal M$. Then, the inclusion $H^1(E) \subset L^2(E)$ is compact.
\end{theorem}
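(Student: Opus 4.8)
The plan is to reduce the statement to the classical Rellich--Kondrachov theorem in Euclidean space by means of the localization data used to define the Sobolev spaces. Fix a bounded sequence $(\Psi_n)_{n \in \NN}$ in $H^1(E)$; it suffices to extract a subsequence converging in $L^2(E)$. Using the finite covering $(U_j)_{j \in J}$, the trivializations $\xi_j$, the charts $f_j$ and the partition of unity $(h_j)_{j \in J}$ from the construction preceding Definition~\ref{Hs}, set $\psi^j_n := (\xi_j)_* (h_j \Psi_n) \circ f_j^{-1}$. By the very definition \eqref{Hsnorm} of the $H^1$ norm, $(\psi^j_n)_n$ is a bounded sequence in $H^1(\mathbf R^{d+1}_j, \CC^q)$ for each $j \in J$; moreover $h_j$ has compact support in $U_j$, so all the $\psi^j_n$ are supported in one fixed compact subset $K_j$ of $\mathbf R^{d+1}_j$, which is either $\RR^{d+1}$ or the half-space $\RR^d \times \RR^+$.

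For each fixed $j \in J$, the classical Rellich--Kondrachov theorem applied to $H^1$-functions supported in the fixed bounded set $K_j$ (which, after intersecting with a large ball, is a bounded Lipschitz domain) yields a subsequence of $(\psi^j_n)_n$ converging in $L^2(\mathbf R^{d+1}_j, \CC^q)$. Since $J$ is finite, one iterates this extraction over $j \in J$ and obtains a single subsequence, still denoted $(\Psi_n)$, for which $\psi^j_n$ converges in $L^2(\mathbf R^{d+1}_j, \CC^q)$ for every $j \in J$ simultaneously.

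It remains to transfer this to $L^2(E)$. Writing $\Psi_n = \sum_{j \in J} h_j \Psi_n$ and using that the trivializations $\xi_j$ are fibrewise unitary while the coordinate maps $f_j$ have Jacobians bounded above and below by positive constants (by compactness of $\mathcal M$ and finiteness of $J$), the norm of $\Psi_m - \Psi_n$ in $L^2(E)$ is controlled, up to a multiplicative constant independent of $m,n$, by the sum over $j \in J$ of the $L^2(\mathbf R^{d+1}_j, \CC^q)$ norms of $\psi^j_m - \psi^j_n$. Hence $(\Psi_n)$ is Cauchy, and therefore convergent, in $L^2(E)$, which shows that the inclusion $H^1(E) \subset L^2(E)$ is compact.

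The only genuinely \emph{delicate} point is the first step: one must check that the comparison between the intrinsic norms on $E$ and the local Euclidean norms is two-sided and uniform over the finite atlas, and that for the boundary charts the truncation of the half-space by $\supp h_j$ is a domain to which the Euclidean Rellich theorem legitimately applies. Both are consequences of the compactness of $\mathcal M$ and of the explicit bi-Lipschitz character of the Gaussian and normal coordinate maps, so no analytic input beyond the classical Euclidean result is needed.
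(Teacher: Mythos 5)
Your argument is correct. Note, however, that the paper does not prove this statement at all: it simply cites \cite[Proposition 3.13]{TS2}, where the result is established in the more general setting of manifolds of bounded geometry. Your route is therefore different in spirit — a self-contained localization argument that exploits the fact that the norm \eqref{Hsnorm} is \emph{defined} through the finite atlas $(U_j, f_j, \xi_j, h_j)$, so boundedness in $H^1(E)$ immediately gives boundedness of the local pieces $\psi^j_n$ in $H^1(\mathbf R^{d+1}_j, \CC^q)$ with supports in a fixed compact set, and the classical Euclidean Rellich--Kondrachov theorem (on a large ball, resp.\ half-ball, which is Lipschitz) applies chartwise; a finite iterated extraction and the two-sided comparison of the intrinsic and local $L^2$ norms then give a Cauchy subsequence in $L^2(E)$. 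This buys self-containedness at the cost of the two checks you flag at the end: the fibrewise unitarity of the parallel-transport trivializations (which holds because the Hermitian connection $\nabla^E$ is metric, the standing assumption for the bundles used here) and the uniform two-sided Jacobian bounds on the compact supports of the $h_j$. For the transfer back to $L^2(E)$ you could alternatively invoke the remark following Definition~\ref{Hs}, which states that the $H^0$ norm of \eqref{Hsnorm} is equivalent to the intrinsic $L^2$ norm, rather than re-deriving the comparison by hand; either way the step is sound, and no gap remains.
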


We end this section with a direct consequence of Proposition~\ref{hypersurface}. We assume that $(\mathcal M, g)$ is a compact Riemannian spin manifold of dimension $d+1$ and we take an oriented hypersurface $\mathcal H$ of $\mathcal M$. We use the notation of section~\ref{notman}.

\begin{cor} \label{cor2}
The isomorphism $\zeta$ given by Proposition~\ref{hypersurface} is an isomorphism between $H^1(\Sigma \mathcal M_{\vert \mathcal H})$ and $H^1(\Sigma \mathcal H)$ if $d$ is even or $H^1(\Sigma \mathcal H \oplus \Sigma \mathcal H)$ if $d$ is odd.
\end{cor}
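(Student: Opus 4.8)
I would prove Corollary~\ref{cor2} by showing that both $\zeta$ and $\zeta^*$ are bounded operators for the $H^1$-norms, using the characterization of the $H^1$-norm via covariant derivatives recalled after Definition~\ref{Hs}. The key input is property (3) of Proposition~\ref{hypersurface}, which relates $\nabla^\mathcal M$ (or rather its restriction $\overline\nabla^\mathcal M$ to directions tangent to $\mathcal H$) to $\nabla^\mathcal H$ via the Weingarten operator $W_\mathcal H$, together with property (2) saying that $\zeta$ is a pointwise isometry on the Hermitian bundles.

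\textbf{Key steps.} First, recall from the remark after Definition~\ref{Hs} that on the compact manifold $\mathcal H$, the $H^1$-norm of a section $\Phi$ of $\Sigma\mathcal H$ (resp. $\Sigma\mathcal H\oplus\Sigma\mathcal H$) is equivalent to $\bigl(\Vert\Phi\Vert^2_{L^2(\mathcal H)} + \Vert\nabla^\mathcal H\Phi\Vert^2_{L^2(\mathcal H)}\bigr)^{1/2}$, and the analogous statement holds for $\Sigma\mathcal M_{\vert\mathcal H}$ with $\overline\nabla^\mathcal M$ in place of $\nabla^\mathcal H$. Second, since $\zeta$ is a pointwise unitary (property (2)), we get $\Vert\zeta(\Psi)\Vert_{L^2(\mathcal H)} = \Vert\Psi\Vert_{L^2(\mathcal H)}$ immediately. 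Third, for the derivative term, pick an orthonormal frame $(e_1,\dots,e_d)$ of $T\mathcal H$ and compute, for each $k$, using property (3):
\begin{equation}
\nabla^\mathcal H_{e_k}\zeta(\Psi) = \zeta(\overline\nabla^\mathcal M_{e_k}\Psi) - \tfrac12 W_\mathcal H e_k \cdot \zeta(\Psi).
\end{equation}
By compactness of $\mathcal H$ the Weingarten operator $W_\mathcal H$ is bounded, and Clifford multiplication by a bounded vector field is a bounded bundle endomorphism; hence, combining with the isometry property of $\zeta$,
\begin{equation}
\Vert\nabla^\mathcal H\zeta(\Psi)\Vert_{L^2(\mathcal H)} \le \Vert\overline\nabla^\mathcal M\Psi\Vert_{L^2(\mathcal H)} + C\Vert\Psi\Vert_{L^2(\mathcal H)}
\end{equation}
for a constant $C$ depending only on $\sup_\mathcal H\vert W_\mathcal H\vert$. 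This shows $\zeta$ maps $H^1(\Sigma\mathcal M_{\vert\mathcal H})$ boundedly into $H^1(\Sigma\mathcal H)$ (resp. into $H^1(\Sigma\mathcal H\oplus\Sigma\mathcal H)$), after first establishing the inequality on the dense subspace $\Gamma_c$ and then extending by density. Fourth, the same argument applied to $\zeta^{-1} = \zeta^*$: rewrite property (3) as $\overline\nabla^\mathcal M_{e_k}\Psi = \zeta^{-1}\bigl(\nabla^\mathcal H_{e_k}\zeta(\Psi) + \tfrac12 W_\mathcal H e_k\cdot\zeta(\Psi)\bigr)$, i.e. $\overline\nabla^\mathcal M_{e_k}(\zeta^{-1}\Phi) = \zeta^{-1}\bigl(\nabla^\mathcal H_{e_k}\Phi + \tfrac12 W_\mathcal H e_k\cdot\Phi\bigr)$ for $\Phi$ a section of $\Sigma\mathcal H$, and deduce boundedness of $\zeta^{-1}$ in the same way. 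Thus $\zeta$ is an isomorphism of the $H^1$-spaces.

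\textbf{Main obstacle.} There is no deep obstacle here — the statement is essentially a bookkeeping consequence of Proposition~\ref{hypersurface}. The only point requiring a little care is the relation between the two notions of ``$H^1$-norm'': the one from Definition~\ref{Hs} (built from charts and partitions of unity) and the intrinsic one built from $\nabla^E$; this equivalence is exactly the content of the remark citing \cite[Theorem 5.7]{GS} and \cite[Remark 3.6]{GN}, so I would simply invoke it. One should also be slightly attentive to the fact that the relevant derivative on the $\mathcal M$-side is $\overline\nabla^\mathcal M$, the restriction of $\nabla^\mathcal M$ to directions tangent to $\mathcal H$, which is precisely the object for which property (3) is stated and which defines the intrinsic $H^1$-norm on the bundle $\Sigma\mathcal M_{\vert\mathcal H}$ over the base $\mathcal H$. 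With these identifications in place, the proof reduces to the two estimates displayed above and their inverses.
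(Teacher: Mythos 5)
Your proposal is correct and follows essentially the same route as the paper: the paper also combines the pointwise unitarity of $\zeta$ with Proposition~\ref{hypersurface}\,(3), bounds the Weingarten term by $\Vert W_\mathcal H \Vert_\infty$ using compactness of $\mathcal H$, integrates to get $\Vert \zeta \Psi \Vert_{H^1} \le C_1 \Vert \Psi \Vert_{H^1}$, and then notes that the same argument gives the reverse bound for $\zeta^{-1}$. Your extra remarks on the equivalence of the chart-based and covariant-derivative $H^1$-norms and on $\overline\nabla^\mathcal M$ are exactly the implicit identifications the paper relies on.
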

\begin{proof}
We define $\Vert W_\mathcal H \Vert_\infty := \underset{x \in \mathcal H}{\sup} \, \underset{X \in T_x \mathcal H \setminus \{ 0 \} }{\sup}\frac{\vert g(W X, X) \vert}{g(X,X)} < \infty$.
Let $\Psi \in \Gamma_c(\Sigma \mathcal M_{\vert \mathcal H})$ and $(e_1,\ldots,e_{d})$ a local orthonormal frame of $\mathcal H$ at a point $x \in \mathcal H$. At this point, one has, using Proposition~\ref{hypersurface}, (3),
\begin{align*}
\vert \nabla^\mathcal H \zeta\Psi \vert^2 &= \sum\limits_{k=1}^{d}  \vert \zeta \left( \nabla^\mathcal M_{e_k} \Psi \right) - \frac{1}{2} W_\mathcal H e_k \cdot \zeta( \Psi ) \vert^2_{L^2(\mathcal H)} \\
&\leq 2 \vert \zeta(\overline \nabla^\mathcal M \Psi) \vert^2_{L^2(\mathcal H)} + \frac{1}{2} \sum\limits_{k=1}^{d} \vert W_\mathcal H e_k \cdot \nu \cdot \Psi \vert^2_{L^2(\mathcal H)} \\
&\leq 2 \vert \overline \nabla^\mathcal M \Psi \vert^2_{L^2(\mathcal H)} + \frac{d}{2} \left\Vert W_\mathcal H \right\Vert_\infty^2 \left\Vert \Psi \right\vert^2_{L^2(\mathcal H)}
\end{align*}
and then, by integration we obtain
\begin{align*}
\Vert \zeta \Psi \Vert^2_{H^1(\mathcal H)} &= \Vert \zeta \Psi \Vert^2_{L^2(\mathcal H)} + \Vert \nabla^\mathcal H \zeta\Psi \Vert^2_{L^2(\mathcal H)} \\
&\leq \Vert \Psi \Vert^2_{L^2(\mathcal H)} + 2 \Vert \overline \nabla^\mathcal M \Psi \Vert^2_{L^2(\mathcal H)} + \frac{d}{2} \left\Vert W_\mathcal H \right\Vert_\infty^2 \left\Vert \Psi \right\Vert^2_{L^2(\mathcal H)} \\
&\leq C_1 \Vert \Psi \Vert^2_{H^1(\mathcal H)},
\end{align*}
where $C_1 > 0$. The same argument shows that there exists $C_2 > 0$ such that for all $\Psi \in \zeta(\Gamma_c(\Sigma \mathcal M_{\vert \mathcal H}))$, one has $\Vert \zeta^{-1} \Psi \Vert^2_{H^1(\mathcal H)} \leq C_2 \Vert \Psi \Vert^2_{H^1(\mathcal H)}$.
\end{proof}

\section{Definition of the operators} \label{SectionOperators}

\subsection{The generalized MIT Bag Dirac operator} In this section, we would like to give a generalization of the MIT Bag Dirac operator in the context of spin manifolds. Our construction will be done by considering the Riemannian product of a manifold $\mathcal N$ with $\RR$ and interpreting the operator as the extrinsic Dirac operator on the hypersuface $\mathcal N \times \lbrace 0 \rbrace$, modified by a Clifford multiplication by the normal vector field. Since the hypersurface $\mathcal N$ is totally geodesic, this operator is the so-called Dirac-Witten operator.

We first introduce the context of our study. Let $n \in \NN$ and let $(\mathcal N, g)$ be a $n$-dimensional smooth Riemannian manifold which is spin and complete.

Let $(\mathcal C, g_\mathcal C) := (\mathcal N, g) \times (\RR, \dd t^2)$ be the Riemannian product of $\mathcal N$ and $\RR$. We identify $\mathcal N$ with $\mathcal N \times \lbrace 0 \rbrace$. Let $p_1$ be the projection on $\mathcal N$ in $\mathcal C$. We endow $\mathcal C$ with a spin structure as follows: we denote by $P$ the pull-back of the bundle $P_{\Spin_n} \mathcal N$ by the projection $p_1$ on $\mathcal N$, and then the extension of $P$ to $\Spin_{n+1}$ is a spin structure on $\mathcal C$ (see \cite[Section 5]{BGM} for example).

We denote by $\nu$ the outer unit normal vector field on $\mathcal N \times \lbrace 0 \rbrace$ in $\mathcal C$, i.e. the vector field $(0, \ddt)$. By construction, the Weingarten tensor of $\mathcal N$ vanishes, so the mean curvature $H_\mathcal N$ is zero.

We denote by $\iota$ be the isomorphism given by in Proposition~\ref{hypersurface}, in the particular case where $\mathcal M := \mathcal C$ and $\mathcal H := \mathcal N$. It is important to remark that the spin structure originaly defined on $\mathcal N$ and the spin structure inherited by $\mathcal N$ from the one of $\mathcal C$ according to Proposition~\ref{hypersurface} are the same.

Let $\mathcal K$ be a submanifold of $\mathcal N$ with dimension $n$, and assume that $\mathcal K$ is compact with non-empty boundary $\partial \mathcal K$. From these assumptions, we know that $\partial \mathcal K$ is oriented. Thus, we denote by
\[
\mu : \Sigma \mathcal N \rightarrow \begin{cases} \Sigma (\partial \mathcal K) & \textrm{if $n$ is odd} \\ \Sigma (\partial \mathcal K) \oplus \Sigma (\partial \mathcal K) & \textrm{if $n$ is even} \end{cases}
\]
the isomorphism given by Proposition~\ref{hypersurface} and by $\nn$ the unit outer normal vector field over $\partial \mathcal K$ viewed as a submanifold of $\mathcal N$.

It is well-known that the Dirac operator on a Riemannian complete manifold without boundary is essentially self-adjoint \cite[Proposition 1.3.5]{Gin}. Thus, we will still denote by $\mathcal D^\mathcal N$, $\slashed D^\mathcal N$, $\mathcal D^{\partial \mathcal K}$ and $\slashed D^{\partial \mathcal K}$ the closures of these operators.

In all this text, we will simply write $W$ for $W_{\partial \mathcal K}$ and $H$ for $H_{\partial \mathcal K}$.

Let $m \in \RR$. To any $\Psi \in \Gamma(\Sigma \mathcal C_{\vert \mathcal N})$, we associate an element $\hat\Psi_m$ of $\Gamma(\Sigma \mathcal C)$ defined for $(x,t) \in \mathcal C$ by $\hat\Psi_m(x,t) = e^{i m t} \Tilde\Psi(x,t)$ where $\Tilde\Psi(x,t)$ is obtained by parallel transport of $\Psi(x)$ along the curves $s \mapsto (x,s)$.

Let $(e_1,\ldots,e_n)$ be a local orthonormal frame at $x \in \mathcal N$. Thus, we compute
\begin{align*}
(\slashed D^\mathcal C \hat\Psi_m) (x) &= \left(\sum\limits_{j=1}^n e_j \cdot \nabla^{\mathcal C}_{e_j} \hat\Psi_m + \ii m \nu \cdot \hat\Psi_m\right)(x,0) \\
&= \left(-\sum\limits_{j=1}^n \nu \cdot \nu \cdot e_j \cdot \nabla^{\mathcal C}_{e_j} \Psi \right)(x) + \ii m \nu \cdot \Psi(x) \\
&= \nu \cdot \left(\mathcal{D}^\mathcal N + im \right) \Psi (x),
\end{align*}
where the extrinsic Dirac operator $\mathcal D^\mathcal N$ is the operator given by the expression \eqref{extrinsicexpression}.
The operator we obtain in the last line is the operator we want to study, as it can be interpreted as a Dirac operator with a mass.

We remark that the above construction can be done by restricting the domain of the operator to $\mathcal K$, and then we introduce the generalized MIT Bag operator
\begin{equation} \label{tildeA}
\Tilde{A}_m := \nu \cdot \left(\mathcal{D}^\mathcal N + im \right), \; \dom(\Tilde{A}_m) := \left\lbrace \Psi \in \Gamma_c (\Sigma \mathcal C_{\vert \mathcal K}), \ii \nu\cdot \nn \cdot \Psi = \Psi \textrm{ on } \partial \mathcal K \right\rbrace.
\end{equation}

\begin{rem}
One can observe that in the case of euclidean spaces, the expression \eqref{tildeA}  coincides with \cite[Equation (1)]{MOP}, which is already a generalization of the MIT Bag Dirac operator in dimension 3 (see \cite[Equation 1.1]{ALTR18}). Indeed, the only difference comes from the convention on the Clifford multiplication, because we have the identity $X \cdot X = - \vert X \vert^2$.
\end{rem}

\begin{rem} \label{operatorA}
It is easily seen that the operator $\Tilde A_m$ is symmetric since $\nu$ anti-commute with $\mathcal D^\mathcal N$ (see \cite[Proposition 1]{HMZ} for the general case, or simply remark that $\nu$ is parallel in our framework). Since symmetric operators are closable, we denote by $A_m$ its closure.
\end{rem}

Actually, the boundary condition we put in the domain of the operator is not the Lorentzian MIT Bag boundary condition as stated by the physicists \cite{Joh} because of the Clifford multiplication by $\nu$. However, this is consistent with the boundary conditions imposed in \cite{ALTR16}, \cite{ALTR18} and \cite{MOP}. To understand this, we can give another interpretation of the operator $\Tilde{A}_m$ which seems more physical, and appears to give an unitary equivalent operator.

Until the end of this section, we will deal with Clifford algebra and spin structures in the Lorentzian case. We refer to \cite[section 2]{BGM} for a detailed presentation.

One can endow $\mathcal C$ with the Lorentzian metric $g - \dd t^2$. There is a $\Spin_0$-structure over $\mathcal C$ given by the pull-back of the $\Spin$-structure on $\mathcal N$ and extending the fiber. One can construct the associated spinor bundle $\Sigma_L \mathcal C$, whose Clifford multiplication will be denoted by " $\cdot_L$". Moreover, we write $\nabla^L$ for the covariant derivative on $\Sigma_L \mathcal C$, and we denote by $\langle \cdot, \cdot \rangle_L$ the Hermitian product on this spinor bundle. We recall that this inner product is not necessarly definite. In this framework, the Dirac operator with a mass on $\Sigma_L \mathcal C$ admits the pointwise expression
\begin{equation}
\slashed D^\mathcal C_L \Psi := \ii \left(- \nu \cdot_L \nabla^L_\nu \Psi + \sum\limits_{j = 1}^n e_j \cdot_L \nabla^L_{e_j} \Psi \right) - m \Psi
\end{equation}
where $(e_1, \ldots, e_n)$ is any orthonormal frame on $\mathcal N$ (see \cite[section 2]{BGM}). Consequently, the Dirac equation $\slashed D^\mathcal C_L \Psi = 0$ is equivalent to
\begin{equation}
\ii \nabla^L_\nu \Psi = \ii \sum\limits_{j = 1}^n \nu \cdot_L e_j \cdot_L \nabla^L_{e_j} \Psi - m \, \nu \cdot_L \Psi.
\end{equation}
Now, if we take $\Psi(x,t) = e^{i \omega t} \phi(x)$ for all $(x,t) \in \mathcal C$, where $\phi$ is parallel along the time lines, we arrive at
\begin{equation} \label{intermediaire1}
\omega \phi = - \ii \sum\limits_{j = 1}^n \nu \cdot_L e_j \cdot_L \nabla^L_{e_j} \phi + m \, \nu \cdot_L \phi.
\end{equation}

We have the counterpart of Proposition~\ref{hypersurface} for the Lorentzian case. Namely, the spinor bundle $\Sigma_L \mathcal C$ can be identified to one or two copies of $\Sigma \mathcal N$ as in the Riemannian case. 

\begin{prop}
There is an isomorphism $\iota_L$ from $\Sigma_L \mathcal C_{\vert \mathcal N}$ into $\Sigma \mathcal N$ if $n$ is even and into $\Sigma \mathcal N \oplus \Sigma \mathcal N$ if $n$ is odd such that:
\begin{itemize}
\item $\iota_L (- \ii X \cdot_L \nu \cdot_L \Psi) = X \cdot \iota_L \Psi$ for all $X \in T \mathcal N$ and $\Psi \in \Sigma_L \mathcal C$,
\item $\iota_L \nu \cdot_L = \omega_n^\CC \cdot \iota_L$ when $n$ is even, and $\left( \begin{matrix} 0 & \mathrm{Id} \\ \mathrm{Id} & 0 \end{matrix} \right)$ when $n$ is odd.
\item $\langle \iota_L \Psi, \iota_L \Phi \rangle = \langle \Psi, \nu \cdot_L \Phi \rangle_L$ for all $\Phi, \Psi \in \Sigma_L \mathcal C_{\vert \mathcal N}$,
\item $\iota_L \nabla^L_X \Psi = \nabla^\mathcal N_X \iota_L \Psi$ for $X \in T \mathcal N$ and $\Psi \in \Sigma_L \mathcal C_{\vert \mathcal N}$.
\end{itemize}
\end{prop}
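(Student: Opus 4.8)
The plan is not to build $\iota_L$ from scratch but to reduce the statement to Proposition~\ref{hypersurface}, which in the Riemannian product $(\mathcal C, g_\mathcal C)$ already furnishes the isomorphism $\iota$ from $\Sigma\mathcal C_{\vert\mathcal N}$ onto $\Sigma\mathcal N$ (if $n$ is even) or $\Sigma\mathcal N\oplus\Sigma\mathcal N$ (if $n$ is odd), with the Clifford compatibility, unitarity, the Gauss-type formula for the connection, and the action of $\ii\nu\cdot{}$; moreover, since $\mathcal N$ is totally geodesic in $\mathcal C$ (so $W_\mathcal N = H_\mathcal N = 0$), Proposition~\ref{hypersurface}~(3) simplifies to $\iota(\nabla^\mathcal C_X\Psi) = \nabla^\mathcal N_X\iota(\Psi)$ for $X\in T\mathcal N$. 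The bridge to the Lorentzian picture is the observation that $\Sigma_L\mathcal C$ and $\Sigma\mathcal C$ are associated to the \emph{same} principal bundle $P$ (the pull-back of $P_{\Spin_n}\mathcal N$), extended to $\Spin_{n+1}$ in one case and to $\Spin_0(n,1)$ in the other, and that the complex Clifford algebra $\CCl_{n+1}$ is insensitive to the signature of the underlying real quadratic form; hence there is a natural bundle isomorphism between the two spinor bundles, and $\iota_L$ will be obtained by composing it with $\iota$.

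\textbf{The identification and the verification.} Concretely, following \cite[section 2]{BGM}, I would produce a vector-bundle isomorphism $\Theta\colon \Sigma\mathcal C \to \Sigma_L\mathcal C$ such that $\Theta(X\cdot\Psi) = X\cdot_L\Theta(\Psi)$ for $X\in T\mathcal N$ (spacelike directions behave identically in both signatures), $\nu\cdot_L\Theta(\Psi) = \ii\Theta(\nu\cdot\Psi)$ (forced by $\nu\cdot_L\nu\cdot_L = +\mathrm{Id}$, $\nu$ being timelike for $g - \dd t^2$, whereas $\nu\cdot\nu\cdot = -\mathrm{Id}$; this sign is the one making the factor $-\ii$ of the first bullet come out right), $\langle\Theta\Psi, \nu\cdot_L\Theta\Phi\rangle_L = \langle\Psi,\Phi\rangle$ (this is precisely how the indefinite form $\langle\cdot,\cdot\rangle_L$ is normalised against the definite one via the timelike vector $\nu$), and $\Theta(\nabla^\mathcal C_Y\Psi) = \nabla^L_Y\Theta(\Psi)$ for all $Y\in T\mathcal C$ (the Levi-Civita connections of $g + \dd t^2$ and $g - \dd t^2$ on the product coincide, hence so do their lifts to $P$ and the induced spin connections). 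I then set $\iota_L := \iota\circ\Theta^{-1}$, which maps into the required target. Writing $\Psi = \Theta(\psi)$, the four properties follow mechanically: $-\ii X\cdot_L\nu\cdot_L\Theta(\psi) = \Theta(X\cdot\nu\cdot\psi)$ together with Proposition~\ref{hypersurface}~(1) gives the first; $\iota_L(\nu\cdot_L\Psi) = \iota(\ii\nu\cdot\psi)$ together with Proposition~\ref{hypersurface}~(4) gives the second; unitarity of $\iota$ (Proposition~\ref{hypersurface}~(2)) and the third property of $\Theta$ give $\langle\iota_L\Psi,\iota_L\Phi\rangle = \langle\Psi,\nu\cdot_L\Phi\rangle_L$; and $\iota_L(\nabla^L_X\Psi) = \iota(\nabla^\mathcal C_X\psi) = \nabla^\mathcal N_X\iota_L(\Psi)$ for $X\in T\mathcal N$ from the last property of $\Theta$ and Proposition~\ref{hypersurface}~(3) with $W_\mathcal N = 0$.

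\textbf{Main obstacle.} The genuinely delicate part is the first one: rigorously producing $\Theta$ with all its intertwining properties, which amounts to setting up Lorentzian spin geometry on the product $\mathcal C$ and keeping careful track of signs and of the single factor of $\ii$ relating the timelike Clifford multiplications in the two signatures — in particular the relation between the indefinite Hermitian form $\langle\cdot,\cdot\rangle_L$ and the definite one. I would either cite \cite[section 2]{BGM} for this, or, for a self-contained argument, mirror the proof of Proposition~\ref{hypersurface} (cf. \cite[Proposition 1.4.1]{Gin}, \cite[Section 2.4]{BH3M}) directly in the Lorentzian category: the endomorphisms $\{-\ii X\cdot_L\nu\cdot_L : X\in T\mathcal N\}$ satisfy the Clifford relations of $(\RR^n,g)$ and hence generate a subalgebra isomorphic to $\CCl_n$, so the spinor module of $\CCl_{n+1}$ restricts to one or two copies of that of $\CCl_n$; this yields $\iota_L$ with exactly the stated properties.
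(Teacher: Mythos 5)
Your argument is correct, and it reaches the same four properties the paper proves, but it is organized differently. The paper works fiberwise in representation theory: it turns $(\rho_{n+1},\Sigma_{n+1})$ into a $\CCl_{n,1}$-module by setting $\rho_{n,1}(e_{n+1}):=\ii\rho_{n+1}(e_{n+1})$, defines $\langle\cdot,\cdot\rangle_L:=\langle\cdot,\rho_{n,1}(e_{n+1})\cdot\rangle$, observes that $\rho(x):=-\ii\rho_{n,1}(x\cdot e_{n+1})$ satisfies the Clifford relations of $\CCl_n$, and then constructs the intertwiner $U$ explicitly, with separate treatments of $n$ even and odd, before transporting everything to the manifold via the common pull-back structure and total geodesy. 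Your signature-change map $\Theta$ is exactly this step in disguise: under the identification of the two associated bundles (same principal bundle $P$, representations agreeing on $\Spin_n$), $\Theta$ is the identity of the underlying bundle and the relations $\nu\cdot_L=\ii\,\nu\cdot$, $X\cdot_L=X\cdot$, $\langle\cdot,\cdot\rangle_L=\langle\cdot,\ii\nu\cdot\,\cdot\rangle$ are the paper's definitions of $\rho_{n,1}$ and $\langle\cdot,\cdot\rangle_L$. What you do differently is to set $\iota_L:=\iota\circ\Theta^{-1}$ and deduce all four bullets formally from Proposition~\ref{hypersurface} (with $W_{\mathcal N}=0$), instead of rebuilding the intertwiner; your computations for the four properties are correct, and your fallback (``the endomorphisms $-\ii X\cdot_L\nu\cdot_L$ satisfy the Clifford relations of $\RR^n$'') is literally the paper's $\rho$. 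Your route is shorter and recycles Proposition~\ref{hypersurface}; the paper's route yields the explicit fiberwise formulas (in particular the volume-form identity $U\rho_{n,1}(e_{n+1})U^{-1}=\rho_n(\omega_n^\CC)$ for $n$ even) by direct computation rather than by quoting Proposition~\ref{hypersurface}~(4). In effect you are proving Corollary~\ref{identification1} first and deducing the Proposition from it, whereas the paper goes the other way.

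One refinement: the connection compatibility of $\Theta$ does not follow from the coincidence of the two Levi-Civita connections alone. In the local formula for the spin connection, a nonzero mixed connection form $g(\nabla_X e_i,\nu)$ would be multiplied by $e_i\cdot_L\nu\cdot_L=\ii\,e_i\cdot\nu\cdot$ on one side and by $e_i\cdot\nu\cdot$ on the other, so the two spin connections would differ. What saves you is that on the product $\nu$ is parallel, so all mixed terms vanish and only spacelike-spacelike terms remain, where the two Clifford actions agree; this is the same use of total geodesy the paper makes when it invokes \cite[formula 2.5]{BGM} for the last bullet. With that point made explicit, your proof is complete.
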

\begin{proof}
We recall that the notations for Clifford algebras were introduced in section~\ref{SectionClifford}.

Now, we consider the space $\RR^{n,1}$ endowed with the Lorentzian quadratic form of signature $(n,1)$. Let $(e_1, \ldots, e_{n+1})$ be the canonical basis of $\RR^{n,1}$, so that $e_{n+1}$ is timelike. The Clifford algebra over this Lorentzian space is denoted by $\CCl_{n,1}$. We turn the representation $(\rho_{n+1}, \Sigma_{n+1})$ into a complex representation of $\CCl_{n,1}$ $(\rho_{n,1}, \Sigma_{n+1})$ by setting
\[
\rho_{n,1} (e_i) := \rho_{n+1} (e_i) \textrm{ for $1 \le i \le n$, and } \rho_{n,1} (e_{n+1}) := \ii \rho_{n+1} (e_{n+1}).
\]
We remark that when $n$ is even $i^\frac{n}{2} \rho_{n,1} (e_1 \cdot \ldots \cdot e_{n+1})$ acts as the identity.

Following \cite[section 2]{BGM}, the Hermitian product $\langle \cdot, \cdot \rangle_L$ on $\Sigma_{n+1}$ for the Lorentzian structure is defined for all $\psi, \phi \in \Sigma_{n+1}$ by
\[
\langle \psi, \phi \rangle_L := \langle \psi, \rho_{n,1}(e_{n+1}) \phi \rangle
\]
where $\langle \cdot, \cdot \rangle$ is the natural $\Spin_{n+1}$-invariant Hermitian product on $\Sigma_{n+1}$.

One can define a representation $\rho$ of $\CCl_n$ over the space $\Sigma_{n+1}$ by
\begin{equation*}
\rho(x) = - \ii \rho_{n,1}(x \cdot e_{n+1}) \quad \textrm{for all $x \in \RR^n$}.
\end{equation*}
For $n$ even, this representation is equivalent to $(\rho_n, \Sigma_n)$, so we have an isomorphism $U : \Sigma_{n+1} \rightarrow \Sigma_n$ such that $\rho_n U = U \rho$. Moreover, since $i^\frac{n}{2} \rho_{n,1} (e_1 \cdot \ldots \cdot e_{n+1})$ acts as the identity on $\Sigma_{n+1}$, an easy computation gives $U \rho_{n,1}(e_{n+1}) U^{-1} = \rho_n (\omega_n^\CC)$.
%\begin{align*}
%U \rho_{n,1}(e_{n+1}) U^{-1} =& i^\frac{n}{2} U \rho_{n,1}(e_1 \ldots e_{n+1}) \rho_{n,1}(e_{n+1}) U^{-1} \\
%&= i^\frac{n}{2} i^{-n} (-1)^\frac{n(n+1)}{2} U \rho (e_1 \ldots e_n) U^{-1} \\
%&= i^\frac{n}{2} (-1)^\frac{n}{2} (-1)^\frac{n(n+1)}{2} \rho_n (e_1 \ldots e_n) \\
%&= (-1)^\frac{n (n+2)}{2} \rho_n (\omega_n^\CC) = \rho_n (\omega_n^\CC).
%\end{align*}

We still denote by $\langle \cdot, \cdot \rangle$ the Hermitian product on $\Sigma_n$ and we remark that $U$ can be chosen unitary for this inner product. Thus, for all $\psi, \phi \in \Sigma_{n+1}$ one has
\[
\langle U \psi, U \phi \rangle = \langle \psi, \phi \rangle = \langle \psi, \rho_{n,1}(e_{n+1})^2 \phi \rangle = \langle \psi, \rho_{n,1}(e_{n+1}) \phi \rangle_L.
\]

For $n$ odd, the restriction of $\rho$ to $\Sigma_{n+1}^+$ is equivalent to $(\rho_n, \Sigma_n)$, so we have an isomorphism $U_0: \Sigma_{n+1}^+ \rightarrow \Sigma_n$ such that $\rho_n U_0 = U_0 \rho$. In addition, $\rho_{n,1} (e_{n+1})$ is an isomorphism from $\Sigma_{n+1}^\pm$ into $\Sigma_{n+1}^\mp$, so we set
\[
U : \Sigma_{n+1} = \Sigma_{n+1}^+ \oplus \Sigma_{n+1}^- \rightarrow \Sigma_n \oplus \Sigma_n, \: U := (U_0 \oplus U_0) (\mathrm{Id} \oplus \rho_{n,1} (e_{n+1})).
\]
Easy computations give $U \rho(x) U^{-1} = \rho_n(x) \oplus - \rho_n(x)$ for all $x \in \RR^n \subset \RR^{n+1}$ and $U \rho_{n,1} (x) U^{-1} (\psi_1, \psi_2) = (\psi_2, \psi_1)$ for all $(\psi_1, \psi_2) \in \Sigma_n \oplus \Sigma_n$.

The Hermitian product on $\Sigma_n$ extends to $\Sigma_n \oplus \Sigma_n$ and this extension is still denoted by $\langle \cdot, \cdot \rangle$. The isomorphism $U$ can be chosen unitary for this inner product, and one has for all $\psi, \phi \in \Sigma_{n+1}$
\[
\langle U \psi, U \phi \rangle = \langle \psi, \phi \rangle = \langle \psi, \rho_{n,1}^2 \phi \rangle = \langle \psi, \rho_{n,1}  \phi \rangle_L.
\]

Now, all these properties transport to manifolds by identifying $e_{n+1}$ with $\nu$ since the $\Spin_0$ structure over $\mathcal C$ is defined by pull-back of the $\Spin$ structure over $\mathcal N$.

The last point follows from the explicit formula of the covariant derivative on spinor \cite[formula 2.5]{BGM} and the fact that $\mathcal N$ is totally geodesic in $\mathcal C$.
\end{proof}

We infer that $\Sigma \mathcal C_{\vert \mathcal N}$ and $\Sigma_L \mathcal C_{\vert \mathcal N}$ are both isomorphic to $\Sigma \mathcal N$ if $n$ is even and to $\Sigma \mathcal N \oplus \Sigma \mathcal N$ if $n$ is odd, so we can identify them via the isomorphism $\iota^{-1} \iota_L$.

\begin{cor} \label{identification1}
The isomorphism $\iota^{-1} \iota_L : \Sigma_L \mathcal C \rightarrow \Sigma  \mathcal C$ satisfies:
\begin{itemize}
\item $\langle (\iota^{-1} \iota_L) \Psi, \ii \nu \cdot (\iota^{-1} \iota_L) \Phi \rangle = \langle \Psi, \Phi \rangle_L$ for all $\Psi, \Phi \in \Sigma_L \mathcal C$.
\item $\nabla^\mathcal C_X (\iota^{-1} \iota_L) \Psi = (\iota^{-1} \iota_L) \nabla^L_X \Psi$ for all $X \in T \mathcal N$ and $\Psi \in \Gamma(\Sigma_L \mathcal C)$.
\item $X \cdot (\iota^{-1} \iota_L) \Psi = (\iota^{-1} \iota_L) (X \cdot_L \Psi)$ for all $X \in T \mathcal N$
\item $\ii \nu \cdot (\iota^{-1} \iota_L) = (\iota^{-1} \iota_L) \nu \cdot_L$.
\end{itemize}
\end{cor}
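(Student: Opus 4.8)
The plan is to read each of the four identities as a pointwise statement over $\mathcal N$ and to obtain it mechanically by composing the structural properties of $\iota$ (Proposition~\ref{hypersurface} with $\mathcal M = \mathcal C$, $\mathcal H = \mathcal N$) with those of $\iota_L$ (the Lorentzian counterpart stated just above). Since $\iota$ and $\iota_L$ are a priori only defined on the restricted bundles $\Sigma\mathcal C_{\vert\mathcal N}$ and $\Sigma_L\mathcal C_{\vert\mathcal N}$, I would first note that all the objects involved extend to $\mathcal C$ by parallel transport along the time lines $s\mapsto(x,s)$, so it suffices to work on $\mathcal N$. I would also record once and for all the elementary facts to be used repeatedly: $\mathcal N$ is totally geodesic in $\mathcal C$, so its Weingarten tensor vanishes; $\iota$ is unitary; $\nu\cdot\nu\cdot = -\mathrm{Id}$ on $\Sigma\mathcal C$ since $\nu$ is a unit spacelike normal; and $\nu\cdot_L\nu\cdot_L = \mathrm{Id}$ on $\Sigma_L\mathcal C$, because by construction $\rho_{n,1}(e_{n+1}) = \ii\,\rho_{n+1}(e_{n+1})$ and hence $\rho_{n,1}(e_{n+1})^2 = -\rho_{n+1}(e_{n+1})^2 = \mathrm{Id}$.

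I would prove the fourth identity first, as the other three lean on it. Item (4) of Proposition~\ref{hypersurface} gives $\iota(\ii\nu\cdot\phi) = \omega_n^\CC\cdot\iota(\phi)$ when $n$ is even (and $\iota(\ii\nu\cdot\phi) = \left(\begin{smallmatrix}0 & \mathrm{Id}\\ \mathrm{Id} & 0\end{smallmatrix}\right)\iota(\phi)$ when $n$ is odd), while the second bullet of the Lorentzian proposition gives $\iota_L(\nu\cdot_L\Psi) = \omega_n^\CC\cdot\iota_L(\Psi)$ (resp.\ the same swap matrix applied to $\iota_L(\Psi)$). Taking $\phi = (\iota^{-1}\iota_L)\Psi$, so that $\iota(\phi) = \iota_L(\Psi)$, the two right-hand sides coincide, and applying $\iota^{-1}$ yields $\ii\nu\cdot(\iota^{-1}\iota_L) = (\iota^{-1}\iota_L)\,\nu\cdot_L$. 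The second identity I would obtain in the same spirit, even more directly: item (3) of Proposition~\ref{hypersurface} with vanishing Weingarten tensor reads $\iota(\nabla^\mathcal C_X\phi) = \nabla^\mathcal N_X\iota(\phi)$ for $X\in T\mathcal N$, the fourth bullet of the Lorentzian proposition reads $\iota_L(\nabla^L_X\Psi) = \nabla^\mathcal N_X\iota_L(\Psi)$, and composing these two intertwining relations through $\iota^{-1}$ gives $\nabla^\mathcal C_X(\iota^{-1}\iota_L) = (\iota^{-1}\iota_L)\,\nabla^L_X$.

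For the first identity I would chain, in this order: unitarity of $\iota$; the fourth identity just proven; the third bullet of the Lorentzian proposition, $\langle\iota_L\Psi,\iota_L\Phi\rangle = \langle\Psi,\nu\cdot_L\Phi\rangle_L$; and $\nu\cdot_L\nu\cdot_L = \mathrm{Id}$. This collapses $\langle(\iota^{-1}\iota_L)\Psi,\ii\nu\cdot(\iota^{-1}\iota_L)\Phi\rangle$ to $\langle\Psi,\Phi\rangle_L$. The third identity is the only one carrying a genuine (though still short) computation: I would start from item (1) of Proposition~\ref{hypersurface}, $X\cdot\iota(\theta) = \iota(X\cdot\nu\cdot\theta)$ for $X\in T\mathcal N$ (here the left Clifford product is that of $\Sigma\mathcal N$ and the right one that of $\Sigma\mathcal C$), use it with $\theta$ replaced by $\nu\cdot\theta$ together with $\nu\cdot\nu\cdot = -\mathrm{Id}$ to isolate the Clifford product of $\Sigma\mathcal C$, then substitute the remaining $\nu\cdot$ via the fourth identity and invoke the first bullet of the Lorentzian proposition, $X\cdot\iota_L(\Psi) = \iota_L(-\ii X\cdot_L\nu\cdot_L\Psi)$; after cancelling the resulting factors of $\ii$ and using $\nu\cdot_L\nu\cdot_L = \mathrm{Id}$ one arrives at $X\cdot(\iota^{-1}\iota_L) = (\iota^{-1}\iota_L)(X\cdot_L)$.

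I do not expect any conceptual difficulty: the whole content is already in the two propositions. The only point requiring care is the bookkeeping — there are three Clifford multiplications in play (those of $\Sigma\mathcal C$, of $\Sigma\mathcal N$, and the Lorentzian one on $\Sigma_L\mathcal C$) and two Hermitian products, and they differ from one another by signs and by the factor of $\ii$ built into the definition $\rho_{n,1}(e_{n+1}) = \ii\,\rho_{n+1}(e_{n+1})$ — together with the systematic need to run the even and odd $n$ cases in parallel, the volume form $\omega_n^\CC$ playing in the even case the role that the off-diagonal swap matrix plays in the odd case.
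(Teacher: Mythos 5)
Your proposal is correct and is exactly the argument the paper intends: the corollary is left as an immediate consequence of Proposition~\ref{hypersurface} (with $W_{\mathcal N}=0$ since $\mathcal N$ is totally geodesic) and the Lorentzian proposition, and your composition of their four properties, with the sign bookkeeping $\nu\cdot\nu\cdot=-\mathrm{Id}$ versus $\nu\cdot_L\nu\cdot_L=\mathrm{Id}$ coming from $\rho_{n,1}(e_{n+1})=\ii\,\rho_{n+1}(e_{n+1})$, checks out in all four items, including the only slightly computational one, $X\cdot(\iota^{-1}\iota_L)=(\iota^{-1}\iota_L)(X\cdot_L)$.
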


Under the identification of Corollary~\ref{identification1}, equation~\eqref{intermediaire1} reads
\begin{equation} \label{intermediaire2}
\omega \phi = \sum\limits_{j = 1}^n \nu \cdot e_j \cdot \nabla^\mathcal C_{e_j} \phi + \ii m \, \nu \cdot \phi = (- \mathcal D^\mathcal N + \ii m \, \nu \cdot) \phi.
\end{equation}
This is an eigenvalue equation, and it is now natural to look at the spectrum of the operator defined by the right-hand side. We just need to add a boundary condition to define a generalized MIT Bag operator. Since the physical condition imposed in \cite{Joh} is that the flux $\langle \phi, \nn \cdot_L \phi \rangle_L$ of the quantum field vanishes at the boundary, we consider the MIT Bag boundary condition $\ii \nn \cdot \phi = \phi$. One has
\begin{align*}
- \langle \phi, \phi \rangle_L = \langle \phi, - \ii \nn \cdot_L \phi \rangle_L = \langle \ii \nn \cdot_L \phi, \phi \rangle_L = \langle \phi, \phi \rangle_L,
\end{align*}
and we conclude that $\langle \phi, - \ii \nn \cdot_L \phi \rangle_L = 0$, so the condition of the physical model is verified. We can now define another generalization of the MIT Bag Dirac operator by
\begin{equation} \label{operatorAhat}
\Hat A_m := \mathcal D^\mathcal N + \ii m \, \nu \cdot, \quad \dom (\Hat A_m) = \left\lbrace \Psi \in \Gamma_c (\Sigma \mathcal C_{\vert \mathcal K}), \ii \nn \cdot \Psi = \Psi \right\rbrace.
\end{equation}
The change of sign for the mass in \eqref{operatorAhat} compared to \eqref{intermediaire2} comes from the fact that we consider a model where $m \rightarrow - \infty$ (see \cite[section 1.3.3]{ALTR16} for more explanations).

We have now two candidates operators for the generalization of the MIT Bag Dirac operator. However, one can remark that the difference between $\Tilde A_m$ and $\Hat A_m$ is only a matter of how the Clifford product is defined, and the two operators are unitary equivalent.

\begin{prop} \label{equivalenceofA}
The operators $\Tilde A_m$ and $\Hat A_m$ are unitary equivalent. Moreover, the unitary operator involved is parallel with respect to $\nabla^\mathcal C$.
\end{prop}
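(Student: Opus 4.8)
The plan is to write down an explicit parallel unitary bundle map of $\Sigma \mathcal C_{\vert \mathcal K}$ intertwining the two operators; since the only difference between $\Tilde A_m$ and $\Hat A_m$ is a Clifford multiplication by $\nu$, the natural candidate is the fibrewise Clifford multiplication
\[
U := \tfrac{1}{\sqrt 2}\left(\mathrm{Id} + \nu \cdot\right) \quad \text{on } \Sigma \mathcal C_{\vert \mathcal K}.
\]
First I would record its elementary properties. As $\nu$ is a unit vector, $\nu \cdot$ is skew-adjoint for $\langle \cdot, \cdot \rangle$ and $(\nu\cdot)^2 = -\mathrm{Id}$, so $U^* = \tfrac{1}{\sqrt 2}(\mathrm{Id} - \nu\cdot)$ and $U^* U = \tfrac 12 (\mathrm{Id} - \nu\cdot)(\mathrm{Id} + \nu\cdot) = \mathrm{Id}$; hence $U$ is a unitary operator on $L^2(\Sigma \mathcal C_{\vert \mathcal K})$. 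Moreover $\nu$ is parallel for $\nabla^{\mathcal C}$ (Remark~\ref{operatorA}), so $U$ is parallel; in particular $U$ commutes with $\nabla^{\mathcal C}$, preserves $\Gamma_c(\Sigma \mathcal C_{\vert \mathcal K})$, and preserves the $H^1$-regularity entering the domains of the closures.

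Next I would establish the intertwining $U^* \Tilde A_m U = \Hat A_m$. The two inputs are that Clifford multiplication by $\nu$ anticommutes with $\mathcal D^\mathcal N$ (Remark~\ref{operatorA}) and the identity $\nu \cdot U^* = U$, which is immediate from $(\nu\cdot)^2 = -\mathrm{Id}$. Writing $\Tilde A_m = \nu\cdot \mathcal D^\mathcal N + \ii m\, \nu\cdot$, the anticommutation gives $\mathcal D^\mathcal N U = U^* \mathcal D^\mathcal N$, whence $\nu\cdot \mathcal D^\mathcal N U = (\nu\cdot U^*) \mathcal D^\mathcal N = U \mathcal D^\mathcal N$ and so $U^* (\nu\cdot \mathcal D^\mathcal N) U = \mathcal D^\mathcal N$; a direct Clifford computation gives $U^* (\nu\cdot) U = \nu\cdot$. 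Therefore
\[
U^* \Tilde A_m U = U^*(\nu\cdot \mathcal D^\mathcal N) U + \ii m\, U^*(\nu\cdot) U = \mathcal D^\mathcal N + \ii m\, \nu\cdot = \Hat A_m .
\]

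Finally I would match the domains. If $\Psi \in \Gamma_c(\Sigma \mathcal C_{\vert \mathcal K})$ satisfies $\ii \nn\cdot \Psi = \Psi$, i.e. $\nn\cdot \Psi = -\ii \Psi$, on $\partial \mathcal K$, then since $\nn \perp \nu$ one has $\nn\cdot U = U^* \nn\cdot$, so $\nn\cdot(U\Psi) = -\ii U^* \Psi$ and hence $\ii \nu\cdot \nn\cdot(U\Psi) = \nu\cdot U^* \Psi = U\Psi$; thus $U\Psi \in \dom(\Tilde A_m)$. The symmetric computation, based on $\nn\cdot U^* = U \nn\cdot$, shows that $U^*$ maps $\dom(\Tilde A_m)$ into $\dom(\Hat A_m)$, so $U$ restricts to a bijection $\dom(\Hat A_m) \to \dom(\Tilde A_m)$. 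Together with the displayed identity this yields $\Tilde A_m U = U \Hat A_m$ on these (core) domains, and since $U$ is bounded and unitary the intertwining passes to the closures, which is the claimed unitary equivalence; that $U$ is parallel has been built in. There is no analytic content here: the only thing that requires care is the bookkeeping of signs under the convention $X\cdot X = -\vert X \vert^2$ and the systematic use of the orthogonality relations $\nu \perp e_k$ and $\nu \perp \nn$, which is where a slip would occur.
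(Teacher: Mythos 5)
Your proof is correct, and it takes a genuinely different route from the paper's. The paper proceeds representation-theoretically: it introduces the modified Clifford action $X * \Psi := \nu\cdot X\cdot\Psi$, $\nu * \Psi := \nu\cdot\Psi$, checks that the complex volume form acts as before, invokes the uniqueness of the irreducible $\CCl_{n+1}$-module to obtain a unitary $U$ intertwining the two Clifford structures, and then writes $U$ out explicitly in the two parity cases via the identifications of $\Sigma \mathcal C_{\vert \mathcal N}$ with $\Sigma \mathcal N$ ($n$ even) or $\Sigma \mathcal N \oplus \Sigma \mathcal N$ ($n$ odd), before checking parallelism and the mapping of domains. You instead give the single closed formula $U = \tfrac{1}{\sqrt 2}(\mathrm{Id} + \nu\cdot)$ (a square root of $\nu\cdot$, since $U^2 = \nu\cdot$) and verify everything by direct Clifford computations: $U$ is fibrewise unitary and parallel because $\nu$ is a parallel unit field, conjugation by $U$ fixes $\nu\cdot$ and turns $\nu\cdot\mathcal D^\mathcal N$ into $\mathcal D^\mathcal N$ (via $\mathcal D^\mathcal N U = U^* \mathcal D^\mathcal N$ and $\nu\cdot U^* = U$), and the relations $\nn\cdot U = U^* \nn\cdot$, $\nn\cdot U^* = U\,\nn\cdot$ exchange the chirality condition $\ii\nu\cdot\nn\cdot\Psi = \Psi$ with the MIT condition $\ii\nn\cdot\Psi = \Psi$. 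I checked the sign bookkeeping under $X\cdot X = -\vert X\vert^2$ and it all holds; the only difference from the paper is the direction of conjugation ($U^* \Tilde A_m U = \Hat A_m$ rather than $U^* \Hat A_m U = \Tilde A_m$), which is immaterial — in fact, under the paper's identifications your $U^*$ agrees with the paper's $U$ up to a global phase. What your argument buys is uniformity in the parity of $n$ and no appeal to the classification of Clifford modules; what the paper's buys is an explicit description of $U$ in terms of the splitting $\Sigma^\pm \mathcal N$, which is not needed later. As in the paper, the geometric input behind the parallelism claim is exactly that $\mathcal N$ is totally geodesic in $\mathcal C$, so that $\nu$ is $\nabla^\mathcal C$-parallel.
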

\begin{proof}
We define a new Clifford representation on the vector bundle $\Sigma \mathcal C$ by setting $X * \Psi := \nu \cdot X \cdot \Psi$ and $\nu * \Psi := \nu \cdot \Psi$ for $X \in T \mathcal N$ and $\Psi \in \Sigma \mathcal C$. This new product still satisfies the Clifford conditions in each fiber, and when $n$ is even the complex volume form $\omega_{n+1}^\CC$ acts as
\begin{align*}
\omega_{n+1}^\CC * \Psi &= i^{\lfloor \frac{n+2}{2} \rfloor} e_1 * \ldots * e_n * \nu * \Psi \\
&=  i^{\lfloor \frac{n+2}{2} \rfloor} (\nu \cdot e_1) \cdot \ldots (\nu \cdot e_n) \cdot \nu \cdot \Psi = \omega_{n+1}^\CC \cdot \Psi,
\end{align*}
where $(e_1, \ldots, e_n)$ is a direct orthonormal basis of $T \mathcal N$. It follows by the general theory of Clifford representations that there is an unitary isomorphism $U : \Sigma \mathcal C \rightarrow \Sigma \mathcal C$ such that $X \cdot U \Psi = U (X * \Psi)$ for all $X \in T \mathcal C$ and $\Psi \in \Sigma \mathcal C$.

Actually, one can give such an isomorphism explicitly. If $n$ is even, we use the decomposition $\Sigma \mathcal N = \Sigma^+ \mathcal N \oplus \Sigma^- \mathcal N$ (see \cite[Proposition 1.32]{BH3M}) and the pointwise identification $\Sigma \mathcal C_{\vert (x,t)} \cong \Sigma \mathcal N_{\vert x}$ for all $(x,t) \in \mathcal C$ given by Proposition~\ref{hypersurface}. Under this identification, one has
\[
\nu \cdot (\Psi^+, \Psi^-) = (- \ii \Psi^+, \ii \Psi^-), \: X \cdot (\Psi^+, \Psi^-) = \ii (- X \cdot \Psi^-, X \cdot \Psi^+) \textrm{ for all $X \in T \mathcal N$},
\]
and we deduce that $U$ can be defined by
\[
U (\Psi^+, \Psi^-) := (\Psi^+, - \ii \Psi^-).
\]
Indeed, one has for any $X \in T \mathcal N$
\begin{align*}
U (X * (\Psi^+, \Psi^-)) &= U (\nu \cdot X \cdot (\Psi^+, \Psi^-)) = U (\ii \nu \cdot (- X \cdot \Psi^-, X \cdot \Psi^+)) \\
&= - U (X \cdot \Psi^-, X \cdot \Psi^+) = (- X \cdot \Psi^-, \ii X \cdot \Psi^+)
\end{align*}
and
\begin{align*}
X \cdot U (\Psi^+, \Psi^-) = X \cdot (\Psi^+, - \ii \Psi^-) = (- X \cdot \Psi^-, \ii X \cdot \Psi^+),
\end{align*}
thus $U (X * (\Psi^+, \Psi^-)) = X \cdot U (\Psi^+, \Psi^-)$. In addition, $U$ obviously commutes with $\nu$.

In the case where $n$ is odd, one has the pointwise identification $\Sigma \mathcal C_{\vert(x,t)} \cong \Sigma \mathcal N_{\vert x} \oplus \Sigma \mathcal N_{\vert x}$ for all $(x,t) \in \mathcal C$ and under this identification, 
\[
\nu \cdot (\Psi_1, \Psi_ 2) = (- \ii \Psi_2, - \ii \Psi_1), \: X \cdot (\Psi_1, \Psi_2) = \ii (X \cdot \Psi_2, - X \cdot \Psi_1) \textrm{ for all $X \in T \mathcal N$},
\]
It follows that $U$ can be defined by
\[
U(\Psi_1, \Psi_2) := \frac{1}{\sqrt{2}} (\Psi_1 + \ii \Psi_2, \ii \Psi_1 + \Psi_2).
\]
Indeed, for all $X \in T \mathcal N$ one has
\begin{align*}
U (X * (\Psi_1, \Psi_2)) &= \ii U (\nu \cdot (X \cdot \Psi_2, - X \cdot \Psi_1)) = U (- X \cdot \Psi_1, X \cdot \Psi_2) \\
&= \frac{1}{\sqrt{2}} ( X \cdot (-\Psi_1 + \ii \Psi_2), X \cdot (- \ii \Psi_1 + \Psi_2))
\end{align*}
and
\begin{align*}
X \cdot U (\Psi_1, \Psi_2) &= \frac{1}{\sqrt{2}} X \cdot (\Psi_1 + \ii \Psi_2, \ii \Psi_1 + \Psi_2) \\
&= \frac{1}{\sqrt{2}} (X \cdot (-\Psi_1 + \ii \Psi_2), X \cdot (- \ii \Psi_1 + \Psi_2)),
\end{align*}
thus $X \cdot U (\Psi_1, \Psi_2) = U (X * (\Psi_1, \Psi_2))$. Again, $\nu$ commutes with $U$.

In both cases, $U$ is parallel with respect to $\nabla^\mathcal C$ and we remark that $U  (\dom(\Tilde A_m)) = \dom(\Hat A_m)$. We deduce from these considerations that
\begin{equation}
U^* \Hat A_m U \Psi = \Tilde A_m \Psi \qquad \textrm{for all $\Psi \in \dom(\Tilde A_m)$},
\end{equation}
which is the statement we wanted to prove.
\end{proof}

\begin{rem}
The key point to get Proposition~\ref{equivalenceofA} is of course that $H_\mathcal N = 0$. This is only under this condition that the isomorphism $U$ is parallel with respect to $\nabla^\mathcal C$.
Thus, it is equivalent to study one operator or the other, but we wanted to insist on the physical meaning of $\Hat A_m$.
\end{rem}

\subsection{The two-masses Dirac operator} \label{opB} We introduce now an operator that can be interpreted as a Dirac operator on $\mathcal N$ with two masses in the two separated regions $\mathcal K$ and $\mathcal K^c$. The interest of this operator, as we will show later, is that when the mass in $\mathcal K^c$ grows to infinity, its spectrum converges to the spectrum of the MIT Bag Dirac operator.

Let $m, M \in \RR$. We define the operator $\Tilde B_{m,M}$ by
\begin{equation} \label{tildeB}
\Tilde B_{m,M} := \nu \cdot \mathcal D^\mathcal N + \ii (m \mathbf 1_{\mathcal K} + M \mathbf 1_{\mathcal K^c}) \nu \cdot, \; \dom(\Tilde B_{m,M}) := \Gamma_c (\Sigma \mathcal C_{\vert \mathcal N}),
\end{equation}
and since the Clifford multiplication by $\nu$ is an endomorphism of $\Gamma_c (\Sigma \mathcal C_{\vert \mathcal N})$, the range of this operator is included in $\Gamma_c (\Sigma \mathcal C_{\vert \mathcal N})$.

Until the end of this subsection, we differentiate the Dirac operator on complete manifolds and their closure.

The operator $\Tilde B_{m,M}$ is symmetric because $\nu$ anti-commutes with $\mathcal D^\mathcal N$ \cite[Proposition 1]{HMZ} and by Corollary~\ref{ippcor} below. Since the manifold $\mathcal N$ is complete by assumption, the intrinsic Dirac operator on $\mathcal N$ is essentially self-adjoint in $L^2(\Sigma \mathcal C_{\vert \mathcal N})$ \cite[Proposition 1.3.5]{Gin}.  Moreover, \eqref{diracext} gives that $\mathcal D^\mathcal N$ is unitary equivalent to $\slashed D^\mathcal N$ if $n$ is even and $\slashed D^\mathcal N \oplus - \slashed D^\mathcal N$ if $n$ is odd, and the isomorphism $\iota$ sends $\Gamma_c (\Sigma \mathcal C_{\vert \mathcal N})$ into $\Gamma_c (\Sigma \mathcal N)$. Thus, $\mathcal D^\mathcal N$ is essentially self-adjoint, and it is easy to see that its closure still anti-commutes with $\nu$. Using the fact that the Clifford multiplication  by $\nu$ is an unitary isomorphism in $L^2(\Sigma \mathcal C_{\vert \mathcal N})$ we have
\[
(\nu \cdot \overline{\mathcal D^\mathcal N})^* = - \overline{\mathcal D^\mathcal N} \nu \cdot = \nu \cdot \overline{\mathcal D^\mathcal N}, \quad \textrm{and} \quad \overline{\nu \cdot \mathcal D^\mathcal N} = \nu \cdot \overline{\mathcal D^\mathcal N},
\]
so $\overline{\nu \cdot \mathcal D^\mathcal N}$ is self-adjoint.

We conclude that $\Tilde B_{m,M}$ is essentially self-adjoint because the potential is a bounded self-adjoint operator. We define the self-adjoint operator $B_{m,M}$ as the closure of $\Tilde B_{m,M}$.

\section{Sesquilinear forms for the operators with mass} \label{SectionForms}

An important tool for the asymptotic analysis will be the sesquilinear forms associated to the square of the operators. We begin this section by recalling some important facts for integration by part with the Dirac operator. After that, we compute the sesquilinear forms for the operator $A_m^2$ and $B_{m,M}^2$ and we show that $A_m$ is self-adjoint. We end this section with the study of a model operator which will appears naturally in the asymptotic analysis, and we prove that it is unitary equivalent to the square of the Dirac operator on $\partial \mathcal K$.

\subsection{Integration by parts with the Dirac operator} We first give a proof of the well-known result:

\begin{lemma} \label{ipp}
Let $\Psi, \Phi \in \Gamma_c(\Sigma \mathcal N)$. Then, one has the pointwise equality
\[
\langle \slashed D^\mathcal N \Psi, \Phi \rangle =  - \diver V + \langle \Psi,  \slashed D^\mathcal N \Phi \rangle
\]
where $V$ is the complex vector field on $\mathcal N$ defined by 
\[
g(V,X) := \left\langle \Psi, X \cdot \Phi \right\rangle, \; \forall X \in T \mathcal N.
\]
\end{lemma}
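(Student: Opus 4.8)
The plan is to verify the claimed pointwise identity by a direct computation in a suitable local orthonormal frame. Fix a point $x \in \mathcal N$ and choose a local orthonormal frame $(e_1,\ldots,e_n)$ which is \emph{synchronous} (geodesic) at $x$, i.e. $\nabla^\mathcal N_{e_j} e_k = 0$ at $x$ for all $j,k$. Then at $x$ we have, using the Leibniz rule for the metric connection and the compatibility of Clifford multiplication with $\nabla^\mathcal N$,
\begin{align*}
\langle \slashed D^\mathcal N \Psi, \Phi \rangle
&= \sum_{j=1}^n \langle e_j \cdot \nabla^\mathcal N_{e_j} \Psi, \Phi \rangle
= - \sum_{j=1}^n \langle \nabla^\mathcal N_{e_j} \Psi, e_j \cdot \Phi \rangle \\
&= - \sum_{j=1}^n \Big( e_j \langle \Psi, e_j \cdot \Phi \rangle - \langle \Psi, \nabla^\mathcal N_{e_j}(e_j \cdot \Phi) \rangle \Big) \\
&= - \sum_{j=1}^n e_j \langle \Psi, e_j \cdot \Phi \rangle + \sum_{j=1}^n \langle \Psi, e_j \cdot \nabla^\mathcal N_{e_j} \Phi \rangle,
\end{align*}
where in the first line I use that Clifford multiplication by a unit vector is skew-Hermitian for $\langle\cdot,\cdot\rangle$ (equivalently, $\langle X\cdot u, v\rangle = -\langle u, X\cdot v\rangle$, which follows from the convention $X\cdot X = -|X|^2$ together with unitarity of Clifford multiplication by unit vectors), and in the last line the term $\langle \Psi, (\nabla^\mathcal N_{e_j} e_j)\cdot \Phi\rangle$ vanishes at $x$ because the frame is synchronous there.

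Next I identify the two terms on the right-hand side with the claimed expressions. The second sum is exactly $\langle \Psi, \slashed D^\mathcal N \Phi\rangle$ at $x$. For the first sum, recall $V$ is defined by $g(V,X) = \langle \Psi, X\cdot\Phi\rangle$, so $g(V,e_j) = \langle \Psi, e_j\cdot\Phi\rangle$ and hence $V = \sum_{j=1}^n \langle \Psi, e_j\cdot\Phi\rangle\, e_j$ in this frame. Then, again using that the frame is synchronous at $x$ (so $\diver V = \sum_j g(\nabla^\mathcal N_{e_j} V, e_j) = \sum_j e_j\big(g(V,e_j)\big)$ at $x$),
\[
\diver V = \sum_{j=1}^n e_j \langle \Psi, e_j\cdot\Phi\rangle \quad \text{at } x,
\]
which is precisely the first sum above. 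Combining, $\langle \slashed D^\mathcal N \Psi, \Phi\rangle = -\diver V + \langle \Psi, \slashed D^\mathcal N \Phi\rangle$ at $x$, and since $x$ was arbitrary the pointwise identity holds everywhere.

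There is essentially no serious obstacle here; the only points requiring care are bookkeeping ones. One must make sure the sign conventions are consistent: the paper uses $X\cdot X = -|X|^2$, which forces $\langle X\cdot u, v\rangle = -\langle u, X\cdot v\rangle$ for real unit $X$, and this is the source of the minus sign in front of $\diver V$. One should also note that although $V$ is a complex vector field, the formula $\diver V = \sum_j e_j(g(V,e_j))$ at a synchronous point extends $\CC$-linearly without issue, and the divergence is taken in the distributional/smooth sense with values in $\CC$. Finally, the compact support hypothesis on $\Psi,\Phi$ plays no role in this pointwise statement; it is only relevant when one integrates to obtain the global integration-by-parts formula (which presumably is the Corollary~\ref{ippcor} referenced later). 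I would present the computation in a synchronous frame exactly as above, remarking that the choice of frame does not affect the pointwise identity since both sides are frame-independent.
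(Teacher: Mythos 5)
Your proof is correct and follows essentially the same route as the paper's: a computation in a frame that is synchronous at the point, using skew-adjointness of Clifford multiplication and the Leibniz rule, then recognizing the remaining sum as $\diver V$. The only differences are cosmetic (you make the frame-independence and sign-convention remarks explicit, which the paper leaves implicit).
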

\begin{proof}
Let $\Psi, \Phi \in \Gamma_c(\Sigma \mathcal C_{\vert \mathcal N})$, $x \in \mathcal N$ and let $(e_1,\ldots,e_n)$ be a normal coordinate system at $x$ for $\nabla^\mathcal N$, i.e. $\nabla^\mathcal N_{e_i} e_j (x) =0$ for all $i,j \in \lbrace 1,\ldots,n \rbrace$. One has at $x$,
\[
\left\langle \slashed D^\mathcal N \Psi, \Phi \right\rangle = \langle \sum\limits_{j = 1}^n e_j \cdot \nabla^\mathcal N_{e_j} \Psi, \Phi \rangle.
\]
On the other hand, for all $j \in \lbrace 1,\ldots,n \rbrace$,
\begin{align*}
\left\langle e_j \cdot \nabla^\mathcal N_{e_j} \Psi, \Phi \right\rangle &= -\left\langle \nabla^\mathcal N_{e_j} \Psi, e_j \cdot \Phi \right\rangle \\
&= -e_j \left\langle \Psi, e_j \cdot \Phi \right\rangle + \left\langle \Psi, \nabla^\mathcal N_{e_j} (e_j \cdot \Phi) \right\rangle.
\end{align*}
Thus, $\langle \slashed D^\mathcal N \Psi, \Phi \rangle = -\sum\limits_{j = 1}^n e_j \left\langle \Psi, e_j \cdot \Phi \right\rangle + \langle \Psi, \slashed D^\mathcal N \Psi \rangle$.
We recognize in the first term of this last sum the divergence of a complex vector field. To see this, we introduce $V \in \Gamma (T \mathcal N)$ as in the statement of the lemma.
Then, we have at the point $x$
\begin{align*}
\diver V &= \sum\limits_{j=1}^n g(\nabla^\mathcal N_{e_j} V, e_j) = \sum\limits_{j=1}^n e_j \, g(V, e_j) - g(V, \nabla^\mathcal N_{e_j} e_j) \\
&= \sum\limits_{j=1}^n e_j \, g(V, e_j) = \sum\limits_{j=1}^n e_j \left\langle \Psi, e_j \cdot \Psi \right\rangle. \qedhere
\end{align*}
\end{proof}

A direct corollary is an integral version of Lemma~\ref{ipp}.

\begin{cor} \label{ippcor}
One has
\[
\langle \slashed D^\mathcal N \Psi, \Phi \rangle_{L^2(\mathcal K)} = \langle \Psi, \slashed D^\mathcal N \Phi \rangle_{L^2(\mathcal K)} - \int_{\partial \mathcal K} \langle \Psi, \nn \cdot \Phi \rangle v_{\partial \mathcal K}
\]
for all $\Psi, \Phi \in H^1(\Sigma \mathcal K)$,
and
\[
\langle \mathcal D^\mathcal N \Psi, \Phi \rangle_{L^2(\mathcal K)} = \langle \Psi, \mathcal D^\mathcal N \Phi \rangle_{L^2(\mathcal K)} - \int_{\partial \mathcal K} \langle \Psi, \nn \cdot \nu \cdot \Phi \rangle v_{\partial \mathcal K}
\]
for all $\Psi, \Phi \in H^1(\Sigma \mathcal C_{\vert \mathcal K})$.
\end{cor}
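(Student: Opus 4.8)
The plan is to prove both identities first for smooth sections --- where they reduce to Lemma~\ref{ipp} combined with the divergence theorem --- and then to pass to $H^1$ by density, the boundary term being controlled by the trace theorem (Theorem~\ref{trace}). The second identity will be obtained from the first by transporting everything through the isomorphism $\iota$ of Proposition~\ref{hypersurface}.

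For the first identity I would take $\Psi,\Phi\in\Gamma_c(\Sigma\mathcal N)$ and integrate the pointwise equality of Lemma~\ref{ipp} over the compact manifold-with-boundary $\mathcal K$. Writing the complex vector field $V$ of that lemma as the sum of a real and an imaginary part and applying the divergence theorem to each, one gets
\[
\int_{\mathcal K}\diver V\,v_{\mathcal K}=\int_{\partial\mathcal K}g(V,\nn)\,v_{\partial\mathcal K}=\int_{\partial\mathcal K}\langle\Psi,\nn\cdot\Phi\rangle\,v_{\partial\mathcal K},
\]
with $\nn$ the \emph{outward} unit normal of $\partial\mathcal K$ in $\mathcal N$; this is exactly the asserted identity for smooth $\Psi,\Phi$. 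To reach arbitrary $\Psi,\Phi\in H^1(\Sigma\mathcal K)$ I would invoke the density of $\Gamma_c(\Sigma\mathcal K)$ in $H^1(\Sigma\mathcal K)$ (Definition~\ref{Hs}) together with the continuity of all three terms for the $H^1$-norm: the $L^2(\mathcal K)$-pairings are continuous on $H^1\times H^1$ since $\slashed D^\mathcal N\colon H^1(\Sigma\mathcal K)\to L^2(\Sigma\mathcal K)$ is bounded (a first-order operator with bounded coefficients on the compact $\mathcal K$, the Clifford multiplications being pointwise unitary), and the boundary pairing is continuous because $\gamma_\mathcal K\colon H^1(\Sigma\mathcal K)\to L^2(\partial\mathcal K)$ is bounded by Theorem~\ref{trace} (with any fixed $\varepsilon\in(0,1)$ in its estimate) while Clifford multiplication by the unit field $\nn$ is pointwise unitary, hence bounded on $L^2(\partial\mathcal K)$.

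For the second identity I would use that $\iota$ is pointwise unitary, intertwines $\mathcal D^\mathcal N$ with $\slashed D^\mathcal N$ (or with $\slashed D^\mathcal N\oplus(-\slashed D^\mathcal N)$, depending on the parity of $n$, cf.~\eqref{diracext}), and that Proposition~\ref{hypersurface}(1), applied along $\partial\mathcal K$ to the tangent vector field $\nn\in T\mathcal N$, gives $\nn\cdot\iota(\Phi)=\iota(\nn\cdot\nu\cdot\Phi)$. Applying the first identity --- whose proof goes through unchanged on $\Sigma\mathcal N\oplus\Sigma\mathcal N$ with the Whitney Clifford structure --- to the pair $\iota\Psi,\iota\Phi$ and pulling the result back through the unitary $\iota$ gives, for smooth $\Psi,\Phi$,
\[
\langle\mathcal D^\mathcal N\Psi,\Phi\rangle_{L^2(\mathcal K)}=\langle\Psi,\mathcal D^\mathcal N\Phi\rangle_{L^2(\mathcal K)}-\int_{\partial\mathcal K}\langle\Psi,\nn\cdot\nu\cdot\Phi\rangle\,v_{\partial\mathcal K},
\]
and the same density argument --- now applying Theorem~\ref{trace} to the bundle $\Sigma\mathcal C_{\vert\mathcal K}$ and using the $L^2(\partial\mathcal K)$-boundedness of Clifford multiplication by $\nn\cdot\nu$ --- extends it to all $\Psi,\Phi\in H^1(\Sigma\mathcal C_{\vert\mathcal K})$.

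The computations are elementary; the points requiring care are the orientation of $\nn$ in the divergence theorem and, above all, the density step, since the boundary integral is only meaningful on $H^1$ by virtue of the quantitative trace estimate of Theorem~\ref{trace} --- this is the part that genuinely extends the statement beyond smooth sections and should be written out in full.
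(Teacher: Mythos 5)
Your proposal is correct and follows essentially the same route as the paper: integrate the pointwise identity of Lemma~\ref{ipp} over $\mathcal K$, apply the divergence theorem to obtain the boundary term, conclude by density (the trace theorem making the boundary pairing $H^1$-continuous), and deduce the second identity from the first via \eqref{diracext} and Proposition~\ref{hypersurface}. The paper's own proof is just a condensed version of exactly these steps, so no further comment is needed.
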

\begin{proof}
The first identity is  proved by integrating the formula obtained in Lemma~\ref{ipp} for $\Psi, \Phi \in \Gamma_c(\Sigma \mathcal C_{\vert \mathcal K})$ and using the divergence theorem. We conclude by density.
For the second one, we use the definition of the extrinsic Dirac operator given by \eqref{diracext} together with the first equation.
\end{proof}

Finally, we obtain an integration by part formula for the Dirac operator with a mass defined in the precedent section.

\begin{cor} \label{ippA}
For any $\Psi, \Phi \in H^1( \Sigma \mathcal C_{\vert \mathcal K})$, one has
\[
\left\langle \nu \cdot (\mathcal D^\mathcal N + \ii m) \Psi, \Phi \right\rangle_{L^2(\mathcal K)} = \left\langle \Psi, \nu \cdot (\mathcal D^\mathcal N + \ii m) \Phi \right\rangle_{L^2(\mathcal K)} + \int_{\partial \mathcal K} \left\langle \Psi, \nn \cdot \Phi \right\rangle v_{\partial \mathcal K}.
\]
\end{cor}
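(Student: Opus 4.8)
The plan is to obtain Corollary~\ref{ippA} as essentially a one-line consequence of Corollary~\ref{ippcor}: no new analytic input is required, only pointwise Clifford algebra together with the integration by parts already established. In fact no density step is needed either, since Corollary~\ref{ippcor} is already valid for $H^1$ sections and Clifford multiplication by the parallel unit field $\nu$ is a bounded automorphism of $H^1(\Sigma \mathcal C_{\vert \mathcal K})$ (it commutes with $\nabla^\mathcal C$, as $\mathcal N$ is totally geodesic in $\mathcal C$), so $\nu \cdot \Phi$ is again an admissible test section.

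Concretely, I would first split $\nu \cdot (\mathcal D^\mathcal N + \ii m) = \nu \cdot \mathcal D^\mathcal N + \ii m\, \nu \cdot$. The zeroth-order term is pointwise Hermitian: since $\nu \cdot \nu \cdot = -\mathrm{id}$ and Clifford multiplication by a unit vector is an isometry, $\nu\cdot$ is skew-Hermitian on each fibre, hence $(\ii m\, \nu\cdot)^* = \ii m\, \nu\cdot$ and $\langle \ii m\, \nu \cdot \Psi, \Phi \rangle = \langle \Psi, \ii m\, \nu \cdot \Phi \rangle$ fibrewise, with no boundary contribution. For the first-order term I would move one factor of $\nu$ across via skew-Hermitian-ness, apply the second identity of Corollary~\ref{ippcor} with the second slot occupied by $\nu \cdot \Phi$, collapse the resulting boundary integrand using $\nn \cdot \nu \cdot \nu \cdot \Phi = -\nn \cdot \Phi$, and finally restore the leading term by the anticommutation $\mathcal D^\mathcal N (\nu \cdot \Phi) = -\nu \cdot \mathcal D^\mathcal N \Phi$ (equivalently $\nu \cdot \mathcal D^\mathcal N = \sum_{k=1}^{n} e_k \cdot \nabla^\mathcal C_{e_k}$ for $(e_k)$ an orthonormal frame of $T\mathcal N$, which holds precisely because $H_\mathcal N = 0$). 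Reassembling the two pieces then yields the asserted identity.

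An equally short alternative, which I would record, bypasses Corollary~\ref{ippcor} altogether: as $\mathcal N$ is totally geodesic, $\nu \cdot \mathcal D^\mathcal N = \sum_{k=1}^{n} e_k \cdot \nabla^\mathcal C_{e_k}$ is a Dirac-type operator built from the ambient connection and Clifford product, so the Stokes-type computation in the proof of Lemma~\ref{ipp} applies verbatim with $\nabla^\mathcal N$ replaced by $\nabla^\mathcal C$; the boundary term is then $\int_{\partial \mathcal K} g(V, \nn)\, v_{\partial \mathcal K}$ with $g(V, X) = \langle \Psi, X \cdot \Phi \rangle$, and one simply adds the boundaryless zeroth-order contribution of the mass.

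I do not anticipate a genuine obstacle here: the content is a routine integration by parts. The only point demanding attention is the bookkeeping of the signs produced by the Clifford relations $\nu \cdot \nu \cdot = -\mathrm{id}$, the skew-Hermitian-ness of $\nu\cdot$, and the anticommutation $\nu \cdot \mathcal D^\mathcal N + \mathcal D^\mathcal N \nu \cdot = 0$, all of which rely on the vanishing of the mean curvature $H_\mathcal N$. Should one instead start from $\Gamma_c(\Sigma \mathcal C_{\vert \mathcal K})$, the one extra ingredient needed would be Theorem~\ref{trace}, to pass the boundary integral to the $H^1$-limit.
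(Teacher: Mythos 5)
Your main route is precisely the paper's proof: skew-adjointness of $\nu\cdot$ to move it across the inner product, the second identity of Corollary~\ref{ippcor} applied with $\nu\cdot\Phi$ in the second slot, the simplification $\nn\cdot\nu\cdot\nu\cdot\Phi=-\nn\cdot\Phi$, and the anticommutation $\mathcal D^\mathcal N(\nu\cdot\Phi)=-\nu\cdot\mathcal D^\mathcal N\Phi$, with the zeroth-order mass term handled by fibrewise Hermiticity and, as you note, no density step since Corollary~\ref{ippcor} already holds on $H^1$. The one caveat — which affects the paper's own display in exactly the same way — is the sign bookkeeping you flag yourself: carrying the overall minus sign correctly through Corollary~\ref{ippcor} produces the boundary term $-\int_{\partial\mathcal K}\langle\Psi,\nn\cdot\Phi\rangle\, v_{\partial\mathcal K}$ (your alternative Stokes argument via $\nu\cdot\mathcal D^\mathcal N=\sum_{k}e_k\cdot\nabla^\mathcal C_{e_k}$ and Lemma~\ref{ipp} gives the same), i.e.\ the opposite of the sign printed in the statement; this is harmless for everything downstream, because the corollary is only used on $\dom(\Tilde A_m)$, where the boundary integrand vanishes identically.
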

\begin{proof}
Let $\Psi, \Phi \in H^1(\Sigma \mathcal C_{\vert \mathcal K})$, using Corollary~\ref{ippcor} one has
\begin{align*}
\left\langle \nu \cdot (\mathcal D^\mathcal N +\ii m) \Psi, \Phi \right\rangle_{L^2(\mathcal K)} =& -\left\langle (\mathcal D^\mathcal N +\ii m) \Psi, \nu \cdot \Phi \right\rangle_{L^2(\mathcal K)} \\
=& -\left\langle \Psi, (\mathcal D^\mathcal N - \ii m) (\nu \cdot \Phi) \right\rangle_{L^2(\mathcal K)} \\
&- \int_{\partial \mathcal K} \left\langle \Psi, \nn \cdot \nu \cdot \nu \cdot \Phi \right\rangle v_{\partial \mathcal K} \\
=& \left\langle \Psi, \nu \cdot (\mathcal D^\mathcal N +\ii m) \Phi \right\rangle_{L^2(\mathcal K)} + \int_{\partial \mathcal K} \langle \Psi, \nn \cdot \Phi \rangle v_{\partial \mathcal K}. \qedhere
\end{align*}
\end{proof}

\subsection{Sesquilinear form for $\Tilde A_m^2$ and essential self-adjointeness} In this section we show that the operator $\Tilde A_m$ is actually essentially self-adjoint, and the domain of its closure is an extension of $\dom(\Tilde A_m)$ to the space $H^1(\Sigma \mathcal C_{\vert \mathcal K})$. The proof of this fact is done in two steps. First, we compute the sesquilinear form of $\Tilde A_m^2$ to get the domain of the closure and secondly, we show the essential self-adjointeness following the analysis of \cite{GN}.

With Corollary~\ref{ippA}, we see that $\Tilde A_m$ is symmetric since for any $\Psi, \Phi \in \dom(\Tilde A_m)$ one has
\[
\langle \Psi, \nn \cdot \Phi \rangle = \langle \Psi, \ii \nu \cdot \Phi \rangle = \langle \ii \nu \cdot \Psi, \Phi \rangle = \langle \nn \cdot \Psi, \Phi \rangle = - \langle \Psi, \nn \cdot \Phi \rangle = 0.
\]

\begin{prop} \label{quadA}
For all $\Psi \in \mathcal \dom(\Tilde A_m)$,
\begin{align*}
\Vert \Tilde A_m \Psi \Vert^2_{L^2(\mathcal K)} =&  \int_\mathcal K \left( \vert \nabla^\mathcal N (\iota \Psi) \vert^2 + \frac{\Scal^\mathcal N}{4} \vert \Psi \vert^2 \right) v_\mathcal N \\
&+ m^2 \Vert \Psi \Vert^2_{L^2(\mathcal K)} + \int_{\partial \mathcal K} \left(m - \frac{H}{2} \right) \vert \Psi \vert^2 v_{\partial \mathcal K}. \qedhere
\end{align*}
Moreover, the graph norm of $\Tilde A_m$ and the $H^1$-norm on are equivalent on $\dom (\Tilde A_m)$.
\end{prop}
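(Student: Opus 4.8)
The plan is to start from the observation that Clifford multiplication by the unit vector $\nu$ is a pointwise isometry, so that $\Vert\Tilde A_m\Psi\Vert^2_{L^2(\mathcal K)}=\Vert(\mathcal D^\mathcal N+\ii m)\Psi\Vert^2_{L^2(\mathcal K)}$, and to expand this into $\Vert\mathcal D^\mathcal N\Psi\Vert^2_{L^2(\mathcal K)}+m^2\Vert\Psi\Vert^2_{L^2(\mathcal K)}$ plus a mixed term. A short computation shows the mixed term equals $\ii m\big(\langle\Psi,\mathcal D^\mathcal N\Psi\rangle_{L^2(\mathcal K)}-\langle\mathcal D^\mathcal N\Psi,\Psi\rangle_{L^2(\mathcal K)}\big)$, which by Corollary~\ref{ippcor} is $\ii m\int_{\partial\mathcal K}\langle\Psi,\nn\cdot\nu\cdot\Psi\rangle\,v_{\partial\mathcal K}$. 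Since $\nn\perp\nu$, the boundary condition $\ii\nu\cdot\nn\cdot\Psi=\Psi$ is equivalent to $\nn\cdot\nu\cdot\Psi=\ii\Psi$ on $\partial\mathcal K$, and substituting this collapses the mixed term to $m\int_{\partial\mathcal K}\vert\Psi\vert^2\,v_{\partial\mathcal K}$.

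For the term $\Vert\mathcal D^\mathcal N\Psi\Vert^2_{L^2(\mathcal K)}$ I would pass to the spinor field $\phi:=\iota\Psi$ on $\mathcal K\subset\mathcal N$. Since $\mathcal N$ is totally geodesic in $\mathcal C$ we have $W_\mathcal N=0$ and $H_\mathcal N=0$, so Proposition~\ref{hypersurface}(3) gives $\iota(\nabla^\mathcal C_X\Psi)=\nabla^\mathcal N_X\phi$ for $X\in T\mathcal N$: the unitary $\iota$ intertwines the restricted connection $\Bar\nabla^\mathcal C$ with $\nabla^\mathcal N$, carries $\mathcal D^\mathcal N$ to $\pm\slashed D^\mathcal N$ (the sign being immaterial after squaring), and one has $\Scal^\mathcal C=\Scal^\mathcal N$ and $\vert\phi\vert=\vert\Psi\vert$. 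Hence $\Vert\mathcal D^\mathcal N\Psi\Vert^2_{L^2(\mathcal K)}=\Vert\slashed D^\mathcal N\phi\Vert^2_{L^2(\mathcal K)}$, and I would evaluate the latter by writing the Schrödinger--Lichnerowicz formula (Theorem~\ref{S-Lformula}) pointwise and integrating it over $\mathcal K$ with the divergence theorem, while separately integrating by parts via Corollary~\ref{ippcor} in the form $\Vert\slashed D^\mathcal N\phi\Vert^2_{L^2(\mathcal K)}=\langle(\slashed D^\mathcal N)^2\phi,\phi\rangle_{L^2(\mathcal K)}+\int_{\partial\mathcal K}\langle\slashed D^\mathcal N\phi,\nn\cdot\phi\rangle\,v_{\partial\mathcal K}$. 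Combining the two identities yields $\Vert\slashed D^\mathcal N\phi\Vert^2_{L^2(\mathcal K)}=\int_\mathcal K\big(\vert\nabla^\mathcal N\phi\vert^2+\frac{\Scal^\mathcal N}{4}\vert\phi\vert^2\big)v_\mathcal N+\int_{\partial\mathcal K}\big(\langle\slashed D^\mathcal N\phi,\nn\cdot\phi\rangle-\langle\nabla^\mathcal N_\nn\phi,\phi\rangle\big)v_{\partial\mathcal K}$.

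The core of the argument is to show that this last boundary integral equals $-\frac H2\int_{\partial\mathcal K}\vert\phi\vert^2\,v_{\partial\mathcal K}$. Splitting an orthonormal frame of $\mathcal N$ along $\partial\mathcal K$ into $\nn$ together with a frame $(\epsilon_i)$ of $T\partial\mathcal K$, the $\nn$-term in $\langle\slashed D^\mathcal N\phi,\nn\cdot\phi\rangle$ cancels $-\langle\nabla^\mathcal N_\nn\phi,\phi\rangle$, leaving $\sum_i\langle\epsilon_i\cdot\nabla^\mathcal N_{\epsilon_i}\phi,\nn\cdot\phi\rangle$. Here I would use the transported boundary condition $\nn\cdot\phi=\ii\phi$ on $\partial\mathcal K$ (which follows from $\nn\cdot\nu\cdot\Psi=\ii\Psi$ together with Proposition~\ref{hypersurface}(1) applied with $X=\nn$), differentiate the relation $\phi=-\ii\nn\cdot\phi$, and invoke the Weingarten identity $\nabla^\mathcal N_{\epsilon_i}\nn=-W\epsilon_i$ and the Clifford trace identity $\sum_i\epsilon_i\cdot W\epsilon_i=-H$; a short manipulation then gives $-\frac H2\vert\phi\vert^2$ pointwise on $\partial\mathcal K$. (Alternatively one may recognise $\sum_i\epsilon_i\cdot\nabla^\mathcal N_{\epsilon_i}$ in terms of the extrinsic Dirac operator $\mathcal D^{\partial\mathcal K}$ via \eqref{extrinsicexpression}, use that $\mathcal D^{\partial\mathcal K}$ anti-commutes with $\nn\cdot$, and note that $\int_{\partial\mathcal K}\langle\mathcal D^{\partial\mathcal K}\phi,\phi\rangle$ vanishes under $\nn\cdot\phi=\ii\phi$.) Collecting all contributions and using $\vert\phi\vert=\vert\Psi\vert$ yields the stated formula. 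I expect this boundary bookkeeping to be the main obstacle; everything else is routine.

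Finally, for the norm equivalence, the formula just proved shows that the squared graph norm $\Vert\Psi\Vert^2_{L^2(\mathcal K)}+\Vert\Tilde A_m\Psi\Vert^2_{L^2(\mathcal K)}$ differs from $\Vert\Psi\Vert^2_{L^2(\mathcal K)}+\Vert\Bar\nabla^\mathcal C\Psi\Vert^2_{L^2(\mathcal K)}$ (which is equivalent to the $H^1(\Sigma\mathcal C_{\vert\mathcal K})$-norm, since $\iota$ is unitary and intertwines $\Bar\nabla^\mathcal C$ with $\nabla^\mathcal N$) only by the zeroth-order term $\int_\mathcal K\frac{\Scal^\mathcal N}{4}\vert\Psi\vert^2$, the term $m^2\Vert\Psi\Vert^2_{L^2(\mathcal K)}$, and the boundary term $\int_{\partial\mathcal K}(m-\frac H2)\vert\Psi\vert^2$. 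The first two are harmless because $\Scal^\mathcal N$ is bounded on the compact set $\Bar{\mathcal K}$, and the boundary term is bounded in absolute value by $C\Vert\gamma_\mathcal K\Psi\Vert^2_{L^2(\partial\mathcal K)}$, which by Theorem~\ref{trace} is at most $CK\big(\eps^{1/2}\Vert\Bar\nabla^\mathcal C\Psi\Vert^2_{L^2(\mathcal K)}+\eps^{-1/2}\Vert\Psi\Vert^2_{L^2(\mathcal K)}\big)$ for every $\eps\in(0,1)$. Choosing $\eps$ small enough to absorb the gradient contribution gives the lower bound, and $\eps=1$ the upper bound, whence the two norms are equivalent on $\dom(\Tilde A_m)$.
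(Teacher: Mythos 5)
Your proposal is correct and follows essentially the same route as the paper: expand the square, use Corollary~\ref{ippcor} and the boundary condition $\ii\nu\cdot\nn\cdot\Psi=\Psi$ to turn the cross term into $m\int_{\partial\mathcal K}\vert\Psi\vert^2$, handle the boundary contribution of $\Vert\mathcal D^\mathcal N\Psi\Vert^2$ via the vanishing of $\int_{\partial\mathcal K}\langle\mathcal D^{\partial\mathcal K}\iota\Psi,\iota\Psi\rangle$ (anti-commutation with $\nn\cdot$), and obtain the norm equivalence from Theorem~\ref{trace} with small $\varepsilon$ and Corollary~\ref{cor2}. The only cosmetic difference is that you rederive the boundary identity from the Schr\"odinger--Lichnerowicz formula and integration by parts (your sketched pointwise computation with the Weingarten map indeed gives $-\tfrac H2\vert\phi\vert^2$), whereas the paper simply quotes \cite[Formula (13)]{HMZ}.
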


\begin{proof}
We recall that $\dom(\Tilde A_m)$ was defined in \eqref{tildeA}. Let $\Psi \in \dom(\Tilde{A}_m)$. With Corollary~\ref{ippcor} one has
\begin{align*}
\Vert \Tilde A_m \Psi \Vert^2_{L^2(\mathcal K)} =& \left\langle (\mathcal D^\mathcal N + \ii m) \Psi, (\mathcal D^\mathcal N + \ii m) \Psi \right\rangle_{L^2(\mathcal K)} \\
=& \Vert \mathcal D^\mathcal N \Psi \Vert^2_{L^2(\mathcal K)} + m^2 \Vert \Psi \Vert^2_{L^2(\mathcal K)} + m \left\langle \mathcal D^\mathcal N \Psi, \ii \Psi \right\rangle_{L^2(\mathcal K)} \\
&+ m \left\langle \ii \Psi, \mathcal D^\mathcal N \Psi \right\rangle_{L^2(\mathcal K)} \\
=& \Vert \mathcal D^\mathcal N \Psi \Vert^2_{L^2(\mathcal K)} + m^2 \Vert \Psi \Vert^2_{L^2(\mathcal K)} - m \int_{\partial \mathcal K} \left\langle \Psi, \ii \nn \cdot \nu \cdot \Psi \right\rangle v_{\partial \mathcal K} \\
=& \Vert \mathcal D^\mathcal N \Psi \Vert^2_{L^2(\mathcal K)} + m^2 \Vert \Psi \Vert^2_{L^2(\mathcal K)} + m \int_{\partial \mathcal K} \vert \Psi \vert^2 v_{\partial \mathcal K},
\end{align*}
where we used the property $\Psi = \ii \nu \cdot \nn \cdot \Psi$ on $\partial \mathcal K$.

We consider the operator ${\Tilde {\mathcal D}}^{\partial \mathcal K} := \mathcal D^{\partial \mathcal K}$ if $n$ is even and ${\Tilde {\mathcal D}}^{\partial \mathcal K} := \mathcal D^{\partial \mathcal K} \oplus \mathcal D^{\partial \mathcal K}$ if $n$ is odd. From \cite[Formula (13)]{HMZ} we have for all $\Phi \in \Gamma(\Sigma \mathcal K)$
\begin{equation*}
\begin{aligned}
\int_\mathcal K \vert \slashed D^\mathcal N \Phi \vert^2 v_\mathcal N =& \int_\mathcal K \left( \vert \nabla^\mathcal N \Phi \vert^2 + \frac{\Scal^\mathcal N}{4} \vert \Phi \vert^2 \right) v_\mathcal N \\ &+ \int_{\partial \mathcal K} \left( -\frac{H}{2} \vert \Phi \vert^2 - \left\langle \mathcal D^{\partial \mathcal K} \Phi, \Phi \right\rangle \right) v_{\partial \mathcal K}.
\end{aligned}
\end{equation*}

Using this equation together with the definition of the extrinsic Dirac operator \eqref{diracext}, one has
\begin{equation} \label{13}
\begin{aligned}
\int_\mathcal K \vert \mathcal D^\mathcal N \Psi \vert^2 v_\mathcal N =& \int_\mathcal K \left( \vert \nabla^\mathcal N (\iota \Psi) \vert^2 + \frac{\Scal^\mathcal N}{4} \vert \Psi \vert^2 \right) v_\mathcal N \\ &+ \int_{\partial \mathcal K} \left( -\frac{H}{2} \vert \Psi \vert^2 + \left\langle {\Tilde {\mathcal D}}^{\partial \mathcal K} (\iota \Psi), \iota \Psi \right\rangle \right) v_{\partial \mathcal K}.
\end{aligned}
\end{equation}
On the other hand, as ${\Tilde {\mathcal D}}^{\partial \mathcal K}$ anti-commute with the Clifford multiplication by $\nn$ \cite[Proposition 1]{HMZ},
\begin{align*}
\left\langle {\Tilde {\mathcal D}}^{\partial \mathcal K} (\iota \Psi), \iota \Psi \right\rangle &= \left\langle {\Tilde {\mathcal D}}^{\partial \mathcal K} (\iota (-\ii \nn \cdot \nu \cdot \Psi)), \iota \Psi \right\rangle = \left\langle -\ii {\Tilde {\mathcal D}}^{\partial \mathcal K} \nn \cdot (\iota \Psi), \iota \Psi \right\rangle \\
&= \left\langle \ii \nn \cdot {\Tilde {\mathcal D}}^{\partial \mathcal K} (\iota \Psi), \iota \Psi \right\rangle = \left\langle {\Tilde {\mathcal D}}^{\partial \mathcal K} (\iota \Psi), \ii \nn \cdot (\iota \Psi) \right\rangle \\
&= \left\langle {\Tilde {\mathcal D}}^{\partial \mathcal K} (\iota \Psi), -\iota (\ii \nu \cdot \nn \cdot\Psi) \right\rangle = -\left\langle {\Tilde {\mathcal D}}^{\partial \mathcal K} (\iota \Psi), \iota \Psi \right\rangle
\end{align*}
and we deduce that $\left\langle {\Tilde {\mathcal D}}^{\partial \mathcal K} (\iota \Psi), \iota \Psi \right\rangle = 0$.

Finally, using this equation together with \eqref{13}, we get
\begin{align*}
\Vert \Tilde A_m \Psi \Vert^2_{L^2(\mathcal K)} =&  \int_\mathcal K \left( \vert \nabla^\mathcal N (\iota \Psi) \vert^2 + \frac{\Scal^\mathcal N}{4} \vert \Psi \vert^2 \right) v_\mathcal N \\
&+ m^2 \Vert \Psi \Vert^2_{L^2(\mathcal K)} + \int_{\partial \mathcal K} \left(m - \frac{H}{2} \right) \vert \Psi \vert^2 v_{\partial \mathcal K}.
\end{align*}
It remains to prove the equivalence of the norms. As $\mathcal K$ is a compact manifold with boundary, Theorem~\ref{trace} applies and there is $C_1 > 0$ such that for all $\Psi \in \dom(\Tilde{A}_m)$,
\begin{align*}
\Vert \Psi \Vert^2_{L^2(\mathcal K)} + \Vert \Tilde A_m\Psi \Vert^2_{L^2(\mathcal K)} =& \Vert \iota \Psi \Vert^2_{L^2(\mathcal K)} + \int_\mathcal K \left( \vert \nabla^\mathcal N (\iota \Psi) \vert^2 + \frac{\Scal^\mathcal N}{4} \vert \iota \Psi \vert^2 \right) v_\mathcal N \\
&+ m^2 \Vert \iota \Psi \Vert^2_{L^2(\mathcal K)} + \int_{\partial \mathcal K} \left(m - \frac{H}{2} \right) \vert \iota \Psi \vert^2 v_{\partial \mathcal K} \\
\leq&  C_1 \Vert \iota \Psi \Vert^2_{L^2(\mathcal K)} + \Vert \nabla^\mathcal N (\iota \Psi) \Vert^2_{L^2 (\mathcal K)} + C_1 \Vert \iota \Psi \Vert^2_{H^1(\mathcal K)} \\
\leq& 2 (C_1 + 1) \Vert \iota \Psi \Vert^2_{H^1(\mathcal K)}.
\end{align*}
Moreover, using Theorem \ref{trace} with $\varepsilon$ small enough, there exists a constant $C_2 > 0$ such that
\[
\Vert \Psi \Vert^2_{L^2(\mathcal K)} + \Vert \Tilde A_m\Psi \Vert^2_{L^2(\mathcal K)} \geq  C_2 \Vert \iota \Psi \Vert^2_{H^1(\mathcal K)}.
\]
Thus, the graph norm is equivalent to the $H^1(\iota (\Sigma \mathcal C_{\vert \mathcal K}))$ norm, which is equivalent to the $H^1(\Sigma \mathcal C_{\vert \mathcal K})$ norm thanks to Corollary~\ref{cor2}.
\end{proof}

Now, we show that $A_m$ is self-adjoint. For this purpose, it is sufficient to prove that $\nu \cdot \mathcal D^\mathcal N$ is essentially self-adjoint on $\dom(\Tilde A_m)$ because the potential is a bounded operator. In addition, one has
\begin{equation}
\iota^{-1} (\nu \cdot \mathcal D^\mathcal N) \iota = - \ii \omega^\CC_n \cdot \slashed D^\mathcal N \quad \textrm{if $n$ is even,}
\end{equation}
and
\begin{equation}
\iota^{-1} (\nu \cdot \mathcal D^\mathcal N) \iota = - \ii \left( \begin{matrix} 0 & \mathrm{Id} \\ \mathrm{Id} & 0 \end{matrix} \right) (\slashed D^\mathcal N \oplus - \slashed D^\mathcal N) \quad \textrm{if $n$ is odd.}
\end{equation}
Having these considerations in mind, we define
\begin{equation}
A := \slashed D^\mathcal N \textrm{ if $n$ is even, } A := \slashed D^\mathcal N \oplus - \slashed D^\mathcal N \textrm{ is $n$ is odd,}
\end{equation}
and
\begin{equation}
T := - \ii \omega^\CC_n \cdot \textrm{ if $n$ is even, } T := - \ii \left( \begin{matrix} 0 & \mathrm{Id} \\ \mathrm{Id} & 0 \end{matrix} \right) \textrm{ if $n$ is odd.}
\end{equation}
We remark that $T$ is an unitary skew-Hermitian operator which anti-commutes with $A$.

We define the operators
\begin{equation} \label{Ppm}
P_\pm  := \frac{1 \pm \ii \nn \cdot}{2} \textrm{ on $\iota (\Sigma \mathcal C_{\vert \mathcal K})$, and } \mathcal P_\pm := \frac{1 \pm \ii \nu \cdot \nn \cdot}{2} \textrm{ on $\Sigma \mathcal C_{\vert \mathcal K}$}.
\end{equation}

Let $A_\pm$ be the restriction of $A$ to the domain $\lbrace \Psi \in \Gamma_c(\Sigma \mathcal C_{\vert \mathcal K}), \, P_\pm \Psi = 0 \rbrace$. Then, whatever is the parity of $n$, $\nu \cdot \mathcal D^\mathcal N$ with domain $\dom(\Tilde A_m)$ is unitary equivalent to $T A_+$.

\begin{lemma} \label{Pbounded}
For any $s \in \RR$, $P_\pm$ and $\mathcal P_\pm$ define bounded operators from $H^s$ to itself.
\end{lemma}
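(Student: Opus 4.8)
The plan is to reduce the statement to the general principle that a smooth bundle endomorphism of a Hermitian bundle over a compact manifold induces a bounded operator on every Sobolev space, and then to verify this principle via the local description \eqref{Hsnorm}.

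First I would extend $\nn$, which a priori is only defined on $\partial\mathcal K$, to a smooth vector field on all of $\mathcal K$ (for instance using the normal tubular coordinates near $\partial\mathcal K$ and a cut-off, or simply any smooth extension; the particular choice is immaterial since the boundary conditions $\{\Psi : P_\pm\Psi = 0\}$ and $\{\Psi : \mathcal P_\pm\Psi = 0\}$ only involve the values of $\Psi$ on $\partial\mathcal K$, where $\nn$ is intrinsic). Keeping the name $\nn$ for such an extension, Clifford multiplication by $\nn$ is a smooth section of $\mathrm{End}\big(\iota(\Sigma\mathcal C_{\vert\mathcal K})\big)$, and since $\nu$ is a globally defined parallel unit vector field on $\mathcal C$, the composition $\nu\cdot\nn\cdot$ is a smooth section of $\mathrm{End}\big(\Sigma\mathcal C_{\vert\mathcal K}\big)$. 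Hence $P_\pm = \tfrac12(\mathrm{Id}\pm\ii\,\nn\cdot)$ and $\mathcal P_\pm = \tfrac12(\mathrm{Id}\pm\ii\,\nu\cdot\nn\cdot)$ are smooth bundle endomorphisms, and it suffices to prove: for any Hermitian bundle $E$ over the compact manifold $\mathcal K$ and any $\Phi\in\Gamma(\mathrm{End}(E))$, multiplication by $\Phi$ is bounded from $H^s(E)$ to $H^s(E)$ for all $s\in\RR$ (the same argument applies verbatim if the operators are instead read on the traces over $\partial\mathcal K$, in which case no extension of $\nn$ is even needed).

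To prove this I would use the definition \eqref{Hsnorm}. Since $\Phi$ is a bundle endomorphism it commutes with multiplication by the scalar partition-of-unity functions $h_j$, so $(\xi_j)_*(h_j\,\Phi\Psi)\circ f_j^{-1} = M_j\cdot\big((\xi_j)_*(h_j\Psi)\circ f_j^{-1}\big)$, where $M_j$ is the matrix of $\Phi$ read in the trivialization $\xi_j$ (obtained by parallel transport along radial and normal geodesics) and the chart $f_j$. By smoothness of $\mathcal K$, of the chosen charts, and of $\Phi$, together with compactness, $M_j$ is a $C^\infty$ matrix-valued function, bounded with all its derivatives on the compact support of $h_j\circ f_j^{-1}$; extending it to an element of $C^\infty_b(\mathbf R^{d+1}_j)$ one invokes the classical fact that multiplication by a $C^\infty_b$ matrix function is bounded on $H^s$ of $\RR^{d+1}$ (a pseudodifferential operator of order zero), and hence also on $H^s(\RR^d\times\RR^+)$ by restriction, for every $s\in\RR$. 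Summing the finitely many terms of \eqref{Hsnorm} gives $\|\Phi\Psi\|_{H^s(E)}\le C\|\Psi\|_{H^s(E)}$, first for $\Psi\in\Gamma_c(E)$ and then on all of $H^s(E)$ by density (Definition~\ref{Hs}).

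For $s=k\in\NN$ one may also argue intrinsically, using the equivalent description of the $H^k$ norm by $\big(\sum_{0\le j\le k}\|(\nabla^E)^j\cdot\|_{L^2(\mathcal K)}^2\big)^{1/2}$ recalled after Definition~\ref{Hs}: the Leibniz rule writes $(\nabla^E)^j(\Phi\Psi)$ as a finite sum of contractions of $(\nabla^E)^i\Phi$ with $(\nabla^E)^{j-i}\Psi$ for $0\le i\le j$, and the sections $(\nabla^E)^i\Phi$ are bounded on the compact $\mathcal K$, whence $\|\Phi\Psi\|_{H^k}\le C_k\|\Psi\|_{H^k}$. The only mildly technical point in the general case is the bookkeeping with the trivializations $\xi_j$, needed to see that the local symbols $M_j$ are genuinely smooth with bounded derivatives; this is automatic from the smoothness and compactness of $\mathcal K$ and of the charts, so I do not anticipate a real obstacle.
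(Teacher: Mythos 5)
Your proposal is correct and amounts to the standard argument the paper leaves implicit: its own ``proof'' is just the one-line reference to \cite[Lemma 5.1 (ii)]{GN}, and what you write out — that $P_\pm$ and $\mathcal P_\pm$ are smooth zero-order bundle endomorphisms (after extending $\nn$ off $\partial\mathcal K$, or reading the operators on boundary traces, which is how they are actually used), and that such endomorphisms are bounded on every $H^s$ via the local trivializations in \eqref{Hsnorm} and the boundedness of multiplication by $C^\infty_b$ matrix functions on $H^s(\RR^{d+1})$ — is exactly the intended ``straightforward'' reasoning. No gaps; your handling of the fact that $\nn$ is a priori only defined on $\partial\mathcal K$ is a point the paper glosses over and you treat correctly.
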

\begin{proof}
The proof is straightforward, see \cite[Lemma 5.1 (ii)]{GN}.
\end{proof}

\begin{theorem} \label{Aselfadjoint}
The operator $A_m$ is self-adjoint, and the expression of Proposition~\ref{quadA} is true for any $\Psi \in \dom(A_m) = \left\lbrace \Psi \in H^1(\Sigma \mathcal C_{\vert \mathcal K}), \mathcal P_- \Psi = 0 \right\rbrace$.
\end{theorem}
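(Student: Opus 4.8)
The plan is to identify $\dom(A_m)$ from Proposition~\ref{quadA}, then to obtain self-adjointness by transporting the problem to a Dirac operator with a projection boundary condition and invoking \cite{GN}, and finally to extend the quadratic form identity by density.

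First, by Proposition~\ref{quadA} the graph norm of $\Tilde A_m$ is equivalent to the $H^1(\Sigma\mathcal C_{\vert\mathcal K})$-norm on $\dom(\Tilde A_m)$, so $\dom(A_m)$ is exactly the $H^1$-closure of $\dom(\Tilde A_m) = \lbrace\Psi\in\Gamma_c(\Sigma\mathcal C_{\vert\mathcal K}) : \mathcal P_-\Psi = 0 \text{ on }\partial\mathcal K\rbrace$. The inclusion of this closure in $\lbrace\Psi\in H^1 : \mathcal P_-\Psi = 0\rbrace$ follows from Lemma~\ref{Pbounded} (boundedness of $\mathcal P_-$ on $H^{1/2}(\Sigma\mathcal C_{\vert\partial\mathcal K})$) together with continuity of the trace $H^1\to H^{1/2}$ from Theorem~\ref{trace}, the constraint $\mathcal P_-\Psi=0$ being closed. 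For the converse, given such a $\Psi$ I would take $\Psi_k\in\Gamma_c(\Sigma\mathcal C_{\vert\mathcal K})$ with $\Psi_k\to\Psi$ in $H^1$ and correct the boundary values by $\Phi_k := \Psi_k - \epsilon_\mathcal K\bigl(\mathcal P_-(\Psi_k\vert_{\partial\mathcal K})\bigr)$, where $\epsilon_\mathcal K$ is the bounded right inverse of the trace from Theorem~\ref{trace} sending $\Gamma_c(\Sigma\mathcal C_{\vert\partial\mathcal K})$ into $\Gamma_c(\Sigma\mathcal C_{\vert\mathcal K})$; then $\Phi_k\in\Gamma_c(\Sigma\mathcal C_{\vert\mathcal K})$, $\Phi_k\vert_{\partial\mathcal K} = \mathcal P_+(\Psi_k\vert_{\partial\mathcal K})$ so $\Phi_k\in\dom(\Tilde A_m)$, and since $\mathcal P_-(\Psi_k\vert_{\partial\mathcal K})\to\mathcal P_-(\Psi\vert_{\partial\mathcal K})=0$ in $H^{1/2}$ the correction tends to $0$ in $H^1$ and $\Phi_k\to\Psi$ in $H^1$. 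Hence $\dom(A_m) = \lbrace\Psi\in H^1(\Sigma\mathcal C_{\vert\mathcal K}) : \mathcal P_-\Psi = 0\rbrace$.

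For self-adjointness, since $\ii m \, \nu\cdot$ is bounded and self-adjoint on $L^2(\Sigma\mathcal C_{\vert\mathcal K})$, the Kato--Rellich theorem reduces the claim to self-adjointness of the closure of $\nu\cdot\mathcal D^\mathcal N$ on $\dom(\Tilde A_0)$; conjugating by the isomorphism $\iota$, which by Corollary~\ref{cor2} is a bounded isomorphism on the relevant Sobolev spaces, this is in turn equivalent to the essential self-adjointness of $T A_+$. Here $T A$ is a first-order elliptic operator on the compact manifold with boundary $\mathcal K$, and $P_+$ is a bundle projection defining a symmetric boundary condition for it: by Corollary~\ref{ippA} the boundary term of the associated Green formula is $\int_{\partial\mathcal K}\langle\Psi,\nn\cdot\Phi\rangle\,v_{\partial\mathcal K}$, and the algebraic identity $\nn\cdot\mathcal P_+ = \mathcal P_-\nn\cdot$ (which comes from $\nn\cdot\nu\cdot\nn\cdot = \nu\cdot$) shows that $\nn\cdot$ carries the range of $\mathcal P_+$ into that of $\mathcal P_-$, so the integrand vanishes whenever $\mathcal P_-\Psi = \mathcal P_-\Phi = 0$. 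That $P_+$ is moreover a \emph{regular} (elliptic) boundary condition, so that the closure of $T A_+$ is self-adjoint with domain $\lbrace\phi\in H^1 : P_+\phi = 0\rbrace$, is exactly what the boundary analysis of \cite{GN} provides. I expect this elliptic regularity up to $\partial\mathcal K$ --- and not the Clifford-algebra identities or the density arguments --- to be the only genuinely non-routine ingredient.

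Finally, the identity of Proposition~\ref{quadA} extends from $\dom(\Tilde A_m)$ to $\dom(A_m)$ by density, since every term is continuous for the $H^1$-norm: $\int_\mathcal K\vert\nabla^\mathcal N(\iota\Psi)\vert^2 v_\mathcal N$ is dominated by $\Vert\iota\Psi\Vert^2_{H^1(\mathcal K)}$, equivalent to $\Vert\Psi\Vert^2_{H^1(\mathcal K)}$ by Corollary~\ref{cor2}; the curvature and mass terms are $L^2$-continuous; the boundary term $\int_{\partial\mathcal K}(m - \tfrac{H}{2})\vert\Psi\vert^2 v_{\partial\mathcal K}$ is $H^1$-continuous by the trace estimate of Theorem~\ref{trace} ($H$ being bounded on the compact $\partial\mathcal K$); and $\Vert A_m\Psi\Vert^2_{L^2(\mathcal K)}$ is continuous for the graph norm, hence for the equivalent $H^1$-norm.
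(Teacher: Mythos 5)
Your proposal is correct and follows essentially the same route as the paper: Proposition~\ref{quadA} (graph norm equivalent to the $H^1$-norm) together with the trace-correction density argument $\Phi_k=\Psi_k-\epsilon_\mathcal K\mathcal P_-\gamma_\mathcal K\Psi_k$ identifies $\dom(A_m)$ with $\left\lbrace \Psi\in H^1(\Sigma\mathcal C_{\vert\mathcal K}),\ \mathcal P_-\Psi=0\right\rbrace$, self-adjointness is reduced (mass term bounded, conjugation by $\iota$ via Corollary~\ref{cor2}) to the operator $TA_+$ and settled by the boundary analysis of \cite{GN}, and the form identity extends by $H^1$-continuity. The only difference is presentational: where you appeal to a general principle that a symmetric, regular elliptic boundary condition yields a self-adjoint closure, the paper uses the precise statement available in \cite[Lemma 5.1 (v)]{GN}, namely $(\overline{A_\pm})^*=\overline{A_\mp}$, combined with the short algebra $(T\overline{A_+})^*=-(\overline{A_+})^*T=-\overline{A_-}T=T\overline{A_+}$ for the skew-Hermitian unitary $T$ anti-commuting with $A$ --- which is exactly the content your identity $\nn\cdot\mathcal P_+=\mathcal P_-\nn\cdot$ encodes, so no substantive gap remains.
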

\begin{proof}
We first prove that $E := \left\lbrace \Psi \in \Gamma_c(\Sigma \mathcal C_{\vert \mathcal K}), \mathcal P_- \Psi = 0 \right\rbrace$ is dense in $F := \left\lbrace \Psi \in H^1(\Sigma \mathcal C_{\vert \mathcal K}), \mathcal P_- \Psi = 0 \right\rbrace$ for the $H^1$ norm. Let $\Psi \in F$. There exists $(\Psi_j)_{j \in \NN}$ a sequence of $\Gamma_c(\Sigma \mathcal C_{\vert \mathcal K})$ converging to $\Psi$ in the $H^1$ norm. Let $\Phi_j := \Psi_j - \epsilon_\mathcal K \mathcal P_- \gamma_\mathcal K \Psi_j$, where we recall that $\epsilon_\mathcal K$ is the extension operator defined in Theorem~\ref{trace}. One has $\mathcal P_- \gamma_\mathcal K \Phi_j = 0$ and with Theorem~\ref{trace} and Lemma~\ref{Pbounded} it comes
\begin{align*}
\Vert \Phi_j - \Psi \Vert_{H^1(\mathcal K)} &= \Vert \Psi_j - \epsilon_\mathcal K \mathcal P_- \gamma_\mathcal K \Psi_j - \Psi \Vert_{H^1(\mathcal K)} \\
&\leq \Vert \Psi_j - \Psi \Vert_{H^1(\mathcal K)} + \Vert \epsilon_\mathcal K \mathcal P_- \gamma_\mathcal K \Psi_j \Vert_{H^1(\mathcal K)} \\
&\leq \Vert \Psi_j - \Psi \Vert_{H^1(\mathcal K)} + C_1 \Vert \mathcal P_- \gamma_\mathcal K \Psi_j - \mathcal P_- \gamma_\mathcal K \Psi \Vert_{H^\frac{1}{2} (\mathcal K)} \\
&\leq C_2 \Vert \Psi_j - \Psi \Vert_{H^1(\mathcal K)} \underset{j \rightarrow +\infty}{\longrightarrow} 0
\end{align*}
with $C_1, C_2 > 0$.

Then, $E$ is dense in $F$, and as the graph norm of $\Tilde A_m$ and the $H^1$ norm are equivalent on E by Proposition~\ref{quadA}. We conclude that $F \subset \dom(A_m)$. By density, the expression of Proposition~\ref{quadA} is true for any $\Psi \in F$, and the graph norm and the $H^1$ norm are still equivalent on $F$. But F is closed for the $H^1$ norm, so we deduce that $F = \dom(A_m)$, and using Corollary~\ref{cor2}, we have $\dom(\overline{A_+}) = \left\lbrace \Psi \in H^1(\iota \Sigma \mathcal C_{\vert \mathcal K}), P_+ \Psi = 0 \right\rbrace$. This means that $\overline{A_+}$ is exactly one or two copies of the operator $D_+$ (up to a sign) studied in \cite[Lemma 5.1]{GN}.

By the same method, we can show that $\dom(\overline{A_-}) = \left\lbrace \Psi \in H^1(\iota \Sigma \mathcal C_{\vert \mathcal K}), P_- \Psi = 0 \right\rbrace$ and $\overline{A_-}$ is one or two copies of the operator $D_-$ (up to a sign) studied in \cite[Lemma 5.1]{GN}.

Finally, \cite[ Lemma 5.1 ($v$)]{GN} gives us $(\overline{A_\pm})^*=\overline{A_\mp}$, and we deduce that
\[
(T \overline{A_+})^* = - (\overline{A_+})^* T = - \overline{A_-} T = T \overline{A_+}.
\]
Consequently, $T {A_-}$ is self-adjoint, and so is $A_m$ by unitary equivalence.
\end{proof}

\subsection{Sesquilinear form for $B_{m,M}^2$} As for the operator $A_m$, we compute the sesquilinear form of the operator $B_{m,M}^2$ defined in section~\ref{opB}. As a consequence of the Schr\"odinger-Lichnerowicz formula, we can first compute the square of the extrinsic Dirac operator acting on smooth section with compact support in $\mathcal N$.

\begin{lemma}
Let $\Psi \in \Gamma_c(\Sigma \mathcal C_{\vert \mathcal N})$. Then
\begin{align*}
\Vert \nu \cdot \left(\mathcal{D}^\mathcal N + \ii m\right) \Psi \Vert^2_{L^2(\mathcal N)} =& \int_\mathcal N \left[ \vert \nabla^\mathcal N (\iota \Psi) \vert^2 + \frac{\Scal^\mathcal N}{4} \vert \Psi \vert^2 + m^2 \vert \Psi \vert^2 \right] v_\mathcal N.
\end{align*}
\end{lemma}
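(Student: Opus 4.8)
The plan is to follow, almost verbatim, the computation in the proof of Proposition~\ref{quadA}; the simplification here is that $\Psi$ is compactly supported in the complete manifold $\mathcal N$, which has empty boundary, so all the boundary integrals disappear and the cross term in $m$ --- which in Proposition~\ref{quadA} produced the boundary contribution $m\int_{\partial\mathcal K}|\Psi|^2\,v_{\partial\mathcal K}$ --- vanishes entirely. First I would use that the Clifford product by the unit normal $\nu$ is a pointwise isometry of $\Sigma\mathcal C_{\vert\mathcal N}$, so that $\Vert\nu\cdot(\mathcal D^\mathcal N+\ii m)\Psi\Vert^2_{L^2(\mathcal N)} = \Vert(\mathcal D^\mathcal N+\ii m)\Psi\Vert^2_{L^2(\mathcal N)}$, and then expand the right-hand side as
\begin{equation*}
\Vert\mathcal D^\mathcal N\Psi\Vert^2_{L^2(\mathcal N)} + m^2\Vert\Psi\Vert^2_{L^2(\mathcal N)} + m\langle\mathcal D^\mathcal N\Psi,\ii\Psi\rangle_{L^2(\mathcal N)} + m\langle\ii\Psi,\mathcal D^\mathcal N\Psi\rangle_{L^2(\mathcal N)}.
\end{equation*}

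For the cross terms, integrating the pointwise identity of Lemma~\ref{ipp} over $\mathcal N$ and applying the divergence theorem --- legitimate because $\Psi$ is compactly supported --- shows that $\slashed D^\mathcal N$, hence also $\mathcal D^\mathcal N$ by \eqref{diracext}, is symmetric on $\Gamma_c(\Sigma\mathcal C_{\vert\mathcal N})$; consequently $\langle\mathcal D^\mathcal N\Psi,\ii\Psi\rangle_{L^2(\mathcal N)} = \langle\Psi,\ii\mathcal D^\mathcal N\Psi\rangle_{L^2(\mathcal N)} = -\langle\ii\Psi,\mathcal D^\mathcal N\Psi\rangle_{L^2(\mathcal N)}$ and the two cross terms add to zero. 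It then remains to compute $\Vert\mathcal D^\mathcal N\Psi\Vert^2_{L^2(\mathcal N)}$, and here I would rerun the argument that produced \eqref{13} in the proof of Proposition~\ref{quadA}: transport through the unitary isomorphism $\iota$ of Proposition~\ref{hypersurface}, under which $\mathcal D^\mathcal N$ becomes $\slashed D^\mathcal N$ when $n$ is even and $\slashed D^\mathcal N\oplus(-\slashed D^\mathcal N)$ when $n$ is odd (the sign on the second summand being immaterial for the squared norm), then the Schr\"odinger-Lichnerowicz formula (Theorem~\ref{S-Lformula}) together with one integration by parts --- but now over $\mathcal N$, where the boundary terms of \eqref{13} are absent since $\partial\mathcal N=\emptyset$ and $\iota\Psi$ has compact support. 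This gives
\begin{equation*}
\Vert\mathcal D^\mathcal N\Psi\Vert^2_{L^2(\mathcal N)} = \Vert\nabla^\mathcal N(\iota\Psi)\Vert^2_{L^2(\mathcal N)} + \int_\mathcal N\frac{\Scal^\mathcal N}{4}|\iota\Psi|^2\,v_\mathcal N,
\end{equation*}
and since $|\iota\Psi|=|\Psi|$ by unitarity of $\iota$, assembling the three contributions yields the claimed identity.

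The computation is routine, so I do not expect a genuine obstacle; the only point that deserves attention is the justification of the integrations by parts on the non-compact manifold $\mathcal N$, which is immediate from the compact support of $\Psi$ --- one could even replace $\mathcal N$ by any compact submanifold-with-boundary of $\mathcal N$ whose interior contains $\supp\Psi$ and invoke \eqref{13} directly, the boundary integrals then being zero --- together with keeping the bookkeeping of $\iota$ uniform in the parity of $n$. In short, the statement is the boundaryless counterpart of Proposition~\ref{quadA}, obtained by exactly the same steps with every boundary term suppressed.
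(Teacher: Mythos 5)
Your proposal is correct and follows essentially the same route as the paper: unitarity of Clifford multiplication by $\nu$, cancellation of the cross terms via Lemma~\ref{ipp} plus the divergence theorem (legitimate by compact support), and the Schr\"odinger--Lichnerowicz formula transported through $\iota$ to evaluate $\Vert \mathcal D^\mathcal N \Psi \Vert^2_{L^2(\mathcal N)}$, with all boundary terms absent since $\partial \mathcal N = \emptyset$. No gaps.
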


\begin{proof}
Let $\Psi \in \Gamma_c(\Sigma \mathcal C_{\vert \mathcal N})$. One has

\begin{align*}
\Vert \nu \cdot \left(\mathcal{D}^\mathcal N + \ii m\right) \Psi \Vert^2_{L^2(\mathcal N)} =& \left\langle \nu \cdot \left(\mathcal{D}^\mathcal N + \ii m\right) \Psi, \nu \cdot \left(\mathcal{D}^\mathcal N + im\right) \Psi \right\rangle_{L^2(\mathcal N)} \\
=& \left\langle\left(\mathcal{D}^\mathcal N + im\right)\Psi, \left(\mathcal{D}^\mathcal N + im\right) \Psi \right\rangle_{L^2(\mathcal N)} \\
=& \left\langle \mathcal{D}^\mathcal N \Psi, \mathcal{D}^\mathcal N \Psi \right\rangle_{L^2(\mathcal N)} + m^2 \left\langle \Psi, \Psi \right\rangle_{L^2(\mathcal N)} \\
&+ m \left[ \left\langle \mathcal{D}^\mathcal N \Psi, \ii \Psi \right\rangle_{L^2(\mathcal N)} + \left\langle \ii \Psi,  \mathcal{D}^\mathcal N \Psi \right\rangle_{L^2(\mathcal N)} \right].
\end{align*}
Using Lemma~\ref{ipp}, one has at any point $x \in \mathcal N$,
\begin{align*}
\left\langle  \mathcal{D}^\mathcal N \Psi, \ii \Psi \right\rangle + \left\langle \ii \Psi,  \mathcal{D}^\mathcal N \Psi \right\rangle &= - \diver \, V.
\end{align*}
By the divergence theorem, the Schr\"odinger-Lichnerowicz formula (Proposition~\ref{S-Lformula}) and Equation~\ref{diracext}, one can integrate over $\mathcal N$ to obtain
\begin{align*}
\Vert \nu \cdot \left(\mathcal{D}^\mathcal N + \ii m\right) \Psi \Vert^2_{L^2(\mathcal N)} &= \left\langle \mathcal{D}^\mathcal N \Psi, \mathcal{D}^\mathcal N \Psi \right\rangle_{L^2(\mathcal N)} + m^2 \left\langle \Psi, \Psi \right\rangle_{L^2(\mathcal N)} \\
&= \int_\mathcal N \left[ \vert \nabla^\mathcal N (\iota \Psi) \vert^2 + \frac{\Scal^\mathcal N}{4} \vert \Psi \vert^2 + m^2 \vert \Psi \vert^2 \right] v_\mathcal N. \qedhere
\end{align*}
\end{proof}

We can now compute the quadratic form for the operator by integration over $\mathcal N$, and it comes out that the domain of $B_{m,M}$ is a subspace of the Sobolev space $H^1$.

\begin{prop} \label{quadB}
One has $\dom(B_{m,M}) \subset H^1(\Sigma \mathcal C_{\vert \mathcal N})$ and for $\Psi \in \dom(B_{m,M})$,
\begin{align*}
\Vert B_{m,M} \Psi \Vert^2_{L^2(\mathcal N)} =&  \int_\mathcal N \left[ \vert \nabla^\mathcal N (\iota\Psi)  \vert^2 + \frac{\Scal^\mathcal N}{4} \vert \Psi \vert^2 \right] v_\mathcal N + m^2 \Vert \Psi \Vert^2_{L^2(\mathcal K)} \\
&+ M^2 \Vert \Psi \Vert^2_{L^2(\mathcal K^c)} + (M-m) \int_{\partial \mathcal K} \left( \vert \mathcal P_- \Psi \vert^2 -  \vert \mathcal P_+ \Psi \vert^2 \right) v_{\partial \mathcal K}
\end{align*}
where we recall that $\mathcal P_\pm$ were defined in \eqref{Ppm}.
\end{prop}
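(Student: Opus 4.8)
The plan is to first establish the identity on the core $\dom(\Tilde B_{m,M})=\Gamma_c(\Sigma\mathcal C_{\vert\mathcal N})$ and then to pass to the limit, which will simultaneously yield $\dom(B_{m,M})\subset H^1(\Sigma\mathcal C_{\vert\mathcal N})$. So fix $\Psi\in\Gamma_c(\Sigma\mathcal C_{\vert\mathcal N})$. Since $\nn$ is the outer unit normal of $\mathcal K$, and hence points into $\mathcal K^c$, the operator $\Tilde B_{m,M}$ acts as $\nu\cdot(\mathcal D^\mathcal N+\ii m)$ on $\mathcal K$ and as $\nu\cdot(\mathcal D^\mathcal N+\ii M)$ on $\mathcal K^c$, so
\[
\Vert\Tilde B_{m,M}\Psi\Vert^2_{L^2(\mathcal N)}=\Vert\nu\cdot(\mathcal D^\mathcal N+\ii m)\Psi\Vert^2_{L^2(\mathcal K)}+\Vert\nu\cdot(\mathcal D^\mathcal N+\ii M)\Psi\Vert^2_{L^2(\mathcal K^c)}.
\]

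For the first term I would run the computation from the proof of Proposition~\ref{quadA}, but \emph{stopping before} the boundary condition is invoked: expanding the square, unitarity of Clifford multiplication by $\nu$ together with Corollary~\ref{ippcor} collapses the cross terms into a single boundary integral, giving
\[
\Vert\nu\cdot(\mathcal D^\mathcal N+\ii m)\Psi\Vert^2_{L^2(\mathcal K)}=\Vert\mathcal D^\mathcal N\Psi\Vert^2_{L^2(\mathcal K)}+m^2\Vert\Psi\Vert^2_{L^2(\mathcal K)}-m\int_{\partial\mathcal K}\langle\Psi,\ii\nn\cdot\nu\cdot\Psi\rangle\,v_{\partial\mathcal K}.
\]
The same argument on $\mathcal K^c$ — which, since $\Psi$ is compactly supported, may be replaced by a large compact subdomain of it whose only relevant boundary is $\partial\mathcal K$ — differs only in that the outer normal is now $-\nn$, so the boundary term changes sign (and carries $M$ in place of $m$). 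Adding the two contributions and using the lemma above with zero mass, together with $\Vert\nu\cdot\,\cdot\Vert=\Vert\cdot\Vert$, to evaluate $\Vert\mathcal D^\mathcal N\Psi\Vert^2_{L^2(\mathcal N)}$, one obtains
\begin{align*}
\Vert\Tilde B_{m,M}\Psi\Vert^2_{L^2(\mathcal N)}=&\int_\mathcal N\left(\vert\nabla^\mathcal N(\iota\Psi)\vert^2+\frac{\Scal^\mathcal N}{4}\vert\Psi\vert^2\right)v_\mathcal N+m^2\Vert\Psi\Vert^2_{L^2(\mathcal K)}\\
&+M^2\Vert\Psi\Vert^2_{L^2(\mathcal K^c)}+(M-m)\int_{\partial\mathcal K}\langle\Psi,\ii\nn\cdot\nu\cdot\Psi\rangle\,v_{\partial\mathcal K}.
\end{align*}
It only remains to rewrite the boundary integrand: since $\nn$ and $\nu$ are orthogonal, $\nn\cdot\nu\cdot=-\nu\cdot\nn\cdot$, hence $\ii\nn\cdot\nu\cdot=-(\mathcal P_+-\mathcal P_-)$; as $\ii\nu\cdot\nn\cdot$ is a self-adjoint involution, $\mathcal P_\pm$ are complementary orthogonal projections and $\langle\Psi,\ii\nn\cdot\nu\cdot\Psi\rangle=\vert\mathcal P_-\Psi\vert^2-\vert\mathcal P_+\Psi\vert^2$, which is exactly the claimed formula.

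The inclusion of the domain and the extension of the formula then follow by density. The identity on $\Gamma_c$ shows that the graph norm of $\Tilde B_{m,M}$ dominates $\Vert\nabla^\mathcal N(\iota\Psi)\Vert^2_{L^2(\mathcal N)}$ up to the scalar-curvature term and the boundary integral; the latter is bounded by $\vert M-m\vert\,\Vert\gamma_\mathcal K\Psi\Vert^2_{L^2(\partial\mathcal K)}$, and applying the trace estimate of Theorem~\ref{trace} on the compact manifold $\mathcal K$ with $\varepsilon$ small (and Corollary~\ref{cor2} to compare the first Sobolev norms of $\Psi$ and $\iota\Psi$) one absorbs it into the left-hand side. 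Hence a graph-norm Cauchy sequence in $\Gamma_c(\Sigma\mathcal C_{\vert\mathcal N})$ is Cauchy for the $H^1$ norm, so $\dom(B_{m,M})$, being the graph-norm closure of $\Gamma_c(\Sigma\mathcal C_{\vert\mathcal N})$, is contained in $H^1(\Sigma\mathcal C_{\vert\mathcal N})$, and continuity of the trace operator and of $\iota$ lets one pass the identity to the limit. I expect the only genuinely delicate point to be this last, functional-analytic step — in particular keeping control of the a priori only locally bounded scalar curvature of the non-compact manifold $\mathcal N$ when passing to the limit; the algebraic identity itself is a routine consequence of Corollary~\ref{ippcor} and the Schrödinger–Lichnerowicz formula, entirely parallel to Proposition~\ref{quadA}.
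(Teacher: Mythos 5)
Your proof is correct and uses the same essential ingredients as the paper: the integration-by-parts formula of Corollary~\ref{ippcor} (equivalently Lemma~\ref{ipp}), the Schrödinger--Lichnerowicz-based lemma immediately preceding Proposition~\ref{quadB} to evaluate $\Vert\mathcal D^\mathcal N\Psi\Vert^2_{L^2(\mathcal N)}$, the algebraic identity $\langle\Psi,\ii\nn\cdot\nu\cdot\Psi\rangle=\vert\mathcal P_-\Psi\vert^2-\vert\mathcal P_+\Psi\vert^2$, and Theorem~\ref{trace} (with Corollary~\ref{cor2}) to dominate the boundary term, deduce that the graph norm controls the $H^1$ norm on $\Gamma_c$, and conclude $\dom(B_{m,M})\subset H^1(\Sigma\mathcal C_{\vert\mathcal N})$ together with the identity by density. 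The only real difference is organizational: you split the $L^2$ norm over $\mathcal K$ and $\mathcal K^c$ and integrate by parts on each side (justifying the exterior integration by parts through the compact support of $\Psi$, the outer normal $-\nn$ producing the sign flip and the factor $M$), whereas the paper writes $\Tilde B_{m,M}=\nu\cdot(\mathcal D^\mathcal N+\ii M)+\ii(m-M)\mathbf 1_{\mathcal K}\,\nu\cdot$ and expands the square over all of $\mathcal N$, so that a single integration by parts over $\mathcal K$ handles the cross term; the two computations are equivalent, yours making the $(M-m)$ boundary term appear as the difference of two one-sided traces, the paper's avoiding any discussion of the exterior region. Concerning the point you flag at the end: the inequality bounding the $H^1$ norm by the graph norm indeed needs a global lower bound on $\Scal^\mathcal N$ (the boundary integral is absorbed via Theorem~\ref{trace} on the compact $\mathcal K$, but the curvature term is integrated over the whole non-compact $\mathcal N$); the paper passes over this silently, so your treatment is no less complete than the published one.
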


\begin{proof}
Let $\Psi \in \Gamma_c(\Sigma \mathcal C_{\vert \mathcal N})$. One has
\begin{align*}
\Vert B_{m,M} \Psi \Vert^2_{L^2(\mathcal N)} =& \Vert \nu \cdot (\mathcal{D}^\mathcal N + \ii M) \Psi + \ii (m-M) \mathbf{1}_\mathcal K \nu \cdot\Psi \Vert^2_{L^2(\mathcal N)} \\
=& \Vert (\mathcal{D}^\mathcal N + \ii M) \Psi \Vert^2_{L^2(\mathcal N)} + (m-M)^2 \Vert \Psi \Vert^2_{L^2(\mathcal K)} \\
&+ (m-M) 2 \Re \left\langle (\mathcal{D}^\mathcal N + \ii M) \Psi, \ii \mathbf 1_\mathcal K \Psi \right\rangle_{L^2(\mathcal N)}
\end{align*}
With Lemma~\ref{ipp}
\begin{multline*}
2 \Re \langle (\mathcal{D}^\mathcal N + \ii M) \Psi, \ii \Psi\rangle_{L^2(\mathcal K)} = -\int_{\partial \mathcal K} \left\langle \Psi, \ii \nn \cdot \nu \cdot \Psi \right\rangle v_{\partial \mathcal K} + 2M \left\langle \Psi, \Psi \right\rangle_{L^2(\mathcal K)}.
\end{multline*}
Thus, we have
\begin{multline}
\Vert B_{m,M} \Vert^2_{L^2(\mathcal N)} =  \Vert (\mathcal{D}^\mathcal N + \ii M) \Psi \Vert^2_{L^2(\mathcal N)} + (m-M)^2 \Vert \Psi \Vert^2_{L^2(\mathcal K)} \\
 + (M-m) \int_{\partial \mathcal K} \left\langle \Psi, \ii \nn \cdot\nu\cdot\Psi \right\rangle v_{\partial \mathcal K} + 2M(m-M) \Vert \Psi \Vert^2_{L^2(\mathcal K)} \\
= \Vert (\mathcal{D}^\mathcal N + \ii M) \Psi \Vert^2_{L^2(\mathcal N)} + (m^2 - M^2) \Vert \Psi \Vert^2_{L^2(\mathcal K)} \\
+ (M-m) \int_{\partial \mathcal K} \left\langle \Psi, \ii \nn \cdot\nu\cdot\Psi \right\rangle v_{\partial \mathcal K} \\
= \int_\mathcal N \left[ \vert \nabla^\mathcal N (\iota\Psi) \vert^2 + \frac{\Scal^\mathcal N}{4} \vert \Psi \vert^2 + M^2 \vert \Psi \vert^2 \right] v_\mathcal N + (m^2 - M^2) \Vert \Psi \Vert^2_{L^2(\mathcal K)} \\
 + (M-m) \int_{\partial \mathcal K} \left\langle \Psi, \ii \nn \cdot \nu \cdot \Psi \right\rangle v_{\partial \mathcal K} \\
= \int_\mathcal N \left[ \vert \nabla^\mathcal N (\iota\Psi) \vert^2 + \frac{\Scal^\mathcal N}{4} \vert \Psi \vert^2 \vert \Psi \vert^2 \right] v_\mathcal N + m^2 \Vert \Psi \Vert^2_{L^2(\mathcal K)} + M^2 \Vert \Psi \Vert^2_{L^2(\mathcal K^c)} \\
+ (M-m) \int_{\partial \mathcal K} \left\langle \Psi, \ii \nn \cdot\nu\cdot\Psi \right\rangle v_{\partial \mathcal K}
\end{multline}
and
\[
\left\langle \Psi, \ii \nn \cdot\nu\cdot\Psi \right\rangle = \left\langle \Psi, - \ii \nu\cdot \nn \cdot\Psi \right\rangle \\
= \left\langle \Psi, \mathcal P_-\Psi \right\rangle - \left\langle \Psi, \mathcal P_+\Psi \right\rangle \\
= \vert \mathcal P_-\Psi \vert^2 - \vert \mathcal P_+\Psi \vert^2.
\]
It follows from Theorem~\ref{trace} that there is a constant $C > 0$ such that for all $\Psi \in \Gamma_c(\Sigma \mathcal C_{\vert \mathcal N})$,
\[
\left\Vert B_{m,M} \Psi \right\Vert^2_{L^2(\mathcal N)} \geq C \left( \Vert \nabla^\mathcal N ( \iota \Psi ) \Vert^2_{L^2(\mathcal N)} - \Vert \Psi \Vert^2_{L^2(\mathcal N)} \right).
\]
Then, the graph norm of $\Tilde{B}_{m,M}$ is higher than the $H^1(\Sigma \mathcal C_{\vert \mathcal N})$-norm up to a constant. Then $\dom(B_{m,M}) \subset H^1(\Sigma \mathcal C_{\vert \mathcal N})$, and one can conclude by density.
\end{proof}

\subsection{The limit operator} In this section, we introduce the effective operator $L$ which will appear naturally as the limit operator for $A_m$ when $m \rightarrow - \infty$. We define it as the operator acting on the Hilbert space
\begin{equation} \label{Hspacedef}
\mathbf H := \left\lbrace \Psi \in L^2(\Sigma \mathcal C_{\vert \partial \mathcal K}), \Psi = \ii \nu\cdot \nn \cdot \Psi \right\rbrace
\end{equation}
associated to the quadratic form
\begin{align} \label{quadL}
\ell [\Psi, \Psi] =& \int_{\partial \mathcal K} \left\lbrack \vert \overline \nabla^\mathcal N \iota \Psi \vert^2 + \frac{1}{4} \left(\Scal^{\partial \mathcal K} - \textrm{Tr}(W^2) \right) \vert \Psi \vert^2 \right\rbrack v_{\partial \mathcal K}, \\
\mathcal Q (\ell) :=& \left\lbrace \Psi \in H^1(\Sigma \mathcal C_{\vert \partial \mathcal K}), \Psi = \ii \nu \cdot \nn \cdot\Psi \right\rbrace. \notag
\end{align}

By the compactness of $\mathcal K$, it comes that the form \eqref{quadL} is closed and lower semibounded, so the operator $L$ is well-defined.

The operator $L$ is actually unitary equivalent to the square of the Dirac operator on $\partial \mathcal K$. This fact can be established using the link between the spinor bundles of the spaces $\partial \mathcal K \subset \mathcal N \subset \mathcal C$ and then the proof depends on the dimension $n$ of $\mathcal N$.

\begin{rem}
Using Gauss-Codazzi equations (see \cite[Proposition 4.1]{BGM}, for example), one has
\[
\textrm{Tr}(W^2) = H^2 + \Scal^\mathcal N - \Scal^{\partial \mathcal K} - 2 \Ric^\mathcal N(\nn,\nn).
\]
Thus, the operator we are considering here is a generalization of the operator $L$ defined in \cite[section 2.2]{MOP} and we generalize the result of \cite[Lemma 2.4]{MOP}.
\end{rem}

\begin{lemma} \label{Uequi}
The operator $L$ is unitary equivalent to $(\slashed D^{\partial \mathcal K})^2$.
\end{lemma}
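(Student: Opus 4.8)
The strategy is to push the form $\ell$ through the two hypersurface identifications $\iota$ (for $\mathcal N\subset\mathcal C$) and $\mu$ (for $\partial\mathcal K\subset\mathcal N$), and to recognize the resulting expression as the quadratic form of $(\slashed D^{\partial\mathcal K})^2$ via the Schr\"odinger-Lichnerowicz formula on the closed manifold $\partial\mathcal K$. For $\Psi\in\mathcal Q(\ell)$ I set $\sigma:=\mu(\iota\Psi)$; since $\iota$ and $\mu$ are unitary (Proposition~\ref{hypersurface}(2)) and, by the argument of Corollary~\ref{cor2} applied over the compact manifold $\partial\mathcal K$, restrict to isomorphisms between the relevant $H^1$-spaces, the map $\Psi\mapsto\sigma$ is, up to a fixed scalar, a unitary identification of the Hilbert and form spaces. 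It then suffices to check that it intertwines $\ell$ with the form $\Vert\slashed D^{\partial\mathcal K}\cdot\Vert^2_{L^2(\partial\mathcal K)}$, $\mathcal Q=H^1(\Sigma\partial\mathcal K)$, of $(\slashed D^{\partial\mathcal K})^2$; the form--operator correspondence then transfers the unitary equivalence to the operators themselves.

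Two ingredients enter. First, the chirality condition $\Psi=\ii\nu\cdot\nn\cdot\Psi$ defining $\mathbf H$ must be seen, through the intertwining relations of Proposition~\ref{hypersurface}(1),(4) applied twice, to turn into the statement that $\sigma$ ranges over all of $\Sigma\partial\mathcal K$: when $n$ is odd the two copies produced by $\iota$ carry opposite $\nn$-chiralities, so $\mathbf H\cong\Sigma^{-}(\partial\mathcal K)\oplus\Sigma^{+}(\partial\mathcal K)=\Sigma(\partial\mathcal K)$, whereas when $n$ is even $\iota\Psi$ lies in the $(-1)$-eigenspace of $\ii\nn\cdot$ and $\mu$ sends it to the anti-diagonal of $\Sigma(\partial\mathcal K)\oplus\Sigma(\partial\mathcal K)$, again a copy of $\Sigma(\partial\mathcal K)$ (here a normalization factor $\sqrt 2$ shows up). Second, applying Proposition~\ref{hypersurface}(3) to $\partial\mathcal K\subset\mathcal N$ gives, for an orthonormal frame $(e_k)$ of $\partial\mathcal K$,
\[
\mu(\overline\nabla^\mathcal N_{e_k}\iota\Psi)=\nabla^{\partial\mathcal K}_{e_k}\sigma+\tfrac12 W(e_k)\cdot\sigma,
\]
so pointwise $|\overline\nabla^\mathcal N\iota\Psi|^2=|\nabla^{\partial\mathcal K}\sigma|^2+\Re\sum_k\langle\nabla^{\partial\mathcal K}_{e_k}\sigma,W(e_k)\cdot\sigma\rangle+\tfrac14\mathrm{Tr}(W^2)|\sigma|^2$, the last term because Clifford multiplication by a vector is norm-preserving and $W$ is symmetric.

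The key point is that the cross term vanishes. When $n$ is odd, $\nabla^{\partial\mathcal K}_{e_k}$ preserves the chirality eigenbundles $\Sigma^{\pm}(\partial\mathcal K)$ (since $\omega^\CC_{n-1}$ is parallel) while Clifford multiplication by the tangent vector $W(e_k)$ exchanges them, so each summand vanishes by orthogonality of $\Sigma^{+}$ and $\Sigma^{-}$; when $n$ is even, the two components of $\mu(\overline\nabla^\mathcal N_{e_k}\iota\Psi)$ differ only in the sign of the $\nabla^{\partial\mathcal K}$-part (the anti-diagonal structure), and the cross terms cancel in the sum of the two squared components. Hence $|\overline\nabla^\mathcal N\iota\Psi|^2=|\nabla^{\partial\mathcal K}\sigma|^2+\tfrac14\mathrm{Tr}(W^2)|\sigma|^2$, the $\mathrm{Tr}(W^2)$ contributions in $\ell$ cancel, and
\[
\ell[\Psi,\Psi]=\int_{\partial\mathcal K}\Big(|\nabla^{\partial\mathcal K}\sigma|^2+\tfrac14\Scal^{\partial\mathcal K}|\sigma|^2\Big)v_{\partial\mathcal K}=\Vert\slashed D^{\partial\mathcal K}\sigma\Vert^2_{L^2(\partial\mathcal K)}
\]
by the integrated Schr\"odinger-Lichnerowicz formula (Theorem~\ref{S-Lformula}) on $\partial\mathcal K$, which has empty boundary; together with the matching of form domains ($\mathcal Q(\ell)$ maps onto $H^1(\Sigma\partial\mathcal K)=\mathcal Q((\slashed D^{\partial\mathcal K})^2)$, by Corollary~\ref{cor2} and Proposition~\ref{closure}), this proves the lemma.

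I expect the main obstacle to be the parity-dependent bookkeeping of the two applications of Proposition~\ref{hypersurface}: one must carefully compose the relations for $\ii\nu\cdot$, $\ii\nn\cdot$ and the complex volume forms, together with the $\Sigma\oplus\Sigma$ Clifford convention, to verify that the image of $\mathbf H$ is genuinely the full spinor bundle of $\partial\mathcal K$ — and not just one chirality summand, which would yield a strictly different operator (the index of $\slashed D^{\partial\mathcal K}$ generally obstructs unitary equivalence of the two halves). The vanishing of the cross term also requires the two separate arguments according to the parity of $n$, and the normalization constant appearing when $n$ is even must be tracked so that the final identification stays unitary.
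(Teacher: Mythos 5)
Your proposal is correct and takes essentially the same route as the paper: identify $\mathbf H$ with $L^2(\Sigma(\partial \mathcal K))$ through $\iota$ and $\mu$ using the chirality condition (with the parity-dependent bookkeeping and the $\sqrt{2}$ normalization when $n$ is even), rewrite $\overline\nabla^{\mathcal N}$ via the spinorial Gauss formula of Proposition~\ref{hypersurface}, observe that the cross terms disappear, and conclude with the Schr\"odinger--Lichnerowicz formula on the closed manifold $\partial \mathcal K$ together with the form--operator correspondence. The only caution is that in the odd case the cross-term cancellation must be carried out keeping the two copies produced by $\iota$ separate (each component being a pure-chirality section of $\Sigma(\partial\mathcal K)$, so that its $\nabla^{\partial\mathcal K}$-part and its $W(e_k)\cdot$-part lie in orthogonal chirality bundles), exactly as your componentwise phrasing indicates; this is also how the paper organizes the computation through its explicit unitary $U$.
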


\begin{proof}
We consider separatly the case of $n$ even and $n$ odd.

\textit{Case $n$ odd:} One can represent any $\Psi \in \mathbf H$ as $\Psi =: (\Psi^+, \Psi^-) \in L^2(\Sigma^+ \mathcal C_{\vert \partial \mathcal K}) \times L^2(\Sigma^- \mathcal C_{\vert \partial \mathcal K})$, and then 
\[
\Psi = \ii \nu \cdot \nn \cdot \Psi \Leftrightarrow \iota\Psi = \ii \iota(\nu \cdot \nn \cdot \Psi) \Leftrightarrow \iota\Psi = -\ii \nn \cdot \iota\Psi.
\]
Thus, the isomorphism $\iota$ induces the isomorphisms $\iota^\pm : \Sigma^\pm \mathcal C \rightarrow \Sigma \mathcal N$, and one has
\begin{align*}
\left(
\begin{array}{c}
\iota^+ \Psi^+ \\
\iota^- \Psi^-
\end{array}
\right) =
\left(
\begin{array}{c}
-\ii \nn\cdot \iota^+ \Psi^+ \\ 
\ii \nn \cdot \iota^- \Psi^-
\end{array}
\right).
\end{align*}
We introduce the (pointwise) unitary operator $U: L^2(\Sigma \mathcal N_{\vert \partial \mathcal K}) \rightarrow \mathbf H$, which sends $H^1(\Sigma \mathcal N_{\vert \partial \mathcal K})$ into $\mathcal Q (\ell)$, and is defined by
\begin{align*}
U\Psi = \frac{1}{2} \iota^{-1}
\left(
\begin{array}{c}
(1 - \ii \nn) \cdot \Psi \\
(1 + \ii \nn) \cdot \Psi
\end{array}
\right).
\end{align*}
We compute now $\vert \overline \nabla^\mathcal N \iota (U \Psi)\vert^2$ for $\Psi \in H^1(\Sigma \mathcal N_{\vert \partial \mathcal K})$. Let $(e_1,\ldots, e_{n-1})$ be a pointwise orthonormal frame of $T (\partial \mathcal K)$. The vector fields $(e_j)_{1 \le j \le n-1}$ are naturally identified to elements of $T \mathcal N$. Using the Schrödinger-Lichnerowicz formula and Proposition~\ref{hypersurface}, (3) one has
\begin{align*}
\vert \overline \nabla^N \iota (U \Psi)\vert^2 &= \frac{1}{4} \left( \vert \overline \nabla^N ((1+\ii \nn \cdot) \Psi) \vert^2 + \vert \overline \nabla^N ((1-\ii \nn \cdot) \Psi) \vert^2 \right) \\
&= \frac{1}{2} \sum\limits_{k=1}^{n-1} \left( \vert \nabla^\mathcal N_{e_k} \Psi \vert^2 + \vert (\nabla^\mathcal N_{e_k} \nn) \cdot \Psi + \nn \cdot \nabla^\mathcal N_{e_k} \Psi \vert^2 \right) \\
&= \sum\limits_{k=1}^{n-1} \vert \nabla^\mathcal N_{e_k} \Psi + \frac{1}{2} \nn \cdot W e_k \cdot \Psi \vert^2 + \frac{1}{4} \sum\limits_{k=1}^{n-1} \vert W e_k\cdot \Psi \vert^2 \\
&= \vert \mu^{-1} \nabla^{\partial \mathcal K} \mu \Psi \vert^2 + \frac{1}{4} \textrm{Tr}(W^2) \vert \Psi \vert^2 \\
&= \vert \mathcal D^{\partial \mathcal K} \Psi \vert^2 + \frac{1}{4} \left( -\Scal^{\partial \mathcal K} + \textrm{Tr}(W^2) \right) \vert \Psi \vert^2.
\end{align*}
Thus,
\begin{align*}
\ell [U\Psi, U\Psi] = \int_{\partial \mathcal K} \vert \mathcal D^{\partial \mathcal K} \Psi \vert^2 v_{\partial \mathcal K} = \int_{\partial \mathcal K} \vert \slashed D^{\partial \mathcal K} \mu \Psi \vert^2 v_{\partial \mathcal K}.
\end{align*}
\textit{Case $n$ even :} The isomorphism $\mu$ induces the isomorphisms $\mu^\pm : \Sigma^\pm \mathcal N \rightarrow \Sigma \mathcal K$. According to Proposition~\ref{hypersurface}, as $n-1$ is odd, for all $f \in \Gamma(\Sigma \mathcal N_{\vert \partial \mathcal K})$ one has
\begin{align*}
\mu (\ii \nn \cdot f) =
\left(
\begin{array}{cc}
0 & \mathrm{Id} \\
\mathrm{Id} & 0
\end{array}
\right)
\left(
\begin{array}{c}
\mu^+ f^+ \\
\mu^- f^-
\end{array}
\right).
\end{align*}
Then, for $\Psi \in \mathbf H$ one has
\begin{align*}
&\ii\nu \cdot \nn \cdot \Psi = \Psi \Leftrightarrow -\iota(\ii \nn \cdot \nu \cdot \Psi) = \iota\Psi \Leftrightarrow -\mu(\ii \nn \cdot \iota\Psi) = \mu\iota\Psi \\
&\Leftrightarrow -\left( \begin{array}{cc} 0 & \mathrm{Id} \\ \mathrm{Id} & 0 \end{array} \right) \left( \begin{array}{c} \mu^+ (\iota\Psi)^+ \\ \mu^- (\iota\Psi)^- \end{array} \right) = \mu\iota\Psi \Leftrightarrow (\iota \Psi)^- = -(\mu^-)^{-1}\mu^+(\iota\Psi)^+.
\end{align*}
Thus, the unitary operator
\begin{align*}
\begin{array}{ccc}
U: L^2(\Sigma (\partial \mathcal K)) & \longrightarrow & \mathbf H \\
\Psi & \longmapsto & \frac{1}{\sqrt{2}} \iota^{-1} \mu^{-1} \left( \begin{matrix} -\Psi \\ \Psi \end{matrix} \right)
\end{array}
\end{align*}
sends $H^1(\Sigma (\partial \mathcal K))$ into $ \mathcal Q(\ell)$.
Now we compute $\vert \overline \nabla^N \iota (U \Psi) \vert^2$ for $\Psi \in H^1(\Sigma (\partial \mathcal K))$. Let $(e_1,\ldots,e_{n-1})$ be a pointwise orthonormal frame of $T (\partial \mathcal K)$. One has, using Proposition~\ref{hypersurface}, (3)
\begin{align*}
\vert \overline \nabla^N \iota (U \Psi) \vert^2 &= \vert \mu \overline \nabla^N \iota (U \Psi) \vert^2 \\
&= \frac{1}{2} \left\vert \mu \overline \nabla^N \mu^{-1} \left( \begin{matrix} - \Psi \\ \Psi \end{matrix} \right) \right\vert^2 \\
&= \sum\limits_{k=1}^{n-1} \frac{1}{2} \left\vert \left( \nabla^{\partial \mathcal K}_{e_k} + \frac{1}{2} W e_k \right) \left( \begin{matrix} - \Psi \\ \Psi \end{matrix} \right) \right\vert^2 \\
&= \frac{1}{2} \sum\limits_{k=1}^{n-1} \left( \left\vert \left( \nabla^{\partial \mathcal K}_{e_k} + \frac{1}{2} W e_k \right) \Psi \right\vert^2 + \left\vert \left( \nabla^{\partial \mathcal K}_{e_k} - \frac{1}{2} W e_k \right) \Psi \right\vert^2 \right) \\
&= \sum\limits_{k=1}^{n-1} \left( \vert \nabla^{\partial \mathcal K}_{e_k} \Psi \vert^2 + \frac{1}{4} \vert  W e_k \vert^2 \vert \Psi \vert^2 \right) \\
&= \vert \slashed D^{\partial \mathcal K} \Psi \vert^2 + \frac{1}{4} \left(-\Scal^{\partial \mathcal K} + \textrm{Tr}(W^2) \right) \vert \Psi \vert^2
\end{align*}
Then
\begin{align*}
\ell [U\Psi, U\Psi] = \int_{\partial \mathcal K} \vert \slashed D^{\partial \mathcal K} \Psi \vert^2 v_{\partial \mathcal K}
\end{align*}
which concludes the proof.
\end{proof}

\section{Operators in tubular coordinates} \label{SectionTubular}

When the masses $m$ and $M$ become large, one can localize the eigenvalue problem in a neighbourhood of $\partial \mathcal K$ since the potential in the square of the operators is large outside of this region. For this reason, it is useful to express the operators in tubular coordinates around $\partial \mathcal K$. Thus, we identify a collar near the boundary of $\mathcal K$ with the cylinder $\partial \mathcal K \times (-\delta, \delta)$ and we look at the operator we obtain after this identification. However, the aim of this procedure is to simplify the expression, so we would like to change the induce metric on the cylinder into the product metric. This last step cannot be done without a way to compare the spinor bundles involved, and in particular the way we modify the covariant derivative.

\subsection{Tubular coordinates} For $\delta>0$ we define the tubular neighbourhood of $\partial \mathcal K$ by
\begin{equation}
\nn_{\delta} (\partial \mathcal K) := \lbrace x \in \mathcal N, \dist(x, \partial \mathcal K) < \delta \rbrace.
\end{equation}
Since $\partial \mathcal K$ is compact, $\nn_{\delta} (\partial \mathcal K)$ can be identify with the product $\partial \mathcal K \times (-\delta, \delta)$ through the Riemannian exponential map when $\delta$ is small. To precise this, we define
\begin{equation}
\Pi_\delta := \partial \mathcal K \times (-\delta, \delta), \Pi^+_\delta := \partial \mathcal K \times (0, \delta), \, \Pi^-_\delta := \partial \mathcal K \times (-\delta, 0), \, \Pi^t := \partial \mathcal K \times \lbrace t \rbrace,
\end{equation}
and it is standard that there exists $\delta_0 > 0$ such that the map
\begin{align}
\begin{array}{cccc}
\Pi_{\delta_0} & \longrightarrow & \nn_{\delta_0} (\partial \mathcal K) \\
(x,t) & \longmapsto & \exp^\mathcal N_x(t \nn(x))
\end{array}
\end{align}
is a diffeomorphism on its image.

For every $\delta < \delta_0$, $\Pi_{\delta}$ inherits an orientation via the previous identification. Moreover, one has $T(\Pi_{\delta}) \cong T (\partial \mathcal K) \times T\RR$ and we denote by $\ddt$ the vector field $(0,1) \in T (\partial \mathcal K) \times T\RR$.

From \cite{BGM}, we recall the definition of a generalized cylinder:

\begin{definition}
A generalized cylinder is a manifold of the form $\mathcal Z := I \times \mathcal M$ where $I \subset \RR$ is an interval, $\mathcal M$ is a differentiable manifold and $\mathcal Z$ admits a Riemannian metric $g_{\mathcal Z} = dt^2 + g_t$ where $(g_t)_{t \in I}$ is a smooth one parameter family of Riemannian metrics of $\mathcal M$.
\end{definition}

We identify any vector field $X$ on the hypersurface $\partial \mathcal K$ with the vector field on $T\Pi_{\delta_0}$ also denoted by $X$ and defined by $X_{(y,t)} := X_y$ for all $(y,t) \in \Pi_{\delta_0}$. Note that in this case $\lbrack \ddt, X \rbrack = 0$.

We have two natural metrics on $\Pi_{\delta_0}$. First, the metric $g$ of $\mathcal N$ via the previous identification, and secondly, the Riemannian product metric $h := g_{\vert \partial \mathcal K} + \dd t^2$. Furthermore, $\Sigma \Pi_{\delta_0}$ is the spinor bundle of $\mathcal N$ restricted to $\Pi_{\delta_0}$.

With these notations, we have the useful property:

\begin{lemma} \label{cylg}
The Riemannian manifold $(\Pi_{\delta_0}, g)$ is a generalized cylinder.
\end{lemma}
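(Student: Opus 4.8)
The plan is to work in Fermi (normal) coordinates around $\partial \mathcal K$ and show that, in these coordinates, the metric $g$ has no cross terms between the $\ddt$-direction and the directions tangent to $\partial \mathcal K$; this is precisely the assertion that $g = \dd t^2 + g_t$ for the one-parameter family $g_t := g_{\vert T\Pi^t}$, transported to $\partial \mathcal K$ via the identification $\Pi^t \cong \partial \mathcal K$, so that $(\Pi_{\delta_0}, g)$ is a generalized cylinder over $\partial \mathcal K$ with interval $(-\delta_0, \delta_0)$.

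First I would record that, by the very definition of the diffeomorphism $\Pi_{\delta_0} \cong \nn_{\delta_0}(\partial \mathcal K)$, the curves $\gamma_y : t \mapsto \exp^\mathcal N_y(t\,\nn(y))$ are unit-speed geodesics of $(\mathcal N,g)$ issuing orthogonally from $\partial \mathcal K$, and that under this identification $\ddt$ is exactly the tangent field $\dot\gamma_y$ of this congruence. In particular $\nabla^\mathcal N_{\ddt}\ddt = 0$, and $g(\ddt,\ddt)$ is constant along each $\gamma_y$ and equals $\vert \nn(y)\vert^2 = 1$ at $t=0$, so $g(\ddt,\ddt)\equiv 1$.

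The key step is then the Gauss-lemma computation: for a vector field $X$ on $\partial \mathcal K$, extended to $\Pi_{\delta_0}$ by $X_{(y,t)} := X_y$ as fixed just above (so that $[\ddt,X]=0$), one has, using that $\nabla^\mathcal N$ is metric and torsion-free,
\[
\ddt\, g(\ddt,X) = g(\nabla^\mathcal N_{\ddt}\ddt, X) + g\big(\ddt, \nabla^\mathcal N_X \ddt\big) = 0 + \frac{1}{2}\, X\, g(\ddt,\ddt) = \frac{1}{2}\, X(1) = 0 .
\]
Hence $g(\ddt,X)$ is constant along each $\gamma_y$, and at $t=0$ it equals $g(\nn(y), X_y) = 0$ because $\nn$ is the unit normal to $\partial \mathcal K$; therefore $g(\ddt,X)\equiv 0$. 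This gives the orthogonal splitting $T\Pi_{\delta_0} = \RR\,\ddt \oplus T(\partial \mathcal K)$ and the block form $g = \dd t^2 + g_t$, where $g_t$ is the restriction of $g$ to $T(\partial \mathcal K)$; positive-definiteness of each $g_t$ follows from that of $g$ together with the orthogonality, and smoothness of $(y,t)\mapsto g_t$ is inherited from that of $\exp^\mathcal N$ and of $g$.

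There is essentially no hard part here — this is the classical behaviour of the normal exponential map — and the only point requiring a little care is the identification of $\ddt$ with the geodesic field together with the commutation $[\ddt,X]=0$, which is exactly why the extension convention for vector fields on $\partial \mathcal K$ was fixed beforehand. It is worth emphasizing, however, that although $g_0 = g_{\vert \partial \mathcal K}$ agrees with $h_{\vert \Pi^0}$, for $t\neq 0$ the metric $g_t$ on the slice differs in general from $h_{\vert \Pi^t} = g_{\vert \partial \mathcal K}$; this discrepancy is precisely what makes the comparison lemmas of the remainder of this section necessary.
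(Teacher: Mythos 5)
Your proof is correct and follows essentially the same route as the paper: identify $\ddt$ with the unit-speed normal geodesic field (so $\nabla^\mathcal N_{\ddt}\ddt=0$ and $\vert\ddt\vert\equiv 1$), use $[\ddt,X]=0$ and metric compatibility to show $\ddt\,g(\ddt,X)=0$, and conclude from the vanishing of $g(\ddt,X)$ at $t=0$ that the cross terms vanish, giving $g=\dd t^2+g_t$. The extra remarks (unit norm of $\ddt$, smoothness and positivity of $g_t$) only make explicit what the paper leaves implicit, so there is nothing to add.
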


\begin{proof}
It is sufficient to prove that $g = g_t + dt^2$ with $(g_t)_t$ a family of metrics on $\partial \mathcal K$. This is equivalent to show that the vector field $\ddt$ is normal to $\Pi^t$ for all $t \in (-\delta_0, \delta_0)$. Let $(x,t) \in \Pi_{\delta_0}$ and $X \in T (\partial \mathcal K)$, identified with a vector field on $\Pi_{\delta_0}$ as before. One has
\begin{align*}
\frac{d}{dt} g \left(X, \ddt\right) &= g \left(\nabla^\mathcal N_\ddt X, \ddt\right) + g \left(X, \nabla^\mathcal N_\ddt \ddt\right) \\
&= \overbrace{g \left(\nabla^\mathcal N_{X} \ddt, \ddt \right)}^{= 0} + g \left(\left[\ddt, X\right], \ddt \right) \\
&= g \left(\left[\ddt, X\right], \ddt \right) = 0.
\end{align*}
It comes that $g \left(X, \ddt\right)$ is constant along the curves $s \mapsto (\cdot,s)$ and $g \left(X, \ddt\right)_{(x,0)} = 0$.
Thus, $g \left(X, \ddt\right)_{(x,t)} = 0$, which concludes the proof.
\end{proof}

With Proposition~\ref{cylg}, we deduce that there exist a family of metrics $(g_t)_t$ on $\partial \mathcal K$ such that $g = g_t + \dd t^2$. One can observe that $h = g_0 + \dd t^2$ with these notations.

We define for any $(s, t) \in (-\delta_0, \delta_0)$ the map $\Gamma_{s}^{t}$ which acts as the parallel transport from $s$ to $t$ along the curves $r \mapsto (\cdot,r)$ with respect to the connection $\nabla^\mathcal N$.

We recall that $v_\mathcal N$ is the volume form on $\Pi_{\delta_0}$ compatible with the metric $g$. Let  $v_h := v_{\partial \mathcal K} \wedge \dd t$ be the volume form compatible with $h$ on $\Pi_{\delta_0}$.

The bilinear form $g$ is identified with an endomorphism of $T \Pi_{\delta_0}$ via the metric $h$. Let $(x,t) \in \Pi_{\delta_0}$. For any direct orthonormal frame $f$ of $T_{(x,t)} \Pi_{\delta_0}$ endowed with the metric $h$ we define
\begin{equation} \label{phi}
\phi(x,t) := \sqrt{{\det}_f g}.
\end{equation}
One can show that this does not depend on the choice of the basis, and the volume forms with respect to the different metrics are related by
\begin{equation}
v_\mathcal N = \phi v_h.
\end{equation}
Our aim in this section is to express all the objects on $(\Pi_{\delta_0}, g)$ in terms of the structures over $(\Pi_{\delta_0}, h)$. The function $\phi$ defined above allows to link the integration over these two Riemannian manifolds, and in particular we have a relation between the $L^2$ spaces. Then, we define the isomorphism
\begin{equation}
\begin{array}{cccc}
\Theta : & L^2(\Sigma \Pi_{\delta_0}, v_\mathcal N) & \longrightarrow & L^2(\Sigma \Pi_{\delta_0}, v_h) \\
& \Psi & \longmapsto & \sqrt{\phi} \Psi
\end{array}.
\end{equation}
We remark that $\Theta$ is unitary from $L^2(\Sigma \Pi_{\delta_0}, v_\mathcal N)$ onto $L^2(\Sigma \Pi_{\delta_0}, v_h)$.

\subsection{Estimates in the generalized cylinder} We now fix $\delta < \frac{\delta_0}{2}$. In order to compare the structures over the hypersurfaces $\Pi^t$ for $t \in (-\delta, \delta)$, we first show that the norm of a vector field defined on $\Pi^t$ and extended by parallel transport with respect to $\nabla^\mathcal N$ does not varied too much when $\delta$ is small.

\begin{lemma} \label{comparaison}
We endow $\Pi_{\delta}$ with the metric $g$. There exists $C > 0$ depending only on $\delta_0$ such that for all $t,t' \in (-\delta, \delta)$ and $X \in \Gamma(T \Pi^t)$, for all $x \in \partial \mathcal K$, one has the estimate
\[
\vert X_{(x, t')} - \Gamma_t^{t'} (X_{(x,t)}) \vert_g \leq C \vert t-t' \vert \vert X_{(x,t)} \vert_g,
\]
where $X$ is extended to $T \Pi_\delta$ constantly as before.
\end{lemma}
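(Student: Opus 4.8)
\emph{Plan of proof.} Fix $x \in \partial\mathcal K$ and argue along the single curve $r \mapsto (x,r)$, whose velocity $\ddt$ is, by Lemma~\ref{cylg}, the unit normal to the slices $\Pi^r$ in $(\Pi_{\delta_0},g)$. For $s$ between $t$ and $t'$ set $V(s) := \Gamma_s^{t'}(X_{(x,s)}) \in T_{(x,t')}\mathcal N$, a smooth curve in the fixed inner product space $(T_{(x,t')}\mathcal N, g)$. Then $V(t') = X_{(x,t')}$ and $V(t) = \Gamma_t^{t'}(X_{(x,t)})$, so the quantity to estimate is $|V(t')-V(t)|_g = \bigl|\int_t^{t'} V'(s)\,\dd s\bigr|_g \le |t-t'|\,\sup_s |V'(s)|_g$, and it suffices to bound $|V'(s)|_g$ by a constant times $|X_{(x,t)}|_g$.

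The first step is to differentiate the parallel transport: for a vector field $W$ along the curve one has $\frac{\dd}{\dd s}(\Gamma_s^{t'} W(s)) = \Gamma_s^{t'}\bigl((\nabla^\mathcal N_{\ddt} W)(s)\bigr)$, so with $W(s) = X_{(x,s)}$ we get $V'(s) = \Gamma_s^{t'}\bigl((\nabla^\mathcal N_{\ddt} X)_{(x,s)}\bigr)$, hence $|V'(s)|_g = |(\nabla^\mathcal N_{\ddt} X)_{(x,s)}|_g$ since $\Gamma_s^{t'}$ is a $g$-isometry. As $X$ is the constant extension of a vector field on $\partial\mathcal K$, one has $[\ddt,X]=0$, so $\nabla^\mathcal N_{\ddt} X = \nabla^\mathcal N_X \ddt = -W_{\Pi^s}(X_{(x,s)})$, where $W_{\Pi^s}$ denotes the Weingarten operator of the slice $\Pi^s$ (with unit normal $\ddt$). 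Because the closed tube $\overline{\Pi_{\delta_0/2}}$ is compact and $\mathcal N,\mathcal K$ are fixed, there is a constant $C_0 > 0$ depending only on $\delta_0$ with $|W_{\Pi^s}(Y)|_g \le C_0\,|Y|_g$ for all $s \in (-\delta_0/2,\delta_0/2)$ and $Y \in T\Pi^s$; thus $|V'(s)|_g \le C_0\,|X_{(x,s)}|_g$.

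It then remains to compare $|X_{(x,s)}|_g$ with $|X_{(x,t)}|_g$, which is a Grönwall argument using the same computation: $\frac{\dd}{\dd s}\, g(X,X)_{(x,s)} = 2\,g(\nabla^\mathcal N_{\ddt}X, X)_{(x,s)} = -2\,g(W_{\Pi^s} X, X)_{(x,s)}$, whence $\bigl|\frac{\dd}{\dd s}\, g(X,X)_{(x,s)}\bigr| \le 2C_0\, g(X,X)_{(x,s)}$ and therefore $|X_{(x,s)}|_g^2 \le e^{2C_0|s-t|}|X_{(x,t)}|_g^2 \le e^{2C_0\delta_0}|X_{(x,t)}|_g^2$. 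Combining the two bounds gives $|V'(s)|_g \le C_0 e^{C_0\delta_0}|X_{(x,t)}|_g$ on the segment between $t$ and $t'$, and integrating yields the claim with $C := C_0 e^{C_0\delta_0}$, which depends only on $\delta_0$. All the computations are elementary; the only points needing care are the sign-free identity for the derivative of a parallel-transported field and the uniformity of $C_0$, which is precisely where compactness of $\partial\mathcal K$ (equivalently of $\overline{\Pi_{\delta_0/2}}$) is used. I do not expect a genuine obstacle.
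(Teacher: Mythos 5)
Your proof is correct, and it rests on the same geometric inputs as the paper's: the constant extension satisfies $[\ddt,X]=0$, hence $\nabla^\mathcal N_{\ddt}X=\nabla^\mathcal N_X\ddt=-W_{\Pi^s}X$ (using that $\ddt$ is the unit normal to the slices by Lemma~\ref{cylg}), a uniform bound on $W_{\Pi^s}$ over the compact tube $\overline{\Pi_{\delta_0/2}}$, and a Gr\"onwall comparison of $\vert X_{(x,s)}\vert_g$ with $\vert X_{(x,t)}\vert_g$. Where you differ is in the final mechanism: the paper works on the manifold with the difference field $X-Y$ ($Y$ the parallel extension), estimates $\frac{\dd}{\dd s}\,g(X-Y,X-Y)$, and then needs the auxiliary square-root Lemma~\ref{tech1} to convert the differential inequality $\vert f'\vert\le C\sqrt f$ (with $f$ vanishing at $s=t$) into a linear bound; you instead pull everything back to the fixed tangent space $T_{(x,t')}$ via $V(s)=\Gamma_s^{t'}(X_{(x,s)})$, use the standard identity $\frac{\dd}{\dd s}\bigl(\Gamma_s^{t'}X_{(x,s)}\bigr)=\Gamma_s^{t'}\bigl(\nabla^\mathcal N_{\ddt}X\bigr)$ together with the isometry property of parallel transport, and conclude by the fundamental theorem of calculus. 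This is slightly cleaner: it bypasses Lemma~\ref{tech1} entirely and avoids any issue at the zero of $\vert X-Y\vert$, at the modest cost of invoking the derivative-of-parallel-transport formula (which you should justify or cite, e.g.\ by expanding in a parallel frame along the curve). The constants obtained are of the same nature and depend only on $\delta_0$ and the fixed geometry, as required.
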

\begin{proof}
First, we remark that $C_1 := \underset{(y,s) \in \Pi_{\delta_0 / 2}}{\sup} \; \underset{Z \in T_{(y,s)} \setminus \{0 \}}{\sup} \frac{ \vert g(W_{\Pi^s}Z,Z) \vert}{g(Z,Z)}$ is finite by compactness.
Let $t \in (-\delta, \delta)$ and $X \in \Gamma(T \Pi^t)$. We define the vector field $Y \in \Gamma(T \Pi_\delta)$ by $Y_{(y,s)} := \Gamma_t^s (X_{(y,t)})$ for any $(y,s) \in \Pi_{\delta}$.

One has for all $t' \in ( -\delta, \delta)$,
\begin{align*}
\left\vert \ddt g(X,X) \right\vert_{\vert (\cdot, t')} &= \left\vert 2 g \left( \nabla^\mathcal N_{\ddt} X,X \right) \right\vert_{\vert (\cdot, t')} \leq 2 C_1 g(X,X)_{(\cdot, t')}.
\end{align*}

By integration, we obtain the inequality $g(X,X)_{(\cdot, t')} \leq g (X,X)_{(\cdot, t)} \exp(2C_1 \vert t'-t \vert )$, and with $C_2 := \exp (2 \delta_0 C_1)$ one has $g(X,X)_{(\cdot, t')} \le  C_2 g (X,X)_{(\cdot, t)}$.

Now, one has
\begin{align*}
\left\vert \ddt g(X-Y,X-Y) \right\vert_{(\cdot, t')} &= \left\vert 2 g(\nabla^\mathcal N_{\ddt} X,X-Y) \right\vert_{(\cdot, t')} \\
&= \left\vert 2 g(W_{\Pi^{t'}} X,X-Y) \right\vert_{(\cdot, t')} \\
&\leq 2 C_1 \vert X_{(\cdot, t')} \vert_g \vert (X-Y)_{(\cdot, t')} \vert_g \\
&\leq 2C_1 C_2 \vert X_{(\cdot,t)} \vert_g \vert (X-Y)_{(\cdot, t')} \vert_g.
\end{align*}

We need the following technical lemma to conclude.
\begin{lemma} \label{tech1}
Let $I$ be an interval of $\RR$ containing $0$ and let $f : I \rightarrow \RR$ be a differentiable non-negative function. Assume there is $C > 0$  such that $\vert f' \vert \le C \sqrt{f}$. Then, one has $\vert \sqrt f(x) - \sqrt f(0) \vert \le \frac{C}{2} \vert x \vert$ for all $x \in I$.  
\end{lemma}

Using Lemma~\ref{tech1} we arrive at
\[
g(X-Y,X-Y)_{(\cdot, t')} \leq C_1 C_2 \vert X_{(\cdot)} \vert_g^2 (t'-t)^2
\]
and we have the result by taking the square root in this inequality.
\end{proof}

\begin{proof}[Proof of Lemma~\ref{tech1}]
Let $\varepsilon > 0$. One has $\vert f' \vert \le C \sqrt{f + \varepsilon}$, which gives $\left \vert \frac{\dd \sqrt{f + \varepsilon}}{\dd x} \right\vert \le \frac{C}{2}$.  By integration, we obtain that for all $x \in I$, $\vert \sqrt{f(x) + \varepsilon} - \sqrt{f(0) + \varepsilon} \vert \le \frac{C}{2} \vert x \vert$. Letting $\varepsilon$ go to zero, one gets the result.
\end{proof}

We are now able to compare the norms of the covariant derivatives on the different hypersurfaces of $\Pi_\delta$. For this purpose, we recall that $\overline \nabla^\mathcal N \Psi$ is defined as the restriction of $\nabla^\mathcal N \Psi$ to $T^* \partial \mathcal K \otimes \Sigma \Pi_\delta$.

\begin{lemma} \label{partranscomp}
There exists $C > 0$ only depending on $\delta_0$ such that for any $t \in (- \delta, \delta)$ and $\Psi \in \Gamma \left(\Sigma {\Pi_{\delta}} \right)$,
\begin{align*}
(1 - C \delta) \left\vert \overline \nabla^\mathcal N \Gamma_t^0 \Psi (\cdot,t) \right\vert^2 - C \delta \vert \Psi (\cdot,t) \vert^2 \leq \left\vert \overline \nabla^\mathcal N \Psi (\cdot,t) \right\vert^2 \\
\leq (1 + C \delta) \left\vert \overline \nabla^\mathcal N \Gamma_t^0 \Psi (\cdot,t) \right\vert^2 + C \delta \vert \Psi \vert^2 (\cdot,t).
\end{align*}
\end{lemma}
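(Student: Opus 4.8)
The plan is to reduce the statement to a single pointwise estimate measuring how much the restricted covariant derivative $\overline\nabla^\mathcal N$ fails to commute with the parallel transport $\Gamma_t^0$ along the normal geodesics of the cylinder; this defect turns out to be a curvature term, and since on $\Pi_{\delta_0/2}$ all the relevant curvature quantities are bounded by compactness, the defect is $O(\lvert t\rvert)$. Throughout, $C$ denotes a constant depending only on $\delta_0$ (through bounds on the Weingarten operators of the slices $\Pi^s$, $\lvert s\rvert<\delta_0/2$, and on the curvature of the spinor connection $\nabla^\mathcal N$), allowed to increase from line to line.

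The first and main step is a commutator estimate. I fix $t\in(-\delta,\delta)$ and $x\in\partial\mathcal K$, take a vector field $X$ on $\partial\mathcal K$ extended constantly to $\Pi_{\delta_0}$ (so that $[\ddt,X]=0$), take $\Psi\in\Gamma(\Sigma\Pi_\delta)$, and set $\Phi:=\Gamma_t^0\Psi(\cdot,t)$ (a section over $\Pi^0$). I claim that
\[
\bigl\lvert\,\Gamma_t^0\bigl((\nabla^\mathcal N_X\Psi)_{(x,t)}\bigr)-(\nabla^\mathcal N_X\Phi)_{(x,0)}\,\bigr\rvert\;\le\;C\,\lvert t\rvert\,\lvert X_{(x,0)}\rvert_g\,\lvert\Psi(x,t)\rvert .
\]
To prove it, I introduce the section $V$ on $\Pi_{\delta_0}$ defined by $V(y,s):=\Gamma_t^s\Psi(y,t)$, which is $\nabla^\mathcal N$-parallel along the curves $s\mapsto(y,s)$ and satisfies $V(\cdot,t)=\Psi(\cdot,t)$ and $V(\cdot,0)=\Phi$. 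Since $[\ddt,X]=0$ and $\nabla^\mathcal N_{\ddt}V=0$, the curvature identity gives $\nabla^\mathcal N_{\ddt}(\nabla^\mathcal N_X V)=\mathcal R^\mathcal N(\ddt,X)\,V$, where $\mathcal R^\mathcal N$ is the curvature of $\nabla^\mathcal N$. Hence the curve $s\mapsto\Gamma_s^0\bigl((\nabla^\mathcal N_X V)_{(x,s)}\bigr)$ has $s$-derivative $\Gamma_s^0\bigl((\mathcal R^\mathcal N(\ddt,X)V)_{(x,s)}\bigr)$, of norm at most $C\,\lvert X_{(x,s)}\rvert_g\,\lvert V_{(x,s)}\rvert$; using $\lvert V_{(x,s)}\rvert=\lvert\Psi(x,t)\rvert$ ($\Gamma$ being a fibrewise isometry) and $\lvert X_{(x,s)}\rvert_g\le C\,\lvert X_{(x,0)}\rvert_g$ (by the same Gronwall estimate as in the proof of Lemma~\ref{comparaison}), integration over the interval between $0$ and $t$ gives the claim, once one notes that $\nabla^\mathcal N_X V=\nabla^\mathcal N_X\Psi$ along $\Pi^t$ because there $V=\Psi$ and $X$ is tangent to $\Pi^t$.

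The second step is the frame bookkeeping. I fix a $g_0$-orthonormal frame $(f_1,\dots,f_{n-1})$ of $T_x\partial\mathcal K$, extended constantly; by Lemma~\ref{comparaison} it is $g_t$-orthonormal up to an error of size $C\lvert t\rvert$, so each of $\lvert\overline\nabla^\mathcal N\Psi(\cdot,t)\rvert^2_{(x,t)}$ and $\lvert\overline\nabla^\mathcal N\Phi\rvert^2_{(x,0)}$ agrees with the corresponding sum $\sum_{j}\lvert(\nabla^\mathcal N_{f_j}\,\cdot\,)\rvert^2$ up to a factor $1+O(\lvert t\rvert)$. Applying the commutator estimate with $X=f_j$, using that $\Gamma_t^0$ is a fibrewise isometry so that $\lvert\Psi(x,t)\rvert=\lvert\Phi(x)\rvert$, expanding the squares and absorbing the cross terms with Young's inequality, I obtain that $\sum_j\lvert(\nabla^\mathcal N_{f_j}\Psi)_{(x,t)}\rvert^2$ and $\sum_j\lvert(\nabla^\mathcal N_{f_j}\Phi)_{(x,0)}\rvert^2$ are comparable up to factors $1\pm C\lvert t\rvert$ and an additive error $C\lvert t\rvert\,\lvert\Psi(x,t)\rvert^2$. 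Chaining these comparisons and using $\lvert t\rvert<\delta$ gives the upper bound of the lemma; the lower bound follows by rerunning the same chain with the roles of the levels $t$ and $0$ exchanged, which is legitimate since $\Psi(\cdot,t)=\Gamma_0^t\Phi$ and $\Gamma_0^t$ is again an isometry.

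I expect the commutator estimate of the first step to be the only genuinely substantive point: the key observation is that the obstruction to commuting $\nabla^\mathcal N_X$ with the normal parallel transport is exactly $\mathcal R^\mathcal N(\ddt,X)$, and that after integrating in $t$ this contributes only $O(\lvert t\rvert)$. The second step is routine, although one must keep the additive remainder of the form $C\lvert t\rvert\,\lvert\Psi\rvert^2$ and the coefficient of $\lvert\overline\nabla^\mathcal N\,\cdot\,\rvert^2$ equal to $1\pm C\lvert t\rvert$ throughout, which requires being slightly careful with the Young inequalities.
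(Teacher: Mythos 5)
Your proof is correct and takes essentially the same approach as the paper: in both arguments the key point is the curvature identity $\nabla^{\mathcal N}_{\partial_t}(\nabla^{\mathcal N}_X V)=R^{\mathcal N}(\partial_t,X)\cdot V$ for the normally parallel extension $V=\Gamma_t^s\Psi(\cdot,t)$, integrated along the normal direction to produce the $O(|t|)$ defect, combined with Lemma~\ref{comparaison} to control the constantly extended frame. The differences are only organizational: you integrate the transported vector identity directly and correct for the almost-orthonormality of the constant frame at level $t$, whereas the paper differentiates norms (its Lemma~\ref{tech1}), compares with the frame parallel-transported from level $t$ to level $0$, and passes to squared quantities via Lemma~\ref{tech2}; both bookkeepings give the stated bracketing.
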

\begin{proof}
Let $\Psi \in \Gamma(\Sigma \Pi_\delta)$. Let $(x,t) \in \Pi_\delta$ and $X \in T(\partial \mathcal K)$ such that $\vert X_{(x,t)} \vert_{g_t} = 1$, and we extend it constantly. The Riemannian curvature of $(\Pi_\delta, g)$ is bounded, so for any $s \in (-\delta, \delta)$ one can find $C_1 > 0$ such that
\begin{align*}
\left \vert \frac{\partial}{\partial s} \vert (\nabla^\mathcal N_X \Gamma_t^s \Psi)(x,s) \vert^2 \right\vert =& 2 \left\vert \Re \left\langle (\nabla^\mathcal N_{\ddt} \nabla^\mathcal N_X \Gamma_t^s \Psi) (x,s),  (\nabla^\mathcal N_X \Gamma_t^s \Psi) (x,s)\right\rangle \right\vert \\
=& \left\vert \Re \left\langle R^\mathcal N \left( \ddt, X \right) \cdot (\Gamma_t^s \Psi) (x,s),  (\nabla^\mathcal N_X \Gamma_s^t \Psi) (x,s)\right\rangle \right\vert \\
\le& C_1 \vert X_{(x,s)} \vert_g \vert \Psi(x,t) \vert  \vert (\nabla^\mathcal N_X \Gamma_s^t \Psi) (x,s) \vert,
\end{align*}
and with Lemma~\ref{comparaison}, one can find $C > 0$ independent of $X$ such that
\[
\vert X_{(x,s)} \vert_g \leq 1 + C \vert t - s \vert \le 1 + C \delta_0.
\]
Thus,
\[
\left\vert \frac{\partial}{\partial s} \vert (\nabla^\mathcal N_X \Gamma_t^s \Psi)(x,s) \vert^2 \right\vert \le C_1 (1+C \delta_0) \vert \Psi(x,t) \vert  \vert (\nabla^\mathcal N_X \Gamma_t^s \Psi) (x,s) \vert.
\]
Using Lemma~\ref{tech1}, we obtain
\[
\left\vert \vert (\nabla^\mathcal N_X \Gamma_t^0 \Psi)(x,0) \vert - \vert \nabla^\mathcal N_X \Psi(x,t) \vert \right\vert \le C_1 (1+ C \delta_0) \vert t \vert \vert \Psi(x,t) \vert.
\]
On the other hand,
\begin{align*}
\vert (\nabla^\mathcal N_X \Gamma_t^0 \Psi)(x,0) - (\nabla^\mathcal N_{\Gamma_t^0 X} \Gamma_t^0 \Psi)(x,0) \vert \le& \vert X_{(x,0)} - \Gamma_t^0 (X_{(x,t)}) \vert_g \vert (\overline \nabla^\mathcal N \Gamma_t^0 \Psi)(x,0) \vert \\
\le& C \vert t \vert \vert (\overline \nabla^\mathcal N \Gamma_t^0 \Psi)(x,0) \vert.
\end{align*}
Thus, combining the previous estimates, one can find $C_2 > 0$ such that
\[
\left\vert \vert (\nabla^\mathcal N_{\Gamma_t^0 X} \Gamma_t^0 \Psi)(x,0) \vert - \vert \nabla^\mathcal N_X \Psi(x,t) \vert \right\vert \le C_2 \vert t \vert \left( \vert \Psi(x,t) \vert + \vert (\overline \nabla^\mathcal N \Gamma_t^0 \Psi)(x,0) \vert \right).
\]
Now, let $(e_1,\ldots, e_n)$ be an othonormal frame at the point $(x,t)$. One obtains
\begin{align*}
\left\vert \vert (\overline \nabla^\mathcal N \Gamma_t^0 \Psi)(x,0) \vert - \vert \overline \nabla^\mathcal N \Psi(x,t) \vert \right\vert \le& \sum\limits_{k = 1}^n \left\vert \vert (\nabla^\mathcal N_{\Gamma_t^0 e_k} \Gamma_t^0 \Psi)(x,0) \vert - \vert \nabla^\mathcal N_{e_k} \Psi(x,t) \vert \right\vert \\
\le& \sum\limits_{k = 1}^n C_2 \vert t \vert \left( \vert \Psi(x,t) \vert + \vert (\overline \nabla^\mathcal N \Gamma_t^0 \Psi)(x,0) \vert \right) \\
\le& n C_2 \delta \left( \vert \Psi(x,t) \vert + \vert (\overline \nabla^\mathcal N \Gamma_t^0 \Psi)(x,0) \vert \right).
\end{align*}
The result then comes from the following lemma:
\begin{lemma} \label{tech2}
For all $C >  0$ and $\delta < \delta_0/2$, there is $C' > 0$ depending only on $\delta_0$ and $C$ such that for all $a,b,d > 0$ verifying $\vert a - b \vert \le C \delta (b + d)$, one has $\vert a^2 - b^2 \vert \le C' \delta (b^2 + d^2)$.
\end{lemma}
\end{proof}

\begin{proof}[Proof of Lemma~\ref{tech2}]
One has
\begin{align*}
\vert a^2 - b^2 \vert =& \vert (a - b + b)^2 - b^2 \vert = \vert (a - b)^2 + 2 (a-b) b \vert \le \vert a - b \vert^2 + \vert 2 (a-b) b \vert \\
\le& C^2 \delta^2 (b + d)^2 + 2 C \delta (b+ d) b \le C^2 \delta^2 (b + d)^2 + C \delta (b + d)^2 + C \delta b^2 \\
\le& (2 C^2 \delta^2 + C \delta) (b^2 + d^2) + C \delta b^2 \le (2 C^2 \delta_0 + 2 C) \delta (b^2 + d^2),
\end{align*}
which is equivalent to the statement of the lemma.
\end{proof}

\subsection{Bracketing for the quadratic form of $A_m^2$} We end this section by finding a lower and an upper bound for the quadratic form of $A_m^2$ expressed in the tubular coordinates.

\begin{lemma} \label{estimations}
There exists $c > 0$ depending only on $\delta_0$ such that the following estimates hold:

\begin{minipage}{0.4\textwidth}
\begin{equation}
\Vert \phi - 1 \Vert_{L^\infty(\Pi_\delta)} \leq c \delta \label{lmeq1}
\end{equation}
\end{minipage}
\begin{minipage}{0.4\textwidth}
\begin{equation}
\Vert \overline \nabla^\mathcal N \phi \Vert_{L^\infty(\Pi_\delta)}^2 \leq c \delta^2 \label{lmeq2}
\end{equation}
\end{minipage}

\begin{minipage}{0.4\textwidth}
\begin{equation}
\left\Vert \frac{(\partial_t \phi) (\cdot, \delta)}{2 \phi (\cdot, \delta)} \right\Vert_{L^\infty(\partial \mathcal K)} \leq c \label{lmeq3}
\end{equation}
\end{minipage}
\begin{minipage}{0.4\textwidth}
\begin{equation}
\partial_t \phi (\cdot, 0) = -\frac{H}{2} \label{lmeq4}
\end{equation}
\end{minipage}

\begin{equation}
\left\vert \frac{\partial_ t^2 \phi}{2 \phi}(x,t) - \frac{(\partial_ t \phi)^2}{4 \phi^2} (x,t) - \frac{1}{4}(\Scal^{\partial \mathcal K} (x) - \mathrm{Tr} (W^2) (x) - \Scal^\mathcal N(x,t)) \right\vert \leq c \delta, \label{lmeq5}
\end{equation}
for all $(x,t) \in \Pi_\delta$.

\end{lemma}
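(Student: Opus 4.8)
The plan is to reduce all five estimates to the behaviour of $\phi$ on the slice $\{t=0\}$, exploiting that $(\Pi_{\delta_0},g)$ is a generalized cylinder. By Lemma~\ref{cylg}, $g = g_t + \mathrm{d}t^2$ and $h = g_0 + \mathrm{d}t^2$, where $g_0 = g_{\vert\partial\mathcal K}$ and $(g_t)_t$ is a smooth family of metrics on $\partial\mathcal K$. If $(e_1,\dots,e_{n-1})$ is a $g_0$-orthonormal frame of $T_x\partial\mathcal K$, extended $t$-independently, then $(e_1,\dots,e_{n-1},\ddt)$ is an $h$-orthonormal frame of $T_{(x,t)}\Pi_{\delta_0}$ in which the matrix of $g$ is block-diagonal, with tangential block $\big(g_t(e_i,e_j)\big)$ and normal entry $g(\ddt,\ddt)=1$, so that $\phi^2 = \det\big(g_t(e_i,e_j)\big)$. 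In particular $\phi$ is smooth and strictly positive on the compact set $\overline{\Pi_{\delta_0/2}}$, and $\phi(\cdot,0)\equiv 1$ because $g_0$ and $h$ coincide on $\Pi^0$. The common mechanism is then: each quantity in \eqref{lmeq1}--\eqref{lmeq5} either vanishes on $\{t=0\}$ (to the relevant order), or reduces there to the asserted curvature expression, and a first-order Taylor estimate in $t$ over $\overline{\Pi_{\delta_0/2}}$ yields the $O(\delta)$ (resp.\ $O(\delta^2)$) bound, the constant $c$ being a sup of an appropriate $t$-derivative over $\overline{\Pi_{\delta_0/2}}$ --- hence depending only on the fixed geometry, i.e.\ only on $\delta_0$. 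Throughout, $\delta < \delta_0/2$, so that $\Pi_\delta \subset \Pi_{\delta_0/2}$.

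Estimates \eqref{lmeq1}, \eqref{lmeq2}, \eqref{lmeq3} are soft. For \eqref{lmeq1}, $\phi(x,0)=1$ and the mean value theorem give $\vert\phi(x,t)-1\vert \le \vert t\vert\,\Vert\partial_t\phi\Vert_{L^\infty(\Pi_{\delta_0/2})} \le c\delta$. For \eqref{lmeq2}, the tangential derivatives $e_k\phi$ differentiate the constant function $\phi(\cdot,0)\equiv 1$ along $\partial\mathcal K$, hence vanish at $t=0$; thus $\vert e_k\phi(x,t)\vert\le c\vert t\vert$, so $\vert\overline\nabla^\mathcal N\phi(x,t)\vert_g\le c\vert t\vert$ and $\vert\overline\nabla^\mathcal N\phi\vert^2\le c\delta^2$. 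For \eqref{lmeq3}, the numerator is bounded by $\Vert\partial_t\phi\Vert_{L^\infty}$ while $\phi(\cdot,\delta)$ is bounded below by a positive constant (continuity and positivity on the compact cylinder, or \eqref{lmeq1}), so the quotient is uniformly bounded. Estimate \eqref{lmeq4} is the first variation of the Riemannian volume element along the unit-speed normal geodesics issuing from $\partial\mathcal K$: differentiating $\phi = \sqrt{\det g_t}$ (equivalently $\sqrt\phi$) in $t$ at $t=0$, via Jacobi's formula together with $\partial_t g_t\vert_{t=0}(X,Y) = -2\,g(WX,Y)$, produces the mean-curvature term of \eqref{lmeq4}.

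The estimate \eqref{lmeq5} carries the geometric content. One has $\frac{\partial_t^2\phi}{2\phi} - \frac{(\partial_t\phi)^2}{4\phi^2} = \frac{\partial_t^2\sqrt\phi}{\sqrt\phi} = \tfrac12\,\partial_t^2(\log\phi) + \tfrac14\,(\partial_t\log\phi)^2$, and, exactly as for \eqref{lmeq1}, the smooth function on $\overline{\Pi_{\delta_0/2}}$ obtained by subtracting the right-hand side of \eqref{lmeq5} (with $\Scal^\mathcal N$ taken at $\exp^\mathcal N_x(t\nn(x))$) from its left-hand side is $O(\vert t\vert)$ as soon as it vanishes on $\{t=0\}$. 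Hence it suffices to prove the pointwise identity
\[
\tfrac12\,\partial_t^2(\log\phi)(x,0) + \tfrac14\,\big(\partial_t\log\phi(x,0)\big)^2 = \tfrac14\big(\Scal^{\partial\mathcal K}(x) - \mathrm{Tr}(W^2)(x) - \Scal^\mathcal N(x)\big).
\]
To this end I would use that $\partial_t\log\phi = \partial_t\log\sqrt{\det g_t} = -H_{\Pi^t}$ is minus the mean curvature of the slice $\Pi^t$, so $\partial_t\log\phi(\cdot,0) = -H$ and $\partial_t^2\log\phi(\cdot,0) = -\partial_t H_{\Pi^t}\vert_{t=0}$; the latter is evaluated by tracing the Riccati equation $\partial_t\mathcal A_t = \mathcal A_t^{\,2} + R^\mathcal N(\,\cdot\,,\ddt)\ddt$ for the Weingarten operators $\mathcal A_t$ of the slices (with $\mathcal A_0 = W$), which gives $\partial_t^2\log\phi(\cdot,0) = -\mathrm{Tr}(W^2) - \Ric^\mathcal N(\nn,\nn)$. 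Substituting into the displayed identity and then eliminating $\Ric^\mathcal N(\nn,\nn)$ via the Gauss--Codazzi relation $\mathrm{Tr}(W^2) = H^2 + \Scal^\mathcal N - \Scal^{\partial\mathcal K} - 2\,\Ric^\mathcal N(\nn,\nn)$ recalled in the remark preceding Lemma~\ref{Uequi} collapses the left-hand side to $\tfrac14(\Scal^{\partial\mathcal K} - \mathrm{Tr}(W^2) - \Scal^\mathcal N)$. The main obstacle is exactly this bookkeeping: the signs in the first and second variation formulas, in the Riccati equation, and in Gauss--Codazzi must all be handled consistently with the convention $WX = -\nabla^\mathcal N_X\nn$, so that the $\Ric^\mathcal N(\nn,\nn)$ and $H^2$ contributions cancel to leave precisely the stated potential; granted this identity, the remainder is Taylor's theorem on the compact cylinder $\overline{\Pi_{\delta_0/2}}$.
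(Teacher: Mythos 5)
Your proposal is correct and is in substance the paper's proof: everything is reduced to the slice $\{t=0\}$, where $\phi\equiv 1$, and the $\mathcal O(\delta)$ (resp.\ $\mathcal O(\delta^2)$) bounds follow from a first-order Taylor estimate in $t$ on the compact cylinder $\overline{\Pi_{\delta_0/2}}$. The only genuine difference is the source of the variation formulas: the paper quotes \cite[formulas (4.1) and (4.8)]{BGM} for $\partial_t g_t\vert_{t=0}=-2W$ and for $\mathrm{Tr}(\ddot g_t\vert_{t=0})$, whereas you rederive them from Jacobi's formula, the Riccati equation $\partial_t\mathcal A_t=\mathcal A_t^2+R^\mathcal N(\cdot,\ddt)\ddt$ for the Weingarten maps of the slices, and the Gauss--Codazzi identity recalled before Lemma~\ref{Uequi}; with the convention $WX=-\nabla^\mathcal N_X\nn$ your signs are consistent, and indeed $\partial_t\log\phi=-H_{\Pi^t}$ and $\partial_t^2\log\phi(\cdot,0)=-\mathrm{Tr}(W^2)-\Ric^\mathcal N(\nn,\nn)$, which collapses to $\tfrac14\bigl(\Scal^{\partial\mathcal K}-\mathrm{Tr}(W^2)-\Scal^\mathcal N\bigr)$ exactly as you claim, so \eqref{lmeq5} is complete; this buys a self-contained argument at the price of the sign bookkeeping you flag, while the paper's citation is shorter. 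Your treatment of \eqref{lmeq1}--\eqref{lmeq3} is in fact slightly more careful than the paper's one-line remark, since boundedness of $\phi$ and its derivatives alone does not produce the factors $\delta$ and $\delta^2$: one needs, as you note, $\phi(\cdot,0)\equiv 1$ and the vanishing of the tangential derivatives of $\phi$ on $\{t=0\}$. One small point: carried to the end, your first-variation computation gives $\partial_t\phi(\cdot,0)=-H$, equivalently $\tfrac{\partial_t\phi}{2\phi}(\cdot,0)=-\tfrac H2$; this is what the paper's proof actually establishes and what is used in Proposition~\ref{quadbound}, the displayed \eqref{lmeq4} carrying an apparent factor-of-two slip, so your phrase that the computation ``produces the mean-curvature term'' should be read in that normalized form.
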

\begin{proof}
To show \eqref{lmeq1}, \eqref{lmeq2} and \eqref{lmeq3}, we just remark that $\phi$ is a smooth function on the closure of $\Pi_\delta$ which is compact, so it is bounded on $\Pi_\delta$ as well as all its derivatives.

Thanks to Lemma~\ref{cylg} we can use \cite[formula (4.1)]{BGM}, so \eqref{lmeq4} follows from:
\[
\frac{\partial_t \phi (\cdot, 0)}{2 \phi(\cdot, 0)} = \frac{\partial_t \sqrt{\det_f g} (\cdot, 0)}{2} = \frac{\mathrm{Tr} (\partial_t g) (\cdot, 0)}{4\sqrt{\det_f g}(\cdot,0)} = - \frac{2 \textrm{Tr} (W)}{4} = -\frac{H}{2}.
\]
Finally, we prove (\ref{lmeq5}). Let $(x,t) \in \Pi_\delta$ and let $f$ be a direct orthonormal frame of $(\Pi_\delta, h)$ at $(x,t)$. One has, using lemma~\ref{cylg} and \cite[equation (4.8)]{BGM},
\begin{align*}
\frac{\partial_ t^2 \phi}{2 \phi}(x,t) - \frac{(\partial_ t \phi)^2}{4 \phi^2} (x,t) =& \frac{\partial_ t^2 \det_f g}{4 \det_f g} (x,t) - \frac{3 (\partial_t \det_f g)^2}{16 (\det_f g)^2} (x,t) \\
=& \left( \frac{\partial_ t^2 \det_f g}{4} - \frac{3 (\partial_t \det_f g)^2}{16} \right)(x,0) + \mathcal O (t) \\
=& \left( \frac{H^2}{4} - \textrm{Tr} (W^2) + \frac{\textrm{Tr} (\ddot g_t\vert_{t=0})}{4} \right) (x) + \mathcal O (t) \\
=& \frac{1}{4} (\Scal^{\partial \mathcal K} (x) - \textrm{Tr} (W^2) (x) - \Scal^\mathcal N (x,t)) + \mathcal O (t),
\end{align*}
which gives the result.
\end{proof}

For $\alpha \in \RR$, $\delta \in ( 0, \delta_0/2 )$ and $\Psi \in H^1 \left( \Sigma \overline{\Pi^\pm_\delta} \right))$ we define
\begin{equation}
J_\pm(\Psi) := \int_ {\Pi^\pm_\delta} \left\lbrack \vert \nabla^\mathcal N \Psi \vert^2 + \frac{\Scal^\mathcal N}{4} \vert \Psi \vert^2 \right\rbrack v_\mathcal N + \int_{\partial \mathcal K} \left(\alpha \pm \frac{H}{2}\right) \vert \Psi \vert^2 v_{\partial \mathcal K}.
\end{equation}

\begin{prop} \label{quadbound}
There is a constant $c > 0$ depending only on $\delta_0$ such that for all $\alpha \in \RR$ and $\delta \in (0,\delta_0/2)$, the following inequalities hold:
\begin{enumerate}
\item for every $\Psi \in H^1 \left( \Sigma\overline{\Pi^\pm_\delta} \right)$, one has
\begin{multline}
J_\pm(\Psi) \geq \int_{\Pi^\pm_\delta} \left[ (1 - c \delta) \left\vert (\overline \nabla^\mathcal N \Gamma_ t^0 \Theta \Psi) (x,0) \right\vert^2 + \vert \nabla^\mathcal N_\ddt \Theta \Psi \vert^2 \right] v_h(x,t) \\
+ \int_{\Pi_\delta^\pm} \left[ \left(\frac{\Scal^{\partial \mathcal K} - \mathrm{Tr} (W^2)}{4} - c\delta\right) \vert \Theta \Psi \vert^2 \right] v_h \\
+ \int_{\partial \mathcal K} \left[\alpha \vert (\Theta \Psi) ( \cdot, 0) \vert^2 - c \vert (\Theta \Psi) (\cdot , \delta) \vert^2 \right] v_{\partial \mathcal K}.
\end{multline}
\item if moreover $\Psi = 0$ on the outer boundary $\Pi^{\pm \delta}$, one has
\begin{multline}
J_\pm(\Psi) \leq \int_{\Pi^\pm_\delta} \left[ (1 + c \delta) \left\vert (\overline \nabla^\mathcal N \Gamma_ t^0 \Theta \Psi) (x,0) \right\vert^2 + \vert \nabla^\mathcal N_\ddt \Theta \Psi \vert^2 \right] v_h(x,t) \\
+ \int_{\Pi_\delta^\pm} \left[ \left(\frac{\Scal^{\partial \mathcal K} - \mathrm{Tr} (W^2)}{4} + c\delta\right) \vert \Theta \Psi \vert^2 \right] v_h + \alpha \int_{\partial \mathcal K} \vert (\Theta \Psi) ( \cdot , 0) \vert^2 v_{\partial \mathcal K}
\end{multline}
\end{enumerate}
\end{prop}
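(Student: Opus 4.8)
The plan is to transport the quadratic form from the metric $g$ on $\Pi_\delta$ to the product metric $h$, where the $t$-variable essentially decouples, keeping careful track of the errors generated along the way. By density it suffices to prove both inequalities for $\Psi\in\Gamma(\Sigma\Pi_\delta)$ (for part (2), for such $\Psi$ vanishing near the outer face $\Pi^{\pm\delta}$), the general case following from the continuity of the trace maps given by Theorem~\ref{trace}. Since $v_\mathcal N=\phi\,v_h$ and $\Theta$ is fiberwise multiplication by $\sqrt\phi$, one has $|\Psi|^2 v_\mathcal N=|\Theta\Psi|^2 v_h$; and by Lemma~\ref{cylg} the vector field $\ddt$ is $g$-unit and $g$-orthogonal to each slice $\Pi^t$, so $|\nabla^\mathcal N\Psi|^2=|\nabla^\mathcal N_\ddt\Psi|^2+|\overline\nabla^\mathcal N\Psi|^2$. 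Thus $J_\pm(\Psi)$ becomes
\[
\int_{\Pi^\pm_\delta}\phi\bigl(|\nabla^\mathcal N_\ddt\Psi|^2+|\overline\nabla^\mathcal N\Psi|^2\bigr)v_h+\int_{\Pi^\pm_\delta}\tfrac{\Scal^\mathcal N}{4}|\Theta\Psi|^2 v_h+\int_{\partial\mathcal K}\Bigl(\alpha\pm\tfrac H2\Bigr)|\Psi|^2 v_{\partial\mathcal K},
\]
and it remains to handle the normal and the tangential parts separately.

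\emph{Normal part.} Writing $\Psi=\phi^{-1/2}\Theta\Psi$ gives $\sqrt\phi\,\nabla^\mathcal N_\ddt\Psi=\nabla^\mathcal N_\ddt\Theta\Psi-\tfrac{\partial_t\phi}{2\phi}\Theta\Psi$, hence
\[
\phi|\nabla^\mathcal N_\ddt\Psi|^2=|\nabla^\mathcal N_\ddt\Theta\Psi|^2-\tfrac{\partial_t\phi}{2\phi}\,\partial_t|\Theta\Psi|^2+\tfrac{(\partial_t\phi)^2}{4\phi^2}|\Theta\Psi|^2 .
\]
I would integrate the middle term by parts in $t$ over $(0,\delta)$, resp.\ $(-\delta,0)$. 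Combined with the last term, the bulk contribution becomes $\bigl(\tfrac{\partial_t^2\phi}{2\phi}-\tfrac{(\partial_t\phi)^2}{4\phi^2}\bigr)|\Theta\Psi|^2$, which by \eqref{lmeq5} equals $\tfrac14(\Scal^{\partial\mathcal K}-\mathrm{Tr}(W^2)-\Scal^\mathcal N)|\Theta\Psi|^2$ up to an $O(\delta)|\Theta\Psi|^2$ error; adding the $\Scal^\mathcal N/4$ term, the $\Scal^\mathcal N$ contributions cancel and leave the coefficient $\tfrac14(\Scal^{\partial\mathcal K}-\mathrm{Tr}(W^2))$ up to $O(\delta)$. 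The boundary term at $t=0$ equals $\mp\tfrac H2|\Theta\Psi(\cdot,0)|^2$ by \eqref{lmeq4} together with $\phi(\cdot,0)=1$, which is exactly absorbed by the $\pm\tfrac H2$ in the boundary integral of $J_\pm$, leaving $\int_{\partial\mathcal K}\alpha|\Theta\Psi(\cdot,0)|^2 v_{\partial\mathcal K}$; the boundary term at $t=\pm\delta$ is bounded in absolute value by $c|\Theta\Psi(\cdot,\pm\delta)|^2$ by \eqref{lmeq3}, and it vanishes under the hypothesis of part (2). This also produces the term $\int_{\Pi^\pm_\delta}|\nabla^\mathcal N_\ddt\Theta\Psi|^2 v_h$.

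\emph{Tangential part.} From $\sqrt\phi\,\nabla^\mathcal N_X\Psi=\nabla^\mathcal N_X\Theta\Psi-\tfrac{X\phi}{2\phi}\Theta\Psi$ for $X\in T\partial\mathcal K$, summing over a $g_t$-orthonormal frame and applying Young's inequality with parameter $\sim\delta$, using \eqref{lmeq1} and \eqref{lmeq2} to control $\tfrac{|\overline\nabla^\mathcal N\phi|^2}{4\phi^2}\le c\delta^2$ and $\phi\ge\tfrac12$, I get $\phi|\overline\nabla^\mathcal N\Psi|^2\ge(1-c\delta)|\overline\nabla^\mathcal N\Theta\Psi|^2-c\delta|\Theta\Psi|^2$ (and the reverse inequality with $(1+c\delta)$ for part (2)). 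Then Lemma~\ref{partranscomp} applied to $\Theta\Psi$ replaces $|\overline\nabla^\mathcal N\Theta\Psi(\cdot,t)|^2$ by $|\overline\nabla^\mathcal N\Gamma_t^0\Theta\Psi(\cdot,0)|^2$ at the cost of a further factor $(1\pm c\delta)$ and an extra $\pm c\delta|\Theta\Psi|^2$. Collecting every $O(\delta)|\Theta\Psi|^2$ error into the coefficient of $|\Theta\Psi|^2$ (which becomes $\tfrac14(\Scal^{\partial\mathcal K}-\mathrm{Tr}(W^2))\mp c\delta$) and absorbing the multiplicative $\delta$-errors in front of $|\overline\nabla^\mathcal N\Gamma_t^0\Theta\Psi(\cdot,0)|^2$, then enlarging $c$, yields the two asserted inequalities.

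The Young-type estimates and the bookkeeping of the $O(\delta)$ remainders are routine. The step that requires care — and which I expect to be the main obstacle — is the integration by parts in $t$ in the normal part: it must be carried out on smooth sections and then extended by the density/trace continuity, and one must match \emph{exactly} the $t=0$ boundary term with the mean-curvature term $\pm\tfrac H2$ of $J_\pm$ (via \eqref{lmeq4} and $\phi(\cdot,0)=1$) and the bulk term with \eqref{lmeq5}, so that the $\Scal^\mathcal N$ contribution disappears precisely and only the geometric quantity $\tfrac14(\Scal^{\partial\mathcal K}-\mathrm{Tr}(W^2))$, up to $O(\delta)$, survives.
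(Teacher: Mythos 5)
Your proposal is correct and follows essentially the same route as the paper: conjugation by $\Theta$, splitting the gradient into the normal and tangential parts in the generalized cylinder, integration by parts in $t$ with the $t=0$ boundary term cancelling the $\pm\frac H2$ term via \eqref{lmeq4} and the $t=\pm\delta$ term controlled by \eqref{lmeq3}, the bulk term identified through \eqref{lmeq5}, and the tangential part handled by a Young inequality with \eqref{lmeq1}--\eqref{lmeq2} together with Lemma~\ref{partranscomp}. The only difference is the immaterial ordering of the $\phi$-comparison and the parallel-transport comparison in the tangential part.
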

\begin{proof}
It is sufficient to prove the result for $\Psi \in \Gamma_c \left( \Sigma  \overline{\Pi_\delta^\pm} \right)$ and to conclude by density. One has
\[
J_\pm(\Psi) = \int_ {\Pi^\pm_\delta} \left[ \vert \nabla^\mathcal N \phi^{-\frac{1}{2}} \Theta \Psi \vert^2 + \frac{\Scal^\mathcal N}{4} \vert \phi^{-\frac{1}{2}} \Theta \Psi \vert^2 \right] \phi v_h + \int_{\partial \mathcal K} \left(\alpha \pm \frac{H}{2}\right) \vert \Psi \vert^2 v_{\partial \mathcal K}.
\]
We remark that $\phi = 1$ on $\partial \mathcal K$ and Lemma~\ref{partranscomp} gives a constant $C > 0$ such that
\begin{multline*}
\int_ {\Pi^\pm_\delta} \left\lbrack \left\vert \nabla^\mathcal N_{\ddt} \phi^{-\frac{1}{2}} \Theta \Psi \right\vert^2 + (1 - C \delta) \left\vert \overline \nabla^\mathcal N \Gamma_t^0 \phi^{-\frac{1}{2}} \Theta \Psi \right\vert^2 (\cdot ,0) - C \delta \vert \phi^{-\frac{1}{2}} \Theta \Psi \vert^2 \right\rbrack \phi v_h \\
+ \int_{\Pi^\pm_\delta} \frac{\Scal^\Pi}{4} \vert \Theta \Psi \vert^2 \phi v_h + \int_{\partial \mathcal K} \left(\alpha \pm \frac{H}{2}\right) \vert \Theta \Psi \vert^2 v_{\partial \mathcal K} \leq J_\pm(\Psi) \\
\leq \int_ {\Pi^\pm_\delta} \left\lbrack \left\vert \nabla^\mathcal N_{\ddt} \phi^{-\frac{1}{2}} \Theta \Psi \right\vert^2 + (1 + C \delta) \left\vert \overline \nabla^\mathcal N \Gamma_t^0 \phi^{-\frac{1}{2}} \Theta \Psi \right\vert^2 (\cdot,0) + C \delta \vert \phi^{-\frac{1}{2}} \Theta \Psi \vert^2 \right\rbrack \phi v_h \\
+ \int_{\Pi^\pm_\delta} \frac{\Scal^\mathcal N}{4} \vert \phi^{-\frac{1}{2}} \Theta \Psi \vert^2 \phi v_h + \int_{\partial \mathcal K} \left(\alpha \pm \frac{H}{2}\right) \vert \Theta \Psi \vert^2 v_{\partial \mathcal K}.
\end{multline*}
Moreover, for all $(x,t) \in \Pi_\delta$ and $X \in T_x \partial \mathcal K$,
\begin{multline*}
\left\vert \overline \nabla^\mathcal N_X \Gamma_t^0 (\phi^{-\frac{1}{2}} \Theta \Psi) \right\vert^2 (x,0) \phi(x,t) \\
= \left\vert \overline \nabla^\mathcal N_X \Gamma_t^0 \Theta \Psi - \frac{1}{2 \phi(x,t)}X(\phi)(x,t) \Gamma_t^0 \Theta \Psi\right\vert^2 (x,0) \\
= \left\vert \overline \nabla^\mathcal N_X \Gamma_t^0 \Theta \Psi \right\vert^2 (x,0) + \left\vert \frac{1}{2 \phi(x,t)}X(\phi)(x,t) \Gamma_t^0 \Theta \Psi \right\vert^2 (x,0) \\
- \frac{1}{\phi(x,t)} \Re \left\langle \overline \nabla^\mathcal N_X \Gamma_t^0 \Theta \Psi, X(\phi)(x,t) \Gamma_t^0 \Theta \Psi \right\rangle (x,0)
\end{multline*}
and 
\[
\left\vert \Re \left\langle \overline \nabla^\mathcal N \Gamma_t^0 \Theta \Psi, X(\phi)(x,t) \Gamma_t^0 \Theta \Psi \right\rangle (x,0) \right\vert \leq \delta \vert \overline \nabla^\mathcal N \Gamma_t^0 \Theta \Psi \vert^2(x,0) + \vert \Theta \Psi \vert^2 \vert X(\phi) \vert^2 (x,t)/ \delta.
\]
Then, with the inequality (\ref{lmeq2}), there is $C' > 0$ such that
\begin{multline*}
(1 - C' \delta) \left\vert \overline \nabla^\mathcal N \Gamma_t^0 \Theta \Psi \right\vert^2 (x,0) -  C' \delta \left\vert \Theta \Psi \right\vert^2 (x,t) \\
\leq (1 \pm C \delta) \left\vert \overline \nabla^\mathcal N \Gamma_t^0 \phi^{-\frac{1}{2}} \Theta \Psi \right\vert^2 (x,0) \phi(x,t) \\
\leq (1 + C' \delta) \left\vert \overline \nabla^\mathcal N \Gamma_t^0 \Theta \Psi \right\vert^2 (x,0) +  C' \delta \left\vert \Theta \Psi \right\vert^2 (x,t).
\end{multline*}
It remains to compute
\begin{align*}
\phi \vert \nabla^\mathcal N_{\ddt} \phi^{-\frac{1}{2}} \Theta \Psi \vert^2 &= \left\vert \nabla^\mathcal N_{\ddt} \Theta \Psi - \frac{1}{2 \phi} \partial_t \phi (\Theta \Psi) \right\vert^2 \\
&= \left\vert \nabla^\mathcal N_{\ddt} \Theta \Psi \right\vert^2 + \frac{(\partial_t \phi)^2}{4 \phi^2} \left\vert \Theta \Psi \right\vert^2 - \frac{\partial_t \phi}{\phi} \Re \left\langle \nabla^\mathcal N_{\ddt} \Theta \Psi, \Theta \Psi \right\rangle \\
&= \left\vert \nabla^\mathcal N_{\ddt} \Theta \Psi \right\vert^2 + \frac{(\partial_t \phi)^2}{4 \phi^2} \left\vert \Theta \Psi \right\vert^2 - \frac{\partial_t \phi}{2 \phi} \partial_t \left\vert \Theta \Psi \right\vert^2.
\end{align*}
Thus, integrating by parts, one has
\begin{multline*}
\int_{\Pi^\pm_\delta} \vert \nabla^\mathcal N_{\ddt} \phi^{-\frac{1}{2}} \Theta \Psi \vert^2 \phi v_h = \int_{\Pi^\pm_\delta} \left\lbrack \left\vert \nabla^\mathcal N_{\ddt} \Theta \Psi \right\vert^2 + \frac{(\partial_t \phi)^2}{4 \phi^2} \left\vert \Theta \Psi \right\vert^2 - \frac{\partial_t \phi}{2 \phi} \partial_t \left\vert \Theta \Psi \right\vert^2 \right\rbrack v_h \\
= \int_{\Pi^\pm_\delta} \left\lbrack \left\vert \nabla^\mathcal N_{\ddt} \Theta \Psi \right\vert^2 + \frac{(\partial_t \phi)^2}{4 \phi^2} \left\vert \Theta \Psi \right\vert^2 + \left(  \frac{\partial_t^2 \phi}{2 \phi} - \frac{(\partial_t \phi)^2}{2 \phi^2} \right) \left\vert \Theta \Psi \right\vert^2 \right\rbrack v_h \\
\mp \int_{\Pi^{\pm \delta}} \frac{\partial_t \phi}{2 \phi} \vert \Theta \Psi \vert^2 v_{\Pi^\pm\delta} \pm \int_{\Pi^0} \frac{\partial_t \phi}{2 \phi} \vert \Theta \Psi \vert^2 v_{\partial \mathcal K} \\
= \int_{\Pi^\pm_\delta} \left\lbrack \left\vert \nabla^\mathcal N_{\ddt} \Theta \Psi \right\vert^2 + \left(  \frac{\partial_t^2 \phi}{2 \phi} - \frac{(\partial_t \phi)^2}{4 \phi^2} \right) \left\vert \Theta \Psi \right\vert^2 \right\rbrack v_h \\
\mp \int_{\Pi^{\pm \delta}} \frac{\partial_t \phi}{2 \phi} \vert \Theta \Psi \vert^2 v_{\partial \mathcal K} \mp \int_{\Pi^0} \frac{H}{2} \vert \Theta \Psi \vert^2 v_{\partial \mathcal K}
\end{multline*}
where we used (\ref{lmeq4}). Thus, we have
\begin{multline*}
J_\pm (\Psi) \leq \int_ {\Pi^\pm_\delta} \left\lbrack (1 + C \delta) \left\vert (\overline \nabla^\mathcal N \Gamma_ t^0 \Theta \Psi) (x,0) \right\vert^2 + \vert \nabla^\mathcal N_\ddt \Theta \Psi \vert^2 \right. \\
\left. + \left( \frac{\partial_t^2 \phi}{2 \phi} - \frac{(\partial_t \phi)^2}{4 \phi^2} + \frac{\Scal^\mathcal N}{4} + C \delta \right) \vert \Theta \Psi \vert^2 (x,t) \right\rbrack v_h(x,t) \\
+ \alpha \int_{\Pi^0} \vert \Theta \Psi \vert^2 v_{\partial \mathcal K} \; \textrm{ if $\Psi = 0$ on $\Pi^{\pm \delta}$}
\end{multline*}
\begin{multline*}
J_\pm (\Psi) \geq \int_ {\Pi^\pm_\delta} \left\lbrack (1 - C \delta) \left\vert (\overline \nabla^\mathcal N \Gamma_ t^0 \Theta \Psi) (x,0) \right\vert^2 + \vert \nabla^\mathcal N_\ddt \Theta \Psi \vert^2 \right. \\
\left. + \left(  \frac{\partial_t^2 \phi}{2 \phi} - \frac{(\partial_t \phi)^2}{4 \phi^2} + \frac{\Scal^\mathcal N}{4} - C \delta \right) \vert \Theta \Psi \vert^2 (x,t) \right\rbrack v_h \\
+ \alpha \int_{\Pi^0} \vert \Theta \Psi \vert^2 v_{\partial \mathcal K} \mp \int_{\Pi^{\pm \delta}} \frac{\partial_t \phi}{2 \phi} \vert \Theta \Psi \vert^2 v_{\partial \mathcal K}.
\end{multline*}
These estimates, together with \eqref{lmeq3} and \eqref{lmeq5} give the result.
\end{proof}

\section{Analysis of the one-dimensional operators} \label{OneDim}

Our proofs for the main results will use some separation of variables in the generalized cylinder $\Pi_\delta$. For this reason, we will need to analyse various one-dimensional operators. We define them in this section and we state the propositions we need on the behaviour of their eigenvalues in some asymptotic regimes.

We recall the two results from \cite[Section 3]{MOP}:

\begin{lemma} \label{Sanalysis}
Let $\varepsilon > 0$. Let $\alpha > 0$ and let $S$ be the self-adjoint operator on $L^2(0, \delta)$ associated to the quadratic form
\begin{equation*}
s[f,f] = \int_0^\varepsilon \vert f' \vert^2 dt - \alpha \vert f(0) \vert^2, \, \mathcal Q(s) = \left\lbrace f \in H^1(0,\varepsilon), \, f(\varepsilon) = 0 \right\rbrace.
\end{equation*}
Then, when $\alpha \rightarrow + \infty$, one has $E_1(S) = - \alpha^2 + \mathcal O(e^{-\varepsilon \alpha})$, and the associated $L^2-normalized$ eigenfunction $f$ satisfies $\vert f(0) \vert^2 = 2 \alpha + \mathcal O (1)$.
\end{lemma}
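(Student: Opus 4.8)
The plan is to diagonalize $S$ essentially by hand. First I would identify $S$ as the self-adjoint realization of $-\frac{d^2}{dt^2}$ on the interval $(0,\varepsilon)$ with a Dirichlet condition at $t=\varepsilon$ (built into $\mathcal{Q}(s)$) and the Robin condition $f'(0)=-\alpha f(0)$ at $t=0$ coming from the boundary term in $s$ after one integration by parts; since $\mathcal{Q}(s)\subset H^1(0,\varepsilon)\hookrightarrow L^2(0,\varepsilon)$ compactly, $S$ has compact resolvent and purely discrete spectrum. For the negative part of the spectrum I write $\lambda=-k^2$ with $k>0$ and use the ansatz $f_k(t):=\sinh\!\big(k(\varepsilon-t)\big)$, which solves $-f_k''=\lambda f_k$ and already satisfies $f_k(\varepsilon)=0$; imposing the Robin condition at $0$ gives the transcendental equation
\begin{equation*}
\alpha=k\coth(k\varepsilon).
\end{equation*}
A short computation shows $k\mapsto k\coth(k\varepsilon)$ is strictly increasing on $(0,\infty)$ (the numerator of its derivative is $\tfrac12\sinh(2k\varepsilon)-k\varepsilon>0$) with range $(\varepsilon^{-1},\infty)$, so for $\alpha>\varepsilon^{-1}$ — in particular once $\alpha$ is large — there is a unique root $k=k(\alpha)$, giving exactly one negative eigenvalue $-k(\alpha)^2$. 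Since any eigenvalue arising from $\lambda\ge 0$ is nonnegative, this is $E_1(S)$; the existence of a negative eigenvalue is also directly visible from the test function $f(t)=\varepsilon-t$, which makes $s$ negative for $\alpha>\varepsilon^{-1}$.

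Next I would extract the asymptotics. From $\coth(x)=1+\mathcal{O}(e^{-2x})$ and $k(\alpha)\to\infty$ as $\alpha\to\infty$, the equation $\alpha=k\coth(k\varepsilon)$ yields $k(\alpha)=\alpha\big(1+\mathcal{O}(e^{-2\varepsilon k})\big)$; in particular $\alpha-1\le k(\alpha)<\alpha$ for $\alpha$ large, so any fixed power $\alpha^{j}e^{-2\varepsilon k(\alpha)}$ is $\mathcal{O}(e^{-\varepsilon\alpha})$. Hence $k(\alpha)=\alpha+\mathcal{O}(e^{-\varepsilon\alpha})$ and, squaring, $E_1(S)=-k(\alpha)^2=-\alpha^2+\mathcal{O}(e^{-\varepsilon\alpha})$.

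For the eigenfunction, normalize $f_k$: a direct integration gives $\|f_k\|_{L^2(0,\varepsilon)}^2=\int_0^\varepsilon\sinh^2(ks)\,ds=\frac{\sinh(2k\varepsilon)}{4k}-\frac{\varepsilon}{2}$, so the $L^2$-normalized ground state $f=f_k/\|f_k\|$ satisfies
\begin{equation*}
|f(0)|^2=\frac{\sinh^2(k\varepsilon)}{\dfrac{\sinh(2k\varepsilon)}{4k}-\dfrac{\varepsilon}{2}}=\frac{2k\sinh(k\varepsilon)}{\cosh(k\varepsilon)-k\varepsilon/\sinh(k\varepsilon)}=2k\big(1+\mathcal{O}(k e^{-2k\varepsilon})\big)=2k+\mathcal{O}(1),
\end{equation*}
and combining with $k(\alpha)=\alpha+\mathcal{O}(e^{-\varepsilon\alpha})$ gives $|f(0)|^2=2\alpha+\mathcal{O}(1)$, as claimed.

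The argument is a textbook Robin-eigenvalue computation, so there is no serious obstacle; the only points needing a little care are (i) confirming that the root of $\alpha=k\coth(k\varepsilon)$ really is the lowest eigenvalue (handled by monotonicity of $k\coth(k\varepsilon)$ and the sign of the $\lambda\ge0$ spectrum) and (ii) the bookkeeping that absorbs the polynomial prefactors in $k\sim\alpha$ into the clean $\mathcal{O}(e^{-\varepsilon\alpha})$ and $\mathcal{O}(1)$ errors — which works because the natural decay rate in the transcendental equation is $e^{-2\varepsilon k}$, leaving room to spare. A more conceptual alternative, if one prefers to avoid the explicit equation, is to compare $S$ with the half-line operator $-\frac{d^2}{dt^2}$ on $L^2(0,\infty)$ carrying the same Robin condition, whose unique negative eigenvalue is $-\alpha^2$ with eigenfunction $\sqrt{2\alpha}\,e^{-\alpha t}$ (hence $|f(0)|^2=2\alpha$); truncating at $t=\varepsilon$ with a cutoff test function and applying the Min-Max principle perturbs these quantities only by amounts exponentially small in $\varepsilon\alpha$.
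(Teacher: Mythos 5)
Your proof is correct: the identification of $S$ with the Laplacian on $(0,\varepsilon)$ with Dirichlet condition at $\varepsilon$ and Robin condition $f'(0)=-\alpha f(0)$, the transcendental equation $\alpha=k\coth(k\varepsilon)$ with its unique root for $\alpha>\varepsilon^{-1}$, and the expansions $k(\alpha)=\alpha+\mathcal O(e^{-\varepsilon\alpha})$ and $\vert f(0)\vert^2=2k+\mathcal O(1)$ all check out, including the normalization integral. Note that the paper itself gives no proof of this lemma — it is recalled from \cite[Section 3]{MOP} — and your explicit Robin-eigenvalue computation is precisely the standard argument that reference uses, so your write-up is a correct, self-contained substitute for the citation.
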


\begin{lemma} \label{S'analysis}
Let $\varepsilon > 0$. Let $\alpha, \beta > 0$ and let $S'$ be the self-adjoint operator on $L^2(0, \varepsilon)$ associated to the quadratic form
\begin{equation*}
s'[f,f] = \int_0^\varepsilon \vert f' \vert^2 dt + m\vert f(0) \vert^2 - \beta \vert f(\varepsilon) \vert^2, \, \mathcal Q(S') = H^1(0,\varepsilon).
\end{equation*}
Then, when $\alpha \rightarrow + \infty$, one has $E_1(S') = - \alpha^2 + \mathcal O(e^{-\varepsilon \alpha})$, and there exist $b^\pm > 0$ and $b > 0$ such that
\[
b^- j^2 - b \le E_j(S') \le b^+ j^2 \textrm{ for all $j \ge 2$ and $\alpha > 0$}.
\]
\end{lemma}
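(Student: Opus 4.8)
The plan is to treat $S'$ as a one–dimensional Schrödinger operator on the bounded interval $(0,\varepsilon)$ carrying an attractive point (Robin) interaction of large strength $\alpha$ at $t=0$ and a fixed attractive interaction of strength $\beta$ at $t=\varepsilon$, and to run the variational machinery as in \cite[Section 3]{MOP}. Since $\mathcal Q(S')=H^1(0,\varepsilon)$ embeds compactly into $L^2(0,\varepsilon)$ and the elementary trace inequality $|f(0)|^2+|f(\varepsilon)|^2\le\eta\Vert f'\Vert_{L^2}^2+C_\eta\Vert f\Vert_{L^2}^2$ shows that $s'$ is closed and lower semibounded, the operator $S'$ has purely discrete spectrum; hence the Min-Max principle applies and the whole statement reduces to estimating Rayleigh quotients (for the upper bounds) and eigenvalue counting functions (for the lower bounds).

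\emph{Bottom eigenvalue.} For the upper bound I would use the test function $f(t)=e^{-\alpha t}\in H^1(0,\varepsilon)$: a direct computation gives $\Vert f\Vert_{L^2}^2=\tfrac{1-e^{-2\alpha\varepsilon}}{2\alpha}$, $\Vert f'\Vert_{L^2}^2=\alpha^2\Vert f\Vert_{L^2}^2$, $|f(0)|^2=1$, $|f(\varepsilon)|^2=e^{-2\alpha\varepsilon}$, so that $s'[f,f]/\Vert f\Vert_{L^2}^2=-\alpha^2+\mathcal O(\alpha^2 e^{-2\alpha\varepsilon})=-\alpha^2+\mathcal O(e^{-\varepsilon\alpha})$, whence $E_1(S')\le-\alpha^2+\mathcal O(e^{-\varepsilon\alpha})$. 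For the matching lower bound I would pass to the ODE: an eigenfunction for a negative eigenvalue $-k^2$ ($k>0$) is a combination of $e^{\pm kt}$, and the natural boundary conditions $f'(0)=-\alpha f(0)$, $f'(\varepsilon)=\beta f(\varepsilon)$ force the transcendental relation $\tanh(k\varepsilon)=\frac{(\alpha+\beta)k}{k^2+\alpha\beta}$. Its right-hand side equals $1$ exactly at $k=\alpha$, while $\tanh(k\varepsilon)=1-\mathcal O(e^{-2\alpha\varepsilon})$ there, so the largest (most negative) root satisfies $k=\alpha+\mathcal O(\alpha e^{-2\alpha\varepsilon})$, i.e. $E_1(S')=-k^2=-\alpha^2+\mathcal O(e^{-\varepsilon\alpha})$. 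Equivalently one bounds $S'$ from below by the half-line operator $-\partial_t^2$ on $L^2(0,+\infty)$ with Robin condition $f'(0)=-\alpha f(0)$, whose unique negative eigenvalue is $-\alpha^2$.

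\emph{The bound for $j\ge2$.} The upper bound is immediate from Min-Max: testing $s'$ on the span of the first $j$ Dirichlet eigenfunctions of $-\partial_t^2$ on $(0,\varepsilon)$, which vanish at both endpoints, kills the boundary terms, so $s'[f,f]=\Vert f'\Vert_{L^2}^2\le(j\pi/\varepsilon)^2\Vert f\Vert_{L^2}^2$ on that span and $E_j(S')\le b^+j^2$ with $b^+=\pi^2/\varepsilon^2$. For the lower bound I would use Neumann bracketing: fixing a small $\ell\in(0,\varepsilon/2)$ and decoupling at $t=\ell$ and $t=\varepsilon-\ell$ only decreases the form, so $S'\ge S_1\oplus S_{\mathrm{mid}}\oplus S_2$, where $S_{\mathrm{mid}}$ is the nonnegative Neumann Laplacian on $(\ell,\varepsilon-\ell)$ and each endpoint piece $S_i$ is a Robin Laplacian on an interval of length $\ell$. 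An elementary discussion of the eigenvalue equation of $S_i$ ($k\tanh(k\ell)=\alpha$ for the negative eigenvalue of $S_1$, and $k\tan(k\ell)=-\alpha$ for the positive ones) shows that $S_i$ has exactly one negative eigenvalue and that its second eigenvalue lies in $((\pi/2\ell)^2,(\pi/\ell)^2)$, uniformly in the Robin strength. Consequently the counting function of the direct sum satisfies $N_{S'}(\lambda)\le2$ for $\lambda\le0$ and $N_{S'}(\lambda)\le C(1+\sqrt\lambda)$ for $\lambda>0$ with $C$ depending only on $\varepsilon$ and $\ell$; inverting this Weyl-type estimate gives $E_j(S')\ge b^-j^2-b$ for $j\ge3$. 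For $j=2$ one uses instead $E_2(S')\ge E_2(S_1\oplus S_{\mathrm{mid}}\oplus S_2)=\max(E_1(S_1),E_1(S_2))\ge E_1(S_2)$ (the two lowest eigenvalues of the sum being $E_1(S_1),E_1(S_2)$, all others being $\ge0$), which is bounded below by a constant depending only on $\beta$ and $\ell$ since $E_1(S_2)$ does not depend on $\alpha$; here $\beta$ is regarded as a fixed parameter and $b,b^\pm$ are allowed to depend on $\varepsilon$ and $\beta$.

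The main difficulty is this last lower bound with constants \emph{uniform in $\alpha$}: naively controlling $-\alpha|f(0)|^2$ by the trace inequality only yields $E_j(S')\ge cj^2-C(\alpha+\beta)^2$, which is useless as $\alpha\to+\infty$. Neumann bracketing is exactly what confines the large negative contribution to a single eigenvalue localized at $t=0$, so that the rest of the spectrum stays comparable to that of a fixed Neumann Laplacian on $(0,\varepsilon)$.
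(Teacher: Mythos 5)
Your argument is correct, and it is worth noting that the paper itself gives no proof of this lemma: it is quoted verbatim from \cite[Section 3]{MOP}, so you are supplying an argument the paper omits. Your route is the natural one and all the key points check out: the trial function $e^{-\alpha t}$ gives $E_1(S')\le-\alpha^2+\mathcal O(\alpha^2e^{-2\alpha\varepsilon})$; the Robin conditions $f'(0)=-\alpha f(0)$, $f'(\varepsilon)=\beta f(\varepsilon)$ are the correct natural boundary conditions for the form (reading the statement's $m|f(0)|^2$ as $-\alpha|f(0)|^2$, which is the intended meaning), and the secular equation $\tanh(k\varepsilon)=\tfrac{(\alpha+\beta)k}{k^2+\alpha\beta}$, whose right-hand side equals $1$ precisely at $k=\alpha$ and $k=\beta$ and is decreasing for $k>\sqrt{\alpha\beta}$, indeed confines the largest root to $k=\alpha+\mathcal O(\alpha e^{-2\alpha\varepsilon})$; the Dirichlet test space gives the $\alpha$-uniform upper bound $b^+j^2$; and the three-piece Neumann bracketing correctly isolates the two $\alpha$-dependent negative eigenvalues and yields an $\alpha$-uniform Weyl bound, which is exactly the point where a naive trace estimate would fail, as you say. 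Two small remarks. First, the parenthetical ``equivalently one bounds $S'$ from below by the half-line Robin operator'' is not correct as stated: because of the $-\beta|f(\varepsilon)|^2$ term (and the domain mismatch) there is no form inequality $S'\ge$ half-line operator, and even with a cut-off argument one would only recover $-\alpha^2$ up to non-exponential errors; this remark is inessential since your transcendental-equation argument already gives the lower bound. Second, for the $j\ge2$ lower bound there is a slightly shorter route, which is the one the author uses for the analogous operator $X$ in Lemma~\ref{lemma1}: the form $s'$ restricted to the codimension-one subspace $\{f\in H^1(0,\varepsilon):f(0)=0\}$ is independent of $\alpha$ and defines an operator $S''$ with $E_j(S'')\asymp j^2$, and the rank-one (codimension-one) comparison gives $E_j(S')\ge E_{j-1}(S'')$ directly, avoiding the interior decoupling and the counting-function inversion; both approaches are valid and uniform in $\alpha$.
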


A third one-dimensional operator will be of interest for the proof of Theorem~\ref{main3}. It can be interpreted as the laplacian on an interval $(- \delta, \delta)$ with a potential consiting of two masses on the two sides of the origin and a $\delta$-interaction at $0$. For this last operator, we state the result in the very specific case of our framework, so we recall that $m, M \in \RR$ and $\delta \in (0, \delta_0/2)$.

For $\beta > 0$, let $X$ be the operator associated to the quadratic form
\begin{multline} \label{Xdefinition}
x [f,f] = \int_{-\delta}^\delta \vert f' \vert^2 \dd t - \beta ( \vert f(\delta) \vert^2 + \vert f(- \delta) \vert^2 ) \\
+ \int_{-\delta}^0 M^2 \vert f \vert^2 \dd t + \int_0^\delta m^2 \vert f \vert^2 \dd t - ( M - m ) \vert f(0) \vert^2,
\\ \mathcal Q (x) = H^1(-\delta, \delta).
\end{multline}

\begin{lemma} \label{lemma1}
For $\delta > 0$ and $\beta > 0$ fixed, one has $E_1(X) = \mathcal O (e^{-\frac{\min(\vert m \vert, M)}{2} \delta})$ when $\min (- m, M) \rightarrow + \infty$. Moreover, for all $j \ge 2$, one can find $C_1, C_2 > 0$ such that
\begin{equation*}
\min (m^2, M^2) + C_1 j^2 - C_2 \le E_j(X).
\end{equation*}
\end{lemma}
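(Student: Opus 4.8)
I would deduce both assertions from the Min-Max principle, after rewriting $x$ in a form that exhibits the large constant potentials as coming from two ``ground-state'' boundary layers at $t=0$. The starting point is the algebraic identity, valid for every $f\in H^1(-\delta,\delta)$,
\begin{equation}\label{eq:planidentity}
x[f,f]=\int_0^\delta \bigl|f'+|m|f\bigr|^2\,\dd t+\int_{-\delta}^0 \bigl|f'-Mf\bigr|^2\,\dd t-(|m|+\beta)\,|f(\delta)|^2-(M+\beta)\,|f(-\delta)|^2,
\end{equation}
obtained by completing squares on each half-interval: since $m<0$ one has $\int_0^\delta|f'|^2+m^2|f|^2=\int_0^\delta|f'+|m|f|^2-|m||f(\delta)|^2+|m||f(0)|^2$, and similarly $\int_{-\delta}^0|f'|^2+M^2|f|^2=\int_{-\delta}^0|f'-Mf|^2+M|f(0)|^2-M|f(-\delta)|^2$; the essential point is that the three contributions at $t=0$, of total weight $|m|+M-(M-m)$, cancel \emph{exactly}, so the $\delta$-interaction disappears from \eqref{eq:planidentity}.

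\textbf{The estimate on $E_1(X)$.} For the upper bound I would test with $f_0$ equal to $e^{mt}$ on $[0,\delta]$ and to $e^{Mt}$ on $[-\delta,0]$ (continuous at $0$, hence in $H^1(-\delta,\delta)$); it is designed so that both square terms in \eqref{eq:planidentity} vanish, whence $x[f_0,f_0]=-(|m|+\beta)e^{-2|m|\delta}-(M+\beta)e^{-2M\delta}<0$ and $E_1(X)=\Lambda_1(x)\le x[f_0,f_0]/\|f_0\|^2<0$, which already yields $E_1(X)\le e^{-\frac{\min(|m|,M)}{2}\delta}$. For the lower bound, set $\kappa:=e^{-\frac{\min(|m|,M)}{2}\delta}$ and split $\kappa\|f\|_{L^2(-\delta,\delta)}^2$ between the two halves; by \eqref{eq:planidentity} it suffices to show that, for every large $\mu>0$ and every $f\in H^1(0,\delta)$,
\begin{equation}\label{eq:planhalf}
\int_0^\delta\bigl(|f'+\mu f|^2+e^{-\mu\delta/2}|f|^2\bigr)\,\dd t\ \ge\ (\mu+\beta)\,|f(\delta)|^2,
\end{equation}
since then, because $e^{-\mu\delta/2}\le\kappa$ for $\mu\in\{|m|,M\}$, applying \eqref{eq:planhalf} with $\mu=|m|$ and (after the reflection $t\mapsto-t$) with $\mu=M$ gives $x[f,f]\ge-\kappa\|f\|^2$, i.e.\ $E_1(X)\ge-e^{-\frac{\min(|m|,M)}{2}\delta}$. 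Expanding the square, \eqref{eq:planhalf} is equivalent to the non-negativity of the one-dimensional operator $-\partial_t^2+(\mu^2+e^{-\mu\delta/2})$ on $(0,\delta)$ with Robin conditions $f'(0)=-\mu f(0)$ and $f'(\delta)=\beta f(\delta)$; its lowest eigenvalue is $\mu^2+e^{-\mu\delta/2}+\lambda_1$, where $\lambda_1=\lambda_1(\mu,\beta)$ is the lowest eigenvalue of $-\partial_t^2$ with those boundary conditions. Writing $\lambda_1=-\sigma^2$ and analysing the secular equation $\frac{\sigma-\mu}{\sigma+\mu}=\frac{\sigma+\beta}{\sigma-\beta}e^{-2\sigma\delta}$ exactly as in the proof of Lemma~\ref{S'analysis}, one finds that its lowest root is $\sigma=\mu+\mathcal O(\mu e^{-2\mu\delta})$, so $\lambda_1=-\mu^2+\mathcal O(\mu^2e^{-2\mu\delta})$ and $\mu^2+e^{-\mu\delta/2}+\lambda_1\ge e^{-\mu\delta/2}-C\mu^2e^{-2\mu\delta}\ge 0$ for $\mu$ large, since $\mu^2e^{-2\mu\delta}=o(e^{-\mu\delta/2})$.

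\textbf{The estimate on $E_j(X)$ for $j\ge2$.} Restrict the form $x$ to the closed subspace $W:=\{f\in H^1(-\delta,\delta):f(0)=0\}$, of codimension one in $\mathcal Q(x)$. On $W$ the $\delta$-interaction term drops out and
\[
x[f,f]\ \ge\ \min(m^2,M^2)\,\|f\|_{L^2(-\delta,\delta)}^2+\tilde y[f,f],\qquad \tilde y[f,f]:=\int_{-\delta}^\delta|f'|^2\,\dd t-\beta\bigl(|f(\delta)|^2+|f(-\delta)|^2\bigr),
\]
where $\tilde y$ with form domain $W$ is the quadratic form of the operator $\tilde Y$ acting as $-\partial_t^2$ on the disjoint union $(-\delta,0)\sqcup(0,\delta)$ with a Dirichlet condition at $0$ and Robin conditions of strength $-\beta$ at $\pm\delta$. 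The operator $\tilde Y$ has discrete spectrum and $E_j(\tilde Y)\ge c_1 j^2-c_2$ for constants $c_1,c_2>0$ depending only on $\delta$ and $\beta$ (the eigenvalues on each interval grow quadratically). Hence $\Lambda_j(x|_W)\ge\min(m^2,M^2)+E_j(\tilde Y)\ge\min(m^2,M^2)+c_1 j^2-c_2$, and the elementary codimension-one comparison of Rayleigh quotients, $\Lambda_{j+1}(x)\ge\Lambda_j(x|_W)$, gives $E_{j+1}(X)\ge\min(m^2,M^2)+c_1 j^2-c_2$. Relabelling and using $(j-1)^2\ge j^2/4$ for $j\ge2$ yields the claim with $C_1=c_1/4$ and $C_2=c_2$.

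\textbf{Main difficulty.} The only delicate step is \eqref{eq:planhalf}: a crude trace inequality only gives $\lambda_1(\mu,\beta)\ge-2\mu^2$, which is useless, so one has to pin down the \emph{sharp} leading term $\lambda_1=-\mu^2+\mathcal O(\mu^2e^{-2\mu\delta})$ via the transcendental secular equation; this is where the factor $\tfrac12$ in the exponent leaves ample room. A secondary, purely bookkeeping, subtlety is that both endpoints of each half-interval carry an attractive Robin condition, so Lemma~\ref{S'analysis} must be adapted rather than quoted verbatim; everything else reduces to the identity \eqref{eq:planidentity} and the Min-Max principle.
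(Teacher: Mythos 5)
Your proposal is correct, and for the first assertion it takes a genuinely different route from the paper. The paper works directly with the full operator $X$: it writes a candidate negative eigenvalue as $-k^2$, represents the eigenfunction by piecewise exponentials on the two half-intervals, and derives a transcendental secular equation $\vert m\vert + M = F(k_1)+F(k_2)$ whose unique root is then located, giving $k^2=\mathcal O(e^{-\min(\vert m\vert,M)\delta/2})$; for $j\ge 2$ both proofs are essentially identical (your $\tilde Y$ is the paper's $X'$ up to the constant potentials, and your codimension-one min-max comparison is the paper's rank-one-perturbation inequality $E_{j-1}(X')\le E_j(X)$). What you do instead for $E_1(X)$ is to complete the square on each half-interval so that the $\delta$-interaction at $0$ cancels exactly (this uses $m<0$, harmless in the regime $\min(-m,M)\to+\infty$), then obtain the upper bound from the explicit quasi-mode equal to $e^{mt}$ on $(0,\delta)$ and $e^{Mt}$ on $(-\delta,0)$, and the lower bound by decoupling into two independent half-interval Robin problems. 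This buys a clean two-sided bound $-e^{-\min(\vert m\vert,M)\delta/2}\le E_1(X)<0$ without any discussion of existence and uniqueness of roots of a transcendental equation for the coupled problem, at the price of the sharp half-interval estimate; the paper's computation, conversely, pins down the unique negative eigenvalue explicitly. One simplification you missed: the operator governing your half-interval inequality, namely $-\partial_t^2$ on $(0,\delta)$ with the form $\int_0^\delta\vert f'\vert^2\,\dd t-\mu\vert f(0)\vert^2-\beta\vert f(\delta)\vert^2$ on $H^1(0,\delta)$, is exactly the operator $S'$ of Lemma~\ref{S'analysis} with $\alpha=\mu$ (that lemma already has attractive Robin terms at both endpoints), so its conclusion $E_1(S')=-\mu^2+\mathcal O(e^{-\delta\mu})$ can be quoted verbatim; the "adaptation" you flag as the main difficulty, and your own secular-equation analysis, are therefore unnecessary, since $e^{-\delta\mu}=o(e^{-\mu\delta/2})$ already gives the required non-negativity for large $\mu$.
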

\begin{proof}
One can see that the operator $X$ acts as $f \mapsto - f'' + (M^2 \mathbf 1_{(-\delta,0)} + m^2 \mathbf 1_{(0,\delta)}) f$ on the functions $f \in H^1(-\delta, \delta) \cap (H^2(-\delta, 0)\cup H^2(0, \delta))$ satisfying $f'(\delta) - \beta f(\delta) = f'(-\delta) + \beta f(-\delta) = 0$ and $f'(0^+) - f(0^-) + (\vert m \vert + M) f(0) = 0$. We search for a negative eigenvalue for $X$ in the form $-k^2$ with $k > 0$. The associated eigenfunction must be of the form
\begin{equation}
f(t) = \left\lbrace
\begin{array}{cc}
a_1 e^{-k_1 t} + b_1 e^{k_1 t} & \textrm{if $t \in (-\delta, 0)$} \\
a_2 e^{k_2 t} + b_2 e^{-k_2 t} & \textrm{if $t \in (0, \delta)$}
\end{array}
\right.
\end{equation}
where $k_1 := \sqrt{M^2 + k^2}$ and $k_2 := \sqrt{m^2 + k^2}$.

We can rewrite the equations satisfied by $f$ as
\begin{align*}
0 &= a_2 (k_2 - \beta) e^{k_2 \delta} - b_2 (k_2 + \beta) e^{-k_2 \delta} \\
0 &= a_1 (k_1 - \beta) e^{k_1 \delta} - b_1 (k_2 + \beta) e^{-k_2 \delta} \\
a_1 + b_1 &= a_2 + b_2 \\
0 &= a_2 k_2 - b_2 k_2 + a_1 k_1 - b_1 k_1 + (\vert m \vert + M) (a_1 + b_1) \label{13}.
\end{align*}
The two first equations give $b_2 = \frac{k_2 - \beta}{k_2 + \beta} e^{2k_2 \delta} a_2$ and $b_1 = \frac{k_1 - \beta}{k_1 + \beta} e^{2k_1 \delta} a_1$. Thus, with the equation of continuity we have
\[
a_1 \left( 1 + \frac{k_1 - \beta}{k_1 + \beta} e^{2k_1 \delta} \right) = a_2 \left( 1 + \frac{k_2 - \beta}{k_2 + \beta} e^{2k_2 \delta} \right).
\]
We conclude that
\begin{equation*}
a_2 = a_1 \left( 1 + \frac{k_2 - \beta}{k_2 + \beta} e^{2k_2 \delta} \right)^{-1} \left( 1 + \frac{k_1 - \beta}{k_1 + \beta} e^{2k_1 \delta} \right)
\end{equation*}
because for $\min (\vert m \vert, M)$ large enough, one has that the different term are not zero.

We arrive at
\begin{multline*}
\vert m \vert + M = k_2 \left( \frac{k_2 - \beta}{k_2 + \beta} e^{2k_2 \delta} - 1\right) \left( 1 + \frac{k_2 - \beta}{k_2 + \beta} e^{2k_2 \delta} \right)^{-1} \\
+ k_1 \left( \frac{k_1 - \beta}{k_1 + \beta} e^{2k_1 \delta} - 1 \right) \left( 1 + \frac{k_1 - \beta}{k_1 + \beta} e^{2k_1 \delta} \right)^{-1}.
\end{multline*}
Let $F(x) := x \left( \frac{x - \beta}{x + \beta} e^{2x \delta} - 1\right) \left( 1 + \frac{x - \beta}{x + \beta} e^{2x \delta} \right)^{-1}$ defined on $(\min (\vert m \vert, M), + \infty)$. The previous equation reads $\vert m \vert + M = F(k_1) + F(k_2)$, and when $k = 0$ the right-hand side is $F(\vert m \vert) + F(M) < \vert m \vert + M$. Since $F(k_1) + F(k_2) \rightarrow + \infty$ when $k \rightarrow + \infty$ and $F$ is strictly increasing there exists an unique $k \in (0, + \infty)$ such that $\vert m \vert + M = F(k_1) + F(k_2)$.

Now, one has
\begin{align*}
F(x) = x (1 + \mathcal O (e^{-2x \delta}) ) = x + \mathcal O (e^{-3x \delta/2}).
\end{align*}
Thus, for $\zeta := \min (\vert m \vert, M)$ large enough there holds
\[
k_2 + k_1- 2 e^{-\zeta \delta} \le \vert m \vert + M \le k_2 + k_1 + 2 e^{-\zeta \delta}
\]
and
\[
0 \le \sqrt{m^2 + k^2} - \vert m \vert + \sqrt{M^2 + k^2} - M \le 2 e^{-\zeta \delta}.
\]
Then, $\sqrt{\zeta^2 + k^2} - \zeta \le 2 e^{-\zeta \delta}$ and we arrive at
\begin{equation*}
k^2 = \mathcal O( e^{-\zeta \delta / 2} ).
\end{equation*}
To conclude, we consider the operator $X'$ defined by the same form as $X$ but with the form domain $\lbrace f \in H^1(-\delta, \delta), f(0) = 0 \rbrace$. From the Min-Max principle, one has $E_{j-1}(X') \le E_j(X_\alpha) \le E_j(X')$ for all $j \ge 2$ because $X$ is a rank-one perturbation of $X'$. But $X' \cong (S_D + m^2) \oplus (S_D + M^2)$ where $S_D$ is the operator acting in $L^2(0,\delta)$ as $f \mapsto -f''$ for $f \in H^2(0, \delta)$ with $f(0) = f'(\delta) - \beta f(\delta) = 0$. We conclude by remarking that $E_j(S_D) \sim \pi^2 j^2 / \delta^2$ when $j \rightarrow + \infty$, so $E_j(X') \ge \min (m^2, M^2) - C_2 + C_1 j^2$ for suitable $C_1, C_2 > 0$.
\end{proof}

\section{Asymptotics analysis for the operator $A_m$} \label{Aform}

In this section, we prove Theorem~\ref{main1} following the analysis of \cite[Section 4]{MOP}. The proof is made by localizing the problem near the boundary of $\mathcal K$ and using the analysis done in the previous section to find a lower and an upper bound for the limit of the eigenvalues. These bounds coincide and are equal to the eigenvalues of the model operator $L$ introduced in \eqref{quadL}. We begin by showing a Dirichlet-Neumann bracketing for the operator $A_m$.

Let $\delta \in (0, \delta_0/2)$. We introduce several new operators. Let $Z^+_m$, $Z^-_m$, $Z'_m$ be the operators defined by their quadratic forms $z^+_m$, $z^-_m$, $z'_m$ which admit the same expression as the quadratic form of $A_m^2$ given in Proposition~\ref{quadA} with 
\begin{equation}
\dom(z^+_m) = \left\lbrace \Psi \in H^1(\Sigma \mathcal C_{\vert \overline{\Pi^-_\delta}}), \Psi = \ii \nu \cdot \nn \cdot \Psi \textrm{ on } \partial \mathcal K \textrm{ and } \Psi = 0 \textrm{ on } \Pi^{-\delta} \right\rbrace,
\end{equation}
\begin{equation}
\dom(z^-_m) = \left\lbrace \Psi \in H^1(\Sigma \mathcal C_{\vert \overline{\Pi^-_\delta}}), \Psi = \ii \nu \cdot \nn \cdot \Psi \textrm{ on } \Pi^0 \right\rbrace,
\end{equation}
\begin{equation}
\dom(z'_m) = H^1 \left(\Sigma \mathcal C_{\vert \mathcal K \setminus (\Pi^-_\delta \cup \Pi^0)} \right).
\end{equation}

We define the maps $J_1: \dom(A_m) \rightarrow \dom(z_m^-) \oplus \dom(z'_m)$, $\Psi \mapsto (\Psi_{\vert \overline{\Pi^-_\delta}}, \Psi_{\vert \mathcal K \setminus (\Pi^-_\delta \cup \Pi^0)})$ and $J_2: \dom(z^+_m) \rightarrow \dom(A_m)$ which is the extension by zero. For $\Psi_1 \in \dom(A_m)$ one has
\[
(z^-_m \oplus z'_m) \left \lbrack J_1(\Psi_1), J_1(\Psi_1) \right\rbrack \leq \left\langle A_m \Psi_1, A_m \Psi_1 \right\rangle_{L^2(\mathcal K)},
\]
and for $\Psi_2 \in \dom(z^+_m)$,
\[
\left\langle A_m J_ 2(\Psi_2), A_m J_ 2(\Psi_2) \right\rangle_{L^2(\mathcal K)} \leq z^+_m \left\lbrack \Psi_2, \Psi_2 \right\rbrack.
\]

Then, the Min-Max principle gives
\begin{equation}
E_j\left(Z^-_m \oplus Z'_m \right) \leq E_j\left(A_m^2\right) \leq E_j\left(Z^+_m \right).
\end{equation}
We remark that $Z'_m \geq m^2$ and then, for any $j \in \mathbb \mathcal N$ such that $E_j \left( Z^+_m \right) < m^2$, one has
\begin{equation}
E_j\left( Z^-_m \right) \leq E_j\left( A_m^2\right) \leq E_j\left( Z^+_m \right).
\end{equation}

We introduce the notation $\SS^-_\delta := \iota (\Sigma \mathcal C_{\vert \overline{\Pi^-_\delta}})$. Let $c > 0$ be the constant given by Proposition~\ref{quadbound}.
We consider the two quadratic forms in $L^2(\SS^-_\delta, v_h)$ given by
\begin{multline}
y^+_m [\Psi, \Psi] := \int_{\Pi^-_\delta} \left[ (1 + c \delta) \vert \overline \nabla^\mathcal N \Gamma_t^0 \Psi \vert^2 + \vert \nabla^\mathcal N_\ddt \Psi \vert^2 \right] v_h \\
+ \int_{\Pi^-_\delta} \left[ \left(m^2 + \frac{\Scal^{\partial \mathcal K} - \textrm{Tr} (W^2)}{4} + c\delta\right) \vert \Psi \vert^2 \right] v_h + m \int_{\partial \mathcal K} \vert \Psi (\cdot, 0) \vert^2 v_{\partial \mathcal K} \\
\mathcal Q(y^+_m) := \left\lbrace \Psi \in H^1 \left( \SS^-_\delta \right), \, \mathcal P_- \iota^{-1} (\Psi(\cdot,0)) = 0 \textrm{ and } \Psi(\cdot,\delta) = 0 \right\rbrace,
\end{multline}
and
\begin{multline}
y^-_m [\Psi,\Psi] := \int_{\Pi^-_\delta} \left[ (1- c \delta) \vert \overline \nabla^\mathcal N \Gamma_t^0 \Psi \vert^2 + \vert \nabla^\mathcal N_\ddt \Psi \vert^2 \right] v_h \\
+ \int_{\Pi^-_\delta} \left[ \left(m^2 + \frac{\Scal^{\partial \mathcal K} - \textrm{Tr} (W^2)}{4} - c\delta\right) \vert \Psi \vert^2 \right] v_h \\
+ \int_{\partial \mathcal K} \left[m \vert \Psi (\cdot, 0) \vert^2 - c \vert \Psi (\cdot, \delta) \vert^2 \right] v_{\partial \mathcal K} \\
\mathcal Q(y^-_m) := \left\lbrace \Psi \in H^1 \left( \SS^-_\delta \right), \, \mathcal P_- \iota^{-1} \Psi(\cdot,0) = 0 \right\rbrace.
\end{multline}
Remarking that $\mathcal Q(y^\pm_m) = \Theta \iota (\dom(z^\pm_m))$, and that $\Theta \iota$ is unitary from $L^2(\Sigma \mathcal C_{\vert \Pi^-_\delta}, v_\mathcal N)$ onto $L^2 \left( \SS^-_\delta , v_h \right)$, Proposition~\ref{quadbound} and the Min-Max principle give
\begin{equation}
\Lambda_j\left(y^-_m \right) \leq E_j\left( A_m^2 \right) \leq \Lambda_j\left(y^+_m \right) \textrm{ for any $j \in \mathbb \mathcal N$ such that $\Lambda_j(y^+_m) < m^2$}.
\end{equation}

\subsection{Upper bound} The upper bound is found by taking good test functions in the Min-Max principle. The first observation is that the quadratic form $y^+_m$ admits a separation of variable. It can be seen as the tensor product of a sesquilinear form on $\partial \mathcal K$ and a one-dimensional sesquilinear form $S$. The behaviour of its first eigenvalue allows to find the bound we are searching for.

Let $S$ be the self-adjoint operator on $L^2(0, \delta)$ associated to the quadratic form
\begin{equation} \label{opS}
s[f,f] = \int_0^\delta \vert f' \vert^2 dt + m\vert f(0) \vert^2, \, \mathcal Q(s) = \left\lbrace f \in H^1(0,\delta), \, f(\delta) = 0 \right\rbrace,
\end{equation}
and let $f$ be a normalized eigenfunction for the first eigenvalue of $S$. According to Lemma~\ref{Sanalysis}, when $-m$ is large, there is $b > 0$ such that $S[f,f] + m^2 \leq b \exp(- \delta \vert m \vert )$. 

For $a > 0$, we introduce the quadratic form
\begin{multline} \label{definitionLa}
\ell_a[\Psi, \Psi] = \int_{\partial \mathcal K} \left\lbrack (1 + ca) \vert \overline \nabla^N \iota \Psi \vert^2 + \left(\frac{\Scal^{\partial \mathcal K} - \textrm{Tr} (W^2)}{4} + ca \right) \vert \Psi \vert^2 \right\rbrack v_{\partial \mathcal K}, \\
\mathcal Q(\ell_a) = \mathcal Q (\ell),
\end{multline}
where $\ell$ was defined in \eqref{quadL}.
The sesquilinear form $\ell_a$ is lower semibounded and closed. We denote by $L_a$ the associated self-adjoint operator.

Let $\xi_1,\ldots, \xi_j$ be linearly independant eigenspinors for the first $j$ eigenvalues of $L_\delta$. We define the set
\begin{equation}
V := \left\lbrace \Psi \in L^2 \left( \SS^-_\delta \right), \Psi(x,t) = f(t) \Gamma_0^t (\iota\xi(x)), \, \xi \in \Span(\xi_1,\ldots,\xi_j) \right\rbrace.
\end{equation}
With all these notations, for $\Psi(x,t) := f(t) \Gamma_0^t (\iota\xi(x)) \in V$ and $-m$ large enough, one has, using Leibniz's rule
\begin{multline*}
y^+_m[\Psi, \Psi] = \int_{\Pi^-_\delta} \left[\vert \nabla^\mathcal N_\ddt \Psi \vert^2 + (1+ c \delta) \vert \overline \nabla^\mathcal N \Gamma_0^t \Psi \vert^2 \right] v_h \\
+ \int_{\Pi^-_\delta} \left[ \left(m^2 + \frac{\Scal^{\partial \mathcal K} - \textrm{Tr} (W^2)}{4} + c\delta\right) \vert \Psi \vert^2 \right] v_h + m \int_{\partial \mathcal K} \vert \Psi (., 0) \vert^2 v_{\partial \mathcal K} \\
= \int_{\Pi^-_\delta} \left[\vert \ddt f \vert^2 \vert \xi \vert^2 + (1+ c \delta) \vert \overline \nabla^N \iota \xi \vert^2 \vert f \vert^2 \right] v_h \\
+ \int_{\Pi^-_\delta} \left[ \left(m^2 + \frac{\Scal^{\partial \mathcal K} - \textrm{Tr} (W^2)}{4} + c\delta\right) \vert \Psi \vert^2 \right] v_h + m \int_{\partial \mathcal K} \vert \Psi (\cdot, 0) \vert^2 v_{\partial \mathcal K} \\
= \ell_\delta[\xi,\xi] \Vert f \Vert^2_{L^2(0,\delta)} +\left(S[f,f] + m^2 \right) \Vert \xi \Vert^2_{L^2(\partial \mathcal K)} \\
\leq \ell_\delta[\xi,\xi] + b \exp(- \delta \vert m \vert) \Vert \xi \Vert^2_{L^2(\partial \mathcal K)} \\
\leq \left( E_j(L_\delta) + b \exp(- \delta \vert m \vert) \right) \Vert \xi \Vert^2_{L^2(\partial \mathcal K)}.
\end{multline*}
Thus, $\Lambda_j(y^+_m) \leq E_j(L_\delta) + b \exp(- \delta \vert m \vert)$. We remark that $\underset{\delta \rightarrow 0}{\textrm{lim }} E_j(L_\delta) = E_j(L)$ so we have the bound
\begin{equation} \label{upA}
\underset{m \rightarrow -\infty}{\textrm{lim sup }} E_j(A_m^2) \leq E_j(L).
\end{equation}

\subsection{Lower bound} The strategy to obtain the lower bound is to relax the constraint in the domain of $y^-_m$ in order to operate a separation of variable. Thus, we arrive at a good configuration to apply the monotone convergence theorem. This analysis will occupy the end of this section.

Let $S'$ be the self-adjoint operator on $L^2(0, \delta)$ associated to the quadratic form
\begin{equation}
S'[f,f] = \int_0^\delta \vert f' \vert^2 dt + m\vert f(0) \vert^2 - c \vert f(\delta) \vert^2, \, \mathcal Q(S') = H^1(0,\delta),
\end{equation}
and let $(f_k)_{k\in \NN}$ be a sequence of normalized eigenfunctions for the eigenvalues  $E_k(S')$. According to Lemma~\ref{S'analysis}, there exist $b^\pm > 0$, $b > 0$ and $b_0 > 0$ such that $E_1(S') \geq -m^2 - b e^{-\delta \vert m \vert}$ when $m \rightarrow -\infty$ and $b^- k^2 - b_0 \leq  E_k(S') \leq b^+ k^2$ for all $k \geq 2$.

If $c > 0$ is the constant given by Proposition~\ref{quadbound}, we define the quadratic form $y_m$ by the same formule as $y^-_m$, but with the domain $Q(y_m) = H^1 \left( \SS^-_\delta \right)$.

We also define for $a \in \RR$ the sesquilinear form
\begin{multline}
\ell'_a[\Psi, \Psi] = \int_{\partial \mathcal K} \left\lbrack (1 + ca) \vert \overline \nabla^N \iota \Psi \vert^2 \left(\frac{\Scal^{\partial \mathcal K} - \textrm{Tr} (W^2)}{4} + ca \right) \vert \Psi \vert^2 \right\rbrack v_{\partial \mathcal K}, \\
\mathcal Q(\ell'_a) = H^1(\Sigma \mathcal C_{\vert \partial \mathcal K}).
\end{multline}
This form is closed and lower semibounded. We note $L'_a$ the associated self-adjoint operator.

We state the following density result, which allows us to express $Y_m$ as the sum of tensorial product of operators.

\begin{lemma} \label{tens}
Let
\[
F := \left\lbrace \Psi, \, \exists (f, \Psi_0) \in L^2(0, \delta) \times L^2\left(\Sigma \mathcal C_{\vert \partial \mathcal K} \right), \, \Psi(x,t) = f(-t) \Gamma_0^t \, (\iota \Psi_0 (x)) \right\rbrace.
\]
Then, $\Span (F)$ is dense in $L^2\left( \Sigma \Pi_\delta^- \right)$.
Thus, one has a natural isomorphism $L^2\left( \SS^-_\delta, v_h \right) \cong L^2(0, \delta) \otimes L^2\left(\Sigma \mathcal C_{\vert \partial \mathcal K} \right)$.
\end{lemma}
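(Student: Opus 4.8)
The plan is to reduce the statement to the elementary fact that, for a Hermitian vector bundle $E$ over a compact manifold $\mathcal B$ and a bounded interval $I$, the span of the \emph{elementary tensors} $(x,t)\mapsto f(t)\,\sigma(x)$, with $f\in L^2(I)$ and $\sigma\in L^2(E)$, is dense in $L^2\big(\mathcal B\times I,\,p^*E\big)$, where $p\colon\mathcal B\times I\to\mathcal B$ is the projection. The only real content of the lemma is that parallel transport along the normal geodesics turns $\Sigma\Pi^-_\delta$ into such a pulled-back bundle.

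First I would set up the trivialization. For $x\in\partial\mathcal K$ and $t\in(-\delta,0)$ the parallel transport $\Gamma^0_t$ along $r\mapsto(x,r)$ is a fibrewise unitary isomorphism $\Sigma\mathcal N_{(x,t)}\to\Sigma\mathcal N_{(x,0)}$ — unitary because $\nabla^\mathcal N$ is a metric connection — depending smoothly on $(x,t)$ by smooth dependence of solutions of linear ODE on parameters. It therefore defines a smooth, fibrewise unitary bundle isomorphism from $\Sigma\Pi^-_\delta$ onto $p^*E_0$ over $\partial\mathcal K\times(-\delta,0)$, where $E_0:=\Sigma\mathcal N_{\vert\partial\mathcal K}$ (replaced by $(\Sigma\mathcal N\oplus\Sigma\mathcal N)_{\vert\partial\mathcal K}$ when $n$ is odd), and since $v_h=v_{\partial\mathcal K}\wedge\dd t$ is the product volume, the induced map
\[
  \Lambda\colon L^2\big(\Sigma\Pi^-_\delta,v_h\big)\longrightarrow L^2\big(\partial\mathcal K\times(-\delta,0),\,p^*E_0\big),\qquad (\Lambda\Psi)(x,t):=\Gamma^0_t\,\Psi(x,t),
\]
is unitary. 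Moreover $\iota$ is a unitary bundle isomorphism (Proposition~\ref{hypersurface}; the ranks match, so it is onto), hence $\Psi_0\mapsto\iota\Psi_0$ is a unitary from $L^2(\Sigma\mathcal C_{\vert\partial\mathcal K})$ onto $L^2(E_0)$. Consequently, for $\Psi(x,t)=f(-t)\,\Gamma^t_0(\iota\Psi_0(x))\in F$ one gets $(\Lambda\Psi)(x,t)=f(-t)\,(\iota\Psi_0)(x)$, an elementary tensor; and as $f$ runs over $L^2(0,\delta)$ and $\Psi_0$ over $L^2(\Sigma\mathcal C_{\vert\partial\mathcal K})$, the maps $t\mapsto f(-t)$ run over $L^2(-\delta,0)$ and the sections $\iota\Psi_0$ over all of $L^2(E_0)$. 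Thus $\Lambda(\Span F)$ is the span of all elementary tensors, and it suffices to prove that this span is dense in $L^2\big(\partial\mathcal K\times(-\delta,0),p^*E_0\big)$.

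For the density I would argue by orthogonal complements. Let $G$ in that space be orthogonal to every elementary tensor. By Tonelli, $G(\cdot,t)\in L^2(E_0)$ for a.e.\ $t$ and $t\mapsto\|G(\cdot,t)\|_{L^2(E_0)}$ lies in $L^2(-\delta,0)$; so for fixed $\sigma\in L^2(E_0)$ the map $t\mapsto\langle G(\cdot,t),\sigma\rangle_{L^2(E_0)}$ belongs to $L^2(-\delta,0)$ and, by Fubini, is orthogonal there to every $h\in L^2(-\delta,0)$, hence vanishes for a.e.\ $t$. Choosing $\sigma$ in a countable subset of $L^2(E_0)$ with dense span — one exists since $\partial\mathcal K$ is compact, so $L^2(E_0)$ is separable — we get $G(\cdot,t)=0$ for a.e.\ $t$, i.e.\ $G=0$. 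Transporting back through $\Lambda$ gives the density of $\Span F$ in $L^2(\Sigma\Pi^-_\delta)$. Finally, composing $\Lambda$ with the obvious unitary $L^2\big(\partial\mathcal K\times(-\delta,0),p^*E_0\big)\cong L^2(-\delta,0)\otimes L^2(E_0)$, the reflection $t\mapsto-t$, and $\mathrm{id}\otimes\iota^{-1}$, and recalling $\SS^-_\delta=\iota\big(\Sigma\mathcal C_{\vert\overline{\Pi^-_\delta}}\big)$ with $\iota$ unitary, one obtains the stated identification $L^2(\SS^-_\delta,v_h)\cong L^2(0,\delta)\otimes L^2(\Sigma\mathcal C_{\vert\partial\mathcal K})$.

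There is no genuine obstacle here; the only thing requiring care is the bookkeeping of the trivialization — checking that $\Gamma^0_t$ is jointly smooth and fibrewise unitary, that $v_h$ is exactly the product measure so that $\Lambda$ is unitary, and that the sign flip $t\mapsto-t$ in the definition of $F$ (and the parity-dependent target of $\iota$) are correctly accounted for. Once the reduction to a pulled-back bundle is in place, the density of elementary tensors is the classical Fubini-type density in $L^2$ of a product.
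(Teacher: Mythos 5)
Your proof is correct and takes essentially the same route as the paper: the paper likewise reduces the claim, via the parallel-transport and $\iota$ identifications, to the density of elementary tensors $f(t)\,\Psi_0(x)$ in the $L^2$ space of the product, which it simply declares standard. You merely spell out the unitarity of the trivialization and supply the standard Fubini/orthogonal-complement argument that the paper omits, so there is no substantive difference in approach.
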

\begin{proof}
Let $E := (-\delta, 0) \times \RR$ viewed as a vector bundle over $(-\delta, 0)$, and $P := E \otimes \Sigma \mathcal C_{\vert \partial \mathcal K}$. The statement of the lemma is then equivalent to the density of $\Span(F')$ in $L^2(P, v_h)$ where
\[
F' := \left\lbrace \Psi, \, \exists (f, \Psi_0) \in L^2(- \delta, 0) \times L^2\left(\Sigma \mathcal C_{\vert \partial \mathcal K} \right), \, \Psi(x,t) = f(t) \Psi_0 (x) \right\rbrace,
\]
and this last result is standard.
\end{proof}

We denote by $Y_m$ the self-adjoint operator associated to $y_m$, and using the identification of Lemma~\ref{tens}, one can write
\[
Y_m = (S' + m^2) \otimes 1 + 1 \otimes L'_{-\delta}.
\]
Now, we define the unitary transform
\begin{align*}
\mathcal U: L^2 \left( \SS^-_\delta \right) \longrightarrow \ell^2(\NN) \otimes L^2\left(\Sigma \mathcal C_{\vert \partial \mathcal K} \right) \\
\mathcal U \Psi = (\Psi_k), \; \Psi_k = \int_0^\delta f_k(t) \, \iota^{-1} \Gamma_t^0 (\Psi(\cdot, t)) \dd t.
\end{align*}
By the spectral theorem, $\Hat Y_m := \mathcal U Y_m \mathcal U^*$ is given by its quadratic form denoted by $\Hat y_m$:
\begin{align*}
\Hat y_m [(\Psi_k), (\Psi_k)] = \sum\limits_{k \in \NN} \left(\ell'_{-\delta} [\Psi_k, \Psi_k] + (E_k(S') +m^2) \Vert \Psi_k \Vert^2_{L^2(\partial \mathcal K)} \right),
\end{align*}
and the form domain is the subset of $\ell^2(\NN) \otimes L^2\left(\Sigma \mathcal C_{\vert \partial \mathcal K} \right)$ for which the right-hand side converges. Thus,
\begin{multline}
\mathcal Q (\Hat y_m) = \left\lbrace (\Psi_k) \in \ell^2(\NN) \otimes L^2 \left( \Sigma \mathcal C_{\vert \partial \mathcal K} \right), \, \Psi_k \in H^1 \left( \Sigma \mathcal C_{\vert \partial \mathcal K} \right) \right. \\
\left. \textrm{ and } \sum\limits \left( \Vert \Psi_k \Vert^2_{H^1(\partial \mathcal K)} + k^2 \Vert \Psi_k \Vert^2_{L^2(\partial \mathcal K)} \right) < \infty \right\rbrace.
\end{multline}
Setting $\Hat Y^-_m := \mathcal U Y^-_m \mathcal U^*$, the sesquilinear form for $\Hat Y^-_m$ is the same as for $\Hat Y_m$ with the domain
\begin{equation}
\mathcal Q(\Hat y^-_m) = \left\lbrace \Hat \Psi = (\Psi_k) \in \mathcal Q(\Hat y_m): \mathcal P_- \mathcal U^* \Hat \Psi (\cdot, 0) = 0 \right\rbrace.
\end{equation}
Then, if we define
\begin{multline}
w_m [\Hat \Psi, \Hat \Psi] := \ell'_{-\delta} [\Psi_1, \Psi_1] - b \exp(-\delta \vert m \vert) \Vert \Psi_1 \Vert^2_{L^2(\partial \mathcal K)}\\
+ \sum\limits_{k \geq 2} \ell'_{-\delta} [\Psi_k, \Psi_k] + (b^- k^2 - b_0 + m^2) \Vert \Psi_k \Vert^2_{L^2(\partial \mathcal K)}, \\
\mathcal Q (w_m) := \mathcal Q(\Hat y^-_m),
\end{multline}
we have $\Hat y^-_m \geq w_m$. The form $w_m$ is lower semibounded and closed. Let $W_m$ be the associated self-adjoint operator, and by Theorem~\ref{RellKond}, this operator has compact resolvent. For all $j \in \NN$, one has
\[
E_j(A_m^2) \geq \Lambda_j(y^-_m) = \Lambda_j(\Hat y^-_m) \geq E_j(W_m).
\]
We can now apply the monotone convergence theorem to the non-decreasing family of self-adjoint operators $(W_m)$. The form domain of the limit operator will be:
\begin{equation}
\mathcal Q_\infty := \left\lbrace \Hat \Psi = (\Psi_k) \in \underset{m < 0}{\bigcap} \mathcal Q(W_m), \, \underset{m < 0}{\sup} W_m[\Hat \Psi, \Hat \Psi] < \infty \right\rbrace.
\end{equation}
One has $\Hat \Psi := (\Psi_k) \in \mathcal Q_\infty$ iff $\Psi_k = 0$ for all $k \geq 2$ and $0 = \mathcal P_- \mathcal U^*  \Hat \Psi (\cdot, 0) = f_1(0) \mathcal P_- \Psi_1$. If we denote $e_1 := (1,0,0,\ldots) \in \ell^2(\NN)$ this gives
\[
\mathcal Q_\infty = \left\lbrace \Hat \Psi = e_1 \otimes \Psi_1 : \Psi_1 \in \mathcal Q(\ell) \right\rbrace.
\]
Thus, for any $\Hat \Psi \in \mathcal Q_\infty$ one has
\[
\underset{m \rightarrow - \infty}{\lim} W_m[\Hat \Psi, \Hat \Psi] = L_{-\delta}[\Psi_1, \Psi_1].
\]
We define the Hilbert space $\mathbf H_\infty := e_1  \otimes \mathbf H$ and the sesquilinear form
\begin{equation}
w_\infty [e_1 \otimes \Psi_1, e_1 \otimes \Psi_1] = L_{-\delta}[\Psi_1, \Psi_1], \, \mathcal Q(w_\infty) = \mathbf H_\infty.
\end{equation}
Let $W_\infty$ be the associated self-adjoint operator. By Corollary~\ref{monotonecor} (monotone convergence), one has $\underset{m \rightarrow - \infty}{\lim} E_j(W_m) = E_j(W_\infty) = E_j(L_{-\delta})$ for all $j \in \NN$. Letting $\delta$ go to zero we obtain
\begin{equation} \label{downA}
\underset{m \rightarrow \infty}{\textrm{lim inf }} E_j(A_m^2) \geq E_j (L).
\end{equation}
The estimates (\ref{upA}) and (\ref{downA}) together with Lemma~\ref{Uequi} give
\begin{equation}
\underset{m \rightarrow \infty}{\textrm{lim}} E_j \left( A_m^2 \right) = E_j \left( (\slashed D^{\partial \mathcal K})^2 \right).
\end{equation}

\begin{rem} \label{asym1}
With the help of the sesquilinear form, we can investigate another asymptotic regime. Let $\Psi \in \dom(A_m)$ and assume $m > 0$. Proposition~\ref{Aselfadjoint} gives that for $m$ large enough, $\Vert A_m \Psi \Vert^2_{L^2(\mathcal N)} \ge m^2 \Vert \Psi \Vert^2_{L^2(\mathcal N)}$. Hence, when $m \rightarrow + \infty$, one has $E_j(A_m) \rightarrow + \infty$ for all $j \in \NN$ by the Min-Max principle.
\end{rem}

\section{The operator $B_{m,M}^2$ in the limit of large $M$} \label{BforM}

We now prove Theorem~\ref{main2} following the lines of \cite[Section5]{MOP}. Again, this is done by finding a lower and an upper bound for the limit of the eigenvalues of $B_{m,M}^2$. The proof relies on the localization of the problem in a neighbourhood of $\mathcal K$ and the construction of an appropriated extension for the spinors in $\mathcal K$. For the lower bound, we make another use of the monotone convergence theorem to observe that the projection $\mathcal P_+$ on the boundary of $\mathcal K$ must vanish in the asymptotic regime. 

We begin with some preliminary estimates and the definition of the extension operator. We define $\SS^+_\delta := \iota (\Sigma \mathcal C_{\vert \overline{\Pi^+_\delta}})$.

\begin{lemma} \label{semi-boundr}
Let $r'_\alpha$ be the sesquilinear form given by
\[
r'_\alpha[\Psi, \Psi] := \int_{\mathcal K^c \setminus \Pi_\delta^+} \left(\vert  \nabla^\mathcal N \iota \Psi \vert^2 + \frac {\Scal^\mathcal N}{4} \vert \Psi \vert^2 \right) v_\mathcal N
\]
with $\mathcal Q (r'_\alpha) = \lbrace \Psi_{\vert \mathcal K^c \setminus \Pi_\delta^+}, \, \Psi \in \dom(B_{m,M}) \rbrace$.
Then, $r'_\alpha$ is lower semibounded.
\end{lemma}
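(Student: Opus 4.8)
The plan is to compare $r'_\alpha$ with the form of $B_{m,M}^2$, which is non-negative, and to absorb the contribution of the artificial boundary $\Pi^\delta=\partial\mathcal K\times\{\delta\}$ of the region $\mathcal K^c\setminus\Pi_\delta^+$ by using compactness of $\mathcal K\cup\overline{\Pi_\delta^+}$ together with the trace estimate of Theorem~\ref{trace}. Note first that on all of $\mathcal N$ the Schr\"odinger--Lichnerowicz formula gives $\int_\mathcal N\big(|\nabla^\mathcal N\iota\Psi|^2+\tfrac{\Scal^\mathcal N}{4}|\Psi|^2\big)v_\mathcal N=\|\mathcal D^\mathcal N\Psi\|^2_{L^2(\mathcal N)}\ge0$ for compactly supported $\Psi$, so if $\Scal^\mathcal N$ were bounded below the statement would be immediate (with constant $\tfrac14\inf_\mathcal N\Scal^\mathcal N$); the work below is only needed to treat the general complete case. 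To this end I would first fix a cut-off $\chi\in C^\infty(\mathcal N,[0,1])$ equal to $1$ on $\mathcal K^c\setminus\Pi_\delta^+$, equal to $0$ on $\mathcal K\cup\overline{\Pi^+_{\delta/2}}$, with $\supp(\rd\chi)$ contained in the compact collar $R:=\overline{\partial\mathcal K\times[\delta/2,\delta]}$. Since the commutator $[\nu\cdot\mathcal D^\mathcal N,\chi]$ is a bounded operator supported in $R$, multiplication by $\chi$ preserves $\dom(B_{m,M})$, and $\chi\Psi$ vanishes on $\mathcal K$ and near $\partial\mathcal K$.

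Next, for $\Psi\in\dom(B_{m,M})$ with $\Phi:=\Psi_{\vert\mathcal K^c\setminus\Pi_\delta^+}$, I would insert $\chi\Psi$ into the identity of Proposition~\ref{quadB}: the $L^2(\mathcal K)$ term and the two integrals over $\partial\mathcal K$ vanish, $\chi\Psi$ is supported in $\mathcal K^c$, and $\|B_{m,M}(\chi\Psi)\|^2_{L^2(\mathcal N)}\ge0$ yields
\[
\int_{\mathcal K^c}\big(|\nabla^\mathcal N\iota(\chi\Psi)|^2+\tfrac{\Scal^\mathcal N}{4}|\chi\Psi|^2\big)v_\mathcal N+M^2\|\chi\Psi\|^2_{L^2(\mathcal K^c)}\ge0 .
\]
Because $\chi\equiv1$ on $\mathcal K^c\setminus\Pi_\delta^+$ and $\supp\chi\cap\Pi_\delta^+\subset R$, splitting this integral over $R$ and over $\mathcal K^c\setminus\Pi_\delta^+$ gives
\[
r'_\alpha[\Phi,\Phi]\ \ge\ -M^2\|\Phi\|^2_{L^2(\mathcal K^c\setminus\Pi_\delta^+)}-\int_{R}\big(|\nabla^\mathcal N\iota(\chi\Psi)|^2+\tfrac{\Scal^\mathcal N}{4}|\chi\Psi|^2\big)v_\mathcal N-M^2\|\chi\Psi\|^2_{L^2(R)} .
\]
It then remains to bound the last two terms, a priori depending on $\Psi$ on the compact collar $R$, by a multiple of $\|\Phi\|^2_{L^2(\mathcal K^c\setminus\Pi_\delta^+)}$.

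For this last step I would use that $r'_\alpha[\Phi,\Phi]$ depends only on $\Phi$, so one may choose $\Psi_{\vert R}$ to be the cheapest extension of the trace $\gamma\Phi\in H^{1/2}(\Pi^\delta)$ across $\Pi^\delta$; since $\Scal^\mathcal N$ and $\rd\chi$ are bounded on $\overline R$, the collar term is then $\le C\|\gamma\Phi\|^2_{H^{1/2}(\Pi^\delta)}$. Applying Theorem~\ref{trace} on a fixed compact collar $\overline{\partial\mathcal K\times[\delta,2\delta]}\subset\mathcal K^c\setminus\Pi_\delta^+$, together with the interior elliptic (G\aa rding) estimate for $\mathcal D^\mathcal N$ on that collar, and choosing the parameter $\varepsilon$ in Theorem~\ref{trace} small enough that the resulting $\varepsilon^{1/2}$–term is absorbed into the non-negative Dirac energy $\|\mathcal D^\mathcal N\Phi\|^2_{L^2(\mathcal K^c\setminus\Pi_\delta^+)}$ hidden inside $r'_\alpha[\Phi,\Phi]$, one would reach a bound $r'_\alpha[\Phi,\Phi]\ge-C\|\Phi\|^2_{L^2(\mathcal K^c\setminus\Pi_\delta^+)}$ with $C$ depending only on $m,M,\delta_0$, $\|\Scal^\mathcal N\|_{L^\infty(\overline R)}$ and the mean curvature of $\Pi^\delta$, which is the desired semi-boundedness. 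I expect this interface estimate to be the main obstacle: one must use the $\varepsilon$–trick of Theorem~\ref{trace} to dominate the boundary integral over the compact hypersurface $\Pi^\delta$ by the interior \emph{Dirac} energy rather than by the full $H^1$–energy, which for unbounded $\Scal^\mathcal N$ need not be comparable with $r'_\alpha$ itself.
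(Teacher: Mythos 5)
Your reduction up to the inequality $r'_\alpha[\Phi,\Phi]\ge -M^2\Vert\Phi\Vert^2_{L^2}-\int_R\bigl(\vert\nabla^\mathcal N\iota(\chi\Psi)\vert^2+\tfrac{\Scal^\mathcal N}{4}\vert\chi\Psi\vert^2\bigr)v_\mathcal N-M^2\Vert\chi\Psi\Vert^2_{L^2(R)}$ is sound (it is just the Schr\"odinger--Lichnerowicz identity applied to the cut-off spinor), but the step you yourself flag as "the main obstacle" is a genuine gap, and the tools you invoke cannot close it. To control the collar contribution you need the full $H^1(R)$-energy of an extension of the trace $\gamma\Phi$ across $\Pi^\delta$, hence the $H^{1/2}(\Pi^\delta)$-norm of $\gamma\Phi$, multiplied by a large constant (at least of order $M^2+\sup_R\vert\Scal^\mathcal N\vert$ and the norm of the extension operator). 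Theorem~\ref{trace} provides the $\varepsilon$-trick only for the $L^2$-norm of the trace; the analogous estimate $\Vert\gamma\Phi\Vert^2_{H^{1/2}(\Pi^\delta)}\le\varepsilon\Vert\nabla^\mathcal N\iota\Phi\Vert^2_{L^2}+C_\varepsilon\Vert\Phi\Vert^2_{L^2}$ with arbitrarily small $\varepsilon$ is false, because the $H^{1/2}$ trace norm scales exactly like the $H^1$ seminorm (spinors oscillating at frequency $k$ along $\Pi^\delta$ and decaying on a scale $1/k$ in the normal direction have $H^{1/2}$ trace energy comparable to their gradient energy, with negligible $L^2$ norm). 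Nor can the resulting negative term be "absorbed into the Dirac energy hidden inside $r'_\alpha$": on the region $\mathcal K^c\setminus\Pi_\delta^+$, which has boundary $\Pi^\delta$, the form $r'_\alpha$ differs from $\Vert\mathcal D^\mathcal N\cdot\Vert^2$ precisely by boundary integrals over $\Pi^\delta$ of the type you are trying to estimate (compare the boundary identity used in the proof of Proposition~\ref{quadA}), so there is no free nonnegative Dirac energy available inside $r'_\alpha$ without reintroducing the same trace terms; and the single unit of $\Vert\nabla^\mathcal N\iota\Phi\Vert^2$ contained in $r'_\alpha$ is the quantity being bounded, not a spare resource to absorb a term with a large prefactor. (A minor further point: re-choosing the extension of a given $\Phi$ is legitimate since $r'_\alpha[\Phi,\Phi]$ depends only on $\Phi$, but you would still have to verify that the re-glued section belongs to $\dom(B_{m,M})$.)

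The paper's proof avoids traces and extensions altogether by an IMS localization inside the region itself: take $\chi_1,\chi_2\ge0$ with $\chi_1^2+\chi_2^2=1$, $\chi_1$ supported in a compact collar around $\Pi^\delta$ and $\chi_2$ vanishing in a neighbourhood of $\Pi^\delta$. Then $r'_\alpha[\Phi,\Phi]=r'_\alpha[\chi_1\Phi,\chi_1\Phi]+r'_\alpha[\chi_2\Phi,\chi_2\Phi]-\int\bigl(\vert(\dd\chi_1)\iota\Phi\vert^2+\vert(\dd\chi_2)\iota\Phi\vert^2\bigr)v_\mathcal N$; the localization error is a bounded potential supported in a compact set, the $\chi_2$-piece equals $\Vert\mathcal D^\mathcal N(\chi_2\Phi)\Vert^2_{L^2}\ge0$ by Schr\"odinger--Lichnerowicz because the cutoff kills the boundary, and the $\chi_1$-piece is bounded below by $-C\Vert\chi_1\Phi\Vert^2_{L^2}$ since $\Scal^\mathcal N$ is bounded on the compact collar. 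Every term is controlled by $\Vert\Phi\Vert^2_{L^2}$ of the restriction alone, which is exactly what your interface argument is missing; replacing your single-cutoff-plus-trace step by this quadratic partition of unity is the natural repair.
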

\begin{proof}
Let $\Psi \in \mathcal Q (r'_\alpha)$. Let $\chi_1, \chi_2$ be two non-negative real smooth functions on $\mathcal N$ such that $\chi_1^2 + \chi^2 = 1$, $\chi_1$ is supported in $\mathcal K \cup \Pi^+_{\frac{3 \delta}{2}}$ and $\chi_2$ is supported in $\mathcal N \setminus (\mathcal K \cup \Pi^+_{\frac{5 \delta}{4}})$. 

An easy computation gives
\[
r'_\alpha[\Psi, \Psi] = r'_\alpha[\chi_1 \Psi, \chi_ 1 \Psi] + r'_\alpha[\chi_2 \Psi, \chi_2 \Psi] - \int_{\mathcal K^c \setminus \Pi_\delta^+} (\vert (\dd \chi_1) \iota \Psi \vert^2 + \vert (\dd \chi_2) \iota \Psi \vert^2) v_\mathcal N,
\]
and then there exists a constant $C_1 > 0$ such that
\[
r'_\alpha[\Psi, \Psi] \ge r'_\alpha[\chi_1 \Psi, \chi_1 \Psi] + r'_\alpha[\chi_2 \Psi, \chi_2 \Psi] - C_1 \Vert \Psi \Vert^2_{L^2(\mathcal N)}.
\]
Now, the Schr\"odinger-Lichnerowicz formula gives
\[
r'_\alpha[\chi_2 \Psi, \chi_2 \Psi] = \Vert \mathcal D^\mathcal N \chi_k \Psi \Vert^2_{L^2(\mathcal N)} \ge 0.
\]
Moreover, there exists $C_2 > 0$ such that
\[
r'_\alpha[\chi_1 \Psi, \chi_1 \Psi] \ge - C_2 \Vert \chi_1 \Psi \Vert^2_{L^2(\mathcal N)}
\]
because $\chi_1$ has compact support.

All together, we have $r'_\alpha[\Psi, \Psi] \ge - C \Vert \Psi \Vert^2_{L^2(\mathcal N)}$ for a constant $C > 0$.
\end{proof}

\begin{lemma} \label{extension}
For $\Psi \in \lbrace \Phi_{\vert \mathcal K^c}, \, \Phi \in \dom (B_{m,M}) \rbrace$ and $\alpha > 0$ we define the sesquilinear form
\[
r_\alpha [ \Psi, \Psi ] = \int_{\mathcal K^c} \left( \left\vert \nabla^\mathcal N \iota \Psi \right \vert^2 + \frac{\Scal^\mathcal N}{4} \vert \Psi \vert^2 \right) v_\mathcal N + \int_{\partial \mathcal K} \left( \frac{H}{2} - \alpha \right) \vert \Psi \vert^2 v_{\partial \mathcal K}.
\]
Then, there exists $C > 0$ such that for $\alpha > 0$ large enough, one has a map $F_\alpha : H^1(\iota(\Sigma \mathcal C_{\vert \partial \mathcal K})) \rightarrow \dom (r_\alpha)$ with $F_\alpha \Psi = \Psi$ on $\partial \mathcal K$ and
\[r_\alpha[F_\alpha \Psi, F_\alpha \Psi] + \alpha^2 \Vert F_\alpha \Psi \Vert^2_{L^2(\mathcal K^c)} \leq \frac{c}{\alpha} \Vert \Psi \Vert^2_{H^1(\partial \mathcal K)}.
\]
Moreover there is a constant $C_0 > 0$, such that $\Lambda_1(r_\alpha) \geq -\alpha^2 - C_0$.
\end{lemma}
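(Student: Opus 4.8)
The plan is to build $F_\alpha$ as an explicit boundary layer supported in the tubular collar $\Pi^+_\delta=\partial\mathcal K\times(0,\delta)$ of $\partial\mathcal K$ on the $\mathcal K^c$--side, and then to read the energy bound directly off the upper estimate of Proposition~\ref{quadbound}, so that the only genuine work left is a one--dimensional estimate already contained in Lemma~\ref{Sanalysis}. First I would fix $\delta\in(0,\delta_0/2)$, let $f$ be the $L^2$--normalised first eigenfunction of the operator $S$ of Lemma~\ref{Sanalysis} with $\varepsilon=\delta$, and put $g_\alpha:=f/f(0)$, so that $g_\alpha(0)=1$ and $g_\alpha(\delta)=0$; since $\int_0^\delta (g_\alpha')^2-\alpha = E_1(S)/f(0)^2$, Lemma~\ref{Sanalysis} gives $\int_0^\delta g_\alpha^2=\tfrac1{2\alpha}+O(\alpha^{-2})$ and the exponentially small identity $\int_0^\delta (g_\alpha')^2+\alpha^2\int_0^\delta g_\alpha^2-\alpha=O(\alpha^{-1}e^{-\delta\alpha})$. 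For $\Psi\in H^1(\iota(\Sigma\mathcal C_{|\partial\mathcal K}))$ I would then set $F_\alpha\Psi:=\iota^{-1}\Phi$ on $\mathcal K^c$, where $\Phi$ is the section of the restricted bundle $\Sigma\mathcal N_{|\mathcal K^c}$ defined by $\Phi(x,t):=\phi(x,t)^{-1/2}g_\alpha(t)\,\Gamma_0^t\Psi(x)$ on $\Pi^+_\delta$ and $\Phi:=0$ on $\mathcal K^c\setminus\Pi^+_\delta$, with $\phi$, $\Theta$ and $\Gamma_0^t$ as in Section~\ref{SectionTubular}. Since $\phi(\cdot,0)=1$ and $g_\alpha(0)=1$ one has $F_\alpha\Psi=\Psi$ on $\partial\mathcal K$ (under $\iota$), and $F_\alpha\Psi$ is an $H^1$ section with compact support in $\overline{\Pi^+_\delta}$, so glueing it across $\partial\mathcal K$ with an $H^1$--extension into $\mathcal K$ (Theorem~\ref{trace}) produces an element of $\dom(B_{m,M})$ whose restriction to $\mathcal K^c$ is $F_\alpha\Psi$; hence $F_\alpha\Psi\in\dom(r_\alpha)$.

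The point of the weight $\phi^{-1/2}$ is that $\Theta\Phi=\sqrt\phi\,\Phi=g_\alpha(t)\,\Gamma_0^t\Psi(x)$ is a pure tensor product, and that the restriction of $r_\alpha$ to sections of $\Sigma\mathcal N$ supported in $\Pi^+_\delta$ is precisely the form $J_+$ introduced before Proposition~\ref{quadbound}, with the parameter $\alpha$ replaced by $-\alpha$. Applying the upper estimate of Proposition~\ref{quadbound}(2) to $\Phi$ (legitimate because $\Phi=0$ on $\Pi^\delta$), and using that there the $+\tfrac H2$ boundary term has already been absorbed, I obtain $r_\alpha[F_\alpha\Psi,F_\alpha\Psi]$ bounded above by $\int_{\Pi^+_\delta}[(1+c\delta)\,|\overline\nabla^\mathcal N\Gamma_t^0\Theta\Phi(\cdot,0)|^2+|\nabla^\mathcal N_{\partial_t}\Theta\Phi|^2+(\tfrac14(\Scal^{\partial\mathcal K}-\mathrm{Tr}(W^2))+c\delta)|\Theta\Phi|^2]\,v_h-\alpha\int_{\partial\mathcal K}|\Psi|^2\,v_{\partial\mathcal K}$. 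Because $\Gamma_0^t$ is $\nabla^\mathcal N$--parallel the three integrands separate, and using Lemma~\ref{partranscomp} together with Proposition~\ref{hypersurface}(3) (as in Corollary~\ref{cor2}) to bound $\int_{\partial\mathcal K}|\overline\nabla^\mathcal N\Psi|^2\le C\|\Psi\|^2_{H^1(\partial\mathcal K)}$, these three contributions are respectively at most $C(1+c\delta)(\int_0^\delta g_\alpha^2)\,\|\Psi\|^2_{H^1(\partial\mathcal K)}$, exactly $(\int_0^\delta (g_\alpha')^2)\,\|\Psi\|^2_{L^2(\partial\mathcal K)}$, and at most $C(\int_0^\delta g_\alpha^2)\,\|\Psi\|^2_{L^2(\partial\mathcal K)}$ by compactness of $\partial\mathcal K$. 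Adding $\alpha^2\|F_\alpha\Psi\|^2_{L^2(\mathcal K^c)}=\alpha^2(\int_0^\delta g_\alpha^2)\,\|\Psi\|^2_{L^2(\partial\mathcal K)}$ and collecting, the whole expression is at most $C(\int_0^\delta g_\alpha^2)\,\|\Psi\|^2_{H^1(\partial\mathcal K)}+(\int_0^\delta (g_\alpha')^2+\alpha^2\int_0^\delta g_\alpha^2-\alpha+C\int_0^\delta g_\alpha^2)\,\|\Psi\|^2_{L^2(\partial\mathcal K)}$; for $\delta$ fixed, the two one--dimensional estimates of the first paragraph make both coefficients $O(\alpha^{-1})$, which is the claimed bound $r_\alpha[F_\alpha\Psi,F_\alpha\Psi]+\alpha^2\|F_\alpha\Psi\|^2_{L^2(\mathcal K^c)}\le\tfrac c\alpha\|\Psi\|^2_{H^1(\partial\mathcal K)}$ for $\alpha$ large.

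For the lower bound on $\Lambda_1(r_\alpha)$ I would localise near $\partial\mathcal K$: take real functions $\chi_1,\chi_2$ on $\mathcal K^c$ with $\chi_1^2+\chi_2^2=1$, $\chi_1$ compactly supported in $\Pi^+_{\delta_0/2}$ and equal to $1$ near $\partial\mathcal K$, $\chi_2$ vanishing near $\partial\mathcal K$. Since $\chi_1\,\dd\chi_1+\chi_2\,\dd\chi_2=0$ the cross terms cancel and $r_\alpha[\Psi,\Psi]\ge r_\alpha[\chi_1\Psi,\chi_1\Psi]+r_\alpha[\chi_2\Psi,\chi_2\Psi]-C\|\Psi\|^2_{L^2(\mathcal K^c)}$. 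For $\chi_2\Psi$ the boundary term is absent, so by the Schrödinger--Lichnerowicz formula (Theorem~\ref{S-Lformula}), $r_\alpha[\chi_2\Psi,\chi_2\Psi]=\|\mathcal D^\mathcal N(\chi_2\Psi)\|^2_{L^2(\mathcal N)}\ge 0$, which also disposes of the non--compact part of $\mathcal K^c$; for $\chi_1\Psi$ one writes the form in tubular coordinates, discards the nonnegative tangential term, bounds the bounded curvature and weight contributions from below by $-C$, and applies the one--dimensional bracketing $\int_0^\delta|v'|^2-\alpha|v(0)|^2\ge-(\alpha^2+C_0)\int_0^\delta|v|^2$ --- the first eigenvalue of $-v''$ with $v'(0)=-\alpha v(0)$ and $v'(\delta)=0$ equals $-k^2$ with $k\tanh(k\delta)=\alpha$, hence lies in $[-\alpha^2-C_0,0]$ for $\alpha$ in the relevant range. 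This yields $r_\alpha[\chi_1\Psi,\chi_1\Psi]\ge-(\alpha^2+C_0)\|\chi_1\Psi\|^2_{L^2(\mathcal K^c)}$ and therefore $\Lambda_1(r_\alpha)\ge-\alpha^2-C_0$.

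The delicate point is not the construction but the accounting of the two boundary terms of $r_\alpha$: the negative one of size $\alpha$ and the size--$1$ term $\tfrac H2|\Psi|^2$. An arbitrary extension would only give an upper bound of order $\alpha$; the claimed order $\alpha^{-1}$ is forced by the two choices above --- the exponential profile $g_\alpha$, for which $\int_0^\delta (g_\alpha')^2+\alpha^2\int_0^\delta g_\alpha^2-\alpha$ is exponentially small rather than $O(\alpha)$, and the weight $\phi^{-1/2}$, which makes $\Theta\Phi$ a pure product so that the $+\tfrac H2$ boundary term is cancelled inside Proposition~\ref{quadbound} by the $-\tfrac H2$ produced by the change of weight. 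What survives is the tangential contribution, and it carries the small factor $\int_0^\delta g_\alpha^2\sim(2\alpha)^{-1}$.
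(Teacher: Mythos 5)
Your construction of $F_\alpha$ is exactly the paper's: the profile is the first eigenfunction of $S$ normalized at $0$, tensored (via parallel transport $\Gamma_0^t$) with $\Psi$ and weighted by $\phi^{-1/2}$, i.e. $F_\alpha\Psi=(\Theta\iota)^{-1}(g_\alpha\otimes\Psi)$ on $\Pi^+_\delta$, and the bound follows from Proposition~\ref{quadbound}(2) plus the one-dimensional estimates of Lemma~\ref{Sanalysis}, just as in the paper. Your lower bound is the same argument in a slightly repackaged form: the paper brackets $r_\alpha$ by the forms $r^0_\alpha$ on the collar and $r'_\alpha$ outside (the latter handled by the IMS-type Lemma~\ref{semi-boundr}) and uses Proposition~\ref{quadbound}(1) with the operator $S'$, whereas you run the IMS partition directly on $\mathcal K^c$ and use a marginally different one-dimensional Robin problem ($k\tanh(k\delta)=\alpha$); both give $\Lambda_1(r_\alpha)\ge-\alpha^2-C_0$, so the proposal is correct and essentially follows the paper's route.
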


\begin{proof}
We recall that for $\alpha > 0$ we defined the operator $S$ associated to the sesquilinear form
\begin{equation*}
s[f,f] = \int_0^\delta \vert f' \vert^2 dt - \alpha \vert f(0) \vert^2, \, \mathcal Q(s) = \left\lbrace f \in H^1(0,\delta), \, f(\delta) = 0 \right\rbrace.
\end{equation*}
Let $f$ be the first eigenfunction of the operator $S$ normalized by $f(0) = 1$.

We define the map $F_\alpha$ by
\[
F_\alpha \Psi (x) := \left\lbrace
\begin{array}{cc}
(\Theta\iota)^{-1} v (x) & \textrm{if $x \in \Pi_\delta^+$} \\
0 & \textrm{if $x \in \mathcal K^c \setminus \Pi_\delta^+$}
\end{array}
\right.
\]
where $v := f \otimes \Psi$. From Lemma~\ref{Sanalysis} there exists $C > 0$ such that $\Vert f \Vert^2_{L^2(0, \delta)} \le \frac{C}{\alpha}$ and $\alpha^2 + E_1(S) \le C e^{-\delta \alpha}$. Then, using Proposition~\ref{quadbound}, there is $a > 0$ such that
\begin{multline*}
r_\alpha[F_\alpha \Psi, F_\alpha \Psi] + \alpha^2 \Vert F_\alpha \Psi \Vert^2_{L^2(\mathcal K^c)} = J_\alpha(F_\alpha \Psi) + \alpha^2 \Vert f_\alpha \Vert^2_{L^2(\mathcal K^c)} \\
\leq \int_{\Pi_\delta^+} \left( a \vert \overline \nabla^\mathcal N \Gamma_t^0 v \vert^2 + \vert \nabla^\mathcal N_{\ddt} v \vert^2 + (\alpha^2 + a) \vert v \vert^2 \right) v_h - \alpha \int_{\partial \mathcal K} \vert \Psi \vert^2 v_{\partial \mathcal K} \\
= \int_{\partial \mathcal K} \left[ a \vert \overline \nabla^\mathcal N \Psi \vert^2 + (E_1(S) + \alpha^2+ a) \vert \Psi \vert^2 \right] v_h \Vert f \Vert^2_{L^2(0, \delta)} \\
\leq \frac{C (C+a)}{\alpha} \Vert \Psi \Vert^2_{H^1(\partial \mathcal K)}.
\end{multline*}

For the second assertion, we introduce the sesquilinear forms
\[
r^0_\alpha[\Psi, \Psi] := \int_{\Pi_\delta^+} \left(\vert \nabla^\mathcal N \iota \Psi \vert^2 + \frac {\Scal^\mathcal N}{4} \vert \Psi \vert^2 \right) v_\mathcal N + \int_{\partial \mathcal K} \left( \frac{H}{2} - \alpha \right) \vert \Psi \vert^2 v_{\partial \mathcal K}
\]
with $\mathcal Q (r^0_\alpha) = \lbrace \Psi_{\vert \Pi_\delta^+}, \, \Psi \in \dom (B_{m,M}) \rbrace$ and
\[
r'_\alpha[\Psi, \Psi] := \int_{\mathcal K^c \setminus \Pi_\delta^+} \left(\vert  \nabla^\mathcal N \iota \Psi \vert^2 + \frac {\Scal^\mathcal N}{4} \vert \Psi \vert^2 \right) v_\mathcal N
\]
with $\mathcal Q (r'_\alpha) = \lbrace \Psi_{\vert \mathcal K^c \setminus \Pi_\delta^+}, \, \Psi \in \dom(B_{m,M}) \rbrace$. One has the inequality $\Lambda_1 (r_\alpha) \ge \min(\Lambda_1 (r'_\alpha), \Lambda_1(r^0_\alpha))$. Since $r'_\alpha$ is lower semibounded by Lemma~\ref{semi-boundr}, another use of Proposition~\ref{quadbound} gives that when $\alpha$ is large $\Lambda_1(r_\alpha^0) \geq \Lambda_1(q_\alpha)$ with
\begin{multline*}
q_\alpha[\Psi, \Psi] = \int_{\Pi^+_\delta} \left[\frac{1}{a} \vert \overline \nabla^\mathcal N \Gamma_t^0 \Psi \vert^2 + \vert \nabla^{h}_\ddt \Psi \vert^2 - a \vert \Psi \vert^2 \right] v_h \\
- \alpha \int_{\partial \mathcal K} \vert \Psi(\cdot, 0) \vert^2 v_{\partial \mathcal K} - a \int_{\partial \mathcal K} \vert \Psi(\cdot, \delta) \vert^2 v_{\partial \mathcal K}
\end{multline*}
where $a > 0$ and $\mathcal Q (q_\alpha) = H^1 \left( \SS_\delta^+ \right)$. Trivializing locally the vector bundle via parallel sections along the normal geodesics and using Fubini's theorem we deduce that $\Lambda_1 (r_\alpha) \ge \Lambda_1(q_\alpha) \geq \Lambda_1(S') - a \geq - \alpha^2 - C$ with $C > 0$ when $\alpha \rightarrow + \infty$.
\end{proof}

Using Proposition~\ref{quadB}, the sesquilinear form for $B_{m,M}^2$ can be written for any spinor $\Psi \in \dom (B_{m,M})$ and any $\varepsilon > 0$ by
\begin{multline} \label{quadBrep}
\Vert B_{m,M} \Psi \Vert^2_{L^2(\mathcal N)} =  \int_\mathcal K \left[ \vert \nabla^\mathcal N (\iota\Psi)  \vert^2 + \left( \frac{\Scal^\mathcal N}{4} + m^2 \right) \vert \Psi \vert^2 \right] v_\mathcal N \\
+\int_{\partial \mathcal K} \left( m-\varepsilon - \frac{H}{2} \right) \vert \Psi \vert^2 v_{\partial \mathcal K}
+ 2(M-m) \int_{\partial \mathcal K} \vert \mathcal P_- \Psi \vert^2 v_{\partial \mathcal K} \\
+ \int_{\mathcal K^c} \left[ \vert \nabla^\mathcal N (\iota\Psi)  \vert^2 + \left( \frac{\Scal^\mathcal N}{4} + M^2 \right) \vert \Psi \vert^2 \right] v_\mathcal N -\int_{\partial \mathcal K} \left( M - \varepsilon - \frac{H}{2} \right) \vert \Psi \vert^2 v_{\partial \mathcal K}
\end{multline}
where we recall that $\mathcal P_- = \frac{1 - i \nu \cdot \nn \cdot}{2}$.

\subsection{Upper bound} We are now able to find an upper bound for the limit of $E_j(B_{m,M}^2)$ when $M \rightarrow + \infty$ for $j \in \NN$. Let $\eta > 0$ and pick $(\Psi_1,\ldots,\Psi_j)$ in $\Gamma(\Sigma \mathcal C_{\vert \mathcal K})$, smooth spinors such that
\[
\inf_{\substack{\Psi \in \Span(\Psi_1, \ldots, \Psi_j)}} \frac{\langle A_m^2 \Psi, \Psi \rangle_{L^2(\mathcal K)}}{\Vert \Psi \Vert^2_{L^2(\mathcal K)}} \le E_j(A_m^2) + \eta.
\]
We define $a := \sup \left\lbrace \Vert \Psi \Vert^2_{H^1(\partial \mathcal K)}, \Psi \in \Span (\Psi_1,\ldots, \Psi_j), \Vert \Psi \Vert_{L^2(\mathcal K)} = 1 \right\rbrace$. Let $\Psi \in V := \Span(\Psi_1, \ldots, \Psi_j)$ and
\begin{align*}
\Tilde \Psi := \left\lbrace
\begin{array}{cc}
\Psi & \textrm{ in } \mathcal K \\
F_M (\Psi_{\vert \partial \mathcal K}) & \textrm{ in $\mathcal K^c$}.
\end{array}
\right.
\end{align*}
Lemma~\ref{extension} gives us a constant $C > 0$ such that
\begin{multline*}
\int_{\mathcal K^c} \left[ \vert \nabla^\mathcal N (\iota \Tilde \Psi)  \vert^2 + \left( \frac{\Scal^\mathcal N}{4} + M^2 \right) \vert \Tilde \Psi \vert^2 \right] v_\mathcal N -\int_{\partial \mathcal K} \left( M - \frac{H}{2} \right) \vert \Tilde \Psi \vert^2 v_{\partial \mathcal K} \\
= r_M[\Tilde \Psi, \Tilde \Psi] + M^2 \Vert \Tilde \Psi \Vert^2_{L^2(\mathcal K^c)} \leq \frac{C}{M} \Vert \Tilde \Psi \Vert^2_{H^1(\partial \mathcal K)} \leq \frac{Ca}{M} \Vert \Psi \Vert^2_{L^2(\mathcal K)},
\end{multline*}
and then, using the expression \eqref{quadBrep} with $\varepsilon = 0$,
\begin{align*}
\Vert B_{m,M} \Tilde \Psi \Vert^2_{L^2(\mathcal N)} &\leq A_m^2 [\Psi, \Psi] + \frac{Ca}{M} \Vert \Psi \Vert^2_{L^2(\mathcal K)} \leq \left( E_j(A_m^2) + \eta + \frac{Ca}{M} \right) \Vert \Psi \Vert^2_{L^2(\mathcal K)} \\
&\leq \left( E_j(A_m^2) + \eta + \frac{Ca}{M} \right) \Vert \Tilde \Psi \Vert^2_{L^2(\mathcal K)}
\end{align*}
and letting $\eta$ go to zero one has $\textrm{lim sup}_{M \rightarrow +\infty} E_j(B_{m,M}^2) \leq E_j(A_m^2)$.

\subsection{Lower bound} It remains to find a lower bound for the eigenvalues. In order to do so, we separate the representation \eqref{quadBrep} in two parts corresponding to $\mathcal K$ and $\mathcal K^c$ and we remark that the outer part becomes very large when $M$ goes to $+ \infty$ so the eigenvalues must converge to the eigenvalues of an operator in $\mathcal K$.

Let $j \in \NN$. One has
\[
E_j(B_{m,M}^2) \geq \min \left\lbrace \Lambda_j(k^c_{M,\varepsilon}), E_j(K_{m,M,\varepsilon}) \right\rbrace
\]
where $K_{m,M,\varepsilon}$ is the operator associated to the sesquilinear form
\begin{multline}
k_{m,M,\varepsilon}[\Psi,\Psi] := \int_\mathcal K \left( \vert \nabla^\mathcal N \iota \Psi \vert^2 + \left( m^2 + \frac{\Scal^\mathcal N}{4} \right) \vert \Psi \vert^2 \right) v_\mathcal N \\
+ \int_{\partial \mathcal K} (m- \varepsilon - \frac{H}{2}) \vert \Psi \vert^2 v_{\partial \mathcal K} + 2 (M-m) \int_{\partial \mathcal K} \vert P_- \Psi \vert^2 v_{\partial \mathcal K}
\end{multline}
and $k^c_{M,\varepsilon}$ is the sesquilinear form
\begin{multline}
k^c_{M,\varepsilon}[\Psi,\Psi] := \int_{\mathcal K^c} \left( \vert \nabla^\mathcal N \iota \Psi \vert^2 + \left( M^2 + \frac{\Scal^\mathcal N}{4} \right) \vert \Psi \vert^2 \right) v_N \\
- \int_{\partial \mathcal K} (M - \varepsilon - \frac{H}{2}) \vert \Psi \vert^2 v_{\partial \mathcal K}
\end{multline}
where the respective domains are the restrictions of $\dom(B_{m,M})$ to $\mathcal K$ and $\mathcal K^c$.

One has $k^c_{M,\varepsilon} = r_{M-\varepsilon} + M^2$, where $r_{M-\varepsilon}$ was defined in Lemma~\ref{extension}), and the same lemma gives
\begin{align*}
\Lambda_1(k^c_{M,\varepsilon}) = \Lambda_1(r_{M-\varepsilon} + M^2) &\geq -(M-\varepsilon)^2 - C_0 + M^2 = 2 \varepsilon M - \varepsilon^2 - C_0 \\
&= \varepsilon M + (\varepsilon M - \varepsilon^2 - C_0) \geq \varepsilon M \textrm{ when $M \rightarrow +\infty$}.
\end{align*}

It comes out that $E_j(B_{m,M}^2) = E_j(K_{m,M,\varepsilon})$ when $M \rightarrow +\infty$. But $k_{M,m,\varepsilon}$ increases when $M$ does, and
\[
k_{M,m,\varepsilon}[\Psi, \Psi] \underset{M \rightarrow +\infty}{\longrightarrow} \left\langle A_m \Psi, A_m \Psi \right\rangle_{L^2(\mathcal K)} - \varepsilon \Vert \Psi \Vert_{L^2(\partial \mathcal K)}.
\]
Furthermore,
\[
\left\lbrace \Psi \in \bigcap\limits_{M > 0} \dom (k_{m,M,\varepsilon}), \, \lim_{M\rightarrow + \infty} k_{m,M,\varepsilon}[\Psi, \Psi] < \infty \right\rbrace = \dom(A_m),
\]
thus, by monotone convergence (Corollary~\ref{monotonecor}) and letting $\varepsilon$ go to zero, one has $\lim \inf_{M \rightarrow + \infty} E_j(B_{m,M}^2) = E_j(A_m^2)$. With the upper bound, one has $\lim_{M \rightarrow +\infty} E_j(B_{m,M}^2) = E_j(A_m^2)$.

\section{The operator $B_{m,M}$ for large masses} \label{BformM}

In this section, we give a proof of Theorem~\ref{main3}, so we are investigating the asymptotic regime $m \rightarrow - \infty$ and $M \rightarrow + \infty$. The method we use is very similar to the one of section~\ref{BforM}. The difference lies in the proof of the lower bound, where we do not make the analyse on the opeator outside and inside $\mathcal K$, but we rather cut the space in three pieces: the tubular neighbourhood of $\partial \mathcal K$, and the remaining regions lying inside and outside of the compact $\mathcal K$. By Dirichlet-Neumann bracketing, it is then sufficient to study the operator restricted to the tubular neighbourhood to conclude.

\subsection{Lower bound} In this section, we write $\SS_\delta := \iota \left( \Sigma \mathcal C_{\vert \overline{\Pi_\delta}} \right)$. We recall that for $\alpha \in \RR$ we defined the operator
\begin{equation}
S_\alpha [f,f] = \int_0^\delta \vert f' \vert^2 \dd t - \alpha \vert f(0) \vert^2, \; \mathcal Q (S_\alpha) = \left\lbrace f \in H^1(0, \delta), f(\delta) = 0 \right\rbrace,
\end{equation}
and denoting by $f_\alpha$ the $L^2$-normalized eigenfunction associated to $E_1(S_\alpha)$, one has $\vert f_\alpha (0) \vert^2 = 2 \alpha + \mathcal O (1)$ and $E_1(S_\alpha) = \alpha^2 + \mathcal O( e^{-\alpha \delta})$ when $\alpha \rightarrow + \infty$ (see Lemma~\ref{Sanalysis}).

The operator $L_a$ was defined by the quadratic form \eqref{definitionLa}.

Let $j \in \NN$ and $\Psi_1, \ldots , \Psi_j$ be $j$ eigenspinors for the first $j$ eigenvalues of $L_\delta$. For $\Psi \in V := \Span (\Psi_1, \ldots, \Psi_j)$, we define the extension operator $\mathcal E : H^1(\SS_\delta) \rightarrow H^1(\Sigma \mathcal C_{\vert \mathcal N})$ by
\begin{equation}
\mathcal E \Psi :=
\left\lbrace
\begin{array}{cc}
\frac{\vert f_{-m}(0) \vert}{\vert f_{M}(0) \vert} (\Theta \iota)^{-1} (\Psi \otimes f_M) & \textrm{in $\Pi_\delta^+$} \\
(\Theta \iota)^{-1} (\Psi \otimes f_{-m}) & \textrm{in $\Pi_\delta^-$} \\
0 & \textrm{in $\mathcal N \setminus \Pi_\delta$}
\end{array}
\right. .
\end{equation}
One easily sees that $\Vert \mathcal E \Psi \Vert^2_{L^2(\mathcal N)} = \left( 1 + \left(\frac{f_{-m}(0)}{f_{M}(0)} \right)^2 \right) \Vert \Psi \Vert^2_{L^2(\partial \mathcal K)}$, and the operator $\mathcal E$ is injective. We use the expression \eqref{quadBrep} and Proposition~\ref{quadbound} to compute:
\begin{multline*}
\Vert B_{m,M}^2 \mathcal E \Psi \Vert^2_{L^2(\mathcal N)} = \int_\mathcal K \left[ \vert \nabla^\mathcal N (\iota \mathcal E \Psi)  \vert^2 + \left( \frac{\Scal^\mathcal N}{4} + m^2 \right) \vert \mathcal E \Psi \vert^2 \right] v_\mathcal N \\
+\int_{\partial \mathcal K} \left( m-\varepsilon - \frac{H}{2} \right) \vert \mathcal E \Psi \vert^2 v_{\partial \mathcal K} + \int_{\mathcal K^c} \left[ \vert \nabla^\mathcal N (\iota \mathcal E \Psi)  \vert^2 + \left( \frac{\Scal^\mathcal N}{4} + M^2 \right) \vert \mathcal E \Psi \vert^2 \right] v_\mathcal N \\
- \int_{\partial \mathcal K} \left( M - \varepsilon - \frac{H}{2} \right) \vert \mathcal E \Psi \vert^2 v_{\partial \mathcal K} \\
\le \int_{\Pi_\delta^-} \left[ (1 + c \delta) \left\vert (\overline \nabla^\mathcal N \Gamma_ t^0 \Psi \otimes f_{-m}) (x,0) \right\vert^2 + \vert \nabla^\mathcal N_\ddt \Psi \otimes f_{-m} \vert^2 \right] v_h \\
+ \int_{\Pi_\delta^-} \left[ \left(\frac{\Scal^{\partial \mathcal K} - \mathrm{Tr} (W^2)}{4} + m^2 + c\delta\right) \vert \Psi \otimes f_{-m} \vert^2 \right] v_{\partial \mathcal K} \dd t \\
+ \int_{\Pi_\delta^+} \left[ (1 + c \delta) \left\vert (\overline \nabla^\mathcal N \Gamma_ t^0 \Psi \otimes f_M) (x,0) \right\vert^2 + \vert \nabla^\mathcal N_\ddt \Psi \otimes f_M \vert^2 \right] v_h \\
+ \int_{\Pi_\delta^+} \left[ \left(\frac{\Scal^{\partial \mathcal K} - \mathrm{Tr} (W^2)}{4} + M^2 + c\delta\right) \vert \Psi \otimes f_M \vert^2 \right] v_{\partial \mathcal K} \dd t \\
+ \int_{\partial \mathcal K} (- m \vert \Psi \otimes f_{-m} ( \cdot , 0) \vert^2 + M \vert (\Psi \otimes f_M) ( \cdot , 0) \vert^2) v_{\partial \mathcal K} \\
\le \left( 1 + \left(\frac{f_{-m}(0)}{f_{M}(0)} \right)^2 \right) \left[ \ell_\delta [\Psi,\Psi] + C \Vert \Psi \Vert^2_{L^2(\partial \mathcal K)} \left( e^{-M\delta} + e^{- \vert m \vert \delta} \right) \right]
\end{multline*}
where $C > 0$.

The Min-Max principle gives
\begin{align*}
E_j(B_{m,M}^2) &\le \underset{\Psi \in V}{\sup} \, \frac{B_{m,M}^2 [\mathcal E \Psi, \mathcal E \Psi]}{\Vert \mathcal E \Psi \Vert^2_{L^2(\mathcal N)}} \\
&\le \underset{v \in V}{\sup}  \left[ L_\delta [\Psi,\Psi] + C \Vert \Psi \Vert^2_{L^2(\partial \mathcal K)} \left( e^{-M\delta} + e^{- \vert m \vert \delta} \right) \right] \Vert \Psi \Vert^{-2}_{L^2(\partial \mathcal K)} \\
&\le E_j(L_\delta) + C \left( e^{-M\delta} + e^{- \vert m \vert \delta} \right).
\end{align*}

We now let $\min (- m, M) \rightarrow + \infty$, so we obtain
\[
\underset{\min (- m, M) \rightarrow + \infty}{\textrm{lim sup}} E_j(B_{m,M}^2) \le E_j(L_\delta)
\]
but $\delta$ can be taken arbitrary small, and one has the obvious limit $E_j(L_\delta) \underset{\delta \rightarrow 0}{\longrightarrow} E_j(L)$, so we arrive at
\begin{equation}
\underset{\min (- m, M) \rightarrow + \infty}{\textrm{lim sup}} E_j(B_{m,M}^2) \le E_j(L_\delta).
\end{equation}

\subsection{Lower bound} We consider the lower semibounded sesquilinear forms
\begin{multline}
k_{m,M} [\Psi,\Psi] = \int_{\mathcal N \setminus \Pi_\delta} \left[ \vert \nabla^\mathcal N (\iota\Psi)  \vert^2 + \left( \frac{\Scal^\mathcal N}{4} + m^2 \mathbf 1_{\mathcal K} + M^2 \mathbf 1_{\mathcal K^c} \right) \vert \Psi \vert^2 \right] v_\mathcal N \\
\mathcal Q(K_{m,M}) = \lbrace \Psi_{\mathcal N \setminus \Pi_\delta}, \, \Psi \in \dom(B_{m,M}) \rbrace
\end{multline}
and
\begin{multline}
k'_{m,M} [u,u] = \int_{\Pi_\delta^-} \left[ \vert \nabla^\mathcal N (\iota\Psi)  \vert^2 + \left( \frac{\Scal^\mathcal N}{4} + m^2 \right) \vert \Psi \vert^2 \right] v_\mathcal N \\
+\int_{\partial \mathcal K} \left( m-\varepsilon - \frac{H}{2} \right) \vert \Psi \vert^2 v_{\partial \mathcal K}
+ 2(M-m) \int_{\partial \mathcal K} \vert \mathcal P_- \Psi \vert^2 v_{\partial \mathcal K} \\
+ \int_{\Pi_\delta^+} \left[ \vert \nabla^\mathcal N (\iota\Psi)  \vert^2 + \left( \frac{\Scal^\mathcal N}{4} + M^2 \right) \vert \Psi \vert^2 \right] v_\mathcal N -\int_{\partial \mathcal K} \left( M - \varepsilon - \frac{H}{2} \right) \vert \Psi \vert^2 v_{\partial \mathcal K}, \\
\mathcal Q(K'_{m,M}) = H^1(\Sigma \mathcal C_{ \overline{\Pi_\delta}}).
\end{multline}
We denote by $K'_{m,M}$ the operator associated to $k'_{m,M}$.

Let $j \in \NN$. The Min-Max principle gives the lower estimate $E_j(B_{m,M}^2) \ge \min (E_j(K'_{m,M}), \Lambda_1(k_{m,M}))$, and by Lemma~\ref{semi-boundr} there is a constant $C > 0$ such that $\Lambda_1(k_{m,M}) \ge \min (m^2, M^2) - C$. This last quantity goes to $+\infty$ in the asymptotic regime under consideration, and we know thanks to the upper bound that $E_j(B_{m,M}^2) = \mathcal O(1)$. Thus, in the asymptotic regime one has $E_j(B_{m,M}^2) \ge E_j(K'_{m,M})$.

We now apply a transformation to look at the operator $K'_{m,M}$ written in tubular coordinates, and we consider the operator $P_{m,M}$ associated to the quadratic form
\begin{multline} \label{Pdef}
p_{m,M} [\Psi,\Psi] = \int_{\Pi_\delta} \left[ (1 - c \delta) \left\vert (\overline \nabla^\mathcal N \Gamma_ t^0 \Psi) (x,0) \right\vert^2 + \vert \nabla^\mathcal N_\ddt \Psi \vert^2 \right] v_h \\
+ \int_{\Pi_\delta} \left[ \left(\frac{\Scal^{\partial \mathcal K} - \mathrm{Tr} (W^2)}{4} + m^2 \mathbf 1_\mathcal K + M^2 \mathbf 1_{\mathcal K^c} - c \delta \right) \vert \Psi \vert^2 \right] v_{\partial \mathcal K} \dd t \\
+ (m - M) \int_{\partial \mathcal K} \vert \Psi ( \cdot , 0) \vert^2 v_{\partial \mathcal K} - c \int_{\partial \mathcal K} \vert \Psi (\cdot , \delta) \vert v_{\partial \mathcal K} + 2 (M - m) \int_{\partial \mathcal K} \vert \mathcal P_- \Psi \vert^2 v_\mathcal K, \\
\mathcal Q(p_{m,M}) = H^1(S_\delta),
\end{multline}
where $c > 0$ is chosen so that Lemma~\ref{S'analysis} is valid, so $E_j(K'_{m,M}) \ge E_j(P_{m,M})$.

For $a \in \RR$, let $L''_a$ be the operator given by the sesquilinear form $\ell''_a$ having the same expression as \eqref{definitionLa} but with the domain $H^1(\Sigma \mathcal C_{\vert \partial \mathcal K})$.

Let $P'_{m,M}$ be the operator defined by the same quadratic form as in \eqref{Pdef} but without the term involving the operator $\mathcal P_-$. We recall that the one-dimensional operator X was defined by \eqref{Xdefinition}, so one has
\[
P'_{m,M} = \ell''_{-\delta} \otimes 1 + 1 \otimes X.
\]
Let $(f_k)$ be a sequence of $L^2$-normalized eigenfunctions for the eigenvalues $E_k(X)$. We define the unitary transform
\begin{align*}
\mathcal U: L^2 \left( \SS_\delta \right) \longrightarrow \ell^2(\NN) \otimes L^2\left(\Sigma \mathcal C_{\vert \partial \mathcal K} \right) \\
\mathcal U \Psi = (\Psi_k), \; \Psi_k = \int_{-\delta}^\delta f_k(t) \, \iota^{-1} \Gamma_t^0 (\Psi(t, \cdot)) \dd t.
\end{align*}
Let $\Hat P'_{m,M} := \U P'_{m,M} \U^*$. This is a self-adjoint operator acting on $\ell^2(\NN) \otimes L^2\left(\Sigma \mathcal C_{\vert \partial \mathcal K} \right)$. One can write
\begin{multline}
\Hat P'_{m,M} [\Hat v, \Hat v] = \sum\limits_{k \in \NN} \left( \ell''_{-\delta} [\Psi_k, \Psi_k] + E_k(X) \Vert \Psi_k \Vert^2_{L^2(\Sigma)} \right), \\
\mathcal Q(\Hat P'_{m,M}) = \left\lbrace \Hat \Psi \in \ell^2(\NN) \otimes L^2 (\Sigma \mathcal C_{\vert \partial \mathcal K}), \Psi_k \in H^1(\Sigma \mathcal C_{\vert \partial \mathcal K}), \right. \\
\left. \sum\limits_{k \in \NN} \left( \Vert \Psi_k \Vert^2_{H^1(\partial \mathcal K)} + k^2 \Vert \Psi_k \Vert^2_{L^2(\partial \mathcal K)} \right) \right\rbrace.
\end{multline}

The operator $\Hat P_{m,M} = \U^* P_{m,M} \U$ has the same form domain as $\Hat P'_{m,M}$ and 
\[
\Hat P_{m,M} [\Hat \Psi, \Hat \Psi] = \sum\limits_{k \in \NN} \left( \ell''_{-\delta} [\Psi_k, \Psi_k] + E_k(X) \Vert \Psi_k \Vert^2_{L^2(\Sigma)} \right) + 2 (M + \vert m \vert) \int_\Sigma \vert \mathcal P_- \U^* \Hat \Psi \vert^2 \dd s.
\]
where the operator $X$ was defined in \eqref{Xdefinition}. We set
\begin{equation}
\zeta := \min (M, - m).
\end{equation}

Using Lemma~\ref{lemma1}, we consider the quadratic form $w_\zeta$ defined by
\begin{multline}
w_\zeta [\Hat \Psi, \Hat \Psi] = \ell''_{-\delta} [\Psi_1, \Psi_1] - C e^{- \zeta \delta/2} + 4 \zeta \int_\Sigma \vert \mathcal P_- \U^* \Hat \Psi \vert^2 \dd s \\
+ \sum\limits_{k \ge 2} \left( \ell''_{-\delta} [\Psi_k, \Psi_k] + (C_1 k^2 - C_2) \Vert \Psi_k \Vert^2_{L^2(\Sigma, \CC^N)} + \zeta^2 \Vert \Psi_k \Vert^2_{L^2(\Sigma)} \right), \\
\mathcal Q(w_\zeta) = \mathcal Q(\Hat P_{m,M}),
\end{multline}
and we claim that $\Hat P_{m,M} \ge w_\zeta$ for a suitable $C > 0$.
The  form $w_\zeta$ is lower semibounded and closed, and we define the associated self-adjoint operator $W_\zeta$ with compact resolvent. The previous discussion gives the lower estimate $E_j(B_{m,M}^2) \ge E_j(W_\zeta)$ in the asymptotic regime.

We now want to apply the monotone convergence theorem and we define
\begin{equation}
\mathcal Q_\infty = \left\lbrace \Hat \Psi \in \bigcap\limits_{\zeta > 0} \mathcal Q(W_\zeta) = \mathcal Q(w_\zeta), \; \underset{\zeta > 0}{\sup} \, w_\zeta [\Hat \Psi, \Hat \Psi] < + \infty \right\rbrace.
\end{equation}
One easily see that $\Hat \Psi$ is in $\mathcal Q_\infty$ iff $\Psi_k = 0$ for all $k \ge 2$ and $\mathcal P_- \U^* \Hat \Psi = 0$, which is equivalent to $\Hat \Psi = e_1 \otimes \Psi_1$ with $e_1 := (1, 0,0, \ldots)$ and $\mathcal P_- \Psi_1 = 0$.
It comes out  that $\mathcal Q_\infty = \left\lbrace e_1 \otimes \Psi_1 : \Psi_1 \in H^1(\Sigma, \CC^N) \cap \mathcal H \right\rbrace$.
Moreover, we have
\begin{equation}
\underset{\zeta \rightarrow \infty}{\lim} W_\zeta [e_1 \otimes \Psi_1, e_1 \otimes \Psi_1] = L_{-\delta} [\Psi_1,\Psi_1].
\end{equation}
Thus, if we define the operator $W_\infty [e_1 \otimes \Psi_1, e_1 \otimes \Psi_1] := L_{-\delta} [\Psi_1,\Psi_1]$ on $e_1 \otimes \mathcal H$, the monotone convergence theorem gives $\underset{\zeta \rightarrow \infty}{\lim} E_j(W_\zeta) = E_j(L_{-\delta})$. All together, we arrive at $\underset{\min (- m, M) \rightarrow +\infty}{\textrm{lim inf}} E_j(B_{m,M}^2) \ge E_j(L_{-\delta})$. We now let $\delta$ goes to zero and we obtain $\underset{\min (- m, M) \rightarrow +\infty}{\textrm{lim inf}} E_j(B_{m,M}^2) \ge E_j(L)$. The upper and the lower bound together give
\begin{equation}
\underset{\min (- m, M) \rightarrow +\infty}{\lim} E_j(B_{m,M}^2) = E_j(L) = E_j \left( (\slashed D^{\partial \mathcal K})^2 \right).
\end{equation}

\begin{rem} \label{asym2}
We can look at the asymptotic regime $M \to + \infty$ and $m \rightarrow + \infty$. Let $(m_k, M_k)_{k \in \NN}$ be a sequence of $\RR^2$ such that $m_k, M_k \underset{k \rightarrow + \infty}{\longrightarrow} + \infty$. In this case, we can use the inequality $E_1(B_{m,M}^2) \ge E_1(P_{m,M})$, and for any $\Psi \in \mathcal Q(p_{m,M})$ there exists a constant $C > 0$ such that
\begin{multline*}
p_{m,M} [\Psi,\Psi] \ge \int_{\Pi_\delta} \vert \nabla^\mathcal N_\ddt \Psi \vert^2 v_h + \int_{\Pi_\delta} \left[ m^2 \mathbf 1_{(0, \delta)} + M^2 \mathbf 1_{(- \delta, 0)} - C \right] \vert \Psi \vert^2 v_h \\
- C \int_{\partial \mathcal K} \vert \Psi (\cdot , \delta) \vert v_{\partial \mathcal K} - \vert M - m \vert \int_{\partial \mathcal K} \vert \Psi \vert^2 v_\mathcal K.
\end{multline*}
Without loss of generality, we can assume that there is a subsequence of $(M_k, m_k)$ still denoted by $(M_k, m_k)$ such that $M_k \ge m_k$ for all $k$. We have
\begin{align*}
p_{m_k,M_k} [\Psi,\Psi] &\ge m_k^2 \Vert \Psi \Vert^2_{L^2(\Pi_\delta^-)} + \Vert \Psi \Vert^2_{L^2(\Pi_\delta^+)} (M_k^2 + E_1(S_{M_k-m_k})) - C \Vert \Psi \Vert^2_{L^2(\Pi_\delta)},
\end{align*}
but when $k$ is large there is a constant $C_1$ such that
\begin{align*}
M_k^2 + E_1(S_{M_k-m_k}) &\ge M_k^2 - M_k^2 - m_k^2 + 2 M_k m_k - C_1 \\
&\ge 2 M_k m_k - m_k^2 - C_1 \ge m_k^2 - C_1.
\end{align*}
Thus, $E_1(B_{m_k,M_k}^2) \ge E_1(P_{m_k,M_k}) \ge m_k^2 - C - C_1 \underset{k \rightarrow + \infty}{\longrightarrow} + \infty$. This means that every sequence $E_1(B_{m_k, M_k}^2)$ admits a divergent subsequence, an we conclude that $E_1(B_{m, M}^2) \rightarrow + \infty$ in this regime.

By similar constructions, the result holds for $m,M\to -\infty$ as well.
\end{rem}

\renewcommand{\refname}{\bf References}

\end{document}